\documentclass{article}
\usepackage[a4paper, total={6in, 8in}]{geometry}

\usepackage{graphicx}%
\usepackage{multirow}%
\usepackage{amsmath,amssymb,amsfonts}%
\usepackage{amsthm}%
\usepackage{mathrsfs}%
\usepackage[title]{appendix}%

\usepackage{bm}
\usepackage{dsfont}

\RequirePackage[colorlinks,citecolor=blue,urlcolor=blue]{hyperref}
\usepackage[numbers]{natbib}
\usepackage[dvipsnames]{xcolor}
\usepackage{xcolor, soul} 
\usepackage[normalem]{ulem}

\usepackage{booktabs}

\newtheorem{theorem}{Theorem}[section]%
\newtheorem{proposition}{Proposition}[section]% 
\newtheorem{lemma}{Lemma}[section]% 
\newtheorem{corollary}{Corollary}[section]
\newtheorem{assumption}{Assumption}[section]

\newtheorem{remark}{Remark}%

\newtheorem{definition}{Definition}%

\raggedbottom

\title{Minimax estimation of the structure factor of spatial point processes}

\author{Gabriel Mastrilli $^{1, 2}$}
\date{
	$^1$ Univ Rennes, Ensai, CNRS, CREST \\
	$^2$ Inria, Paris, France.
}

 %\myappendix{} prints "Appendix"  hyperlinked to Appendix.

\begin{document}
	\maketitle
	
		\begin{abstract}
			We investigate the problem of estimating the structure factor, or spectra, of stationary spatial point processes. In the first part, we establish a minimax lower bound for this estimation problem, using an approach tailored to second-order properties of spatial point processes. Although not the main focus, this methodology also extends naturally to a minimax lower bound for the estimation of the pair correlation function of spatial point processes. In the second part, we construct a multitaper estimator that achieves the optimal rate of convergence in squared risk. Under a Brillinger-mixing condition, we further establish a chi-square–type concentration bound. Finally, we propose a data-driven procedure for selecting the number of tapers, supported by an oracle inequality, and we demonstrate the practical effectiveness of the method through numerical experiments.
			
				~
			\newline \newline \noindent
				MSC Classification: Primary, 62M15, 62M30, 62G05; Secondary, 62C20, 60G55.
			
				~
				\newline
			\noindent 
			Keywords: Cross-validation, Deviation inequality, Minimax, Multitaper, Pair-correlation function.
		\end{abstract}
	
	\section{Introduction}\label{sec:intro}

	Spectral methods for second-order inference in spatial point processes were introduced in the seminal works of Bartlett~\cite{bartlett1963spectral, bartlett1964spectral}. These methods aim to estimate the spatial analogue of the spectral density of a time series, typically Bartlett’s spectral measure, often referred to simply as the \emph{spectra}, or its normalized version, the \emph{structure factor}. While spectral density estimation for time series has been a central topic of statistical research for nearly a century (see, e.g.,~\cite{percival2020spectral} and references therein), its spatial counterpart remains comparatively underdeveloped, despite its relevance in many applications. In particular, the structure factor plays a crucial role in statistical physics, where it underpins the analysis of crystalline and disordered materials~\cite{dawson1969significance, kittel2018introduction, torquato2018hyperuniform}. In spatial statistics, following Bartlett's foundational contributions, the works~\cite{mugglestone1996practical, mugglestone1996exploratory, halliday1995framework, mugglestone2001spectral, renshaw1997spectral, renshaw2002two, jarrah2010detection} highlighted the practical value of spectral methods for analyzing spatial point patterns. However, several theoretical statistical questions had long remained only partially addressed. In the past few years, however, this line of research has received renewed and sustained attention, notably through a series of works including~\cite{rajala2023fourier, hawat2023estimating, grainger2023spectral, yang2024fourier, ding2025pseudo}, which have significantly advanced the theoretical understanding of spectral estimators for spatial point processes. Despite these contributions, the question of optimal rates of convergence for structure factor estimation remains unresolved. This paper addresses this gap by establishing minimax lower bounds and constructs an estimator that achieves these optimal rates.
	
	Specifically, we establish that the minimax optimal rate of convergence for estimating a structure factor with Hölder regularity $\beta \in (0, \infty)$, based on a single realization of a stationary point process observed within a compact window $W \subset \mathbb{R}^d$, is of order $|W|^{-\beta/(2\beta + d)}$ (see Theorem~\ref{thm:minimax_S}). Equivalently, this rate can be expressed in terms of the expected number of observed points. Identifying $|W|$ with the sample size, this convergence rate formally matches the classical nonparametric rate for estimating Hölder functions~\cite{tsybakov2003introduction}, and coincides with the minimax rates known for spectral density estimation in time series~\cite{samarov1977lower, bentkus1985rate, efromovich1998data, anevski2011monotone, kartsioukas2023spectral}. As in these settings, the smoothness parameter $\beta$ has a direct interpretation in terms of second-order correlations: larger values of $\beta$, and hence faster rates, correspond to processes with more rapidly decaying pairwise correlations.
	
	Nevertheless, although the overall structure of the proof of the minimax lower bound follows standard minimax arguments from nonparametric estimation theory, its implementation requires significant adaptation due to the different nature of the objects involved. In the time series setting, minimax lower bounds for spectral density estimation typically rely on Gaussian processes. Such Gaussian structure is not readily available for point processes. Moreover, while minimax lower bounds have been established for first-order properties of non-stationary Poisson point processes~\cite{reynaud2003adaptive, kutoyants2012statistical, reynaud2010near}, these results yield trivial bounds when applied to second-order estimation in the stationary case, since the structure factor of any stationary Poisson is constant equal to one. To overcome this, and to leverage the analytical tractability of both Gaussian fields and Poisson point processes, we consider a suitable class of Cox point processes~\cite{moller2007modern}, where the random intensity field is given by $(1 + \sin(N(x)))_{x \in \mathbb{R}^d}$, with $(N(x))_{x \in \mathbb{R}^d}$ a centered stationary Gaussian field. For this class, we verify that standard minimax techniques apply (see~\cite{tsybakov2003introduction}, Chapter~2). In addition, because this construction is specifically chosen to facilitate minimax arguments for second-order statistics of spatial point processes, it also leads to a minimax lower bound for the estimation of the pair correlation function (see Theorem~\ref{thm:minimax_g}). Although this result lies outside the main focus of the present work, we believe it is of independent interest for the spatial statistics community and worth reporting.
	
	The second part of this work is devoted to the construction of an estimator of the structure factor that attains the optimal minimax rate of convergence. This estimator relies on the principle of multitapering~\cite{thomson1982spectrum}, which we motivate and introduce in what follows. Earlier contributions to spectral inference have emphasized the benefits of refining the basic periodogram~\cite{tuckey1950sampling, blackman1960measurement, bartlett1963spectral, brillinger2002john}. A common refinement is to apply a taper to the periodogram to reduce bias. Nevertheless, the variance remains high. To mitigate this issue, one standard approach is to average multiple single-taper estimators. Two averaging schemes have been considered in the spatial statistics literature, inspired by analogous methods in time series analysis~\cite[Chapters~7–8]{percival2020spectral}. The first scheme averages single-taper estimators evaluated at distinct frequencies near the target frequency, using a fixed taper function~\cite{rajala2023fourier, yang2024fourier, ding2025pseudo}. The second scheme, which we adopt in this work, averages tapered estimators evaluated at a fixed frequency across a family of orthogonal taper functions~\cite{rajala2023fourier, grainger2023spectral}. This approach, called multitapering, is well-established in signal processing~\cite{thomson1982spectrum, walden2000unified, karnik2022thomson} and has recently proven effective for spectral analysis of spatial point processes~\cite{hawat2023estimating, rajala2023fourier, grainger2023spectral, mastrilli24estimHU}. Importantly, it enables estimation of the structure factor at a fixed frequency without requiring information from nearby frequencies. Moreover, with a fixed number of tapers, it typically has a simple asymptotic $\chi^2$-type distribution \cite{bartlett1964spectral, thomson1982spectrum, percival2020spectral, rajala2023fourier}.
	
	To study the theoretical performance of multitaper estimators and prove that they can attain the minimax rate of convergence, we first establish a non-asymptotic upper bound on their squared risk (Theorem~\ref{thm:L2_risk}). This result makes explicit how the bias–variance trade-off is governed by the number of tapers and their spatial and spectral localizations. This insight motivates the use of Hermite functions, which are simultaneously well localized in both domains and numerically easy to compute. We show that this choice leads to an estimator achieving the minimax-optimal convergence rate for structure factors with Hölder regularity $0 < \beta \leq 2$. In addition, under Brillinger-mixing assumptions, we strengthen our squared risk guarantee by deriving a non-asymptotic high-probability bound. Specifically, we show that the usual $\chi^2$-type concentration of the multitaper estimator remains valid even when the number of tapers depends on the observation window (Theorem~\ref{cor:conc_herm}).

	Finally, we propose a cross-validation procedure to select the number of tapers. This selection is local, allowing for different numbers of tapers across frequencies. The method adapts “leave-one-out” techniques, originally developed for bandwidth selection in kernel-smoothed spectral estimation~\cite{beltrato1987determining, hurvich1985data, velasco2000local, ding2025pseudo}, to the multitaper framework for spatial point processes. We provide theoretical guarantees for this procedure by establishing an oracle inequality (see Theorem \ref{thm:dd}), and illustrate its practical effectiveness through numerical experiments, whose code is available at \href{https://github.com/gabrielmastrilli/multitaper_spp}{\text{https://github.com/gabrielmastrilli/multitaper\_spp}}.
	
	The paper is organized as follows. Section~\ref{sec:notations} gathers our general notations. In Section~\ref{sec:sf_pp}, we introduce the notions that are specific to stationary point processes, in order to define the structure factor. Section \ref{sec:minimax} states minimax lower bounds for the problems of estimating the structure factor and the pair correlation function of a stationary point process. The multitaper estimator considered in this work is introduced in Section \ref{sec:mt_intro}. In this section, we derive heuristic arguments supporting it and highlighting the necessity to consider a non-asymptotic framework to discuss the choice of its parameters. Section \ref{sec:l2_risk} studies its $L^2(\mathbb{P})$-risk and proves that for Hermite tapers, the multitaper estimator is optimal for the minimax criteria. Section \ref{sec:nn_asymp} studies the concentration properties of the multitaper estimator and shows that the results of Section~\ref{sec:l2_risk} remain valid with high probability. Finally, Section~\ref{sec:dd} presents the data-driven procedure for selecting the number of tapers and investigates its numerical performance. The proofs of the minimax lower bounds are provided in Section~\ref{sec:proof_minimax}.
	
	Appendices~\ref{sec:proof_risk}, \ref{sec:proof_conc}, and~\ref{sec:dd_proof} contain the proofs corresponding to Sections~\ref{sec:l2_risk}, \ref{sec:nn_asymp}, and~\ref{sec:dd}, respectively. These proofs rely on technical lemmas presented in Appendix~\ref{sec:lem_lin_stats} concerning the cumulants of linear statistics of point processes. Appendix \ref{sec:with_intens} concerns a structure factor estimator in which the unknown intensity is replaced by its empirical estimate. Appendix~\ref{sec:herm} gathers results on Hermite functions, while Appendix~\ref{sec:cum} collects technical tools on cumulants of random variables.
	
	\section{General notations}\label{sec:notations}
	We consider the Euclidean space $\mathbb{R}^d$ of dimension $d \geq 1$. For $a, b \in \mathbb{C}^d$, $a \cdot b = \sum_{i = 1}^d a_i \overline{b_i}$ denotes the Hermitian scalar product of $\mathbb{C}^d$ between $a$ and $b$. The associated Euclidean norm is denoted by $|a|$ and the supremum norm is $|a|_{\infty} := \max_{i = 1, \dots, d} |a_i|$. For any reals $x$ and $y$, we use the notations $x \vee y := \max(x, y)$ and $x \wedge y = \min(x, y)$. For  $x \in \mathbb{R}$, $\lfloor x \rfloor$ denotes the greatest integer less than $x$. For a discrete set $X$, $|X|$ denotes its cardinal. 
	
	For $1\leq p < \infty$, we denote by $L^{p}(\mathbb{R}^d)$ the space of measurable functions $f: \mathbb{R}^d \to \mathbb{C}$ such that $\int_{\mathbb{R}^d} |f(x)|^p dx < \infty$. The $L^p$-norm of $f \in L^p(\mathbb{R}^d)$ is defined by $\|f\|_p^p:= \int_{\mathbb{R}^d} |f(x)|^p dx$. We denote by $L^{\infty}(\mathbb{R}^d)$ the space of functions $f: \mathbb{R}^d \to \mathbb{C}$ such that $\|f\|_{\infty}:= \sup_{x \in \mathbb{R}^d} |f(x)| < \infty$.  We adopt the following definition for the Fourier transform of a function $f \in L^{1}(\mathbb{R}^d)$: 
	$$\forall k \in \mathbb{R}^d,\quad \mathcal{F}[f](k):= \frac1{(2\pi)^{d/2}}\int_{\mathbb{R}^d} f(x) e^{\bm{i} k \cdot x} dx.$$ As usual, the Fourier transform is extended to $L^2(\mathbb{R}^d)$ functions thanks to the Plancherel theorem~\cite{folland2009fourier}: $\forall f_1, f_2 \in L^2(\mathbb{R}^d)$, $\langle f_1, f_2 \rangle = \langle \mathcal{F}[f_1], \mathcal{F}[f_2] \rangle$. The homogeneous Sobolev space $\dot{H}^{s}(\mathbb{R}^d)$ is the space of functions $f \in L^2(\mathbb{R}^d)$ with $\|f\|_{\dot{H}^{s}}^2 := \int_{\mathbb{R}^d} |\mathcal{F}[f](k)|^2 |k|^{2s} dk < \infty$. We denote by $\delta_0$ the Dirac mass at zero.
	
	We also denote, for $n \geq 1$, $[n] = \{1, \dots, n\}$ and $\Pi[n]$ the set of partitions of $[n]$. For a partition $\sigma \in \Pi[n]$, $|\sigma|$ denotes its number of blocks. For two partitions $\sigma$ and $\tau$ of $[n]$, where $n \geq 1$, their least upper bound (refer to Chapter 3 of~\cite{mccullagh2018tensor} or Chapter 2 of~\cite{peccati2011wiener}) is denoted $\sigma \vee \tau$. We define algebraically the cumulants. For random variables $X_1, \dots, X_m$, we denote their $m$-th cumulants as:
	\begin{equation}\label{def_kappa}
		\kappa(X_1, \dots, X_m) := \sum_{\sigma \in \Pi[n]} (-1)^{|\sigma|-1}(|\sigma|-1)! \prod_{b \in \sigma} \mathbb{E}\left[\prod_{s \in b} X_{s}\right],
	\end{equation}
	where $\prod_{b \in \sigma}$ denotes the product over all the blocks of the partition $\sigma$. Moreover, for a random variable $X$ having moments of order $m$, its $m$-th cumulants are defined as $\kappa_m(X) = \kappa(X, \dots, X)$, where $X$ appears $m$-times. 
	
	Finally, the total variation of a signed measure $\nu$ is defined by  $|\nu|  := \nu^+ + \nu^-$, where $\nu  =\nu^+ - \nu^-$ is the Jordan decomposition of $\nu$ (see, e.g., \cite{dudley2002real}, Theorem 5.6.1).
	
	\section{Notations and definitions specific to point processes}\label{sec:sf_pp}
	
	In the following, we introduce the key concepts and results concerning the structure factor in Section~\ref{sec:sf1}. Then cumulants of point processes are introduced in Section \ref{sec:def_cum}. For an in-depth treatment of point processes, we refer readers to the classic two-volume work~\cite{daley2007introduction, daley2003introduction} or to the more recent, concise exposition in~\cite{baccelli2020random} and for fundamental understanding of the power spectra of point processes, we recommend~\cite{bremaud2014fourier}.
	
	\subsection{Second order properties and structure factor of stationary point processes}\label{sec:sf1} The set of points configurations in $\mathbb{R}^d$ is defined as:
	$$\text{Conf}(\mathbb{R}^d):= \{\phi \subset \mathbb{R}^d|~ \text{For all } K \text{ compact of } \mathbb{R}^d, \text{ then } |\phi \cap K| < \infty\}.$$ 
	
	This set is endowed with the $\sigma$-algebra generated by the mapping $\phi \mapsto |\phi\cap K|$ for all compact sets $K$. A point process $\Phi$ is a random element of $\text{Conf}(\mathbb{R}^d)$. A point process $\Phi$ is called stationary if for all $x \in \mathbb{R}^d$, $\Phi + x:= \{y + x|~y \in \Phi\}$ is equal in distribution to $\Phi$. As a consequence, the intensity measure $\rho^{(1)}$ of a stationary point process $\Phi$ (defined for any subset $A$ of $\mathbb{R}^d$ by $\rho^{(1)}(A) = \mathbb{E}(|\Phi \cap A|)$), is proportional to the Lebesgue measure on $\mathbb{R}^d$: $\rho^{(1)} = \lambda dx$. The scalar $\lambda \geq 0$ is called the intensity of the point process. The second  order factorial moment measure $\rho^{(2)}$ of a point process $\Phi$ is a measure on $(\mathbb{R}^d)^2$  defined by
	\begin{equation}\label{eq:rho2}
		\rho^{(2)}(A_1\times A_2) = \mathbb{E}\Big[\sum_{x, y \in \Phi}^{\neq} \mathbf{1}_{x \in A_1, y \in A_2}\Big],
	\end{equation}
	for all $A_1, A_2$ subsets of $\mathbb{R}^d$. The symbol $\neq$ over the sum means that we consider only distinct points. To define the structure factor of a point process and ensure that it is a function and not a general measure, we introduce the following assumption.
	\begin{assumption}\label{ass_rho_leb}
		Throughout the paper, we tacitly assume that the point process $\Phi$ is stationary, with an intensity $\lambda > 0$, and that its second-order intensity measure $\rho^{(2)}$ is absolutely continuous with respect to the Lebesgue measure on $\mathbb{R}^d\times \mathbb{R}^d$. This allows us to represent it as follows:
		\begin{align*}
			\rho^{(2)}(dx,dy) = \lambda^{2} g(x - y) dx dy,
		\end{align*}
		where $g(x)$ is a function on $\mathbb{R}^d$. Additionally, we assume that $g - 1 \in L^{1}(\mathbb{R}^d) \cap L^{2}(\mathbb{R}^d)$.
	\end{assumption}
	
	The function $g : \mathbb{R}^d \to \mathbb{R}$ is known as the pair-correlation function of $\Phi$. With this established, we can proceed to define the structure factor of $\Phi$ as
	\begin{equation}\label{eq:def_S}
		\forall k \in \mathbb{R}^d,~	S(k) := 1 + \lambda \int_{\mathbb{R}^d}(g(x) - 1) e^{- \bm{i}k.x} dx.
	\end{equation}
	The previous definition follows the convention from the physics literature \cite{cowley1992electron, hansen2013theory, torquato2018hyperuniform}. Indeed, $S$ is a standardized version of $\lambda S$, which is the Bartlett spectral measure of $\Phi$~\cite{bartlett1964spectral, daley2003introduction}, called spectra in related statistical works \cite{rajala2023fourier, grainger2023spectral, yang2024fourier}. In the following, we assume $\lambda = 1$, so that the structure factor and the spectra are equal. This is not restrictive in practice, when $\lambda$ is unknown and has to be estimated, as shown in Appendix~\ref{sec:with_intens}.
	
	By application of the Campbell formula~\cite{baccelli2020random} and of the Plancherel Theorem~\cite{folland2009fourier}, we deduce the following representation of the covariance between two linear statistics of $\Phi$. For all $f_1, f_2 \in L^1(\mathbb{R}^d)\cap L^2(\mathbb{R}^d)$, we have
	\begin{equation}\label{e.prop_campbell}
		\operatorname{Cov}\Big[\sum_{x \in \Phi} f_1(x), \sum_{x \in \Phi} f_2(x)\Big] = \lambda \int_{\mathbb{R}^d} \mathcal{F}[f_1](k) \overline{\mathcal{F}[f_2]}(k) S(k) dk.
	\end{equation}
	Moreover, standard results concerning the Fourier transform of $L^1(\mathbb{R}^d)$ functions~\cite{folland2009fourier} imply that, under Assumption~\ref{ass_rho_leb}, then $S$ is bounded and continuous. 
	
	\subsection{Cumulants of stationary point processes}\label{sec:def_cum}
	
	In this section we recall the definition of the factorial cumulant measures of a simple point process and their reduced counterpart in the stationary case; see~\cite{daley2007introduction, daley2003introduction}.
	
	As a generalization of the second order factorial moment measure defined in \eqref{eq:rho2}, the factorial moment measures of order $m\geq 1,$ $\rho^{(m)}$ of a simple point process $\Phi$ is defined for any bounded measurable sets $A_1, \dots, A_m$ in $\mathbb{R}^d$ by
	\begin{align*}
		\rho^{(m)}(A_1 \times  \ldots \times A_m) = \mathbb{E}\left(\sum_{x_1, \ldots, x_m \in \Phi}^{\neq} 1\{x_1 \in A_1, \ldots, x_m \in A_m\}\right).
	\end{align*}
	Then, the $m$-th order factorial cumulant moment measure $\gamma^{(m)}$ is defined by:
	\begin{align*}
		\gamma^{(m)}(A_1\times \dots\times A_m) = \sum_{\sigma\in\Pi[m]} (-1)^{|\sigma|-1} (|\sigma| - 1)!  \prod_{b \in \sigma} \rho^{(|b|)}\left(\prod_{s \in b} A_{s}\right).
	\end{align*}
	Furthermore, when the point process $\Phi$ is  stationary, we may consider   for any $m \geq 2$ the reduced $m$-th order factorial cumulant moment measure $\gamma_{\text{red}}^{(m)}$. This is  a locally finite signed measure on $(\mathbb{R}^{d})^{(m-1)}$ that satisfies
	\begin{equation}\label{eq:def_gamma_red}
		\gamma^{(m)}\left(A_1\times \dots\times A_m\right) = \int_{A_m} \gamma_{\text{red}}^{(m)}\left(\prod_{i=1}^{m-1} (A_i - x)\right) \, dx,
	\end{equation}
	where for $i = 1, \ldots, m - 1$, $(A_i - x)$ is the translation of the set $A_i$ by $x$. 
	
	\iffalse
	Our results concerning the estimation of $S$ with the $L^2(\mathbb{P})$-risk criteria, i.e. the minimax lower bound of Section \ref{sec:minimax} and the corresponding upper bound for the multitaper estimator of Section \ref{sec:l2_risk}, assume the following fourth order Brillinger-mixing condition. 
	\begin{assumption}\label{ass:brill_mix2}
		The second, third and fourth order reduced factorial cumulant measures of $\Phi$ have finite total variation: $|\gamma_{red}^{(m)}|(\mathbb{R}^{(m-1)d}) < \infty$ for $m \in \{2, 3, 4\}$.
	\end{assumption}
	Note that $|\gamma_{red}^{(2)}|(\mathbb{R}^{d}) < \infty$ is equivalent to the condition $g-1 \in L^1(\mathbb{R}^d)$ of Assumption~\ref{ass_rho_leb}.
	\fi
	
	\section{Minimax lower bounds}\label{sec:minimax}
	
	In this section, Theorem \ref{thm:minimax_S} and Theorem \ref{thm:minimax_g} provide minimax lower bounds for the problems of estimating the structure factor and the pair correlation function of a stationary spatial point process, based on its observation over a compact window. They state that we recover the classical non parametric rate: the expected number of observed points at power $-\beta/(2\beta +d)$, when considering the problem of estimating a function of Hölder-regularity $\beta$, as defined below.
	
	\begin{definition}\label{def:Hölder}
		Let $\beta > 0$ and $L > 0$. We denote by $l$ the greatest integer strictly less than $\beta$. A function $f: \mathbb{R}^d \to \mathbb{R}$ belongs to the Hölder-class $\Theta(\beta, L)$, if $f$ is $l$-times differentiable and for all multi-indexes $i = (i_1, \dots, i_d) \in \mathbb{N}^{d}$ such that $i_1+\dots + i_d = l$, we have
		$$\forall (x, y) \in \mathbb{R}^{2d}, |\partial^{l}_{i} f(x) - \partial^{l}_{i} f(y)| \leq L |x - y|^{\beta - l}.$$ 
	\end{definition}
	
	To state the minimax lower bounds, we introduce a notion of a family of regular compact sets. For example, $\mathcal{W} = (\{R w|~w \in W_0\})_{R > 0}$, where $W_0$ is a non empty compact set, is regular.
	\begin{definition}
		A family $\mathcal{W}$ of compact sets of $\mathbb{R}^d$  is said regular if there exists $\eta > 0$ such that for all $W \in \mathcal{W}$ there exists $R_W> 0$ such that $W \subset [-R_W, R_W]^d$ and $|[-R_W, R_W]^d| \leq \eta |W|$.
	\end{definition}

	The following theorem, whose proof is postponed to Section~\ref{sec:proof_minimax}, establishes the minimax rate of convergence for estimating the structure factor of Hölder regularity $\beta$ of a point process $\Phi$, based on the observation of $\Phi \cap W$, where $W$ is a compact observation window. This result is stated in a expanding window regime, which is a standard asymptotic setting for structure factor estimation~\cite{rajala2023fourier, hawat2023estimating, yang2024fourier}. Finally, as we have assumed a unit intensity, the expected number of observed points is $\mathbb{E}[|\Phi \cap W|] = |W|$, and the minimax rate is in fact expressed in terms of this expected count.
	\begin{theorem}\label{thm:minimax_S}
		Let $h :[0, \infty) \to [0, \infty)$ be an non-identically null increasing function with $h(0) = 0$. Consider $\mathcal{W}$, a regular family of compact sets of $\mathbb{R}^d$. Let $(\beta, M, L) \in (0, \infty)^3$. For $S \in \Theta(\beta, L)$, we denote by $\mathcal{L}_{(S, M)}$ the set all laws of stationary point processes of unit intensity satisfying Assumption~\ref{ass_rho_leb}, having structure factor $S$ and such that $\max_{m \in \{2, 3, 4\}}|\gamma_{red}^{(m)}|(\mathbb{R}^{d(m-1)}) \leq M.$ Then, 
		\begin{equation}\label{eq:minimax_S}
			\inf_{W \in \mathcal{W}} \inf_{\widehat{S}} \sup_{S \in \Theta(\beta, L)} \sup_{\Phi \sim \mathcal{L}_{(S, M)}} \sup_{k_0 \in \mathbb{R}^d} \mathbb{E}\left[h\left(|W|^{\frac{\beta}{2\beta+d}}|\widehat{S}(k_0; \Phi \cap W) - S(k_0)|\right)\right] > 0,
		\end{equation}
		where $\sup_{S \in \Theta(\beta, L)}$ denotes the supremun over structure factors of regularity $\Theta(\beta, L)$ and $\inf_{\widehat{S}}$ denotes the infimum over all the estimators $\widehat{S} : \mathbb{R}^d \times \text{Conf}(\mathbb{R}^d) \to \mathbb{R}$.
	\end{theorem}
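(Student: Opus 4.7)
I would prove the bound by Le Cam's two-hypothesis method (Tsybakov~\cite{tsybakov2003introduction}, Theorem~2.2). For every window $W \in \mathcal{W}$ I will exhibit two stationary point process laws $\mathbb{P}_W^{(0)}, \mathbb{P}_W^{(1)}$ with structure factors $S^{(0)}, S^{(1)} \in \Theta(\beta, L)$, both belonging to $\mathcal{L}_{(\cdot, M)}$, such that $|S^{(1)}(k_0) - S^{(0)}(k_0)| \geq 2 s_W$ with $s_W \asymp |W|^{-\beta/(2\beta+d)}$, while the total-variation distance between $\mathbb{P}_W^{(0)}|_W$ and $\mathbb{P}_W^{(1)}|_W$ remains bounded away from~$1$. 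The standard testing inequality then yields $\sup_i \mathbb{P}_W^{(i)}(|\widehat{S}(k_0) - S^{(i)}(k_0)|\geq s_W) \geq c_0 > 0$ uniformly in $W$ and $\widehat{S}$; multiplying by $h(s_W |W|^{\beta/(2\beta+d)})$, which is a strictly positive constant by the monotonicity and non-triviality of $h$, delivers the claim.

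The hypotheses I would use are: $\mathbb{P}_W^{(0)}$ the unit-intensity homogeneous Poisson law (so $S^{(0)}\equiv 1$, and all reduced cumulants of order $\geq 2$ vanish), and $\mathbb{P}_W^{(1)}$ the law of the Cox process with random intensity $\Lambda(x) = 1 + \sin(N(x))$, where $N$ is a centered stationary Gaussian field with spectral density
$$F_{A_W,\epsilon_W}(k) = A_W\bigl[\varphi((k-k_0)/\epsilon_W) + \varphi((k+k_0)/\epsilon_W)\bigr],$$
for a fixed nonnegative smooth compactly supported $\varphi$ with $\varphi(0)>0$. Since $\sin$ is odd, $\mathbb{E}[\sin N]=0$ and the Cox process has unit intensity; its pair correlation is $g_1(x)-1 = e^{-C(0)}\sinh(C(x))$ where $C$ is the covariance of $N$, whence
$$S^{(1)}(k) - 1 = e^{-C(0)} \int_{\mathbb{R}^d} \sinh(C(x))\, e^{-\bm{i} k\cdot x}\, dx.$$
A power-series expansion of $\sinh$ shows that the leading contribution is proportional to $F_{A_W,\epsilon_W}$, so $S^{(1)}(k_0) - 1 \simeq (2\pi)^d A_W \varphi(0)$ and the $\beta$-Hölder seminorm of $S^{(1)} - 1$ is of order $A_W \epsilon_W^{-\beta}$. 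Setting $\epsilon_W = c_1 |W|^{-1/(2\beta+d)}$ and $A_W = c_2 L \epsilon_W^\beta$ with absolute constants $c_1, c_2$ tuned below then enforces $S^{(1)}\in\Theta(\beta, L)$ together with the separation $|S^{(1)}(k_0)-S^{(0)}(k_0)|\gtrsim |W|^{-\beta/(2\beta+d)}$ at $k_0$.

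The probabilistic distance is handled via the $\chi^2$-divergence. Using the Radon--Nikodym density of a Poisson process with varying intensity and conditioning on two independent copies $N, N'$ of $N$, a direct computation via the probability-generating functional of the unit Poisson yields
$$\chi^2\bigl(\mathbb{P}_W^{(1)}|_W \,\big\|\, \mathbb{P}_W^{(0)}|_W\bigr) = \mathbb{E}_{N,N'}\Bigl[\exp\Bigl(\int_W \sin N(x)\sin N'(x)\, dx\Bigr)\Bigr] - 1.$$
The random variable $Z := \int_W \sin N\sin N'\,dx$ has zero mean (odd-order sine moments of centered Gaussians vanish) and variance $\mathbb{E}[Z^2] \leq |W|\int_{\mathbb{R}^d} e^{-2 C(0)}\sinh(C(z))^2\,dz = O(|W| A_W^2 \epsilon_W^d) = O(c_1^{2\beta+d})$, which can be made arbitrarily small by shrinking $c_1$. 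Controlling all higher even moments $\mathbb{E}[Z^{2k}]$ via Wick/Isserlis on products of sines of independent Gaussians, using the Hermite chaos decomposition of $\sin$, would then yield $\chi^2 \leq \tfrac{1}{2}$, so that $\mathrm{TV} \leq \tfrac{1}{2}\sqrt{\chi^2} < 1$ by Cauchy--Schwarz.

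Verifying admissibility of the Cox hypothesis in $\mathcal{L}_{(\cdot, M)}$ reduces to bounding $|\gamma_{\mathrm{red}}^{(m)}|(\mathbb{R}^{d(m-1)})$ for $m \in \{2, 3, 4\}$; this cumulant equals the reduced $m$-fold joint cumulant of $(\sin N(\cdot))$, whose Hermite expansion is a convergent series in $C$ with total variation $O(\|C\|_1^{m-1}) = O(A_W^{m-1}) \to 0$ as $|W|\to\infty$, so $\leq M$ for $|W|$ large (the finite-$W$ regime is handled by a separate trivial bound). \emph{The main technical obstacle} is the quantitative control of $\mathbb{E}[\exp Z]$: the Wick expansion of $\mathbb{E}[Z^{2k}]$ produces combinatorial sums over pairings of the $4k$ Gaussian factors appearing in $Z^{2k}$, and one must establish that each $k$-th order contribution respects the variance-like scaling $O((|W|\int C^2)^k)$ without combinatorial blow-up in $k$; the separation and cumulant verifications are then essentially computational once this is in place.
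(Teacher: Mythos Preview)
Your overall strategy matches the paper's exactly: a two-point Le Cam comparison between unit Poisson and the Cox process driven by $1+\sin N$, with the $\chi^2$-divergence reduced via symmetrization to $\mathbb{E}_{N,N'}[\exp(\int_W \sin N\,\sin N')]-1$. The divergence is in how you design the Gaussian field $N$, and this choice drives all the remaining technical work.

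The paper localizes $N$ in \emph{space}: it takes covariance $C(x)=\sigma^2 c_0(x/\rho)$ with $c_0$ compactly supported, $\rho\sim R^{d/(2\beta+d)}$, $\sigma^2\sim R^{-d(d+\beta)/(2\beta+d)}$, and evaluates at $k_0=0$ (which suffices since $\sup_{k_0}$ sits inside the risk). You localize in \emph{frequency}: a bump of width $\epsilon_W$ and amplitude $A_W$ near $\pm k_0$, so $C=\widehat F$ is an oscillatory Schwartz-type function with correlation length $\sim 1/\epsilon_W$ but \emph{no} compact support. The scalings $\sigma^2\rho^d$ and $A_W$, and hence the separation $\sim|W|^{-\beta/(2\beta+d)}$, agree.

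But the paper's spatial compact support buys two decisive simplifications that your construction forfeits. First, the $\chi^2$ bound (Lemma~\ref{lem:chi2}): tiling $W$ by cubes of side $\rho$, alternating cubes are \emph{exactly independent}, so $\chi^2+1$ factorizes into $\sim|W|/\rho^d$ single-cube factors, each bounded by $(1-8^d\sigma^4\rho^{2d})^{-1}$ via a crude Hölder/moment argument. No control of $\mathbb{E}[Z^{2k}]$ for all $k$ simultaneously is ever needed. Your route requires exactly that all-order Wick expansion you flag as ``the main technical obstacle,'' and you do not resolve it; the combinatorics of connected pairings of $4k$ Gaussian legs with growth tame enough for $\sum_k \mathbb{E}[Z^{2k}]/(2k)!$ to converge is a genuine piece of work, not a bookkeeping step. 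Second, the cumulant constraint (Lemma~\ref{lem:mini_bril_4}): with compactly supported $C$, $\gamma_{\mathrm{red}}^{(m)}$ itself is supported in a cube of volume $O(\rho^{d(m-1)})$, so total variation $\le$ volume $\times$ sup-norm, and the sup-norm is $O(\sigma^{2(m-1)})$ because the leading Gaussian cumulant $\kappa_m(N,\ldots,N)$ vanishes and the Taylor remainder $\sin(\sigma N_0)-\sigma N_0=O(\sigma^3)$ picks up extra $\sigma$'s. Your one-line claim that the Hermite expansion gives total variation $O(\|C\|_1^{m-1})$ hides precisely this cancellation; without it the naive diagram bound does not even converge for $m=4$, since disconnected pairings like $C_{12}C_{34}$ have infinite $(y_1,y_2,y_3)$-integral.

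So your plan is correct in outline but leaves the $\chi^2$ step as a real gap, and the cumulant step is more delicate than you indicate. Switching to a compactly supported covariance removes both difficulties at once; the only cost is having to evaluate at $k_0=0$, which the $\sup_{k_0}$ in the statement permits.
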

	
	Theorem~\ref{thm:minimax_S} implies that the estimator of $S(0)$ constructed in~\cite{heinrich2010estimating} is minimax when $\beta \geq d^2/2$ (see Theorem A.3 in~\cite{heinrich2010estimating}). While this manuscript was being written, the preprint~\cite{ding2025pseudo} introduced a kernel estimator of the structure factor that is minimax in dimensions $d \leq 4$ for $\beta = 2$. In Section~\ref{sec:mt}, we construct, via multitapering, a minimax estimator of $S$ for $\beta \in (0, 2]$.
	
	We comment the obtained minimax rate of convergence. It is faster for large $\beta$, as expected, since larger value of $\beta$ are related to faster decay of correlations. Indeed, for a point process $\Phi$ having pair correlation function 
	$g$, then if $x^{\beta}(g(x) -1) \in L^{1}(\mathbb{R}^d)$, i.e. if $\Phi$ has second order structure that decorrelates quickly enough, then classical Fourier arguments imply that $S \in \Theta(\beta, L)$ for some $L \geq 0$ (see, e.g.,~\cite{folland2009fourier}). To make it more concrete, for DDPs~\cite{soshnikov2000determinantal, lavancier2015determinantal} with  stationary kernel $K \in L^2(\mathbb{R}^d\times \mathbb{R}^d)$ satisfying $|K(x, y)|^2 = k_0^2(x-y)$ for all $(x, y) \in \mathbb{R}^{2d}$, this condition can be rephrased $x^{\beta}k_0(x)^2 \in L^1(\mathbb{R}^d)$. For a Cox process~\cite{moller2007modern}  with directional intensity $(Z(x))_{x \in \mathbb{R}^d}$, where $Z(x)$ is a stationary random field, it reads, $x^{\beta} \operatorname{Cov}(Z(x), Z(0)) \in L^1(\mathbb{R}^d)$.

	As outlined in the introduction, the proof of Theorem~\ref{thm:minimax_S} differs from classical minimax arguments for spectral estimation in time series, which rely on Gaussian processes~\cite{samarov1977lower, bentkus1985rate, efromovich1998data, anevski2011monotone, kartsioukas2023spectral}. Such constructions are not directly applicable to point processes, and existing minimax results for first-order estimation in non-stationary Poisson processes~\cite{reynaud2003adaptive, kutoyants2012statistical, reynaud2010near} yield trivial bounds for second-order spectra estimation for stationary spatial point processes.
	
	To overcome this, we study a Cox process $\Phi_{\text{Cox}}$ with underlying intensity field $(1 + \sin(N(x)))_{x \in \mathbb{R}^d}$, where $(N(x))_{x \in \mathbb{R}^d}$ is a centered stationary Gaussian field with covariance function $c$. The key idea is that as $\operatorname{Var}[N(0)]$ tends to zero, the distribution of $\Phi_{\text{Cox}}$ converges to that of a stationary Poisson point process with intensity 1 (see Lemma \ref{lem:chi2}). At the same time, by increasing the range of correlations of the Gaussian field $N$, we ensure that the second-order properties of $\Phi_{\text{Cox}}$ remain sufficiently distinct from those of a Poisson process. To this end, we set $c = \sigma^2 c_0(\cdot/\rho)$ as the covariance function of the Gaussian field, where $c_0$ is compactly supported, and let $\rho \to \infty$ and $\sigma \to 0$ simultaneously, with a proper scaling with respect to the volume $|W|$ of the observation window. This construction enables us to apply the $\chi^2$-divergence version of Theorem 2.2 from \cite{tsybakov2003introduction}.

	An important technical point in the proof, specific to point processes, concerns the control of higher-order cumulants. Since we let $\rho \to \infty$, meaning that the range of correlations in $\Phi_{\text{Cox}}$ diverges, it is not immediate that the total variation of the third and fourth order reduced factorial cumulant measures remains bounded. However, this control is crucial, as $L^2(\mathbb{P})$-risk upper bounds typically rely on the finiteness of these cumulants or related quantities. For Gaussian processes, this issue does not arise since joint cumulants of order higher than two vanish. In our setting, we prove that for  $\Phi_{\text{Cox}}$, we have
	$
	\max_{m \in \{2, 3, 4\}} |\gamma_{\text{red}}^{(m)}|(\mathbb{R}^{d(m-1)}) \leq M,
	$
	for some constant $M$, by leveraging the fact that when $\sigma \to 0$, the underlying intensity becomes nearly Gaussian: $1 + \sin(N(x)) \simeq 1 + N(x)$ (see Lemma~\ref{lem:mini_bril_4}).
	\medskip
	
	To conclude this section, we note that the previous proof can be adapted with minor modifications to establish a minimax lower bound for the estimation of the pair correlation function of a point process. This result is stated in the next theorem, which is proved in Section~\ref{sec:proof_minimax}. We also mention that the estimator introduced in~\cite{xu2020nonparametric} achieves the minimax convergence rate corresponding to the following theorem when $\beta \in \mathbb{N} \setminus \{0\}$ (see Lemma~3.2 in~\cite{xu2020nonparametric}).

	\begin{theorem}\label{thm:minimax_g}
		Let $(\beta, M, L) \in (0, \infty)^3$ and $h :[0, \infty) \to [0, \infty)$ be non-identically null, increasing and with $h(0) = 0$. Let $\mathcal{W}$ be a regular family of compact sets of $\mathbb{R}^d$. For $g \in \Theta(\beta, M)$, we denote by $\mathcal{L}_{(g, M)}$ the set all laws of stationary point processes of unit intensity satisfying Assumption~\ref{ass_rho_leb}, having pair correlation function $g$ and such that $\max_{m \in \{2, 3, 4\}}|\gamma_{red}^{(m)}|(\mathbb{R}^{d(m-1)}) \leq M.$ Then, 
		\begin{equation}\label{eq:minimax_g}
			\inf_{W \in \mathcal{W}} \inf_{\widehat{g}} \sup_{g \in \Theta(\beta, L)} \sup_{\Phi \sim \mathcal{L}_{(g, M)}} \sup_{x_0 \in \mathbb{R}^d} \mathbb{E}\left[h\left(|W|^{\frac{\beta}{2\beta+d}}|\widehat{g}(x_0; \Phi \cap W) - g(x_0)|\right)\right] > 0,
		\end{equation}
		where $\sup_{g \in \Theta(\beta, L)}$ denotes the supremun over pair correlation functions of regularity $\Theta(\beta, L)$ and $\inf_{\widehat{g}}$ denotes the infimum over all the estimators $\widehat{g}: \mathbb{R}^d \times \text{Conf}(\mathbb{R}^d) \to \mathbb{R}$.
	\end{theorem}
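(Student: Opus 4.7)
The plan is to adapt the proof of Theorem~\ref{thm:minimax_S} to the pair correlation setting, again using Cox processes with intensity field $(1 + \sin(N(x)))_{x \in \mathbb{R}^d}$. The only structural difference is that the perturbation must now be localized in the spatial rather than in the Fourier domain. This is in fact simpler: by translation invariance we may take the target point to be $x_0 = 0$, and the required bump of $g - 1$ around the origin is produced directly by the covariance $c$ of $N$, without the cosine modulation that was needed in Theorem~\ref{thm:minimax_S} to shift the Fourier-side bump to an arbitrary frequency $k_0$.

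Concretely, I would fix once and for all a smooth, symmetric, compactly supported function $\phi \in \Theta(\beta, L_\phi) \cap L^1(\mathbb{R}^d) \cap L^2(\mathbb{R}^d)$ with $\phi(0) > 0$ and non-negative Fourier transform; for instance $\phi = \psi \ast \psi$ with $\psi$ smooth, real, symmetric, and compactly supported, so that $\mathcal{F}[\phi]$ is a non-negative multiple of $(\mathcal{F}[\psi])^2$, which by Bochner's theorem guarantees the positive-definiteness of $c$. The two hypotheses are then the stationary unit-intensity Poisson process $\mathbb{P}_0$ (with $g_0 \equiv 1$) and $\mathbb{P}_1$ the Cox process driven by a centered stationary Gaussian field $N$ of covariance $c(x) = \sigma^2 \phi(x/\rho)$. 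A direct Gaussian computation based on $\sin a \sin b = \tfrac{1}{2}(\cos(a-b) - \cos(a+b))$ and $\mathbb{E}[\cos(N(x) \pm N(y))] = \exp(-(c(0) \mp c(x-y)))$ gives $g_1(x) = 1 + e^{-c(0)} \sinh(c(x))$, hence $|g_1(0) - g_0(0)| = e^{-c(0)} \sinh(c(0)) \geq \tfrac{1}{2} \sigma^2 \phi(0)$ for $\sigma$ small.

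Three ingredients remain to check, each a direct transcription of the corresponding step in the proof of Theorem~\ref{thm:minimax_S}. First, $g_1 \in \Theta(\beta, L)$: since $\phi(\cdot/\rho) \in \Theta(\beta, L_\phi \rho^{-\beta})$ and $z \mapsto e^{-c(0)} \sinh(z)$ is smooth, the Hölder norm of $g_1$ is of order $\sigma^2 \rho^{-\beta}$, so the Hölder constraint is satisfied provided $\sigma^2 \leq C_L \rho^\beta$. Second, the reduced factorial cumulants of orders $2,3,4$ are uniformly bounded by $M$, which follows from Lemma~\ref{lem:mini_bril_4}, whose argument (based on $1 + \sin(N) \simeq 1 + N$ for $\sigma$ small and on the integrability of products of $c$) applies essentially verbatim. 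Third, $\chi^2(\mathbb{P}_1|_W \,\|\, \mathbb{P}_0|_W) \lesssim \sigma^4 \rho^d |W|$ by Lemma~\ref{lem:chi2}. Setting $\sigma^2 \asymp \rho^\beta$ (to saturate the Hölder constraint) and $\rho \asymp |W|^{-1/(2\beta + d)}$ (to keep the $\chi^2$ bounded) then produces a gap of order $|W|^{-\beta/(2\beta + d)}$ between $g_0(0)$ and $g_1(0)$, and the $\chi^2$-divergence version of Theorem~2.2 of~\cite{tsybakov2003introduction} concludes.

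The main subtlety — and the point that I would verify most carefully — is that the scaling regime is reversed relative to the structure factor proof: here $\rho \to 0$ as $|W| \to \infty$, whereas in Theorem~\ref{thm:minimax_S} one has $\rho \to \infty$. I would therefore need to re-examine Lemmas~\ref{lem:mini_bril_4} and~\ref{lem:chi2} to confirm that their bounds remain valid in this regime, which ultimately reduces to checking that the natural integral expressions of the form $\sigma^{2m} \rho^{(m-1)d}$ associated to the $m$-th reduced cumulant of $\sin(N)$ vanish along $\sigma^2 \asymp \rho^\beta$ with $\rho \to 0$; this is immediate since the exponent $(m-1)d + m\beta$ is strictly positive for $m \in \{2,3,4\}$. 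All remaining steps — the construction of $\phi$, the Gaussian computation of $g_1$, the $\chi^2$ bound, and the final reduction via Tsybakov's inequality — are straightforward adaptations of the arguments already developed for Theorem~\ref{thm:minimax_S}.
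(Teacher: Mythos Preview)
Your proposal is correct and follows essentially the same route as the paper: the same Cox construction with covariance $c(x)=\sigma^2 c_0(x/\rho)$, the same formula $g_c = 1 + e^{-c(0)}\sinh(c)$ (the paper's Lemma~\ref{lem:Sc}), the same scaling $\rho \asymp |W|^{-1/(2\beta+d)}$ and $\sigma^2 \asymp \rho^{\beta}$, and the same four checks (H\"older regularity via the factor $\sigma^2\rho^{-\beta}$, gap at $x_0=0$, the $\chi^2$ bound of Lemma~\ref{lem:chi2}, and the cumulant control of Lemma~\ref{lem:mini_bril_4}). You also correctly flag the reversed regime $\rho\to 0$; the paper handles this exactly as you do, simply noting that both $\sigma$ and $\rho$ are smaller than $1$ so the cumulant bounds from Lemma~\ref{lem:mini_bril_4} apply directly.
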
 
	
	\section{Minimax estimator of the structure factor}\label{sec:mt}
	
	In this section, we construct an estimator based on multitapering that achieves the minimax rate of convergence stated in Theorem~\ref{thm:minimax_S}. Specifically, Section \ref{sec:mt_intro} introduces a multitaper estimator. Then, Section \ref{sec:l2_risk} analyses its squared risk. Finally, Section~\ref{sec:nn_asymp} establishes that the $L^2(\mathbb{P})$ minimax bound can be strengthened to a high-probability bound.	As before, we assume for simplicity that the intensity is $\lambda=1$. Appendix \ref{sec:with_intens} extends the results to practical estimator, where the unknown intensity is replaced by the standard estimator $\widehat{\lambda} = |\Phi \cap W|/|W|.$
	
	\subsection{Multitaper estimator: introduction}\label{sec:mt_intro}
	
	The multitaper estimator of the structure factor of a spatial point process is defined as follows.
	
	\begin{definition}\label{def:multi_tap}
		Let $\Phi$ be a stationary point process of $\mathbb{R}^d$,  $W$ be a compact set of $\mathbb{R}^d$ and $I$ be a subset of $\mathbb{N}^d$. Let $k_0 \in \mathbb{R}^d$ be a fixed frequency. We consider a family of orthogonal tapers functions $(f_i)_{i \in I} \in L^2(\mathbb{R}^d)^{|I|}$. The multitaper estimator of the structure factor $S$ of $\Phi$ associated to this family is defined as 
		\begin{equation}\label{eq:def_mult_tapers}
			\widehat{S}(k_0) := \frac1{|I|} \sum_{i \in I} \widehat{S}_i(k_0),
		\end{equation}
		where the single-taper estimator of $S$ is $
		\widehat{S}_i(k_0) := \left|C_i(k_0)\right|^2,$
		with 
		\begin{align}\label{eq:def_Ti_Ci}
			&	C_i(k_0) := T_i(k_0) - \mathbb{E}\left[T_i(k_0)\right],~ T_i(k_0) := \sum_{x \in \Phi \cap W} e^{- \bm{i} k_0.x} f_i(x),~ \mathbb{E}\left[T_i(k_0)\right] = \int_W f_i(x) dx.
		\end{align}
	\end{definition}
	
	\begin{remark}\label{rmk:mt_trunc}
		Unlike previous works on taper-based estimators for spatial point processes~\cite{rajala2023fourier, yang2024fourier, grainger2023spectral}, which consider tapers supported within the observation window $W$ and define $T_i(k_0)$ via sums over all points $x \in \Phi$, we consider here a more general setting where the tapers may have infinite support. However, in the following, we will require strong localization properties within the window to control the truncation error.
	\end{remark}
	
	The standard framework for analyzing the statistical properties of multitaper estimators in spatial statistics is the
	expanding-window regime, where the observation window $W$ grows to fill $\mathbb{R}^d$. In this setting, the tapers are chosen so that
	they become increasingly concentrated at the origin in the Fourier domain, i.e., $\mathcal{F}[f_i] \to \delta_0$ as $W \to \mathbb{R}^d$
	for every $i \in I$. When the number of tapers $|I|$ is kept fixed while $W$ expands, it is well established that multitaper estimators
	are asymptotically unbiased. Furthermore, under suitable mixing conditions, the linear statistics $(T_i(k_0))_{i\in I}$ (defined in
	\eqref{eq:def_Ti_Ci}) become asymptotically Gaussian and pairwise uncorrelated. As a result, one obtains the following convergence in
	distribution, for $k_0 \neq 0$
	(see \cite{brillinger1972spectral, mugglestone1996exploratory, rajala2023fourier, grainger2023spectral} for spatial point processes and~\cite{walden2000unified, karnik2022thomson}, Section~8 of \cite{percival2020spectral} for similar time series results): \begin{align}\label{heur:S_normal} \widehat{S}(k_0) \xrightarrow{d} S(k_0) \frac{\chi^2(2|I|)}{2|I|}, \quad \text{as } W \to \mathbb{R}^d. 
	\end{align}
	
	This limit shows, however, that if $|I|$ remains fixed as $W$ grows, then $\widehat{S}(k_0)$ is not consistent. This key observation,
	emphasized in the works cited above, compels us to let the number of tapers $|I|$ increase with the window size $W$. But this raises theoretical challenges that are critical for the implementation and the choice of the parameters of the multitaper estimators. For instance, increasing $|I|$ reduces variance by averaging, but at the same time it typically increases bias,
	since higher-order tapers have poorer spectral concentration. This naturally leads to a set of fundamental questions:

	\begin{itemize}
		\item How should $|I|$ scale with $W$ in order to preserve asymptotic unbiasedness?
		\item Does the convergence in distribution in \eqref{heur:S_normal} still hold when $|I|$ grows with $W$?
		\item More broadly, how does the rate of convergence depend on properties of the taper family, such as spatial support and spectral concentration?
	\end{itemize}
	
	The answers to the first and third questions are well established in the time series literature, typically under the assumption that the spectral density is twice differentiable (see, e.g.,~\cite{thomson1982spectrum, riedel1995minimum, percival2020spectral}). Recent works on spatial point processes~\cite{rajala2023fourier, grainger2023spectral} have addressed analogous questions, emphasizing the asymptotic interplay between spatial and spectral localization of the tapers. Within our minimax multitaper construction for the structure factor, we obtain a non-asymptotic $L^2(\mathbb{P})$ risk bound (Theorem~\ref{thm:L2_risk}) that encapsulates this trade-off and connects back to these classical results. By contrast, the second question appears to be less thoroughly investigated in the existing time series literature (see, e.g.,~\cite{karnik2022thomson}). In our spatial point process setting, we address this question by establishing a deviation inequality under a (Brillinger) mixing condition, which is well suited to point processes.
	
	\subsection{Multitaper estimators: mean integrated squared risk}\label{sec:l2_risk}
	
	This section establishes an upper bound on the pointwise $L^2(\mathbb{P})$-risk of the multitaper estimator of the structure factor, as stated in Theorem~\ref{thm:L2_risk}. The result holds for any orthonormal family of tapers in $ L^2(\mathbb{R}^d) $. It is further specialized in Corollary~\ref{cor:L2_risk_herm} to the case of Hermite tapers, yielding the minimax rate of convergence. Theorem~\ref{thm:L2_risk} and Corollary~\ref{cor:L2_risk_herm} are proved in Appendix~\ref{sec:proof_risk}, where the variance and the bias of the multitaper estimators are analyzed separately in Lemma~\ref{lem:var_S_hat} and Lemma~\ref{lem:bias_s_hat}.
	
	\begin{theorem}\label{thm:L2_risk}
		Let $\Phi$ be a stationary point process satisfying Assumption \ref{ass_rho_leb} with $\lambda = 1$. Let $W$ be a subset of $\mathbb{R}^d$, $I$ be a discrete subset of $\mathbb{N}^d$. We consider a family of orthonormal functions $(f_i)_{i \in I}$ of $L^2(\mathbb{R}^d)$. Assume that $S \in \Theta(\beta, L)$ with $L \in (0, \infty)$ and $\beta \in (0, 2]$ (see Definition~\ref{def:Hölder}). Then, 
		\begin{align}\label{eq:L2_risk}
			\sup_{k_0 \in \mathbb{R}^d} \mathbb{E}\left[\left|\widehat{S}(k_0) - S(k_0)\right|^2\right]^{\frac12} \leq   \frac{\sqrt{2}\|S\|_{\infty}}{|I|^{\frac12}} + \mathcal{F}\text{-}loc + B_4 + 6 \|S\|_{\infty} W\text{-}loc,
		\end{align} 
		where 
		\begin{align*}
			&\mathcal{F}\text{-}loc = \sqrt{d}L \frac1{|I|}\sum_{i \in I} \|f_i\|_{\dot{H}^{\beta/2}}^2,\quad W\text{-}loc := \left(\frac1{|I|}\sum_{i \in I} \|f_{i} \mathds{1}_{\mathbb{R}^d \setminus W}\|_2^2\right)^{\frac12},
			\\& B_4 := \left(\frac1{|I|} \sum_{i \in I} \|f_i\|_4^2\right)\left(1 + 7 |\gamma_{\text{red}}^{(2)}|(\mathbb{R}^d) + 6 |\gamma_{\text{red}}^{(3)}|(\mathbb{R}^{2d}) +|\gamma_{\text{red}}^{(4)}|(\mathbb{R}^{3d})\right)^{\frac12}.
		\end{align*}
	\end{theorem}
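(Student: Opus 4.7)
The plan is a classical bias--variance decomposition $\mathbb{E}[|\widehat{S}(k_0)-S(k_0)|^2] = \operatorname{Var}(\widehat{S}(k_0)) + |\mathbb{E}[\widehat{S}(k_0)] - S(k_0)|^2$, followed by bounding each piece separately and combining them via the triangle inequality in $L^2(\mathbb{P})$. Writing $T_i(k_0) = \sum_{x\in\Phi} g_i(x)$ with $g_i(x) := e^{-\bm{i} k_0 \cdot x} f_i(x)\mathds{1}_W(x)$, the Fourier shift rule gives $\mathcal{F}[g_i](k) = \mathcal{F}[f_i\mathds{1}_W](k-k_0)$, so Campbell's identity \eqref{e.prop_campbell} (with $\lambda = 1$) together with the change of variable $u = k-k_0$ yields
\[ \mathbb{E}\bigl[C_i(k_0)\overline{C_j(k_0)}\bigr] \;=\; \int_{\mathbb{R}^d} \mathcal{F}[f_i\mathds{1}_W](u)\,\overline{\mathcal{F}[f_j\mathds{1}_W](u)}\, S(k_0+u)\, du. \]
The diagonal terms $j=i$ feed the bias via $\mathbb{E}[\widehat{S}(k_0)] = |I|^{-1}\sum_i \mathbb{E}[|C_i(k_0)|^2]$, while the matrix $M = (\mathbb{E}[C_i\overline{C_j}])_{i,j\in I}$ feeds the variance.

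For the bias, I would write $\mathcal{F}[f_i\mathds{1}_W] = \mathcal{F}[f_i] - \mathcal{F}[f_i\mathds{1}_{\mathbb{R}^d\setminus W}]$ and expand the square $|\mathcal{F}[f_i\mathds{1}_W]|^2$ into the main piece $|\mathcal{F}[f_i]|^2$ plus two cross terms and one pure-truncation square involving $\mathcal{F}[f_i\mathds{1}_{\mathbb{R}^d\setminus W}]$. Orthonormality of $(f_i)$ gives $\|f_i\mathds{1}_W\|_2^2 = 1 - \|f_i\mathds{1}_{\mathbb{R}^d\setminus W}\|_2^2$, and applying Plancherel, Cauchy--Schwarz, and the bound $|S|\leq \|S\|_\infty$ absorbs all truncation-related contributions (including the normalization term $S(k_0)\cdot(\|f_i\mathds{1}_W\|_2^2 - 1)$) into $\|S\|_\infty$ times $W\text{-}loc^2$, with total constant $6$; taking the square root produces the summand $6\|S\|_\infty W\text{-}loc$. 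The remaining main piece $\int|\mathcal{F}[f_i](u)|^2(S(k_0+u)-S(k_0))\,du$ is then bounded by Hölder regularity: for $\beta\in(0,1]$, $|S(k_0+u)-S(k_0)|\leq L|u|^\beta$ gives directly $L\|f_i\|_{\dot{H}^{\beta/2}}^2$, while for $\beta\in(1,2]$ a symmetrization combined with a Taylor expansion using the $(\beta-1)$-Hölder bound on partial derivatives of $S$ yields the same form with the isotropic constant $\sqrt{d}$. Averaging over $i\in I$ produces the $\mathcal{F}\text{-}loc$ summand.

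For the variance, expanding $\operatorname{Var}(\widehat{S}(k_0)) = |I|^{-2}\sum_{i,j}\operatorname{Cov}(|C_i|^2, |C_j|^2)$ and applying the fourth-order moment--cumulant identity to the centered complex variables $(C_i,\overline{C_i},C_j,\overline{C_j})$ yields
\[ \operatorname{Cov}(|C_i|^2,|C_j|^2) \;=\; \kappa(C_i,\overline{C_i},C_j,\overline{C_j}) + |\mathbb{E}[C_i C_j]|^2 + |\mathbb{E}[C_i\overline{C_j}]|^2. \]
The Gram-matrix piece is the easy half. Since $S\geq 0$, $M$ is positive semidefinite and $\sum_{i,j}|M_{ij}|^2 = \operatorname{tr}(M^2) \leq \|M\|_{\text{op}} \operatorname{tr}(M)$. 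The quadratic form $\langle c, Mc\rangle = \int |\sum_i c_i \mathcal{F}[f_i\mathds{1}_W](u)|^2 S(k_0+u)\, du$, bounded via $\|S\|_\infty$, Plancherel, and the orthonormality of $(f_i)$ (applied to $\sum c_i f_i$ after noting truncation to $W$ can only decrease the $L^2$ norm), yields $\|M\|_{\text{op}}\leq \|S\|_\infty$, while $\operatorname{tr}(M) = \sum_i \mathbb{E}[|C_i|^2] \leq |I|\|S\|_\infty$. Hence $|I|^{-2}\sum_{i,j}|M_{ij}|^2 \leq \|S\|_\infty^2/|I|$; an analogous bound for $(\mathbb{E}[C_iC_j])$ combines with this to give the leading $\sqrt{2}\|S\|_\infty/|I|^{1/2}$ term after the square root.

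The main obstacle is the fourth-cumulant contribution $|I|^{-2}\sum_{i,j}\kappa(C_i,\overline{C_i},C_j,\overline{C_j})$, which relies on the cumulant formulas for linear statistics of point processes from Appendix~\ref{sec:lem_lin_stats}. The joint cumulant expands as a sum over partitions $\sigma$ of $\{1,2,3,4\}$, each block producing an integral of the corresponding grouped product of $(g_i,\overline{g_i},g_j,\overline{g_j})$ against the factorial cumulant measure $\gamma^{(|\sigma|)}$, which in the stationary case reduces to an integral against $\gamma_{\text{red}}^{(|\sigma|)}$ (the $|\sigma|=1$ block contributing simply $\lambda$ times a Lebesgue integral of the product). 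Since exactly four taper-modulated functions enter each integrand, Hölder's inequality with conjugate exponents $(4,4,4,4)$ bounds each contribution by $\|f_i\|_4^2\|f_j\|_4^2\cdot|\gamma_{\text{red}}^{(|\sigma|)}|(\mathbb{R}^{d(|\sigma|-1)})$, uniformly in $k_0$ thanks to $|e^{-\bm{i}k_0\cdot x}|=1$. A Cauchy--Schwarz over $(i,j)$ collapses the double sum into $(|I|^{-1}\sum_i \|f_i\|_4^2)^2$, and the coefficients $1$, $7$, $6$, $1$ in $B_4$ (multiplying $\lambda = 1$, $|\gamma_{\text{red}}^{(2)}|$, $|\gamma_{\text{red}}^{(3)}|$, $|\gamma_{\text{red}}^{(4)}|$) are precisely the Stirling numbers of the second kind $S(4,k)$ counting partitions of $\{1,2,3,4\}$ into $k$ blocks. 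The delicate step is this partition bookkeeping: verifying that the $(4,4,4,4)$ Hölder exponents are simultaneously available for every block structure (so that only $\|f_i\|_4$ norms appear) and that diagonal coincidences from the simple point process combine correctly with the reduced cumulant measures. Once this is in place, triangle inequality in $L^2(\mathbb{P})$ over the four contributions delivers \eqref{eq:L2_risk}.
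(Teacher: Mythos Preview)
Your strategy matches the paper's exactly: bias--variance decomposition, then Lemma~\ref{lem:bias_s_hat} (truncation error plus H\"older/Taylor on $|\mathcal{F}[f_i]|^2$) for the bias and Lemma~\ref{lem:var_S_hat} (moment--cumulant identity, H\"older on $\kappa_4$ via Lemma~\ref{lem:bril_thin}, and a Bessel-type bound on the second-order pieces) for the variance, followed by $\sqrt{a+b^2}\le\sqrt{a}+b$. The partition count producing the coefficients $1,7,6,1$ in $B_4$ is exactly the paper's route, and your Stirling-number identification is correct.

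One small gap: your trace inequality $\operatorname{tr}(M^2)\le\|M\|_{\mathrm{op}}\operatorname{tr}(M)$ works because $M_{ij}=\mathbb{E}[C_i\overline{C_j}]$ is Hermitian PSD, but the ``analogous bound'' you invoke for $N_{ij}=\mathbb{E}[C_iC_j]$ is not immediate: $N$ is complex \emph{symmetric}, not Hermitian, so $\operatorname{tr}(N)$ is not even real and the argument does not transfer verbatim. The paper sidesteps this by handling both matrices uniformly with a Bessel-inequality argument (Lemma~\ref{lem:bessel_k2}) after first stripping the truncation $\mathds{1}_W$; that stripping is exactly where the extra $\sqrt{2}\,\|S\|_\infty\,W\text{-}loc$ contribution to the variance appears, and together with the $4\,\|S\|_\infty\,W\text{-}loc$ from the bias this gives the constant $\sqrt{2}+4\le 6$. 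Your accounting places all of the~$6$ in the bias, which is not how the paper's constant arises. If you want to preserve your cleaner operator-norm route, a simple fix is to embed $M$ and $N$ as blocks of the $2|I|\times 2|I|$ Hermitian PSD covariance matrix of the real-linear family $(C_i,\overline{C_i})_{i\in I}$, whose operator norm is at most $2\|S\|_\infty$ (the functions $e^{-\bm{i}k_0\cdot}f_i\mathds{1}_W$ and $e^{\bm{i}k_0\cdot}\overline{f_i}\mathds{1}_W$ need not be jointly orthonormal) and trace at most $2|I|\,\|S\|_\infty$; this yields $\sum_{i,j}(|M_{ij}|^2+|N_{ij}|^2)\le 2|I|\,\|S\|_\infty^2$ directly, with no $W\text{-}loc$ correction in the variance at all.
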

	
	We now discuss the quantities involved in the upper bound~\eqref{eq:L2_risk}. The term $\|S\|_{\infty}/|I|^{1/2}$ follows from the orthonormality of the taper family $(f_i)_{i \in I}$, reflecting the asymptotic pairwise decorrelation of the linear statistics $(T_i(k_0))_{i \in I}$, defined in \eqref{eq:def_Ti_Ci}, as $W \to \mathbb{R}^d$ and $\mathcal{F}[f_i] \to \delta_0$ for all $i \in I$, at a fixed number of tapers $|I|$.

	The remaining terms in~\eqref{eq:L2_risk}, namely \(\mathcal{F}\text{-}loc\), \(W\text{-}loc\), and \(B_4\), capture non-asymptotic effects. These quantities reflect, respectively, the lack of spectral concentration of the tapers around the origin, their imperfect spatial localization within the observation window \(W\), and higher-order cumulants' contributions. We analyze them below.
	
	\begin{enumerate} 
		\item \textit{$W\text{-}loc$ : border effects.} The term $W\text{-}loc$ arises because only the points of $\Phi$ within $W$ are observed. Consequently, when the taper functions $(f_i)_{i \in I}$ have too much mass outside of $W$, border effects occur. Accordingly, to ensure negligible border effects, one should consider tapers with good spatial localization inside $W$.
		
		\item \textit{$B_4$: Fourth order points correlations.} The term $B_4$ keeps track of the higher order correlations present in the underlying point process. This term is expected as the previous theorem deals with a squared risk of a second order statistics of stationary point processes. Note that this term is finite for Brillinger-mixing point processes (see Definition~\ref{def:brill_mix}), including most DPPs~\cite{heinrich2016strong, biscio2016brillinger} and Cox processes \cite{heinrich1985normal, zhu2022minimum, prokevsova2013asymptotic, jovanovic2015cumulants}. In addition, we observe a multiplicative term involving the average of the $L^4(\mathbb{R}^d)$-norm of the tapers. This term is related to the spatial localization of the tapers functions: for well-localized taper functions, this average decreases to zero as the number of tapers $|I|$ increases, providing a second argument in favor of well-localized tapers.
		
		\item \textit{$\mathcal{F}\text{-}loc$: Frequency-localization bias.} The term $\mathcal{F}\text{-}loc$ represents the bias arising from the localization of the taper functions in the Fourier domain. Since this term typically diverges as $|I| \to \infty$ at fixed $W$, it highlights the non-asymptotic trade-off between using many tapers, which reduces variance but increases bias, and using few tapers, which decreases bias but results in larger variance. In addition, its expression naturally motivates the use of an orthogonal family of tapers with small Sobolev norms, and hence with well-localized Fourier transforms. In this way, we recover, within a non-asymptotic framework, a classical requirement in taper theory~\cite{riedel1995minimum, percival2020spectral, karnik2022thomson, rajala2023fourier}.
	\end{enumerate}
	
	The previous discussion motivates the consideration of taper functions with good localization property in both the spatial and the Fourier domain. Accordingly, a balance must be found between good spatial localization, to reduce the bias $W\text{-}loc$ caused by border effects and the variance due to higher-order point correlations $B_4$, and good localization in the Fourier domain, to reduce the bias $\mathcal{F}\text{-}loc$. Common choices of families satisfying both requirements are the Slepian functions \cite{slepian1961prolate, landau1961prolate, simons2011spatiospectral}, sinusoidal tapers \cite{riedel1995minimum, walden2000unified} and the Hermite tapers \cite{bayram1996multiple, flandrin1988maximum, parks1990time, daubechies2002time}. In this work, we focus on the latter family.
	
	\begin{definition}\label{def:herm}
		For $i  = (i_1, \dots, i_d) \in \mathbb{N}^d$, the Hermite taper $\psi_i$ is defined as
		\begin{equation}\label{eq_herm}
			\forall x=(x_1,\dots,x_d)\in\mathbb R^d,~\psi_i(x) = e^{-\frac12|x|^2} \prod_{l = 1}^d H_{i_l}(x_l), 
		\end{equation}
		where  for $n \in \mathbb{N}$, $y \in \mathbb{R}$, $\displaystyle H_n(y) = \frac{(-1)^n}{(2^{n} n! \sqrt{\pi})^{\frac12}} e^{y^2} \dfrac{d^n}{dy^n} e^{-y^2}$ are the Hermite polynomials.
	\end{definition}

	The next corollary simplifies the terms appearing in Theorem \ref{thm:L2_risk} when considering scaled Hermite tapers. It states that, under appropriate choices of the parameters, the corresponding multitaper estimator is optimal regarding the minimax rate of Theorem~\ref{thm:minimax_S}, for structure factor of Hölder-regularity $\beta \leq 2$ and for square observation windows.
	
	\begin{corollary}\label{cor:L2_risk_herm}
		Let $\Phi$ be a stationary point process satisfying Assumption \ref{ass_rho_leb} with $\lambda = 1$. Assume that $S \in \Theta(\beta, L)$ with $L \in (0, \infty)$ and $\beta \in (0, 2]$ (see Definition~\ref{def:Hölder}). Let $R > 0$, $W = [-R, R]^d$ and $I = \{i \in \mathbb{N}^d|~|i|_{\infty} < i_{\max}\}$ with $i_{\max} \geq 1$. We consider, for all $i \in I$, $f_i = r^{-\frac{d}2}\psi_i(\cdot/r)$, where $\psi_i$ is the $i$-th Hermite function introduced in Definition~\ref{def:herm}~and 
		\begin{equation}\label{cond:risk_herm_rd}
			r = R/\sqrt{2 i_{\max}+ i_{\max}^{1/3+\theta}}
		\end{equation}
		where $\theta \in (0, 2/3)$. Suppose that
		\begin{equation}\label{cond:risk_herm_I}
			|I| \left(|W|^{2\beta/(2\beta + d)} \left(\frac{\|S\|_{\infty}}{L}\right)^{\frac{d}{2\beta+d}}\right)^{-1} \in (c_2, c_3),
		\end{equation}
		for some constants $0 < c_2 \leq c_3 < \infty$. Then, we have 
		\begin{align*}
			\sup_{k_0\in \mathbb{R}^d}\mathbb{E}\left[\left|\widehat{S}(k_0) - S(k_0)\right|^2\right]^{\frac12} \leq K  \left(\frac{(L^d \|S\|_{\infty}^{\beta})^{\frac1{2\beta +d}}}{|W|^{\frac{\beta}{2\beta +d}}} +  \sqrt{\frac{|\gamma_{\text{red}}^{4}|\log(|W|)^{d}}{|W|}} + e^{-c |W|^{\alpha}} \right),
		\end{align*}
		where $
		|\gamma_{\text{red}}^{4}| := 1 + 7 |\gamma_{\text{red}}^{(2)}|(\mathbb{R}^d) + 6 |\gamma_{\text{red}}^{(3)}|(\mathbb{R}^{2d}) +|\gamma_{\text{red}}^{(4)}|(\mathbb{R}^{3d})$, $\alpha = 3\beta\theta/(d(2\beta+d))$ and $0 < c, K < \infty$ are constants that do not depend on $W$, $r$ and $I$.
	\end{corollary}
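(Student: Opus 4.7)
The overall strategy is to apply Theorem~\ref{thm:L2_risk} and bound each of the four terms on the right-hand side of~\eqref{eq:L2_risk} separately, using the dilation identity $f_i = r^{-d/2}\psi_i(\cdot/r)$, the balance condition~\eqref{cond:risk_herm_I}, and three specific estimates on the (unscaled) Hermite functions, proved in Appendix~\ref{sec:herm}: (i) a homogeneous Sobolev bound $\|\psi_i\|_{\dot{H}^{\beta/2}}^2 \leq C(d)(|i|_1+1)^{\beta/2}$ valid for $\beta \in (0, 2]$, which follows by interpolating between $L^2$ and $\dot{H}^1$ using the harmonic-oscillator identity $\|\nabla \psi_i\|_2^2 \leq 2|i|_1+d$; (ii) an $L^4$ bound of the form $\|\psi_i\|_4^2 \lesssim \prod_{l=1}^d (1+i_l)^{-1/4}$ (possibly up to polylogarithmic factors) coming from classical $L^p$-estimates on univariate Hermite functions; and (iii) a Plancherel--Rotach tail estimate showing that, uniformly in $i \in I = \{0,\dots,i_{\max}-1\}^d$,
\[
\int_{\{|u|_{\infty} > \sqrt{2 i_{\max} + i_{\max}^{1/3+\theta}}\}} |\psi_i(u)|^2 \, du \leq C \exp\bigl(-c\, i_{\max}^{3\theta/2}\bigr).
\]

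Given these inputs, the four terms of~\eqref{eq:L2_risk} are handled as follows. The variance contribution $\sqrt{2}\|S\|_{\infty}/|I|^{1/2}$ is directly of order $|W|^{-\beta/(2\beta+d)}$ by substituting~\eqref{cond:risk_herm_I}. For $\mathcal{F}\text{-}loc$, the Fourier-dilation identity gives $\|f_i\|_{\dot{H}^{\beta/2}}^2 = r^{-\beta}\|\psi_i\|_{\dot{H}^{\beta/2}}^2$; averaging (i) over $i \in I$ and using $|i|_1 \leq d\, i_{\max}$ yields $\mathcal{F}\text{-}loc \leq C L r^{-\beta} i_{\max}^{\beta/2}$, which combined with $r^{-\beta} \sim R^{-\beta} i_{\max}^{\beta/2}$ and~\eqref{cond:risk_herm_I} again produces the main nonparametric rate. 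For $B_4$, the dilation identity $\|f_i\|_4^2 = r^{-d/2}\|\psi_i\|_4^2$ together with the key cancellation $|I|^{-1}\sum_{i\in I}\prod_l (1+i_l)^{-1/4} \lesssim i_{\max}^{-d/4}$ absorbs the growth $r^{-d/2}\sim i_{\max}^{d/4}|W|^{-1/2}$ and leaves exactly $\sqrt{|\gamma_{\text{red}}^4|\log(|W|)^d/|W|}$. Finally, the choice of $r$ arranges that $\{x/r: x \in W\}$ is precisely the cube $[-\sqrt{2i_{\max}+i_{\max}^{1/3+\theta}}, \sqrt{2i_{\max}+i_{\max}^{1/3+\theta}}]^d$, so $W\text{-}loc$ reduces to the tail integral handled by~(iii); substituting $i_{\max} \sim |W|^{2\beta/(d(2\beta+d))}$ into $\exp(-c\, i_{\max}^{3\theta/2})$ then yields $\exp(-c|W|^{\alpha})$ with $\alpha = 3\beta\theta/(d(2\beta+d))$.

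The main technical obstacle is the sharp Hermite tail estimate~(iii) with the precise exponent $3\theta/2$. This requires an asymptotic analysis of $\psi_{i_{\max}-1}$ beyond its classical turning point at $\sqrt{2(i_{\max}-1)+1}$, where the Airy-regime expansion of the eigenfunction produces a Gaussian-type decay at rate $\exp(-c(|u|^2-2i_{\max})^{3/2}/\sqrt{i_{\max}})$; plugging $|u|^2 = 2i_{\max}+i_{\max}^{1/3+\theta}$ yields $\exp(-c i_{\max}^{3\theta/2})$ and explains the specific $1/3+\theta$ scaling built into the definition~\eqref{cond:risk_herm_rd} of $r$. A secondary difficulty is the averaged $L^4$ estimate (ii): it exploits that each univariate Hermite function $h_n$ is essentially uniform on its classical support $[-\sqrt{2n+1},\sqrt{2n+1}]$ with amplitude $(2n+1)^{-1/4}$, so that $\|h_n\|_4^4$ actually decays; averaging over the full grid $I$ then produces the crucial factor $i_{\max}^{-d/4}$, without which one would only obtain the loose $|W|^{-(\beta+d)/(2(2\beta+d))}$ rate for $B_4$ instead of $|W|^{-1/2}$. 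Once these two Hermite-specific inputs are in place, the remaining arguments amount to bookkeeping under the balance~\eqref{cond:risk_herm_I}.
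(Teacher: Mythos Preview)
Your proposal is correct and follows essentially the same approach as the paper: apply Theorem~\ref{thm:L2_risk} and bound the four terms using the three Hermite-specific estimates from Appendix~\ref{sec:herm} (Lemmas~\ref{lem:tail_herm}, \ref{lem:norm_H12_herm}, and~\ref{lem:L4_herm}), then substitute the balance condition~\eqref{cond:risk_herm_I}. Your explanations of \emph{why} those Hermite estimates hold (harmonic-oscillator identity for the Sobolev bound, Airy-regime/Plancherel--Rotach asymptotics for the tail, uniform-amplitude heuristic for the $L^4$ average) go somewhat beyond what the paper spells out, but the logical structure and the specific bounds you invoke match the paper's proof exactly.
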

	
	In the previous result, condition~\eqref{cond:risk_herm_rd} ensures exponentially negligible border effects $W\text{-}loc$ by localizing all the Hermite tapers $(\psi_i(\cdot/r))_{i \in I}$ in the observation window $W$. Then, condition~\eqref{cond:risk_herm_I}, concerning the number of tapers, balances the frequency bias term $\mathcal{F}\text{-}loc$ of order $L (|I|/|W|)^{\beta/d}$, and the asymptotic variance term $\|S\|_{\infty}|I|^{-1/2}$ (see equation \eqref{eq:risk_l2_almost_final}). It leads to the dominant term of order $(L^d \|S\|_{\infty}^{\beta})^{1/(2\beta +d)}/|W|^{\beta/(2\beta +d)}$. The term accounting for higher-order correlations, namely $|\gamma_{\mathrm{red}}^{4}|^{1/2} \log(|W|)^{d/2}/|W|^{1/2}$, vanishes at the parametric rate (up to a logarithmic factor). 
	
	\subsection{Multitaper estimators: non asymptotic fluctuations}\label{sec:nn_asymp}

	In this section, we establish in Theorem~\ref{thm:conc} a high probability version of Theorem~\ref{thm:L2_risk}. Then, in Corollary~\ref{cor:conc_herm}, we specify this bound for Hermite tapers. 
	
	The proof of Theorem~\ref{thm:conc}, deferred to Appendix~\ref{sec:proof_conc}, relies on bounding the cumulants of the multitaper estimator $\widehat{S}(k_0)$ and on showing that the orthogonality of the tapers, which at first sight affects only second-order moments, in fact propagates to higher-order cumulants. Then, the deviation inequality follows from the method of cumulants~\cite{bentkus1980exponential, saulis2012limit, doring2022method}. Accordingly, the results in this section rely on the following strong Brillinger mixing condition.
	
	\begin{assumption}\label{def:brill_mix}
		A simple stationary point process $\Phi$ is said is $(c, \gamma,q)-$Brillinger-mixing if there exist $c \geq 1, \gamma \geq 0$ and $q \geq 1$, such that for all $m \geq 2$ $$|\gamma_{\text{red}}^{(m)}|(\mathbb{R}^{d(m-1)}) \leq c ((m-1)!)^{1+\gamma} q^{m-1}.$$
	\end{assumption}
	Assumption~\ref{def:brill_mix} is standard in spatial statistic and allows for a quantitative analysis of the fluctuations of linear statistics of point processes; see, e.g., \cite{brillinger1972spectral, brillinger1994uses, heinrich1985normal, heinrich1988asymptotic, heinrich2016strong, khabou2024gaussian}. When $\gamma = 0$, it can be seen as a sub-exponential condition (see, e.g., \cite{vershynin2018high}, Chapter 2). Note that, for a stationary $\alpha$-determinantal point process with stationary kernel $K \in L^1(\mathbb{R}^d)$, previous assumption holds with $\gamma = 0$, $c = 1$ and $q = \|K\|_1$ (refer to Theorem 2 of~\cite{heinrich2016strong}). Assumption~\ref{def:brill_mix} is well-suited to our setting, since the multitaper estimator is built from linear statistics.

	The following theorem provides a high-probability bound on the fluctuations of the multitaper estimator. It highlights three distinct concentration regimes, which are detailed in the discussion following the statement.
	
	\begin{theorem}\label{thm:conc}
		Let $I$ be a discrete subset of $\mathbb{N}^d$, $W$ be a compact set of $\mathbb{R}^d$ and $(f_i)_{i \in I}$ be a family of orthonormal functions of $L^2(\mathbb{R}^d)$ that are in $L^1(\mathbb{R}^d) \cap L^{\infty}(\mathbb{R}^d)$. Assume that there exist $c_f \in (1, \infty)$, $\eta_1, \eta_2 \geq 0$ and $\rho > 0$ such that 
		\begin{align}\label{ass:cesaro_f}
			\forall s\geq 4,~\left(\frac1{|I|} \sum_{i \in I} \|f_i\|_s^2 \right)^{\frac{s}2} \leq \frac{c_f \log(|I|^{\frac1d})}{\rho^{d(s/2-1)}|I|^{s \eta_1+ \eta_2}},
		\end{align}
		\begin{align}\label{ass:cesaro_f3}
			\forall s \geq 3,~\left(\frac1{|I|} \sum_{i \in I} \|f_i\|_s^2 \right)^{\frac{s}2} \leq \frac{c_f}{\rho^{d(s/2-1)}|I|^{s \eta_1}}.
		\end{align}
		Let $\Phi$ be a stationary point process satisfying Assumption \ref{ass_rho_leb} with $\lambda= 1$. Moreover, suppose that $\Phi$ is $(c, \gamma,q)-$Brillinger-mixing (see Assumption \ref{def:brill_mix}) and that $S \in \Theta(\beta, L)$ with $L \in (0, \infty)$ and $\beta \in (0, 2]$. If $\beta > 1$, assume for all $(i, i') \in I^2$, $\mathcal{F}[f_i](k) \mathcal{F}[f_{i'}](k) =  \mathcal{F}[f_i](-k) \mathcal{F}[f_{i'}](-k)$ for all $k \in \mathbb{R}^d$. Suppose that
		\begin{align}\label{cond:rI}
			c \rho^{d} \geq (1+q)~\frac{|I|^{\eta_2}}{\log(|I|^{\frac{1}d}) } \vee 1.
		\end{align}
		Let $k_0 \in \mathbb{R}^d$. We denote
		\begin{align}
			& \Sigma :=\frac{S(k_0)}{|I|} + \sqrt{d} L\left(\frac1{|I|^2}\sum_{i \in I} \|f_i \|_{\dot{H}^{\beta}}^2\right)^{\frac12} + 4 \|S\|_{\infty} \left(\frac1{|I|}\sum_{i \in I} \|f_i \mathds{1}_{\mathbb{R}^d \setminus W}\|_2^2\right)^{\frac12} \label{eq:sigma}\\& \qquad \qquad \qquad\qquad\qquad \quad \qquad+ \frac{\|S\|_{\infty}}{|I|^{\frac12}} \wedge 2 \|S\|_{\infty} \left(\frac1{|I|}\sum_{i \in I} \|\mathcal{F}[f_i] \mathds{1}_{|\cdot| \geq |k_0|}\|_2^2\right)^{\frac12}, \nonumber
			\\& \Lambda:= \left\lceil c_f \vee \frac{6c \rho^{\frac{3d}2}|I|^{3\eta_1}}{1+q}   \left(\sup_{i \in I} \|f_i \mathds{1}_{\mathbb{R}^d \setminus W}\|_3 \sup_{i \in I} \|f_i\|_3^2 +  \sup_{i \in I} \|\mathcal{F}[f_i] \mathds{1}_{|\cdot| \geq |k_0|/3}\|_1\right) \right\rceil, \label{eq:Lambda}
			\\& \text{B}_{\infty} := \frac{ \Lambda^2 c^{1/2} (1+q)^{3/2} \log(|I|^{\frac1d})^{\frac12}}{\rho^{d/2}|I|^{2\eta_1+\eta_2/2}} \wedge \frac{ \lceil c_f \rceil^2 c^{2/3} (1+q)^{4/3} }{\rho^{d/3}|I|^{2\eta_1}}. \nonumber 
		\end{align}
		Then, for all $x \geq 0$, we have
		\begin{align}\label{eq:conc}
			\mathbb{P}[|\widehat{S}(k_0) - \mathbb{E}[\widehat{S}(k_0)]| \geq x] \leq C\max(a_1, a_2, a_3),
		\end{align}
		where $C < \infty$ is a numerical constant and
		\begin{align*}
			& a_1 := \exp\left(-\frac1{12}\frac{x^2 |I|}{S(k_0)^2 + x S(k_0)}\right),
			\quad a_2 := \exp\left(- \frac{x^2}{13^{\gamma}39 \Sigma^2 + 1.5 x^2 (x \Sigma)^{-1/(1+\gamma)}}\right),
			\\&a_3:= \exp\left(- \frac{x^2}{13^{\gamma}(71 B_{\infty})^2 + 1.5 x^2(2x B_{\infty})^{-1/(2+\gamma)}}\right).
		\end{align*}
		
	\end{theorem}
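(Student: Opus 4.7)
The plan is the two-stage scheme announced just before the statement. First, I would derive sharp bounds on the cumulants $\kappa_m(\widehat S(k_0))$ for every integer $m\geq 2$; second, I would feed these into a Bentkus--Saulis--Statulevi\v{c}ius–type cumulant deviation inequality (e.g.\ \cite{saulis2012limit, doring2022method}) to obtain~\eqref{eq:conc}. The three exponents $a_1$, $a_2$, $a_3$ will come from three distinct regimes in the cumulant bound and be combined at the end by a union bound.

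For the cumulant side, I would start from
\begin{equation*}
\widehat S(k_0) - \mathbb E[\widehat S(k_0)] = \frac1{|I|}\sum_{i\in I}\bigl(C_i(k_0)\overline{C_i(k_0)} - \mathbb E[C_i(k_0)\overline{C_i(k_0)}]\bigr)
\end{equation*}
and expand $\kappa_m(\widehat S(k_0))$ through the Leonov--Shiryaev formula into a sum over connected partitions of the $2m$ ``half-edges'' $(C_{i_j}(k_0), \overline{C_{i_j}(k_0)})_{j=1}^m$. Each block $b$ of such a partition is a joint cumulant of linear statistics of $\Phi$; by Campbell's formula and~\eqref{eq:def_gamma_red}, it is an integral of $\prod_{j\in b} e^{\pm \bm i k_0\cdot x_j} f_{i_j}(x_j)$ against the reduced factorial cumulant $\gamma_{\text{red}}^{(|b|)}$, whose total variation is controlled by Assumption~\ref{def:brill_mix}. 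The heart of the proof is then to isolate, within this expansion, the ``orthogonal Poisson skeleton'': pair blocks produce joint cumulants $\kappa(C_{i_j}(k_0),\overline{C_{i_{j'}}(k_0)})$ whose leading piece equals $\langle f_{i_j}, f_{i_{j'}}\rangle S(k_0) = \delta_{i_j i_{j'}} S(k_0)$ by orthonormality and Plancherel applied to~\eqref{e.prop_campbell}. Keeping only these leading pieces collapses the sum over $(i_1,\dots,i_m)$ to perfect matchings and reproduces exactly the cumulants of $S(k_0)\,\chi^2(2|I|)/(2|I|)$, giving the $a_1$ regime. The residuals split into (a) blocks of size $\geq 3$, absorbed into $B_\infty$ via~\eqref{ass:cesaro_f}--\eqref{ass:cesaro_f3} and~\eqref{cond:rI}, giving $a_3$; and (b) pair-block corrections where $\kappa_2$ deviates from $\delta_{i_j i_{j'}} S(k_0)$, controlled by $|S(k)-S(k_0)|\leq L|k-k_0|^\beta$, a Cauchy--Schwarz estimate (which is what replaces the $\dot H^{\beta/2}$-norm of Theorem~\ref{thm:L2_risk} by the $\dot H^\beta$-norm in $\Sigma$), a Fourier tail at $|k|\geq|k_0|$, and a window truncation, giving $a_2$. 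When $\beta>1$ the symmetry hypothesis on $\mathcal F[f_i]\mathcal F[f_{i'}]$ kills the linear Taylor term $\nabla S(k_0)\cdot(k-k_0)$ that would otherwise exceed the $\dot H^\beta$ control.

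The outcome of the two previous steps is a cumulant bound of the generic shape
\begin{equation*}
|\kappa_m(\widehat S(k_0))| \leq (m!)^{1+\gamma}\bigl(A_1 B_1^{m-2} + A_2 B_2^{m-2} + A_3 B_3^{m-2}\bigr),
\end{equation*}
with $(A_j,B_j)$ matching the three regimes of~\eqref{eq:conc}: $(A_1, B_1)$ proportional to $(S(k_0)^2/|I|, S(k_0)/|I|)$, $(A_2, B_2)$ proportional to $(\Sigma^2, \Sigma)$, and $(A_3, B_3)$ proportional to $(B_\infty^2, B_\infty)$. Applying the standard Saulis--Statulevi\v{c}ius cumulant deviation inequality separately to each of the three sequences produces one exponential term each, with the $(x\Sigma)^{-1/(1+\gamma)}$ and $(2xB_\infty)^{-1/(2+\gamma)}$ factors originating from the $\gamma$-dependent generalization of Bernstein's inequality, and the unusual $-1/(2+\gamma)$ exponent in $a_3$ reflecting the extra $m$-dependence hidden in $\Lambda$ and $B_\infty$ when the large-block contribution is estimated through an extra sup-norm factor.

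The main obstacle is the ``orthogonality propagation'' at the core of Step~2: showing that orthonormality of $(f_i)_{i\in I}$, which at $\kappa_2$ merely trims $\sum_{i,i'}|\langle f_i, f_{i'}\rangle|^2$ down to $|I|$, actually propagates to every $\kappa_m$ so that the summation over $(i_1,\dots,i_m)$ saves a factor $|I|^{-m/2}$ and the chi-squared-type exponent $a_1$ survives. Achieving this requires a careful combinatorial inspection of which Leonov--Shiryaev partitions can satisfy the $\delta_{i_j i_{j'}}$ constraints on pair blocks while remaining connected, and it is also where the asymmetric hypotheses on $\|f_i\|_s$ for $s=3$ versus $s\geq 4$ in~\eqref{ass:cesaro_f3}--\eqref{ass:cesaro_f} enter, the former being needed inside blocks of size $3$ that cannot be reduced further by pairing, and the latter taking care of the remaining part of the expansion.
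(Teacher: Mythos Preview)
Your overall strategy matches the paper's: Leonov--Shiryaev expansion of $\kappa_m(\widehat S(k_0))$ via Corollary~\ref{corol:cumulants_square_rvs}, isolation of the $\chi^2$-type leading term from pair blocks, and a cumulant deviation inequality. You also correctly flag the orthogonality propagation as the central combinatorial difficulty; the paper handles it through a dedicated lemma (Lemma~\ref{lem:som_comb}) that exploits the connectivity constraint $\sigma\vee\tau=\mathbf 1_{2m}$ to convert the sum over $I^m$ into products of $\ell^2(I)$-type norms indexed by the block structure.

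There is, however, a gap in your ``generic shape'' for the cumulant bound. You write a single prefactor $(m!)^{1+\gamma}$ in front of all three terms, but the paper obtains three \emph{different} factorial growth rates: $(m!)^1$ for the $\chi^2$ piece, $(m!)^{1+\gamma}$ for the $\Sigma$-piece, and $(m!)^{2+\gamma}$ for the $B_\infty$-piece. This is precisely why $a_1$ carries no $\gamma$, $a_2$ carries the exponent $1/(1+\gamma)$, and $a_3$ carries $1/(2+\gamma)$. Your attribution of the extra $m!$ in the third regime to ``$m$-dependence hidden in $\Lambda$ and $B_\infty$'' is not right---both are constants independent of $m$. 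The paper's device (Lemma~\ref{lem:kappa_m_2}) is to construct an auxiliary random variable $X=\Sigma^{1/2}\mathcal N+b\,\Gamma$, with $\mathcal N$ standard Gaussian and $\Gamma\sim\mathrm{Gamma}(\Lambda,1)$ independent, chosen so that $\kappa_s(\overline X)\geq K_s$ for every $s\geq 2$; the residual partition sum $\sum_{\sigma\vee\tau=\mathbf 1_{2m}}\prod_{b\in\sigma} K_{|b|}$ is then majorised by the full moment $\mathbb E[\overline X^{2m}]$ via the moment--cumulant relation, and the binomial expansion of this moment cleanly separates into the Gaussian contribution $\sim m!\,\Sigma^m$ and the Gamma contribution $\sim(m!)^2\,B_\infty^m$. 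The extra $m!$ thus comes from the $(2m)!$ growth of centred Gamma moments, not from any $m$-dependence of the constants. Without this trick or an equivalent direct estimate, it is not clear how you would close the cumulant bound with the correct $(m!)^{2+\gamma}$ growth.

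A smaller point: you cannot apply the single-term Saulis--Statulevi\v{c}ius inequality three times ``separately'' and then combine by union bound, because it is $|\kappa_m(\widehat S(k_0))|$ that is bounded by a sum of three sequences with different $\gamma_j$, not $\widehat S(k_0)$ itself that splits into three pieces. The paper proves a multi-term extension of the Bentkus--Rudzkis inequality (Theorem~\ref{thm:sc}) handling a condition of the form $|\kappa_m(X)|\leq\sum_{j=1}^J(m!/2)^{1+\gamma_j}H_j/\Delta_j^{m-2}$ directly, which is what produces the $\max$ over $a_j$ in one shot.
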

	
	We begin by discussing the assumptions of the previous theorem. Assumptions~\eqref{ass:cesaro_f} and~\eqref{ass:cesaro_f3} concern the localization properties of the tapers and are required to control high-order cumulants. The parameter $\rho$ in these conditions should be thought of as a scaling factor related to the localization of the tapers within the observation window $W$, analogous to the dilation factor $r$ in Corollary~\ref{cor:L2_risk_herm}. Assumption~\eqref{cond:rI} is of a technical nature. Additionally, the symmetry condition on the Fourier transform of the tapers is used to cancel a first-order term in the Taylor expansion of the structure factor near $k_0$.
	
	The terms $a_1$, $a_2$, and $a_3$ in \eqref{eq:conc} correspond to different concentration types: $\chi^2$, $1+\gamma$ stretched exponential, and $2+\gamma$ stretched exponential, respectively, as detailed below.
	\begin{enumerate}
		\item \textit{$\chi^2$ concentration (fixed-$|I|$ regime).}  
		The term $a_1$ reflects the concentration of the multitaper estimator in the asymptotic regime where $|I|$ is fixed and $\mathcal{F}[f_i] \to \delta_0$ for all $i \in I$ as $W \to \mathbb{R}^d$ (see the discussion following Remark~\ref{rmk:mt_trunc}). Indeed, in this setting, $\widehat{S}(k_0)$ is approximately distributed as $S(k_0)\chi^2(2|I|)/(2|I|)$ (see \eqref{heur:S_normal}). Remarkably, Corollary~\ref{cor:conc_herm} above shows that even when the number of tapers $|I|$ diverges with $W$, provided it scales appropriately, this term remains the leading contribution to the fluctuations of $\widehat{S}(k_0)$, when $k_0$ is not too close to zero.
		
		\item \textit{$1+\gamma$ stretched exponential concentration (second order decorrelation error).}  
		The term $a_2$ accounts for the fact that the linear statistics $T_i(k_0)$ (see Definition~\ref{def:multi_tap}) are only asymptotically uncorrelated. For $i_1, i_2 \in I$,
		\[
		\operatorname{Cov}\left[T_{i_1}(k_0), T_{i_2}(k_0)\right] = S(k_0)\mathds{1}_{i_1=i_2} + \mathrm{Err}(i_1, i_2),
		\]
		where the error term $\mathrm{Err}(i_1, i_2)$ captures residual non-asymptotic correlations. This non-asymptotic correlation leads to an $1+\gamma$ stretched exponential deviation component. Whether $a_2$ dominates depends on the decay of 
		\[
		\frac{\|S\|_{\infty}}{|I|^{1/2}} \wedge 2\|S\|_{\infty} \left(\frac{1}{|I|}\sum_{i \in I} \|\mathcal{F}[f_i]\mathds{1}_{|\cdot|\geq |k_0|}\|_2^2\right)^{1/2},
		\]
		which typically depends on how small $k_0$ is.
		
		\item \textit{$2+\gamma$ stretched exponential concentration (high-order correlations).}  
		The term $a_3$ captures the contribution of higher-order cumulants of the statistics $(T_i(k_0))_{i \in I}$ (see Definition~\ref{def:multi_tap}). For the discussion, assume that $\gamma = 0$. Under Brillinger-mixing, the total variation of the $m$-th order reduced factorial cumulant measure are bounded by $(m-1)!$ (up to constants). But the sequence $((m-1)!)_{m \geq 1}$ is the cumulants of a standard exponential random variable. Accordingly, the higher-order cumulants of the statistics $\widehat{S_i}(k_0) = \left|T_i(k_0) - \mathbb{E}[T_i(k_0)]\right|^2$, for $i \in I$ behave, in the non-asymptotic regime, similarly to the higher-order cumulants of squares of centered exponential random variable variables. The contribution of $a_3$ is determined by $B_\infty$, which splits into two cases. To fix the idea, in the case of the Hermite tapers (see the next corollary), if
		$
		\log(|I|^{1/d})^{1/2}/(\rho^{d/2}|I|^{2\eta_1+\eta_2/2})
		$
		dominates, then $B_{\infty}$ matches the fourth-order cumulant bound $B_4$ from Theorem~\ref{thm:L2_risk}. If instead
		$
		1/(\rho^{d/3}|I|^{2\eta_1})$
		dominates (which could occur for small $k_0$ since in that case $\Lambda$ could be large), the higher-order joint cumulants contribute more significantly than the term $B_4$ appearing in the $L^2(\mathbb{P})$-risk.
	\end{enumerate}

	The following corollary establishes that the multitaper estimator based on Hermite tapers concentrates at the minimax rate with $\chi^2$-type concentration, provided the target frequency $k_0$ is not too small. When $k_0$ is small, the estimator still achieves the minimax rate if $\beta \leq d$, but with stretched-exponential concentrations. We detail these points in the discussion following the theorem.  
	
	\begin{corollary}\label{cor:conc_herm}
		Let $\Phi$ be a stationary point process satisfying Assumption \ref{ass_rho_leb} with $\lambda = 1$.  Moreover, suppose that $\Phi$ is $(c, \gamma,q)-$Brillinger-mixing (see Assumption \ref{def:brill_mix}) and that $S \in \Theta(\beta, L)$ with $L \in (0, \infty)$ and $\beta \in (0, 2]$ (see Definition~\ref{def:Hölder}). Let $R > 0$ and $W = [-R, R]^d$. Consider $I = \{i \in \mathbb{N}^d|~|i|_{\infty} < i_{\max}\}$ with $i_{\max} \geq 1$. Let $\epsilon \in \{-1, 1\}$. If $\beta \geq 1$, we add the constraint $~\psi_i(-\cdot) = \epsilon \psi_i$ in the indexes of $I$, where $\psi_i$ is the $i$-th Hermite function introduced in Definition~\ref{def:herm}. We consider, for all $i \in I$, $f_i = r^{-d/2}\psi_i(\cdot/r)$ where
		\begin{equation}\label{cond:rd_herm1}
			r = R/\sqrt{2 i_{\max}+ i_{\max}^{1/3+\theta}},
		\end{equation}
		where $\theta \in (0, 2/3)$. Suppose that $|I| \geq 2$ and
		\begin{equation}\label{cond:I_herm1}
			|I| |W|^{-2\beta/(2\beta + d)} \in (c_2, c_3),
		\end{equation}
		for some constants $0 < c_2\leq c_3 < \infty.$ Then, there exists $v_0, c_0, c_4> 0$ and $K < \infty$ such that for $|W| \geq v_0$ and $x \geq 0$, we have with probability at least $1 - x$:
		\begin{align}\label{eq:conc_herm1}
			\sup_{|k_0| \geq |k^*_W|} \mathbb{P}\left[\left|\widehat{S}(k_0) - S(k_0)\right| \geq  x + \frac{K}{|W|^{\frac{\beta}{2\beta +d}}}\right] \leq  K\Bigg(\exp\left(-\frac{c_2^2}{12}\frac{(x |W|^{\frac{\beta}{2\beta +d}})^2}{S(k_0)^2 + x S(k_0)}\right) + \varepsilon_1\Bigg), 
		\end{align}
		where $|k^*_W|:= c_0 |W|^{-\frac{1}{2\beta +d}}$ and
		\begin{align*}
			\varepsilon_1 := \exp\left(-c_4 (x |W|^{\frac{2\beta}{2\beta +d}})^2 \wedge (x |W|^{\frac{2\beta}{2\beta +d}})^{\frac1{1+\gamma}}\right) + \exp\left(-c_4 \left( \frac{x|W|^{\frac12}}{\log(|W|)^{\frac{d}2}}\right)^2 \wedge \left(\frac{x|W|^{\frac12}}{\log(|W|)^{\frac{d}2}}\right)^{\frac1{2+\gamma}}\right).
		\end{align*}
		Moreover,
		\begin{align}\label{eq:conc_herm2}
			\sup_{|k_0| \leq |k^*_W|} \mathbb{P}\left[\left|\widehat{S}(k_0) - S(k_0)\right| \geq  x + \frac{K}{|W|^{\frac{\beta}{2\beta +d}}}\right] \leq K(A_1+A_2+A_3),
		\end{align}
		where
		\begin{align*}
			&A_1 := \exp\left(-\frac{c_2^2}{12}\frac{(x |W|^{\frac{\beta}{2\beta +d}})^2}{S(k_0)^2 + x S(k_0)}\right),
			\quad A_2 :=  \exp\left(-c_4 (x |W|^{\frac{\beta}{2\beta +d}})^2 \wedge (x |W|^{\frac{\beta}{2\beta +d}})^{\frac1{1+\gamma}}\right),
			\\&A_3 := \exp\left(-c_4 (x |W|^{\frac13})^2 \wedge (x |W|^{\frac13})^{\frac1{2+\gamma}}\right).
		\end{align*}
	\end{corollary}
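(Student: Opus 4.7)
The plan is to apply Theorem~\ref{thm:conc} to the family $(f_i)_{i \in I}$ of scaled Hermite tapers, combine the resulting tail bound on $\widehat S(k_0) - \mathbb{E}[\widehat S(k_0)]$ with a deterministic bias estimate for $|\mathbb{E}[\widehat S(k_0)] - S(k_0)|$ obtained from Lemma~\ref{lem:bias_s_hat}, and dissect the outcome into the two $k_0$-regimes. I expect the argument to proceed in four steps.

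\textbf{Step 1 (scaling reductions and Hermite inputs).} Since $f_i = r^{-d/2}\psi_i(\cdot/r)$, a change of variables yields
\[
\|f_i\|_s^s = r^{d(1 - s/2)}\|\psi_i\|_s^s,\qquad \mathcal{F}[f_i] = r^{d/2}\mathcal{F}[\psi_i](r\,\cdot),\qquad \|f_i\|_{\dot H^s}^2 = r^{-2s}\|\psi_i\|_{\dot H^s}^2.
\]
From Appendix~\ref{sec:herm} I would invoke averaged bounds on $|I|^{-1}\sum_i \|\psi_i\|_s^s$ and on $|I|^{-2}\sum_i \|\psi_i\|_{\dot H^\beta}^2$; combined with the scalings above, these translate into explicit bounds for all the averaged norms in the statement of Theorem~\ref{thm:conc}. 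Reading the resulting estimates off yields admissible constants $\rho = r$, $\eta_1$, $\eta_2$, $c_f$ for~\eqref{ass:cesaro_f}--\eqref{ass:cesaro_f3}, and the Brillinger-mixing parameters $(c, \gamma, q)$ together with~\eqref{cond:I_herm1} provide~\eqref{cond:rI}. The parity constraint $\psi_i(-\cdot) = \epsilon\psi_i$ imposed when $\beta \geq 1$ is precisely the Fourier symmetry required by Theorem~\ref{thm:conc}, since the Hermite functions are eigenfunctions of $\mathcal{F}$.

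\textbf{Step 2 (exponentially small border and Fourier tail errors).} The scaling~\eqref{cond:rd_herm1} is designed so that for every $x \notin W$ the rescaled argument satisfies $|x|/r \geq \sqrt{2 i_{\max} + i_{\max}^{1/3 + \theta}}$, well beyond the oscillatory region of $\psi_i$. Using the classical super-exponential decay of Hermite functions outside $\sqrt{2 i_{\max}}$ I obtain a uniform-in-$i$ bound $\|f_i\mathds{1}_{\mathbb{R}^d \setminus W}\|_s^s \leq \exp(-c\,i_{\max}^{1/3 + \theta})$, which under~\eqref{cond:I_herm1} is $\exp(-c|W|^\alpha)$ with $\alpha$ as in Corollary~\ref{cor:L2_risk_herm}. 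The same estimate, transported to the Fourier side, shows that $\|\mathcal{F}[f_i]\mathds{1}_{|\cdot|\geq |k_0|}\|_2^2$ is also exponentially small whenever $|k_0|\,r \geq \sqrt{2i_{\max} + i_{\max}^{1/3 + \theta}}$, which defines the threshold $|k^*_W| \asymp \sqrt{i_{\max}}/r$. Substituting $i_{\max} \asymp |W|^{2\beta/(d(2\beta + d))}$ and $r \asymp |W|^{(\beta + d)/(d(2\beta + d))}$ yields $|k^*_W| \asymp |W|^{-1/(2\beta + d)}$.

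\textbf{Step 3 (minimax rate identification).} Plugging Step 1 into~\eqref{eq:sigma}, the first two terms of $\Sigma$ become
\[
\frac{S(k_0)}{|I|} + \sqrt{d}\,L\,r^{-\beta}\Bigl(\tfrac1{|I|^2}\sum_i \|\psi_i\|_{\dot H^\beta}^2\Bigr)^{1/2},
\]
and under the balance~\eqref{cond:I_herm1} both collapse to a constant multiple of $|W|^{-\beta/(2\beta + d)}$. The same computation, run through Lemma~\ref{lem:bias_s_hat}, controls $|\mathbb{E}[\widehat S(k_0)] - S(k_0)|$ by $K|W|^{-\beta/(2\beta + d)}$, absorbed in the deterministic shift appearing in~\eqref{eq:conc_herm1}--\eqref{eq:conc_herm2}. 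What remains is the centered fluctuation, which is directly treated by Theorem~\ref{thm:conc}.

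\textbf{Step 4 (two regimes and main obstacle).} For $|k_0| \geq |k^*_W|$ the Fourier-tail term in~\eqref{eq:sigma} is exponentially small, $\Lambda$ reduces to $c_f$ and $B_\infty$ collapses to order $\log(|W|)^{d/2}/|W|^{1/2}$; renormalizing $x \mapsto x|W|^{\beta/(2\beta + d)}$ in~\eqref{eq:conc} gives the $\chi^2$ contribution of~\eqref{eq:conc_herm1} and, from $a_2$ and $a_3$, the two stretched-exponential terms in $\varepsilon_1$. For $|k_0| \leq |k^*_W|$ the minimum in the last line of~\eqref{eq:sigma} is achieved by $\|S\|_\infty/|I|^{1/2}$, and crucially $\|\mathcal{F}[f_i]\mathds{1}_{|\cdot| \geq |k_0|/3}\|_1$ is no longer exponentially small, so $\Lambda$ grows and degrades the first argument of the minimum in $B_\infty$ down to order $|W|^{-1/3}$; this is precisely the origin of $A_3$, with $A_1$ and $A_2$ following as before. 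The main technical hurdle is the dense bookkeeping of the four coupled scalings $r$, $i_{\max}$, $|I|$, $|W|$ through the compound quantities $\Sigma$, $\Lambda$, $B_\infty$, and the accurate identification of $|k^*_W|$; in particular, showing in the small-$|k_0|$ regime that $B_\infty$ still decays polynomially in $|W|$ and produces the $|W|^{1/3}$ rate in $A_3$ requires careful handling of the two minima in~\eqref{eq:sigma} and~\eqref{eq:Lambda}.
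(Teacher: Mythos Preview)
Your proposal is correct and follows essentially the same approach as the paper's proof: decompose into bias plus centered fluctuation, control the bias via Lemma~\ref{lem:bias_s_hat} at the rate $|W|^{-\beta/(2\beta+d)}$, verify the hypotheses~\eqref{ass:cesaro_f}--\eqref{cond:rI} of Theorem~\ref{thm:conc} for scaled Hermite tapers using the estimates of Appendix~\ref{sec:herm} (the paper identifies $\rho=r$, $\eta_1=1/12$, $\eta_2=1/6$ explicitly), and then bound $\Sigma$, $\Lambda$, $B_\infty$ separately in the two $k_0$-regimes delimited by $|k_W^*|\asymp |W|^{-1/(2\beta+d)}$. One minor slip in your Step~3: the first two terms of $\Sigma$ actually collapse to order $|W|^{-2\beta/(2\beta+d)}$, not $|W|^{-\beta/(2\beta+d)}$; you recover the correct rates in Step~4, so this is only a typo in the intermediate bookkeeping.
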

	
	We now discuss the assumptions of the theorem. Conditions~\eqref{cond:rd_herm1} and~\eqref{cond:I_herm1} are identical to those of Corollary~\ref{cor:L2_risk_herm}. Condition~\eqref{cond:rd_herm1} localizes the tapers inside the observation window while Condition \eqref{cond:I_herm1} balances bias and variance. The threshold $|k^*_W|$, corresponding to the minimal allowed frequency, separates the regime of large frequencies $|k_0| \geq |k^*_W|$, where $T_i(k_0)$ and $T_i(-k_0)$ (see Definition~\ref{def:multi_tap}) asymptotically decorrelate, from the small frequency regime $|k_0| \leq |k^*_W|$, where they do not. 
	
	For large frequencies $|k_0| \geq |k^*_W|$, the remainder term $\epsilon_1$, involving the rates $|W|^{2\beta/(2\beta + d)}$ and $|W|^{1/2}/\log(|W|)^{d/2}$, becomes negligible as $|W| \to \infty$. For this situation, even if $|I|$ diverges with $W$, the bound matches the minimax rate, with the anticipated $\chi^2(2|I|)/(2|I|)$ concentration occurring at fixed $|I|$ (see the heuristic equation \eqref{heur:S_normal}). In contrast, for small frequencies $|k_0| \leq |k^*_W|$, the terms $A_1$ and $A_2$ contribute to the bound. For small deviations $x$, $A_2$, which captures the non-asymptotic correlation between tapers, dominates $A_1$. The third term, $A_3$, related to higher-order correlations between single-taper estimators, is negligible compared to $A_1$ and $A_2$ if $\beta < d$, since in this case $1/3 > \beta/(2\beta + d)$. Accordingly, in that case the multitaper estimator still concentrates at the minimax rate, but with $1+\gamma$ stretched exponential concentration. However, when $\beta = d$, $A_3$ dominates $A_1$ and $A_2$ for small $x$. But, the bound remains minimax with a $2+\gamma$ stretched exponential concentration. Since $\beta$ is assumed to be smaller than two, the Hermite multitaper estimator always concentrates at the minimax rate in dimensions $d \geq 2$, under the conditions of the previous corollary.
	
	\section{Data-driven procedure for the choice of the number of tapers}\label{sec:dd}
	
	To select the number of tapers in practice, we design a local cross-validation procedure. In Section~\ref{sec:dd_def}, we describe the approach and provide theoretical justifications. Then, in Section~\ref{sec:num}, we evaluate its performances through numerical simulations. The implementation used for these experiments is available at \href{https://github.com/gabrielmastrilli/multitaper_spp}{\text{https://github.com/gabrielmastrilli/multitaper\_spp}}.

	\subsection{Description and theoretical guarantee}\label{sec:dd_def}
	
	To select the number of tapers for an estimation of $S$ at frequency $k_0$, we aim to choose $|I|$ to minimize the MSE (mean squared error) between $\widehat{S}_I(k_0)$ and a pilot estimator $\widehat{S}_p(k_0)$. The procedure relies on the following principles.
	\begin{enumerate}
		\item Rather than considering a single frequency $k_0$, we minimize the MSE between $\widehat{S}_I(k_j)$ and $\widehat{S}_p(\tilde{k}_j)$ over a collection of frequency pairs $(k_j, \tilde{k}_j)_{j \in [N]}$, for $N \geq 1$. These frequencies are chosen such that, for all $j \in [N]$, $S(k_j) = S(\tilde{k}_j)$ and, on the other hand, such that $S(k_j)$ equals $S(k_0)$ or is at least close to it. The pairs are moreover chosen sufficiently separated to ensure that:
		\begin{itemize}
			\item $\widehat{S}_I(k_j)$ and $\widehat{S}_p(\tilde{k}_j)$ are as decorrelated as possible, and
			\item for $j \neq j'$, $(\widehat{S}_I(k_j), \widehat{S}_p(\tilde{k}_j))$ and $(\widehat{S}_I(k_{j'}), \widehat{S}_p(\tilde{k}_{j'}))$ are also as decorrelated as possible.
		\end{itemize}
		In the isotropic case, one may choose the pairs $(k_j, \tilde{k}_j)_{j \in [N]}$ as sufficiently well-spread frequencies with norm $|k_0|$.
		\item To further reduce correlations, $\widehat{S}_I$ and $\widehat{S}_p$ are computed on two subsamples obtained by a $0.5$-$0.5$ independent thinning of the original point pattern. This thinning is performed independently for each frequency pairs.
		\item The pilot estimator $\widehat{S}_p$ is chosen to have small bias, which is achieved by using a small number of tapers.
	\end{enumerate}
	
	More rigorously, let $\Phi$ be a stationary point process. We denote by $N$ the number of frequency pairs $(k_j, \widetilde{k_j})_{j = 1}^N$. Let $I \subset \mathbb{N}^d$ be a finite index set. For each $j \in [N]$, define $\Phi_1^j$ as an independent thinning of $\Phi$ with retention probability $1/2$. Let $\Phi_2^j := \Phi \setminus \Phi_1^j$ be its complement. The multitaper estimator $\widehat{S}_I(k_j)$ is computed from $\Phi_1^j$ at frequency $k_j$ and using Hermite tapers $(r^{-d/2} \psi_i(\cdot / r))_{i \in I}$ for some dilation factor $r > 0$. Similarly, let $I_p \subset \mathbb{N}^d$ be another index set and define the pilot estimator $\widehat{S}_p(\widetilde{k_j})$ computed from $\Phi_2^j$ at frequency $\widetilde{k_j}$, using Hermite tapers $(r_p^{-d/2} \psi_i(\cdot / r_p))_{i \in I_p}$, for some $r_p > 0$. Let $\mathcal{I}$ be a finite set of candidate taper index sets. We define the cross-validation criterion:
	\begin{equation}\label{eq:crit_cv}
		\widehat{\mathcal{R}}(I) := \frac{1}{N} \sum_{j=1}^N \left( \widehat{S}_I(k_j) - \widehat{S}_p(\widetilde{k_j}) \right)^2.
	\end{equation}
	Finally, for the estimation of S at frequency $k_0$, we select the taper index set $I \in \mathcal{I}$ that minimizes $\widehat{\mathcal{R}}(I)$. 
	
	\begin{remark}
		The use of the families of thinned point processes $(\Phi_1^j)_{j \in [N]}$ and $(\Phi_2^j)_{j \in [N]}$ enhances the decorrelation between $\widehat{S}_{I}(k_j)$ and $\widehat{S}_p(\widetilde{k_j})$. For instance, in the Poisson case, for each $j \in [N]$, $\Phi_1^j$ and $\Phi_2^j$ are independent, and averaging over thinnings can be shown to be beneficial~\cite{lin2024efficient}. Even in the non-Poisson case, thinning has been observed to perform well for cross-validation~\cite{cronie2024cross}. However, thinning implies that both estimators $\widehat{S}_{I}(k_j)$ and $\widehat{S}_p(\widetilde{k_j})$ target the structure factor $S_{1/2} := 1/2 + S/2$ rather than $S$ itself (see Lemma \ref{lem:cov_thin}). Nevertheless, due to the similarity between $S$ and $S_{1/2}$, the number of tapers that minimizes the risk for estimating $S_{1/2}$ is also suitable for estimating $S$.
	\end{remark}
	
	We provide with Theorem \ref{thm:dd} a theoretical justification for the proposed cross-validation procedure. To state it, we introduce the notion of $k_0$-allowed frequency pairs. 
	
	\begin{definition}\label{def:k0_sep}
		Let $k_0 \in \mathbb{R}^d$ and $N \geq 1$. The family $(k_j, \widetilde{k_j})_{1 \leq j \leq N}$ of $\mathbb{R}^d$ is said $k_0$-allowed if $S(k_j) = S(\widetilde{k_j})$ for all $j \in [N]$ and if there exists $(c_1, c_2, c_3) \in (0, \infty)^3$ such that for all  $j \in [N]$
		\begin{enumerate}
			\item \label{pts:sep}$|k_j| \geq c_1 |k_0|$, $|\widetilde{k_j}| \geq c_1 |k_0|$, $|k_j - \widetilde{k_j}| \geq c_2 |k_0|$ and $|k_j + \widetilde{k_j}| \geq c_2 |k_0|$,
			\item \label{pts:no_clust}for all $a > 0$
			\begin{align*}
				\sum_{j' = 1}^N \mathds{1}_{|k_{j} - k_{j'}| \leq a} + \mathds{1}_{|\widetilde{k_{j}} - k_{j'}| \leq a} + \mathds{1}_{|k_{j} - \widetilde{k_{j'}}| \leq a} + \mathds{1}_{|\widetilde{k_{j}} - \widetilde{k_{j'}}| \leq a}  \leq c_3 \left( N \left(\frac{a}{|k_0|}\right)^{d-1}+1\right).
			\end{align*}
		\end{enumerate}
		
	\end{definition} 
	
	The $k_0$-allowed assumption on the frequency pairs $(k_j, \widetilde{k_j})_{1 \leq j \leq N}$ ensures sufficient separation between frequencies, as stated in point~\ref{pts:sep}. This guarantees the required decorrelation between single-taper estimators evaluated at distinct frequencies. Point~\ref{pts:no_clust} ensures that the frequencies are well spread and do not form clusters. For instance, in dimension~$2$, the $k_0$-allowed assumption is satisfied by choosing 
	$
	\{(k_j, \widetilde{k}_j)\}_{j \in [N]} = \mathcal{C}(|k_0|),
	$
	where
	\begin{align}\label{eq:freq_pair}
		\mathcal{C}(q) := \left\{q\left(\cos\!\left(\tfrac{2\pi j}{N}\right), \, \sin\!\left(\tfrac{2\pi j}{N}\right)\right), 
		q\left(\sin\!\left(\tfrac{2\pi j}{N}\right), \, \cos\!\left(\tfrac{2\pi j}{N}\right)\right)\right\}_{j \in [N]}, \quad N \geq 1.
	\end{align}
	With this choice, the symmetry condition $S(k_j) = S(\widetilde{k}_j)$ for all $j \in [N]$ is satisfied whenever the structure factor is symmetric under coordinate exchange, i.e.
	$
	S(k_1, k_2) = S(k_2, k_1)  \text{ for all } (k_1, k_2) \in \mathbb{R}^2.
	$ In practice, it may be desirable to relax this strict symmetry requirement in order to increase the number of frequency pairs. A natural compromise is to allow pairs $(k_j, \widetilde{k}_j)$ such that $S(k_j)$ is close to $S(k_0)$, though not necessarily equal. This enlargement promotes greater decorrelation and reduces the variance of the criteria $\widehat{\mathcal{R}}(I)$. In particular, one may consider
	$
	\bigcup_{q \in Q} \mathcal{C}(q),
	$
	for a finite set $Q$ of positive numbers with $q$ close to $|k_0|$.
	
	The following theorem, proved in Appendix~\ref{sec:dd_proof}, shows that under suitable conditions, the taper index set selected by minimizing the empirical criterion $\widehat{\mathcal{R}}$ achieves, asymptotically, near-optimal risk for estimating the structure factor of a $1/2$-independent thinning of $\Phi$.

	\begin{theorem}\label{thm:dd}
		Assume that $d \geq 2$. Let $\Phi$ be a stationary point process satisfying Assumption \ref{ass_rho_leb} with structure factor $S \in \Theta(\beta, L)$ where $\beta \leq 2$ and $L \in (0, \infty)$. Assume that for all $2 \leq m \leq 8$:
		$|\gamma_{\text{red}}^{(m)}|(\mathbb{R}^{d(m-1)}) < \infty.$  Let $k_0 \in \mathbb{R}^d$ be a target frequency. Consider $N \geq 1$ $k_0$-allowed frequencies $(k_j, \widetilde{k_j})_{1 \leq j \leq N}$ in $\mathbb{R}^d$. We denote 
		$A_W  = |k_0| |W|^{1/(2\beta +d)}.$ Suppose that $N\geq A_W^{2d(d-1)/(d+1)}$ and that $A_W\to \infty$ as $|W| \to \infty$. We consider $$\mathcal{I} := \left\{I|~c_5 (|W|^{\frac{2\beta}{2\beta +d}}/A_W^{d(d-1)/(d+1) - \eta} \vee \log(|W|)^{2d/(3\theta)+\eta})\leq |I| \leq c_6 |W|^{\frac{2\beta}{2\beta +d}} \right\},$$
		where  $0 < c_5 \leq c_6 < \infty$, $\eta > 0$, $\theta \in (0, 2/3)$ and the tapers index sets $I$ are of the form $\{i \in \mathbb{N}^d|~|i|_{\infty} < i_{\text{max}}\}$ for some $i_{\text{max}} \geq 1$. Assume that the dilation factor $r$ of the tapers $(r^{-d/2} \psi_i(\cdot / r))_{i \in I}$ corresponding to $I \in \mathcal{I}$ satisfies
		\begin{equation}\label{eq:rI_dd}
			r = R/\sqrt{2 i_{\max}+ i_{\max}^{1/3+\theta}}.
		\end{equation} 
		Assume that the tapers index set $I_p$ of the pilot estimator belongs to $\mathcal{I}$ and satisfies $|I_p| = o(|W|^{2\beta/(2\beta +d)})$ as $|W| \to \infty$. Consider $
		\widehat{I} := \underset{I \in \mathcal{I}}{\operatorname{\arg\min}}~\widehat{\mathcal{R}}(I),$
		where $\widehat{\mathcal{R}}(I)$ is defined in \eqref{eq:crit_cv}. Then, as $|W| \to \infty$ we have
		\begin{align*}
			\frac{1}{N} \sum_{j = 1}^N  \mathbb{E}[(\widehat{S}_{\widehat{I}}(k_j) - S_{\frac12}(k_j))^2] \leq \underset{I \in \mathcal{I}}{\min} \left\{\frac{1}{N} \sum_{j = 1}^N  \mathbb{E}[(\widehat{S}_{I}(k_j) - S_{\frac12}(k_j))^2]\right\} + o(|\mathcal{I}|^{\frac12} |W|^{\frac{2\beta}{2\beta +d}}),
		\end{align*}
		where $S_{\frac12} = 1/2+S/2$.
	\end{theorem}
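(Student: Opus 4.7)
The plan is to follow the standard oracle-inequality scheme for cross-validation, adapted to the multitaper setting. Setting $A_j(I) := \widehat{S}_I(k_j) - S_{\frac12}(k_j)$ and $B_j := \widehat{S}_p(\widetilde{k}_j) - S_{\frac12}(\widetilde{k}_j)$, I would first exploit the identity $S_{\frac12}(k_j) = S_{\frac12}(\widetilde{k}_j)$ given by the $k_0$-allowed assumption of Definition \ref{def:k0_sep} to expand
$$\widehat{\mathcal{R}}(I) = \frac{1}{N}\sum_{j=1}^N A_j(I)^2 \;-\; \frac{2}{N}\sum_{j=1}^N A_j(I)\,B_j \;+\; \frac{1}{N}\sum_{j=1}^N B_j^2.$$
The last sum does not depend on $I$. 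Writing $R(I) := N^{-1}\sum_j \mathbb{E}[A_j(I)^2]$ and $U(I) := N^{-1}\sum_j A_j(I)\,B_j$, the optimality $\widehat{\mathcal{R}}(\widehat{I}) \leq \widehat{\mathcal{R}}(I)$, valid for every deterministic $I\in\mathcal{I}$, yields after taking expectations
$$\mathbb{E}\left[\frac{1}{N}\sum_{j=1}^N A_j(\widehat{I})^2\right] \;\leq\; \min_{I\in\mathcal{I}} R(I) \;+\; 4\,\mathbb{E}\!\left[\sup_{I'\in\mathcal{I}}|U(I')|\right].$$
The task reduces to showing $\mathbb{E}[\sup_{I'\in\mathcal{I}}|U(I')|] = o\!\left(|\mathcal{I}|^{1/2}|W|^{2\beta/(2\beta+d)}\right)$.

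I would then split $U(I)=\mathbb{E}[U(I)]+\widetilde{U}(I)$ into its deterministic mean and centered fluctuation, and treat these separately. The mean decomposes as a sum of cross-covariances $\mathrm{Cov}(\widehat{S}_I(k_j),\widehat{S}_p(\widetilde{k}_j))$ plus a product of pointwise biases. Theorem \ref{thm:L2_risk}, applied to $I$ and $I_p$, controls the bias product; under condition \eqref{cond:I_herm1} and the assumption $|I_p|=o(|W|^{2\beta/(2\beta+d)})$, this piece is negligible. For the cross-covariance, the Campbell-type formula \eqref{e.prop_campbell} and its higher-order cumulant analogs express it as integrals against products of Fourier transforms of the Hermite tapers translated by $k_j$ and $\widetilde{k}_j$; the frequency separation $|k_j\pm\widetilde{k}_j|\geq c_2|k_0|$ from point \ref{pts:sep} of Definition \ref{def:k0_sep}, combined with the Fourier decay of the Hermite tapers at scale $r^{-1}$, renders each such integral polynomially small in $A_W = |k_0||W|^{1/(2\beta+d)}$, which vanishes as $A_W\to\infty$.

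For the centered fluctuation I would employ the crude union bound
$$\mathbb{E}\!\left[\sup_{I\in\mathcal{I}}|\widetilde{U}(I)|\right] \;\leq\; \left(\sum_{I\in\mathcal{I}} \mathbb{E}[\widetilde{U}(I)^2]\right)^{1/2} \;\leq\; |\mathcal{I}|^{1/2}\,\max_{I\in\mathcal{I}}\mathbb{E}[\widetilde{U}(I)^2]^{1/2},$$
which produces the $|\mathcal{I}|^{1/2}$ factor appearing in the theorem. Bounding $\mathbb{E}[\widetilde{U}(I)^2]$ for fixed $I$ is then a variance calculation for the double average $N^{-2}\sum_{j,j'}$ of degree-four polynomials in the linear statistics $T_i(k_j),T_{i'}(\widetilde{k}_j)$. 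Expanding this variance by the multilinear cumulant formula requires cumulants of these statistics up to order $8$, which is precisely why the statement assumes $|\gamma_{\mathrm{red}}^{(m)}|(\mathbb{R}^{d(m-1)})<\infty$ for $2\leq m\leq 8$. Point \ref{pts:no_clust} of Definition \ref{def:k0_sep}, which limits the number of pairs $(j,j')$ clustering at any Fourier scale, yields a polynomial gain $N^{-\alpha}$ beyond the naive $1/N$; the quantitative threshold $N\geq A_W^{2d(d-1)/(d+1)}$ is calibrated so that this gain makes the remainder $o(|\mathcal{I}|^{1/2}|W|^{2\beta/(2\beta+d)})$.

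The main obstacle will be the cross-covariance and eighth-order cumulant computations demanded by the previous two steps. The complementary thinning mechanism only yields genuine independence between $\Phi_1^j$ and $\Phi_2^j$ in the Poisson case; for general stationary processes, Lemma \ref{lem:cov_thin} shows that it merely rescales the cumulant measures, so all of the required decorrelation between $\widehat{S}_I(k_j)$ and $\widehat{S}_p(\widetilde{k}_j)$ has to be extracted from the frequency separation hardwired into Definition \ref{def:k0_sep}. Carrying out the cumulant bookkeeping, while cleanly separating the \emph{diagonal} pairs $(j,j')$ whose frequencies coincide up to reflection from the \emph{off-diagonal} ones controlled by point \ref{pts:no_clust}, is the main technical burden of the argument.
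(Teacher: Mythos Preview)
Your proposal is essentially the paper's proof. The quantity $-2U(I)$ is exactly what the paper calls $\Delta_I$ (equation \eqref{eq:def_R_modif_fact}); the paper reduces Theorem~\ref{thm:dd} to Lemma~\ref{lem:exp_delta}, namely $\mathbb{E}[\max_{I\in\mathcal{I}}|\Delta_I|]=o(|\mathcal{I}|^{1/2}|W|^{-2\beta/(2\beta+d)})$, via the same one-line manipulation of $\widehat{\mathcal{R}}(\widehat{I})\le\widehat{\mathcal{R}}(I)$ you wrote, and then proves the lemma by the same second-moment union bound $\mathbb{E}[\max_I|\Delta_I|]\le(|\mathcal{I}|\max_I(\mathbb{E}[\Delta_I]^2+\operatorname{Var}[\Delta_I]))^{1/2}$, bounding mean and variance separately in Lemmas~\ref{lem:exp_delta_I}, \ref{lem:dd_22}, \ref{lem:dd_13}, \ref{lem:dd_4}. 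Your identification of the ingredients---bias products handled by Lemma~\ref{lem:bias_s_hat}, cross-covariances controlled via the frequency separation in Definition~\ref{def:k0_sep}, variance requiring cumulants up to order~8 and the non-clustering condition~\ref{pts:no_clust}, and thinning only rescaling rather than annihilating cross-cumulants (Lemma~\ref{lem:cov_thin})---matches the paper's execution precisely.
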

	
	Before turning to the assumptions of the theorem, we first clarify the guarantee it provides. When the frequency pairs are chosen so that $S(k_j) = S(k_0)$, the result ensures that the cross-validation method selects asymptotically the optimal number of tapers in the sense that it minimizes the total MSE of the estimators $\widehat{S}_{\widehat{I}}(k_j)$ of $S_{\frac12}(k_0)$:
	\begin{align}\label{def:risk_moy}
		\text{tMSE}(k_0): I \in \mathcal{I} \mapsto \sum_{j=1}^N \mathbb{E}\big[(\widehat{S}_{I}(k_j) - S_{\tfrac12}(k_0))^2\big].
	\end{align}
	
	As discussed after Definition~\ref{def:k0_sep}, it may be useful to consider a weaker requirement than $S(k_j) = S(k_0)$. For example, suppose that the target frequencies are taken from $\cup_{q \in Q} \mathcal{C}(q)$, where $Q$ is a finite set of positive numbers and $\mathcal{C}(q)$ is defined in \eqref{eq:freq_pair}. If, for each $q \in Q$, there exists a frequency $k^q$ such that $S(k) = S(k^q)$ for all $k \in \mathcal{C}(q)$, then the theorem provides a guarantee for the sum, over $q$, of the total mean squared errors $\text{tMSE}(k^q)$ defined in \eqref{def:risk_moy}. Provided that for each $q \in Q$, $S(k^q)$ does not differ too much from $S(k_0)$, this still provides a local guarantee.

	Concerning the assumption of the previous theorem, the quantity $A_W$ already appeared in Theorem~\ref{cor:conc_herm} as a threshold under which correlation between tapers associated to opposite frequencies becomes non negligible. The condition $A_W \to \infty$, together with the $k_0$-allowed assumption, ensures asymptotic decorrelation between frequency pairs by forcing the frequencies $(k_j, \widetilde{k_j})_{1 \leq j \leq N}$ to be sufficiently far from zero and well-spread. As a consequence, we expect the data-driven procedure to perform better at larger frequencies $k_0$. Regarding the taper index sets $I \in \mathcal{I}$, the lower bound on their cardinality $|I|$ prevents the selection of estimators with excessively high variance, while the upper bound excludes estimators with large bias. Lastly, the condition $|I_p| = o(|W|^{2\beta/(2\beta + d)})$ ensures that the pilot estimator $\widehat{S}_p$ has negligible bias.		
	
	The remainder term $o(|\mathcal{I}|^{1/2} |W|^{2\beta/(2\beta +d)})$ in the previous theorem is negligible compared to the minimax rate established in Theorem~\ref{thm:minimax_S}, provided that the number of candidate taper index sets $|\mathcal{I}|$ remains bounded as $|W| \to \infty$. The factor $|\mathcal{I}|^{1/2}$ arises from the fact that the proof of Theorem~\ref{thm:dd} relies on controlling the second moment of a suitable transformation of the criterion $\widehat{\mathcal{R}}(I)$ (see Section \ref{sec:dd_proof}).
	
	\subsection{Numerical study}\label{sec:num}
	
	In this section we assess the practical performance of the data-driven criterion.
	
	\subsubsection{Description of the models}
	
	\begin{table}[h!]
		\centering
		\caption{Models considered and corresponding structure factors.}
		\begin{tabular}{ccc}
			\toprule
			Model & Parameter(s) & Structure factor \\
			\midrule
			Thomas cluster & $(\alpha, \sigma^2, \mu) = (5, 0.5^2, 0.2)$ & $1 + 5\exp(-|k|^2/4)$ \\
			\midrule
			Matérn cluster & $(\alpha, \rho, \mu) = (5, 1.5, 0.2)$& $1 + 5 (J_1(1.5r|k|)/(1.5r|k|))^2$ \\
			\midrule
			Exponential LGCP & $(\alpha, \sigma^2, \mu) = (1, 0.5^2, 0)$ & $1 + e^{0.25} \int_{\mathbb{R}^2} (e^{0.5e^{-|x|}} -1)e^{-\bm{i}k\cdot x} dx$ \\
			\midrule
			Ginibre & --- & $1 - \exp(-|k|^2/4)$ \\
			\midrule
			Bessel DPP & $(\rho, \alpha) = (0.3, 1)$ & $1-0.3\ \mathbf{1}_{|k|\le 4}\left[2\arccos\left(\frac{ |k|}{4}\right)-\frac{|k|}{2}\sqrt{1-\left(\frac{|k|}{4}\right)^{2}}\right]$ \\
			\midrule
			$1/2$-perturbed lattice & $-$ & $1 - \operatorname{sinc}\left(\frac{k_1}2\right)^2 \operatorname{sinc}\left(\frac{k_2}2\right)^2\exp(-\sqrt{|k/2|})$ \\
			\bottomrule
		\end{tabular}
		\label{table:models}
	\end{table}

	We consider the models listed in Table~\ref{table:models}, all studied in dimension $d=2$ over the observation window $W = [-20, 20]^2$, with the parameters specified in the second column. These models were chosen because their structure factors exhibit distinct levels of smoothness and illustrate either clustering (Thomas, Matérn, and exponential LGCP) or repulsive behavior (Ginibre, Bessel DPP, and $1/2$-perturbed lattice). For the parameters listed in Table~\ref{table:models}, the Thomas cluster, Matérn cluster, and $1/2$-perturbed lattice processes all have unit intensity, resulting in about $1600$ points within the observation window. The exponential LGCP has intensity $\exp(0.5^2) \simeq 1.28$, producing approximately $2000$ points, whereas the Ginibre process (intensity $1/\pi$) and the Bessel DPP (intensity $0.3$) each yield about $500$ points.
	
	The Thomas and Matérn cluster point processes are Neyman–Scott models with intensity field $\Lambda(x) = \alpha \sum_{y \in \text{PP}_{\mu}} p(y - x)$, where $\alpha > 0$ and $\text{PP}_{\mu}$ is a stationary Poisson point process of intensity $\mu$~\cite{daley2003introduction}. For the Thomas, $p$ is the density of a zero-mean Gaussian distribution with variance $\sigma^2$. For the Matérn, $p$ is the density of a uniform random variable over $B(0, \rho)$. The Ginibre and the Bessel DPP are determinantal point processes. The kernel of the Ginibre is $(x, y) \mapsto e^{-x \overline{y} - |x|^2/2 - |y|^2/2}/\pi$, where $\mathbb{R}^2$ is identified with $\mathbb{C}$~\cite{ginibre1965statistical}, and the one of the Bessel DPP is $(x, y) \mapsto 2\rho J_1(2|x-y|/\alpha)/(2|x-y|/\alpha)$, where $\rho > 0,~\alpha > 0$ satisfy $\rho < (\pi \alpha^2)^{-1}$ and $J_1$ is the Bessel function of the first kind of order one. Finally, the $1/2$-perturbed lattice~\cite{kim2018effect} is defined as the point process $\{x + V_x + U_x+U \mid x \in \mathbb{Z}^2\}$, where $(U, (U_x)_{x \in \mathbb{Z}^2})$ are i.i.d. uniformly distributed on $[-1/2, 1/2]^2$ and the $(V_x)_{x \in \mathbb{Z}^2}$ are i.i.d. random variables with a symmetric $1/2$-stable distribution, whose characteristic function is given by $k \mapsto \exp(-\sqrt{|k/2|}/2)$ for $k \in \mathbb{R}^2$.
	
	\subsubsection{Choice of the hyper-parameters of the cross-validation}
	To implement the multitaper estimator, we use Hermite tapers and substitute the standard estimator $\widehat{\lambda} = |\Phi \cap W| / |W|$ for the intensity (see Appendix~\ref{sec:with_intens}).
	We consider a family of nodes 
	$$\{k_n = q_n \, (\cos(2\pi/3), \sin(2\pi/3))|~q_n \in \{0, 0.25, 0.5, 0.75, 1, 1.5, 2, 2.5, 3, 3.5, 4, 4.5\}\}.$$
	For the estimation of $S$ at these nodes, we use the data-driven procedure introduced in Section~\ref{sec:dd} to select  the number of tapers, as described below. Then for the estimation of $S$ at an abritrary frequency $k_0$, we choose the number of tapers by interpolating the selected number of tapers between the nodes of closest norm.

	For the cross-validation procedure at a given node $k_n$, we choose:
	$$
	\mathcal{I} := \left\{ I_{i_{\max}} \,\middle|\, i_{\max} \leq 25 \right\}, \quad \text{where } I_{i_{\max}} := \left\{ (i_1, i_2) \in \mathbb{N}^2 \,\middle|\, i_1 < i_{\max},\, i_2 < i_{\max} \right\}.
	$$
	For each taper set $I_{i_{\max}}$, we take $r = R/\sqrt{2i_{\max}+i_{\max}^{2/3}}$ for the dilation factor. The tapers set for the pilot estimator is fixed as $I_p = \{(i_1, i_2) \in \mathbb{N}^2 \mid i_1 < 8,\, i_2 < 8\}$ for the Thomas/Matérn cluster and the exponential LGCP. For the Ginibre, the Bessel DPP and the $1/2$-perturbed lattices, where $S$ takes smaller values near the origin, we instead use a smaller taper set $I_p = \{(i_1, i_2) \in \mathbb{N}^2 \mid i_1 < 2,\, i_2 < 2\}$.
	We consider the frequency pairs $\{(k_{j_1, j_2}, \widetilde{k}_{j_1, j_2})\}$ given by
	\[
	\left\{\left(q_{j_1}\left(\cos\left(\tfrac{2\pi j_2}{n_2}\right), \sin\left(\tfrac{2\pi j_1}{n_2}\right)\right), q_{j_1}\left(\sin\left(\tfrac{2\pi j_2}{n_2}\right), \cos\left(\tfrac{2\pi j_2}{n_2}\right)\right)\right)\right\}_{j_1, j_2 \in [n_1]\times [n_2]},
	\]
	where $n_1 \in \{4, 5\}$ and $n_2$ is chosen such that for all $j_2 \in \{0, \dots, n_2-1\}$, $|k_{j_1, j_2} - k_{j_1, j_2+1}| = 0.02$ if $|k_n| \leq 1$ and $|k_{j_1, j_2} - k_{j_1, j_2+1}| = 0.1$ if $|k_n| \geq 1.$ Moreover, for $k_n \neq 0$, we take $q_{j_1} \in |k_n| + \{-0.2, -0.1, 0, 0.1, 0.2\}$, and for $k_n = 0$, we remove the frequency zero from this last set and consider $q_{j_1} \in \{-0.2, -0.1, 0.1, 0.2\}$. 
	
	\subsubsection{Numerical results}
	
	\begin{figure}[htbp]
		\centering
		\begin{minipage}[t]{0.47\textwidth}
			\centering
			\includegraphics[width=\linewidth]{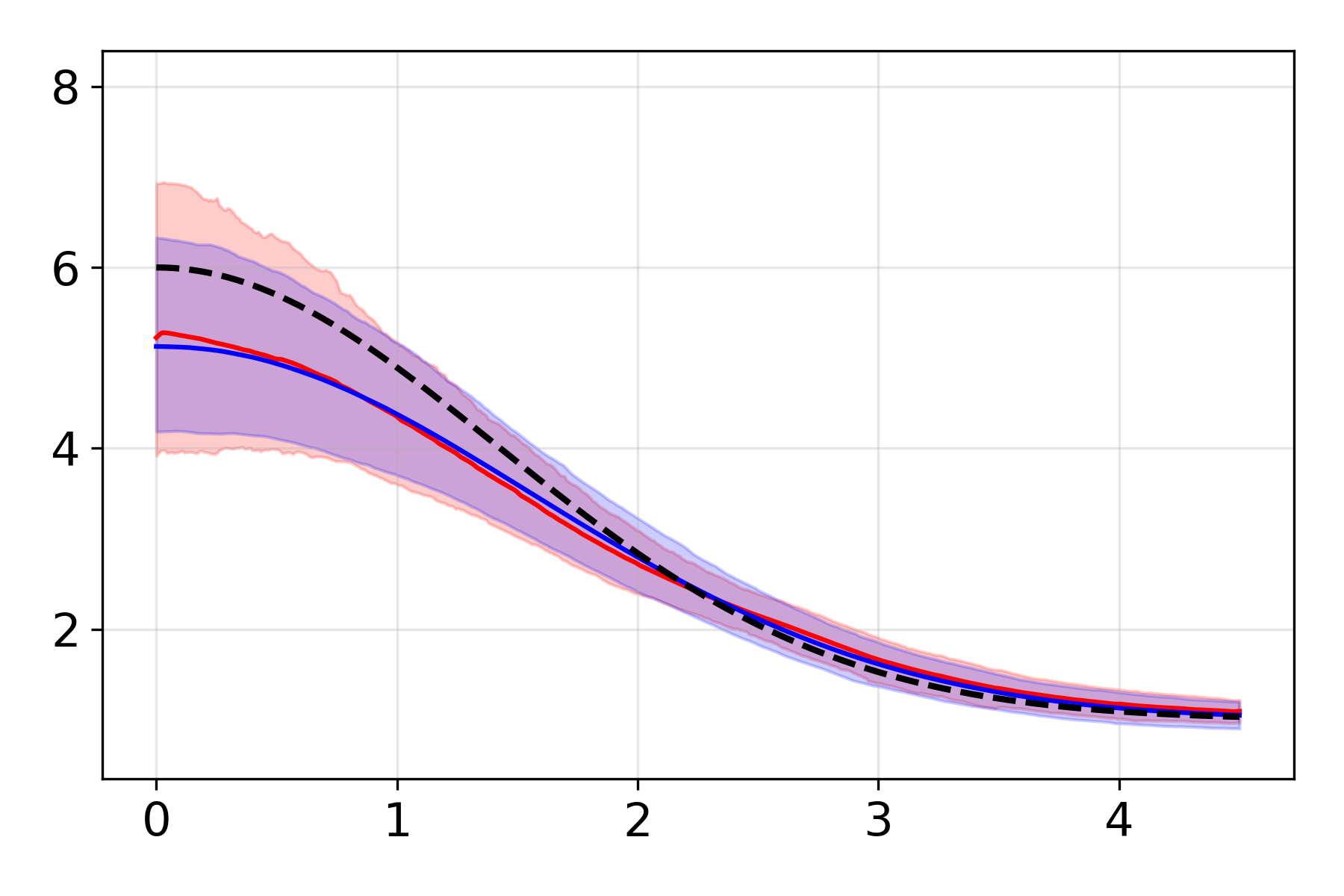}\\
			\small Thomas cluster $(\beta = 2)$
		\end{minipage}
		\hfill
		\begin{minipage}[t]{0.47\textwidth}
			\centering
			\includegraphics[width=\linewidth]{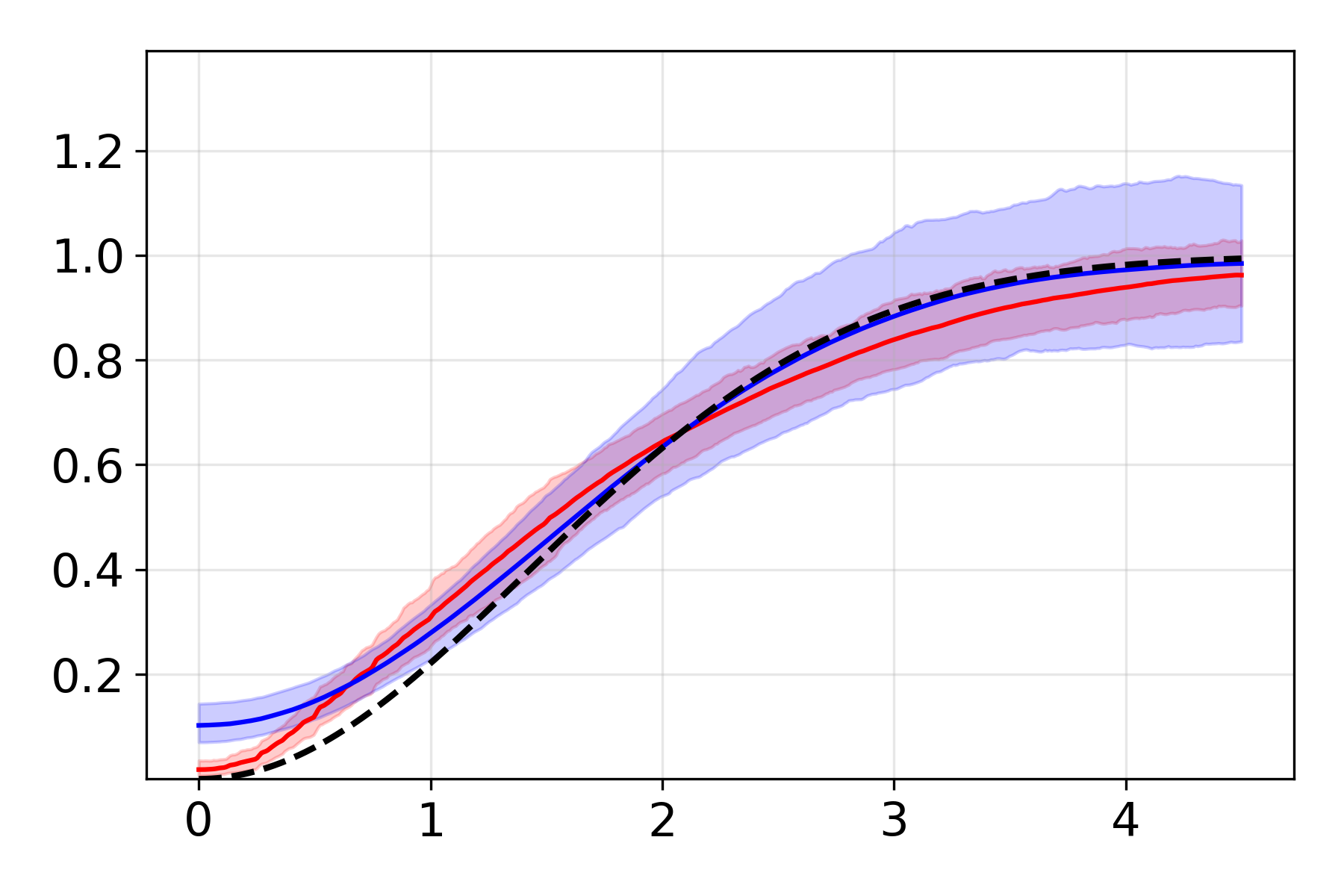}\\
			\small Ginibre $(\beta = 2)$
		\end{minipage}
		\hfill
		\begin{minipage}[t]{0.47\textwidth}
			\centering
			\includegraphics[width=\linewidth]{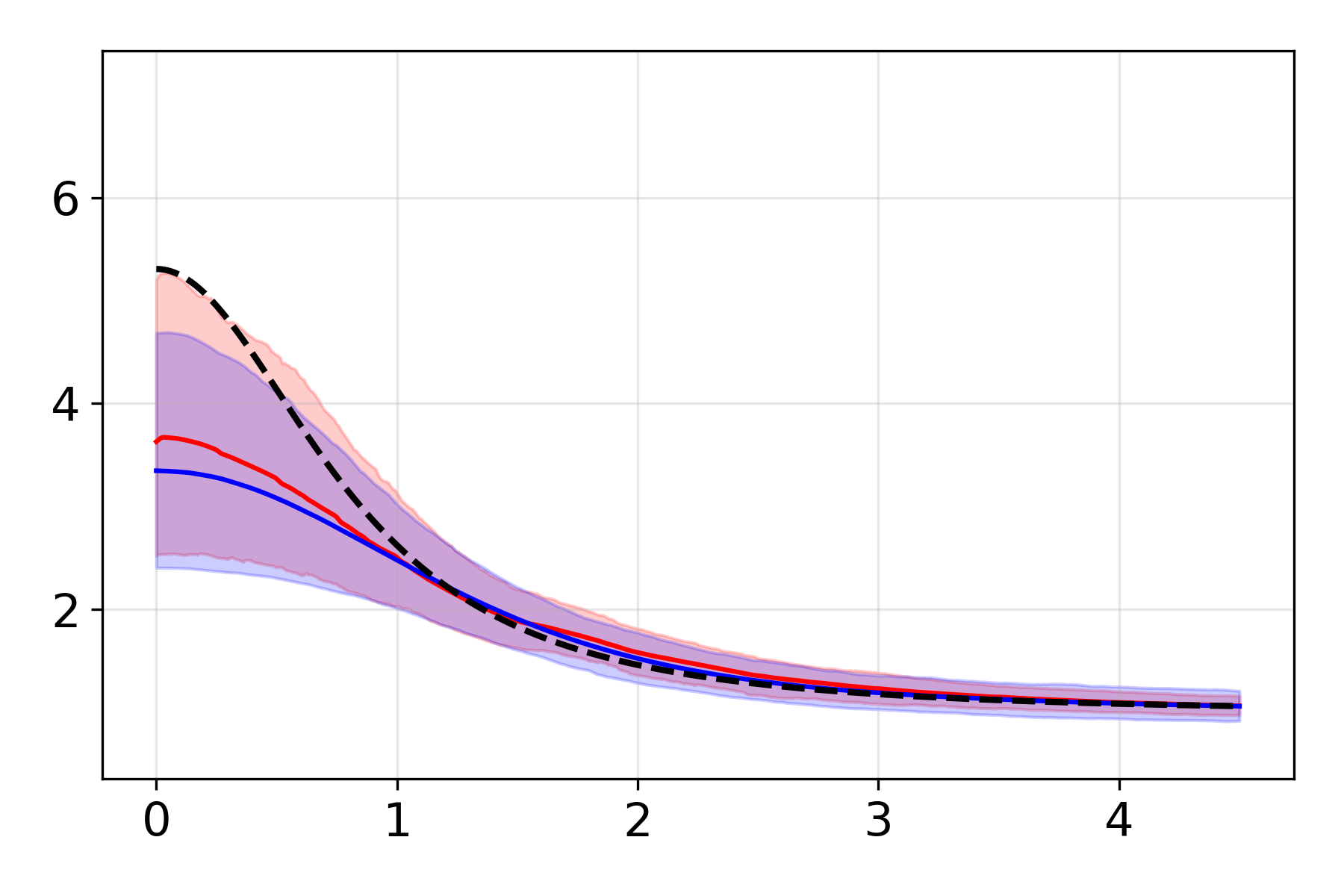}\\
			\small Exponential LGCP $(\beta = 2)$
		\end{minipage}
		\hfill
		\begin{minipage}[t]{0.47\textwidth}
			\centering
			\includegraphics[width=\linewidth]{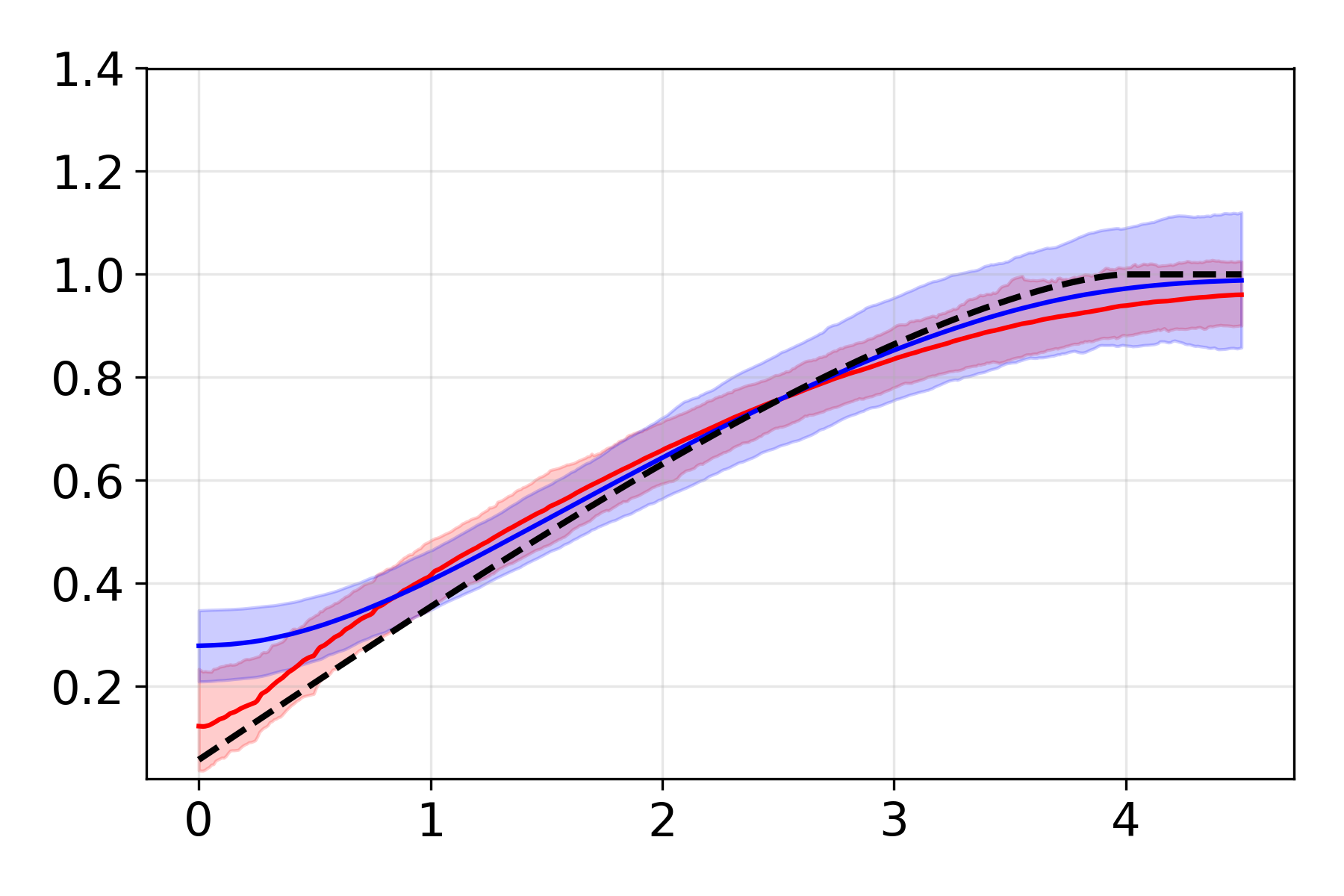}\\
			\small Bessel DPP $(\beta = 1)$
		\end{minipage}
		\hfill
		\begin{minipage}[t]{0.47\textwidth}
			\centering
			\includegraphics[width=\linewidth]{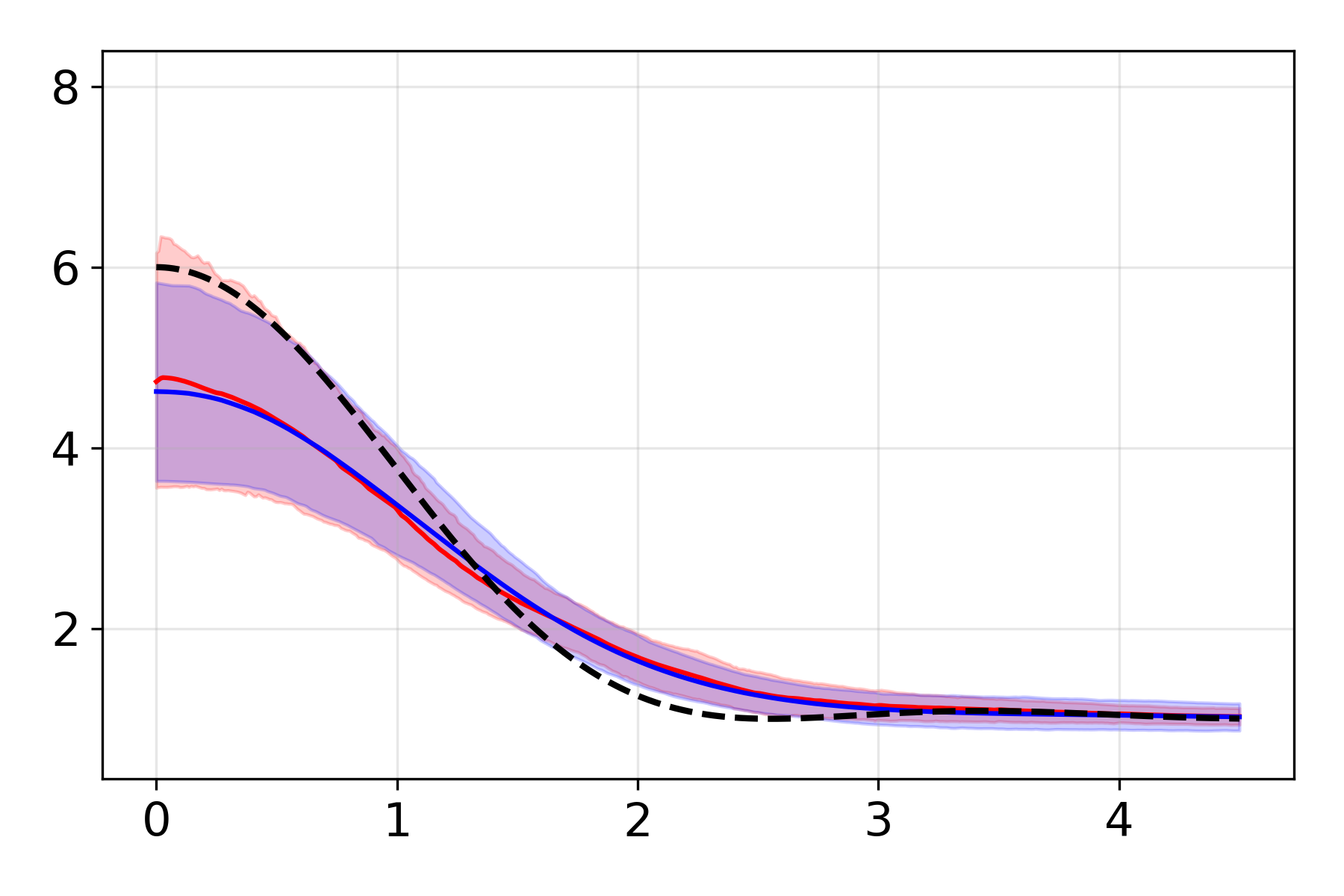}\\
			\small Matérn cluster $(\beta = 2)$
		\end{minipage}
		\hfill
		\begin{minipage}[t]{0.47\textwidth}
			\centering
			\includegraphics[width=\linewidth]{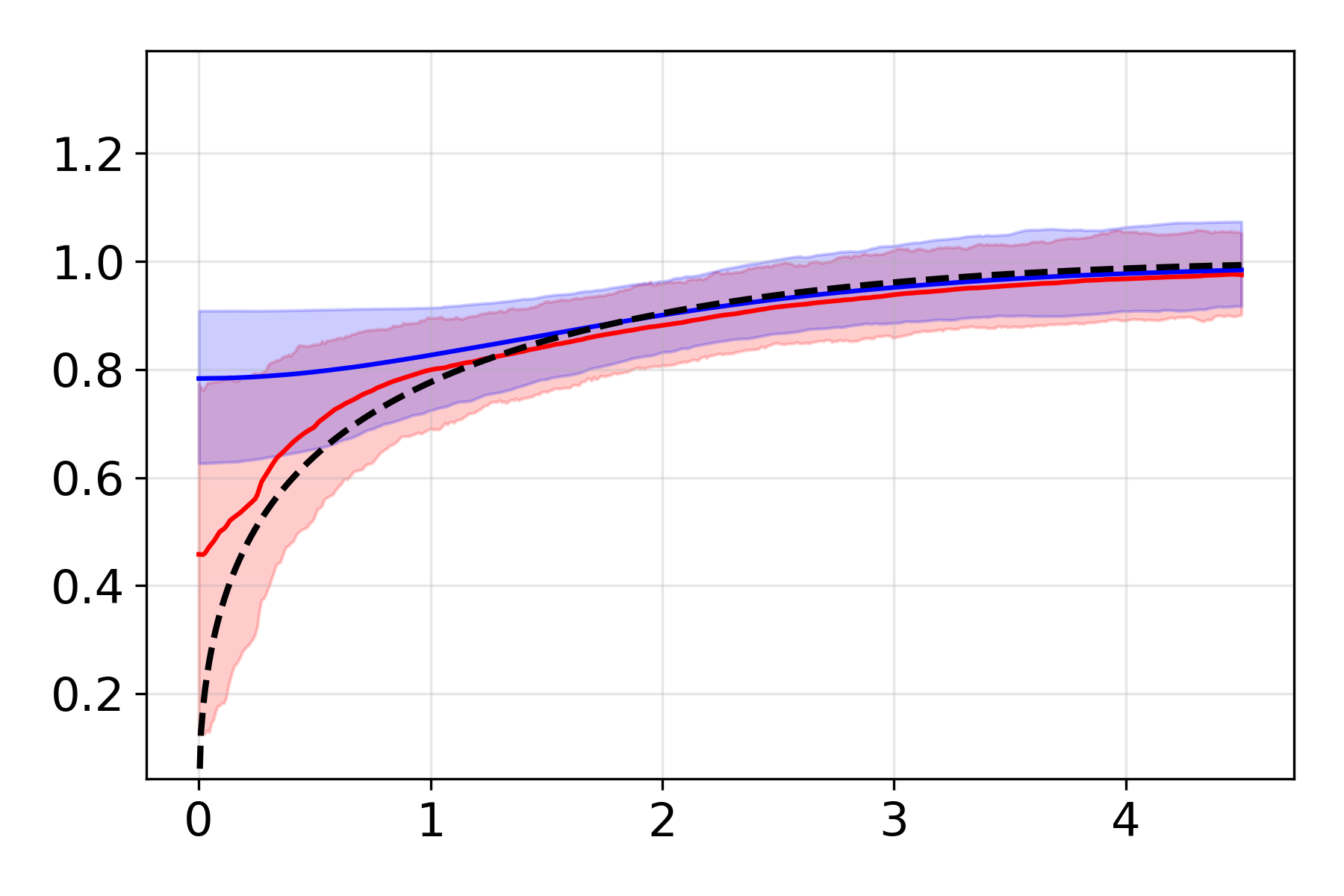}\\
			\small $1/2$-perturbed lattice $(\beta = 1/2)$
		\end{minipage}
		\caption{Each model is simulated $500$ times in the observation window $[-20, 20]^2$. The Hölder regularity $\beta$ of the corresponding structure factors are indicated between parenthesis. The dashed black curve represents the theoretical structure factor. The red color corresponds to the multitaper method, with the number of tapers locally selected via the cross-validation procedure of Section \ref{sec:dd}. The blue color corresponds to the kernel estimator~\cite{yang2024fourier}, with the cross-validation method introduced in~\cite{ding2025pseudo}. For both methods, the solid line represents the average of the structure factor estimates, while the shaded region corresponds to their 0.05--0.95 empirical quantiles.}
		
		\label{fig:taper}
	\end{figure}
	
	Figure~\ref{fig:taper} presents a qualitative study of the performance of the multitaper estimator, using the local cross-validation procedure to select the number of tapers (see the red curves). The blue curves correspond to the kernel estimator considered in~\cite{yang2024fourier}, using the cross-validation approach introduced in~\cite{ding2025pseudo}.
	
	The use of the local cross-validation method for selecting the number of tapers is advantageous, since the optimal number of tapers can vary widely across frequencies and cannot be reliably inferred by merely inspecting the curves for different taper numbers. For example, in simulations not shown here (but available upon request), the optimal number of tapers for the Ginibre point process varies from lower than $10$ at low frequencies to about $400$ at high frequencies. For all models presented in Figure~\ref{fig:taper}, at frequencies that are not too small, the data-driven procedure (corresponding to the red curves) consistently selects an appropriate number of tapers, as predicted by Theorem~\ref{thm:dd}. In the more challenging low-frequency regime, the procedure remains effective for repulsive point processes (Ginibre, Bessel DPP and $1/2$-perturbed lattice), but it encounters more difficulty for clustered point processes (Thomas, Matérn cluster and exponential LGCP). Finally, as shown by the repulsive models, the estimator achieves higher accuracy when the structure factor $S$ exhibits greater Hölder regularity $\beta$, in agreement with theoretical predictions.

	Regarding the comparison with the kernel estimator of $S$ considered in~\cite{yang2024fourier} and \cite{ding2025pseudo} (see the blue curves in Figure~\ref{fig:taper}), we observe that, for clustering point processes, the multitaper estimator with a locally selected number of tapers exhibits lower bias at low frequencies than the kernel estimator. However, it fluctuates more, which, in the case of the Thomas and Matérn cluster point processes, leads to larger mean squared errors. For the LGCP, the reduction in bias at low frequencies compensates for the increase in fluctuations, resulting in smaller mean squared errors overall. At larger frequencies for these three clustering models, the multitaper and kernel estimators perform comparably. For the other repulsive models considered, however, the multitaper method with a locally data-driven selection of the number of tapers outperforms the kernel estimator having a globally selected bandwidth, across both low and high frequencies, highlighting its advantages in these scenarios.

	\section{Proofs of the minimax lower bounds}\label{sec:proof_minimax}
	
	In this section, we prove Theorems \ref{thm:minimax_S} and \ref{thm:minimax_g}. The first key ingredient is Lemma~\ref{lem:chi2}, which provides an upper bound on the $\chi^2$ divergence between an arcsin-Gaussian Cox point process and a unit-intensity stationary Poisson. 
	
	\begin{definition}\label{def:chi2}
		Let $P$ and $Q$ be two probability measures. The $\chi^2$ divergence between $P$ and $Q$ is defined by:
		$$\chi^2(P, Q) := \int \left(\frac{dP}{dQ}\right)^2 dQ -1$$
		if $P$ is absolutely continuous with respect to $Q$ and $+ \infty$ otherwise.
	\end{definition}
	
	Considering the $\chi^2$ divergence is particularly useful in our context, as its quadratic structure enables a symmetrization argument. It allows for reducing the proof of Lemma~\ref{lem:chi2} to bounding the exponential moment of an integral of a random field. 
	
	\begin{lemma}\label{lem:chi2}
		Let $\Phi_{PPP}$ be a Poisson point process of intensity $1$. Let $\sigma \in (0, 1)$, $\rho > 0$ and $(N(x))_{x \in \mathbb{R}^d}$ be a stationary centered Gaussian field with covariance function $c(x) = \sigma^2 c_0(x/\rho)$ where $c_0$ has support included in $B(0, 1/2)$ and $0 \leq c_0 \leq 1$. We consider a Cox point process $\Phi_{Cox}$ with intensity field $(1+ \sin(N(x)))_{x \in \mathbb{R}^d}$ and an observation window $W = [-R, R]^d$ with $R > 0$.
		Assume that $8^d \sigma^4 \rho^{2d}  < 1$ and $R/\rho \in \mathbb{N}.$ Then 
		\begin{align*}
			\chi^2 := \chi^2( \Phi_{Cox} \cap W, \Phi_{PPP} \cap W) & \leq \left(1- 8^d \sigma^4 \rho^{2d} \right)^{-|W|/(2^d\rho^d)}-1
		\end{align*}
	\end{lemma}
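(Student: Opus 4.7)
The plan is to first reduce the $\chi^2$-divergence to a single exponential moment of a Gaussian functional, and then to bound that moment via a Taylor expansion that exploits the compact support of $c_0$.

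For the reduction, conditional on the Gaussian field $N$, the law of $\Phi_{Cox}\cap W$ is a Poisson point process of intensity $1+\sin N$, whose density with respect to $\Phi_{PPP}\cap W$ is $L_N(\phi):=\exp\bigl(-\int_W\sin N(x)\,dx\bigr)\prod_{x\in\phi\cap W}\bigl(1+\sin N(x)\bigr)$. Writing $\chi^2+1=\mathbb{E}_{\Phi_{PPP}}[(\mathbb{E}_N[L_N])^2]$ and tensorizing the square by introducing two independent copies $N_1,N_2$ of the field yields $\chi^2+1=\mathbb{E}_{N_1,N_2}\mathbb{E}_{\Phi_{PPP}}[L_{N_1}L_{N_2}]$. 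The Poisson factorial-moment identity $\mathbb{E}_{\Phi_{PPP}}[\prod_{x\in\phi\cap W}g(x)]=\exp\bigl(\int_W(g-1)\,dx\bigr)$ applied to $g=(1+\sin N_1)(1+\sin N_2)$ makes the single-field terms cancel, leaving
\[
\chi^2+1=\mathbb{E}_{N_1,N_2}\!\left[\exp\!\left(\int_W\sin N_1(x)\sin N_2(x)\,dx\right)\right].
\]

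Next, I would Taylor-expand the exponential (legitimate since the integrand is bounded) and apply Fubini together with the independence of $N_1$ and $N_2$ to obtain
\[
\chi^2+1=\sum_{k\geq 0}\frac{1}{k!}\int_{W^k}\!\left(\mathbb{E}\!\left[\prod_{i=1}^k\sin N(x_i)\right]\right)^{\!2} dx_1\cdots dx_k.
\]
Because $c_0$ is supported in $B(0,1/2)$, $N(x)$ and $N(y)$ are independent whenever $|x-y|\geq\rho/2$, so the inner expectation factors over the connected components of the graph on $\{1,\dots,k\}$ whose edges join pairs at distance strictly less than $\rho/2$. Since $\mathbb{E}[\sin N(x)]=0$ (sine is odd, $N$ is centered Gaussian), only configurations in which every connected component has size at least two contribute.

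To match the product form $(1-x)^{-m}$ with $m=|W|/(2\rho)^d$ and $x=8^d\sigma^4\rho^{2d}$, I would partition $W$ into the $m$ sub-cubes of side $2\rho$ granted by $R/\rho\in\mathbb{N}$ and assign each connected component to the sub-cube containing its leftmost point. For a component of size $s\geq 2$ rooted at a given sub-cube, combining the estimate $|\mathbb{E}\bigl[\prod_{i\in G}\sin N(x_i)\bigr]|\leq\sigma^s\,\mathbb{E}\bigl[\prod_{i\in G}|\tilde N(x_i)|\bigr]$ (with $\tilde N=N/\sigma$) with a Wick-type bound for Gaussian absolute moments and a Cayley-type count for the number of connected cluster shapes within the $\rho/2$-correlation range gives, after squaring, a per-component weight bounded by $(8^d\sigma^4\rho^{2d})^{s-1}$. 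Summing the geometric series $\sum_{s\geq 1}(8^d\sigma^4\rho^{2d})^{s-1}=(1-8^d\sigma^4\rho^{2d})^{-1}$ cube by cube (the empty configuration $s=0$ contributing the factor $1$) and multiplying over the $m$ sub-cubes yields the claimed upper bound.

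The main obstacle is this last step: extracting an honest product over sub-cubes from a sum whose terms couple neighboring sub-cubes through the correlations of $N$, and tracking the combinatorial multiplicities (Wick pairings, Cayley counts $s^{s-2}$ for cluster shapes, and the geometric factor relating the sub-cube side $2\rho$ to the correlation range $\rho/2$) precisely enough to absorb all constants into the base $8^d$. The worst-case combination of these constants produces the factor $8^d$, and the hypothesis $8^d\sigma^4\rho^{2d}<1$ is exactly what is needed for the geometric series to converge.
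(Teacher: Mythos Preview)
Your reduction to the identity
\[
\chi^2+1=\mathbb{E}_{N_1,N_2}\!\left[\exp\!\left(\int_W\sin N_1(x)\sin N_2(x)\,dx\right)\right]
\]
is correct and is exactly what the paper does. The divergence begins immediately after.

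You attempt a cluster expansion: Taylor-expand globally, factor over connected components of the correlation graph, and then try to reorganise the resulting polymer sum into a product over sub-cubes of side $2\rho$. You correctly identify that this last step is the obstacle, and indeed your sketch does not overcome it. A connected component can straddle several sub-cubes, so there is no honest product structure to extract; the appeal to ``Cayley-type counts $s^{s-2}$'' together with Wick bounds does not produce a geometric tail $(8^d\sigma^4\rho^{2d})^{s-1}$ without substantial further work (the super-exponential growth $s^{s-2}$ would in fact ruin the geometric series unless compensated by a $1/s!$ from the exponential expansion, and tracking that cancellation is exactly the hard part of a polymer expansion). As written, the second half is a plausible heuristic, not a proof.

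The paper sidesteps the cluster expansion entirely with a \emph{colouring and H\"older} trick. Partition $W$ into cubes of side $\rho$ (not $2\rho$) and colour them with $2^d$ colours indexed by $\varepsilon\in\{0,1\}^d$, so that two cubes of the same colour are at distance at least $\rho$ and hence carry independent pieces of the field. Apply H\"older's inequality across the $2^d$ colours:
\[
\chi^2+1\leq\prod_{\varepsilon}\mathbb{E}\!\left[\exp\!\left(2^d\!\int_{\bigsqcup_\tau C_{\varepsilon,\tau}\cap W}\sin N\sin\widetilde N\right)\right]^{1/2^d}.
\]
For each fixed colour $\varepsilon$, independence gives an exact factorisation over $\tau$, so one is reduced to bounding the exponential moment of $2^d\int_{C}\sin N\sin\widetilde N$ on a \emph{single} cube $C$ of volume $\rho^d$. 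Expanding that exponential, using that odd moments vanish, $|\sin x|\leq|x|$, H\"older, and the explicit Gaussian moment formula $\mathbb{E}[N(0)^{2n}]=\sigma^{2n}(2n)!/(n!\,2^n)$ gives the per-cube bound $(1-8^d\sigma^4\rho^{2d})^{-1}$. The total number of cubes is $|W|/\rho^d$, and the outer power $1/2^d$ produces the exponent $|W|/(2^d\rho^d)$.

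The factor $8^d$ thus has a transparent origin: $2^d$ from the H\"older step, and $4^d=(\text{volume }\rho^d)^2\times 2^{2n}/\binom{2n}{n}$-type constants from the single-cube moment bound. None of this requires counting cluster shapes.
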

	\begin{proof}		
		The density of $\Phi_{Cox} \cap W$ with respect to $\Phi_{PPP} \cap W$ is given by
		\begin{align*}
			\mathbb{E}_{N}\left[\exp\left(-\int_{W}  \sin(N(x)) dx\right) \prod_{y \in \Phi_{PPP} \cap W}\left(1+ \sin(N(y))\right)\right],
		\end{align*}
		where the expectation  $\mathbb{E}_{N}$ is with respect to the Gaussian field $(N(x))_{x \in \mathbb{R}^d}$ \cite{moller2003statistical}. Accordingly, the $\chi^2$ divergence writes 
		\begin{align*}
			\chi^2= \mathbb{E}_{P}\left[\mathbb{E}_{N}\left[\exp\left(-\int_{W}  \sin(N(x)) dx\right) \prod_{y \in \Phi_{PPP} \cap W} \left(1+ \sin(N(y))\right)\right]^2\right]-1,
		\end{align*}
		where $\mathbb{E}_{P}$ denotes the expectation with respect to the Poisson point process $\Phi_{PPP} \cap W$. We first simplify the nested expectations by introducing $(\widetilde{N}(x))_{x \in \mathbb{R}^d}$ an independent copy of $(N(x))_{x \in \mathbb{R}^d}$. Using the Fubini's theorem and the expression for the generating function of the Poisson point process, we obtain
		\begin{align*}
			&\mathbb{E}_{P}\left[\mathbb{E}_{N}\left[\exp\left(-\int_{W}  \sin(N(x)) dx\right) \prod_{y \in \Phi_{PPP} \cap W} (1+ \sin(N(y)))\right]^2\right] \\& = \mathbb{E}_{P}\left[\mathbb{E}_{N, \widetilde{N}}\left[e^{- \int_{W} (\sin(N(x)) + \sin(\widetilde{N}(x))) dx} \prod_{y \in \Phi_{PPP} \cap W} (1+ \sin(N(y))) (1+ \sin(\widetilde{N}(y)))\right]\right] \\& = \mathbb{E}_{N, \widetilde{N}}\left[e^{- \int_{W} (\sin(N(x)) + \sin(\widetilde{N}(x))) dx}  e^{\int_{W} \left((1+\sin(N(x)))(1+ \sin(\widetilde{N}(x)))-1\right) dx}\right] \\& = \mathbb{E}_{N, \widetilde{N}}\left[\exp\left(\int_{W} \sin(N(x))\sin(\widetilde{N}(x)) dx\right)\right].
		\end{align*}
		The next step is to decompose $W$ as an union of families of cubes such that the cubes of each family are well-enough separated to exploit the decorrelation of the fields $N$ and $\widetilde{N}$. Denote, for $\tau \in \mathbb{Z}^d$ and $\varepsilon \in \{0, 1\}^d$, 
		$C_{\varepsilon, \tau} := \prod_{i = 1}^d [(2\tau_i+\varepsilon_i)\rho, (2\tau_i+1+\varepsilon_i) \rho].$
		We decompose $W$ as
		$$W = \bigsqcup_{\varepsilon \in \{-1, 1\}^d} \bigsqcup_{\tau \in \mathbb{Z}^d} C_{\varepsilon, \tau} \cap W$$
		Note that the union is finite. Hölder's inequality yields the following bound 
		\begin{align*}
			\chi^2 &\leq \prod_{\varepsilon \in \{-1, 1\}^d} \mathbb{E}_{N, \widetilde{N}}\left[\exp\left(2^d\int_{\sqcup_{\tau \in \mathbb{Z}^d} C_{\varepsilon, \tau} \cap W} \sin(N(x))\sin(\widetilde{N}(x)) dx\right)\right]^{2^{-d}}-1 \\& =\prod_{\varepsilon \in \{-1, 1\}^d} \mathbb{E}_{N, \widetilde{N}}\left[\exp\left(2^d\sum_{\tau \in \mathbb{Z}^d; C_{\varepsilon, \tau} \cap W \neq \emptyset} X_{\varepsilon, \tau}\right)\right]^{2^{-d}}-1, 
		\end{align*}
		with $X_{\varepsilon, \tau} = \int_{C_{\varepsilon, \tau}} \sin(N(x))\sin(\widetilde{N}(x)) dx$. 
		Recall that the covariance function $c$ of $N$ and $\widetilde{N}$ has support included in $B(0, \rho/2)$. For $x_{\tau} \in C_{\varepsilon, \tau}$ and $x_{\tau'} \in C_{\varepsilon, \tau'}$ with $\tau \neq \tau'$, we have $|x_{\tau} - x_{\tau'}| \geq \rho$, so $c(x_{\tau} - x_{\tau'}) = 0$. The random integrals $(X_{\varepsilon, \tau})_{\tau \in \mathbb{Z}^d}$ are thus independent.~Accordingly,
		\begin{align*}
			\chi^2 &\leq \prod_{\varepsilon \in \{-1, 1\}^d} \prod_{\tau \in \mathbb{Z}^d,~C_{\varepsilon, \tau} \cap W \neq \emptyset} {\underbrace{\mathbb{E}_{N, \widetilde{N}}\left[\exp\left(2^d \int_{C_{\varepsilon, \tau} \cap W} \sin(N(x))\sin(\widetilde{N}(x)) dx\right)\right]}_{=: A_{\varepsilon, \tau}}}^{2^{-d}}-1.
		\end{align*}
		The previous argument have controlled the correlations of the intensity field $1+\sin(N)$. The remaining of the proof concerns its amplitude. Expanding the exponential function in power series and using the Fubini's theorem, we get 
		\begin{align*}
			A_{\varepsilon, \tau} &= \sum_{n \geq 0} \frac{(2^d)^n}{n!} \mathbb{E}\left[\left(\int_{C_{\varepsilon, \tau} \cap W} \sin(N(x))\sin(\widetilde{N}(x)) dx\right)^n\right] \\& = \sum_{n \geq 0} \frac{(2^d)^n}{n!} \int_{(C_{\varepsilon, \tau} \cap W)^n} \mathbb{E}\left[\prod_{i = 1}^n \sin(N(x_i))\right]^2 dx_1 \dots d{x_n}\\&  = \sum_{n \geq 0} \frac{4^{dn}}{(2n)!}\int_{(C_{\varepsilon, \tau} \cap W)^{2n}} \mathbb{E}\left[\prod_{i = 1}^{2n} \sin(N(x_i))\right]^2 dx_1 \dots d{x_{2n}}.
		\end{align*}
		In the last line, we used $\mathbb{E}[N(x)] = 0$. Using $|\sin(x)| \leq |x|$ for all $x \in \mathbb{R}$, the Hölder-inequality and the stationarity of $N$, we obtain
		\begin{align*}
			A & \leq \sum_{n \geq 0} \frac{4^{dn}}{(2n)!} \int_{(C_{j, \tau} \cap W)^{2n}} \prod_{i = 1}^{2n}\mathbb{E}\left[\sin(N(x_i))^{2n}\right]^{1/n} dx_1 \dots d{x_{2n}} \\& \leq \sum_{n \geq 0} \frac{4^{dn}|C_{j, \tau} \cap W|^{2n}}{(2n)!} \mathbb{E}\left[N(0)^{2n}\right]^2 \leq \sum_{n \geq 0} \frac{\left(4^{d} \rho^d\right)^{2n}}{(2n)!} \left(\sigma^{2n} \frac{(2n)!}{n! 2^{n}}\right)^2 \\& \leq  \sum_{n \geq 0}\left(8^{d} \sigma^4 \rho^{2d}\right)^{n} = (1- 8^{d} \sigma^4 \rho^{2d})^{-1},
		\end{align*}
		since $\binom{2n}{n} \leq 2^{2n}$ and $8^d \sigma^4 \rho^{2d}  < 1$. This yields 
		\begin{align*}
			\chi^2( \Phi_{Cox} \cap W, \Phi_{PPP} \cap W) &\leq\left(1- 8^{d} \sigma^4 \rho^{2d}\right)^{-2^{-d}\sum_{\varepsilon \in \{-1, 1\}^d} \sum_{\tau \in \mathbb{Z}^d} \mathds{1}_{C_{\varepsilon, \tau} \cap W \neq \emptyset}}.
		\end{align*}
		To conclude, we use $R/\rho \in \mathbb{N}$ to get $
		\sum_{\varepsilon \in \{-1, 1\}^d} \sum_{\tau \in \mathbb{Z}^d} \mathds{1}_{C_{\varepsilon, \tau} \cap W \neq \emptyset} = |W|/\rho^d.$
	\end{proof}

	The next crucial result is Lemma~\ref{lem:mini_bril_4}, which addresses the behavior of the total variation of the reduced cumulant measure of the Cox processes of Lemma \ref{lem:chi2}. This result is important, as it ensures that the constraint $\max_{m \in \{2, 3, 4\}}|\gamma_{red}^{(m)}|(\mathbb{R}^{d(m-1)}) \leq M$ is satisfied in Theorem~\ref{thm:minimax_S}. It proof consists in transposing the property of the vanishing cumulants of order larger than three for Gaussian fields to these specific Cox point processes. This lemma motivates our choice of considering an underlying intensity field of the form $(1 + \sin(\sigma^2 N_0(x)))_{x \in \mathbb{R}^d}$, where $(N_0(x))_{x \in \mathbb{R}^d}$ is a unit-variance Gaussian field. Using $(1 + \sigma^2 \sin(N_0(x)))_{x \in \mathbb{R}^d}$ would have simplified the proof of the previous lemma, but it would not allow us to control higher-order cumulants in the low-regularity setting $\beta < \frac{d}{2}$.
	
	\begin{lemma}\label{lem:mini_bril_4}
		Let $\sigma \in (0, 1)$ and $\rho > 0$. Let $(N(x))_{x \in \mathbb{R}^d}$ be a stationary centered Gaussian field with covariance function $c(x) =  \sigma^2 c_0(x/\rho)$ where $c_0$ has support included in $B(0, 1/2)$ and $0 \leq c_0 \leq 1$. We consider a Cox point process $\Phi_{Cox}$ with intensity field $(1+ \sin(N(x)))_{x \in \mathbb{R}^d}$.
		Then, the reduced factorial cumulants measures of $\Phi_{Cox}$ satisfies
		\begin{align*}
			|\gamma_{\text{red}}^{(3)}|(\mathbb{R}^{2d})  \leq K \sigma^{4} \rho^{2d},\quad |\gamma_{\text{red}}^{(4)}|(\mathbb{R}^{3d})  \leq K \sigma^{6} \rho^{3d},
		\end{align*}
		where $K < \infty$ is a numerical constant.
	\end{lemma}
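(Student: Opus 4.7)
The plan is to identify the reduced factorial cumulant densities of $\Phi_{\text{Cox}}$ with the joint cumulants of its intensity field $1+\sin(N)$, then exploit the Gaussian structure of $N$. For a Cox process with intensity $\Lambda$, a classical computation yields the factorial moment density $\rho^{(m)}(x_1,\ldots,x_m)=\mathbb{E}[\prod_j \Lambda(x_j)]$, so inverting via the definition \eqref{def_kappa} of cumulants gives the continuous density $\gamma^{(m)}(x_1,\ldots,x_m)=\kappa(\Lambda(x_1),\ldots,\Lambda(x_m))$. Since additive constants drop out of joint cumulants of order $m\geq 2$ and $\mathbb{E}[\sin(N(x))]=0$ by the symmetry of $N$, stationarity reduces the problem to bounding
\[
|\gamma^{(m)}_{\text{red}}|(\mathbb{R}^{(m-1)d}) = \int_{\mathbb{R}^{(m-1)d}} |\kappa(\sin(N(0)),\sin(N(y_1)),\ldots,\sin(N(y_{m-1})))|\,dy_1\cdots dy_{m-1}.
\]

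The key algebraic identity follows from writing $\sin(z)=(e^{iz}-e^{-iz})/(2i)$ and using the joint characteristic function of the Gaussian vector $(N(x_j))_j$:
\[
\mathbb{E}\!\left[\prod_{j=1}^m \sin(N(x_j))\right] = \frac{1}{(2i)^m} \sum_{\varepsilon\in\{\pm1\}^m} \Big(\prod_{j=1}^m \varepsilon_j\Big) \exp\!\Big(-\tfrac12\sum_{j,l}\varepsilon_j\varepsilon_l\,c(x_j-x_l)\Big).
\]
When $m=3$ the exponent is $\varepsilon\mapsto -\varepsilon$ invariant while $\prod_j\varepsilon_j$ flips sign, so the whole sum vanishes identically; combined with the vanishing of $\mathbb{E}[\sin(N(x_j))]$, the moment--cumulant inversion forces $\kappa_3\equiv 0$, hence $\gamma^{(3)}_{\text{red}}\equiv 0$ and the first bound is automatic. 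For $m=4$ I would expand the exponential in Taylor series in the off-diagonal covariances $c(x_j-x_l)$ and perform the $\varepsilon$-sum: indexing the monomials by multiplicities $(n_{jl})_{1\leq j<l\leq 4}$, the $\varepsilon$-sum equals $2^4$ exactly when every vertex of the underlying multigraph on $[4]$ has \emph{odd} total degree $\sum_{l\neq j} n_{jl}$, and vanishes otherwise. This yields
\[
\mathbb{E}\!\left[\prod_{j=1}^4\sin(N(x_j))\right] = e^{-2\sigma^2}\sum_{(n_{jl})\in\mathcal{D}} \prod_{1\leq j<l\leq 4} \frac{(-c(x_j-x_l))^{n_{jl}}}{n_{jl}!},
\]
with $\mathcal{D}$ the set of admissible multigraphs. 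The moment--cumulant inversion then extracts $\kappa_4$ as the subsum over \emph{connected} admissible multigraphs (the disconnected contributions are precisely the products of lower-order joint moments that the transformation cancels).

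To conclude, on four vertices the handshake lemma forces the connected admissible multigraphs of minimum edge-count to have degree sequence $(3,1,1,1)$, so there are exactly four such multigraphs, namely the four stars; every other connected admissible multigraph has $k\geq 4$ edges. The compact support of $c_0$ gives $|c(x)|\leq \sigma^2\mathbf{1}_{\{|x|\leq \rho/2\}}$ and $\int_{\mathbb{R}^d}|c|\leq K_d\sigma^2\rho^d$. For each of the four stars, placing its centre at the origin and integrating the three independent covariance factors yields a contribution bounded by $(K_d\sigma^2\rho^d)^3 = K_d^3\sigma^6\rho^{3d}$, which is the stated leading bound. The main obstacle is to show that the residual series indexed by connected multigraphs with $k\geq 4$ edges also has total order $\sigma^6\rho^{3d}$: each additional edge supplies a pointwise factor of at most $\sigma^2$, and the factorial denominators $1/n_{jl}!$ guarantee absolute convergence of the Taylor expansion, so the residual is bounded by the full (unconstrained) expansion of $\exp(-\tfrac12\sum \varepsilon_j\varepsilon_l c(x_j-x_l))$, giving only a harmless multiplicative constant as long as $\sigma^2\rho^d$ stays bounded. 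An alternative, avoiding the parity bookkeeping, is to expand $\sin(N(x))$ in Hermite polynomials and invoke the standard diagram formula for cumulants of polynomials of a Gaussian field (see, e.g., Peccati and Taqqu, Chapter 7), which yields the same four star diagrams as the leading term with explicit combinatorial bounds.
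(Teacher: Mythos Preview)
Your route via the Gaussian characteristic function and a cluster (multigraph) expansion is genuinely different from the paper's, and for $m=3$ it is sharper: your parity argument correctly shows that all odd joint moments of $\sin(N(\cdot))$ vanish, hence $\gamma^{(3)}_{\text{red}}\equiv 0$, whereas the paper only records the bound $K\sigma^4\rho^{2d}$. The paper decouples the two scales: it first uses the compact support of $c$ and the vanishing of joint cumulants under independence to confine the support of $\gamma^{(4)}_{\text{red}}$ to a region of volume $O(\rho^{3d})$, and then bounds $\sup|\gamma^{(4)}_{\text{red}}|$ by writing $\sin(\sigma N_0)=\sigma N_0-\tfrac{\sigma^3}{6}N_0^3\cos(Y)$, expanding by multilinearity, and observing that the pure $\sigma^4$ term is $\kappa_4$ of Gaussians and hence zero. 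This is more elementary but less structural than your multigraph picture; your identification of the four stars as the exact leading diagrams is a nice bonus.

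There is however a real gap in your handling of the $m=4$ residual. Bounding the tail by ``the full unconstrained expansion of $\exp(\cdot)$'' does not work after integration: that expansion contains \emph{disconnected} multigraphs, and any isolated vertex produces an unconstrained $\int_{\mathbb{R}^d}dy=\infty$. Also, the proviso ``as long as $\sigma^2\rho^d$ stays bounded'' is neither an assumption of the lemma (which allows arbitrary $\rho>0$) nor needed. The fix is to use connectedness once more: every connected multigraph $G$ on $[4]$ contains a spanning tree $T$, so bounding the $k(G)-3$ non-tree covariance factors by $\|c\|_\infty=\sigma^2$ and integrating the three tree factors by the tree change of variables gives
\[
\int_{\mathbb{R}^{3d}}\prod_{j<l}\frac{|c(x_j-x_l)|^{n_{jl}}}{n_{jl}!}\,dy\;\le\;\frac{K_d\,\sigma^{2k(G)}\rho^{3d}}{\prod_{j<l}n_{jl}!}.
\]
Summing over all connected admissible $G$ (each with $k(G)\ge 3$) and dominating by the sum over \emph{all} multigraphs with $k\ge 3$ yields at most $K_d\rho^{3d}\bigl(e^{6\sigma^2}-1-6\sigma^2-18\sigma^4\bigr)\le K\sigma^6\rho^{3d}$, using only $\sigma\in(0,1)$.
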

	\begin{proof}
		Let $m \geq 2$. The $m$-th order cumulant measure of $\Phi_{Cox}$ is given by \cite{grandell2006doubly} 
		\begin{align*}
			&\gamma^{(m)}(x_1,\dots, x_m) = \kappa_m\left(1+ \sin(N(x_1)), \dots, 1+ \sin(N(x_m))\right).
		\end{align*}
		Using the multi-linearity of the cumulants, we get:
		\begin{align*}
			\gamma^{(m)}(x_1,\dots, x_m) &= \kappa_m\left(\sin(N(x_1)), \dots, \sin(N(x_m))\right). 
		\end{align*}
		Therefore, $
		\gamma_{\text{red}}^{(m)}(y_1, \dots, y_{m-1}) =  \kappa_m\left(\sin(N(y_1)), \dots ,\sin(N(y_{m-1})),\sin(N(0))\right).$ The heuristic for the remainder of the proof is as follows. Assume that $m = 3$. Then, as $\sigma \to 0$, we have
		\begin{align*}
			\gamma_{\text{red}}^{(3)}(y_1, y_2) &=  \kappa_3\left(\sin(N(y_1)),\sin(N(y_2)),\sin(N(0))\right) \\& \simeq \kappa_3\left(1+ N(y_1) + \frac12 N(y_1)^2,1+ N(y_2) + \frac12 N(y_2)^2,1+ N(0) + \frac12 N(0)^2\right).
		\end{align*}
		Then, using the multi-linearity of the cumulants, we obtain a sum of terms, whose main order with respect to $\sigma \to 0$ is:
		$\sigma^{1+1+1}\kappa_3\left(N_1, N_2, N_3\right) + \sigma^{1+1+2}\kappa_3\left(N_1, N_2, N_3^2\right),$
		where $(N_1, N_2, N_3)$ is Gaussian. Accordingly, the first order term $\kappa_3\left(N_1, N_2, N_3\right) $ vanishes, and the scaling $\sigma^4$ appears. Below, we make the previous heuristic rigorous.

		We only consider the case $m = 4$, the case $m = 3$ being similar. Accordingly, in the following, we fix $m = 4$ and denote $y_4 = 0$. Note that if $[4] = L \sqcup L'$ with two disjoint non-empty sets $L$ and $L'$ such that the family of random variables $(\sin(N(y_l)))_{l \in L}$ and $(\sin(N(y_l)))_{l \in L'}$ are independent, then $	\gamma_{\text{red}}^{(4)}(y_1, \dots, y_{m-1}) \equiv 0$ (see, e.g.,~\cite{peccati2011wiener}, Section 3.1. Property (ii)). Since the Gaussian field $(N(x))_{x \in \mathbb{R}^d}$ has covariance function having support included in $B(0, \rho/2)$, it implies that if $\gamma_{\text{red}}^{(m)}(y_1, \dots, y_{m-1}) \neq 0$ then there exists a cycle $\pi : [4] \mapsto [4]$ such that for all $s \in [d]$ and $j \in[4]$, then $|y_j \cdot e_s - y_{\pi(j)}\cdot e_s| \leq \rho$. Accordingly,
		\begin{align}\label{eq:bound_gamma_4_1}
			|\gamma_{\text{red}}^{(4)}(\mathbb{R}^{3d})|& \leq \sum_{\text{cycle }\pi : [4] \mapsto [4]} \int_{\mathbb{R}^{3d}} \prod_{i = 1}^d \prod_{j = 1}^4 \mathds{1}_{|y_{j} \cdot e_i - y_{\pi(j)}\cdot e_i| \leq \rho_i} dy_1 dy_2 dy_2 \sup_{z \in \mathbb{R}^{3d}} |\gamma_{\text{red}}^{(4)}(z)|\nonumber \\& = (4-1)! \left(\prod_{i = 1}^{d} 2\rho\right)^3  \sup_{z \in \mathbb{R}^{3d}} |\gamma_{\text{red}}^{(4)}(z)|.
		\end{align}
		It remains to bound the supremum of $h$. Note that $N = \sigma N_0$ in distribution, where $N_0$ is a Gaussian field with covariance function $c_0$. Moreover, Rolle's theorem implies that for all $j \in [4]$ there exists a random variable $Y_j \in \mathbb{R}$, such that $\sin(N(y_j)) = \sigma N_0(y_j) - \sigma^3 N_0(y_j)^3 \cos(Y_j)/6$. Using the multi-linearity of the cumulants we get
		\begin{align*}
			\gamma_{\text{red}}^{(4)}(y_1, y_2, y_3) = \sum_{P \subset [4]} \sigma^{|P^c| + 3 |P|} \left(-\frac{1}6\right)^{|P|}\kappa_4(N_0(y_l), \cos(Y_{l'}) N_0(y_{l'})^3; l \in P^c, l' \in P),
		\end{align*}
		where the sum runs over all the subsets $P$ of $[4]$. Since the fourth order joint cumulants of a Gaussian field are zero, the sums runs in fact over the non empty subsets $P$. Hence, $|P^c| + 3 |P| = 4 + 2 |P| \geq 6$. Using $\sigma \leq 1$, we have
		\begin{align}\label{eq:bound_gamma_4_2}
			|\gamma_{\text{red}}^{(4)}(y_1, y_2, y_3)| \leq \sigma^{6} \sum_{P \subset [4]} |\kappa_4(N_0(y_l), \cos(Y_{l'}) N_0(y_{l'})^3; l \in P^c, l' \in P)|
		\end{align}
		We denote $Z_l = N_0(x_l)^3$ is $l \in P$ and $Z_l = N_0(x_l)$ if $l \in P^c$. By Hölder-inequality, we have
		\begin{align*}
			|\kappa_4(N_0(y_l), &\cos(Y_{l'}) N_0(y_{l'})^3; l \in P^c, l' \in P)| \leq \sum_{\pi \subset \Pi[4]} (|\pi| - 1)! \prod_{b \in \pi} \prod_{l \in b} \mathbb{E}\left[|Z_l|^{|b|}\right]^{1/|b|},
		\end{align*}
		which is a numerical constant. Recalling \eqref{eq:bound_gamma_4_1} and \eqref{eq:bound_gamma_4_2}, this yields $|\gamma_{\text{red}}^{(4)}|(\mathbb{R}^{3d})  \leq K \sigma^{6} \rho^{3d}$. With similar arguments, we obtain $|\gamma_{\text{red}}^{(3)}|(\mathbb{R}^{2d})  \leq K \sigma^{4} \rho^{2d}$.
	\end{proof}
	
	The next lemma derives the structure factor and the pair correlation function of the arcsin-Cox processes considered in this section.
	
	\begin{lemma}\label{lem:Sc}
		Let $(N(x))_{x \in \mathbb{R}^d}$ be a stationary centered Gaussian field with bounded covariance function $c$ having a compact support. We consider a Cox point process $\Phi_{Cox}$ with intensity field $(1+ \sin(N(x)))_{x \in \mathbb{R}^d}$. 
		Then, the pair correlation function $g_c$ of $\Phi_{Cox}$ is 
		\begin{align*}
			\forall x \in \mathbb{R}^d,~g_c(x) = 1+ \frac{e^{- c(0)}}2\left(e^{c(x)} - e^{- c(x)}\right)
		\end{align*}
		Moreover, the structure factor $S_c$ of $\Phi_{Cox}$ is 
		\begin{align*}
			\forall k \in \mathbb{R}^d,~S_c(k) = 1 + \frac{e^{- c(0)}}2\int_{\mathbb{R}^d} \left(e^{c(x)} - e^{- c(x)}\right) e^{-\bm{i} k\cdot x} dx.
		\end{align*}
	\end{lemma}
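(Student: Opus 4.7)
The plan is to compute $\lambda$, then $g_c$ via the standard Cox-process formula, and finally deduce $S_c$ from definition \eqref{eq:def_S}.

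\textbf{Step 1: Intensity.} For a Cox process with directing intensity field $\Lambda$, the intensity equals $\mathbb{E}[\Lambda(0)]$. Here $\Lambda(0) = 1 + \sin(N(0))$, and since $N(0)$ is a centered Gaussian, its law is symmetric about zero; as $\sin$ is odd, $\mathbb{E}[\sin(N(0))] = 0$, so $\lambda = 1$. This justifies using definition \eqref{eq:def_S} with the prefactor $\lambda = 1$.

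\textbf{Step 2: Pair correlation function.} For a Cox process, $\rho^{(2)}(dx,dy) = \mathbb{E}[\Lambda(x)\Lambda(y)]\,dx\,dy$, so under Assumption~\ref{ass_rho_leb}, $g_c(x-y) = \mathbb{E}[\Lambda(x)\Lambda(y)]$. Expanding and using Step~1 gives
\begin{align*}
g_c(x - y) = 1 + \mathbb{E}\bigl[\sin(N(x))\sin(N(y))\bigr].
\end{align*}
I then apply the product-to-sum identity $\sin(a)\sin(b) = \tfrac{1}{2}(\cos(a-b) - \cos(a+b))$ together with the standard Gaussian formula $\mathbb{E}[\cos(Z)] = e^{-\sigma^2/2}$ for a centered Gaussian $Z$ with variance $\sigma^2$. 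Since $N(x) - N(y)$ has variance $2c(0) - 2c(x-y)$ and $N(x) + N(y)$ has variance $2c(0) + 2c(x-y)$ (by stationarity of $N$), this yields
\begin{align*}
\mathbb{E}[\sin(N(x))\sin(N(y))] = \tfrac{1}{2}\bigl(e^{-c(0)+c(x-y)} - e^{-c(0)-c(x-y)}\bigr) = \tfrac{e^{-c(0)}}{2}\bigl(e^{c(x-y)} - e^{-c(x-y)}\bigr),
\end{align*}
which gives exactly the claimed expression for $g_c$.

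\textbf{Step 3: Structure factor.} Since $c$ is compactly supported, the function $x \mapsto e^{c(x)} - e^{-c(x)}$ also has compact support, so $g_c - 1 \in L^1(\mathbb{R}^d) \cap L^2(\mathbb{R}^d)$, and Assumption~\ref{ass_rho_leb} is satisfied. Plugging the formula from Step~2 into definition \eqref{eq:def_S} with $\lambda = 1$ immediately yields the stated expression for $S_c$.

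\textbf{Obstacles.} There is no substantial difficulty: the proof is a direct calculation relying only on the odd/even symmetry of trigonometric functions and the explicit characteristic function of a Gaussian. The only mild subtlety is verifying the integrability condition of Assumption~\ref{ass_rho_leb}, which follows trivially from the compact support of $c$.
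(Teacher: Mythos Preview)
Your proof is correct and follows essentially the same approach as the paper's: both compute the intensity via the oddness of $\sin$, expand the pair correlation as $1+\mathbb{E}[\sin(N(x))\sin(N(0))]$, and evaluate this expectation using the Gaussian characteristic function (the paper via De Moivre's formula with complex exponentials, you via the real product-to-sum identity $\sin a\sin b=\tfrac12(\cos(a-b)-\cos(a+b))$, which is the same computation). The deduction of $S_c$ from the compact support of $c$ is also identical.
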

	\begin{proof}
		The intensity of $\Phi_{Cox}$ is $\mathbb{E}[1+ \sin(N(0))] = 1$. Moreover, its pair correlation function $g_c$ is~\cite{grandell2006doubly}:
		\begin{align*}
			g_c(x) &=\mathbb{E}\left[(1+\sin(N(x)))(1+ \sin(N(0)))\right] \\& = 1 + 2  \mathbb{E}\left[\sin(N(0))\right] +  \mathbb{E}\left[\sin(N(x))\sin(N(0))\right] = 1+  \mathbb{E}\left[\sin(N(x))\sin(N(0))\right].
		\end{align*}
		The Moivre's formula yields
		\begin{align*}
			\mathbb{E}\left[\sin(N(x))\sin(N(0))\right] &= -\frac12\left(\mathbb{E}\left[e^{\bm{i} (N(x) + N(0))}\right] - \mathbb{E}\left[e^{\bm{i} (N(x) - N(0))}\right]\right)  \\& = \frac{e^{-c(0)}}2\left(e^{c(x)} - e^{-c(x)}\right).
		\end{align*}
		Since $c$ is compactly supported, $g_c - 1 \in L^1(\mathbb{R}^d)$. This concludes the proof, recalling \eqref{eq:def_S}.
	\end{proof}
	
	We are now in position to prove Theorem~\ref{thm:minimax_S}.
	
	\begin{proof}[Proof of Theorem~\ref{thm:minimax_S}]
		Let $W \in \mathcal{W}$. We prove the lower bound when estimating $S$ at frequency zero.
		\begin{multline*}
			r_W:= \inf_{\widehat{S}} \sup_{S \in \Theta(\beta, L)} \sup_{\Phi \sim \mathcal{L}_{(S, M)}} \sup_{k_0 \in \mathbb{R}^d} \mathbb{E} \left[h(|\Phi \cap W|^{\frac{\beta}{2\beta+d}}|\widehat{S}(k_0; \Phi \cap W) - S(k_0)|)\right] \\ \geq 	\inf_{\widehat{S}} \sup_{S \in \Theta(\beta, L)} \sup_{\Phi \sim \mathcal{L}_{(S, M)}} \mathbb{E} \left[h(|\Phi \cap W|^{\frac{\beta}{2\beta+d}}|\widehat{S}(0; \Phi \cap W) - S(0)|)\right].
		\end{multline*}		
		The newt step is to reduce the proof to the square window setting. Since $\mathcal{W}$ is regular, there exists $\eta > 0$ such that for all $W \in \mathcal{W}$ there exists $R_W$ such that $W\subset [-R_W, R_W]^d$ and $|[-R_W, R_W]^d| \leq \eta |W|$. Let $\widehat{S}: \mathbb{R}^d \times \text{Conf}(\mathbb{R}^d) \to \mathbb{R}$ be any estimator. Using the fact that $h$ is increasing and $|[-R_W, R_W]^d| \leq \eta |W|$, we have
		\begin{multline*}
			\sup_{S \in \Theta(\beta, L)} \sup_{\Phi \sim \mathcal{L}_{(S, M)}} \mathbb{E} \left[h(|W|^{\frac{\beta}{2\beta+d}}|\widehat{S}(0; \Phi \cap W) - S(0)|)\right] \\ \geq \sup_{S \in \Theta(\beta, L)} \sup_{\Phi \sim \mathcal{L}_{(S, M)}} \mathbb{E}\left[h_{\eta}(|[-R_W, R_W]^d|^{\frac{\beta}{2\beta+d}}|\widehat{S}_W(0; \Phi \cap [-R_W, R_W]^d) - S(0)|)\right],
		\end{multline*}
		where $h_{\eta} := h(\cdot/\eta^{\frac{2\beta}{2\beta +d}})$ and $\widehat{S}_W(\cdot~; X) := \widehat{S}(\cdot~; X \cap W)$. Note that in the last line, we used that $W \cap [-R_W, R_W]^d = W$. Accordingly,
		\begin{align}\label{eq:mini_int1}
			r_W \geq \inf_{\widehat{S}} \sup_{S \in \Theta(\beta, L)} \sup_{\Phi \sim \mathcal{L}_{(S, M)}} \mathbb{E} \left[h_{\eta}(|[-R_W, R_W]^d|^{\frac{\beta}{2\beta+d}}|\widehat{S}(0; \Phi \cap [-R_W, R_W]^d) - S(0)|)\right].
		\end{align}		
		To derive a lower bound for this quantity, we follow the method presented in Chapter~2 of~\cite{tsybakov2003introduction}, and specifically apply Theorem~2.2 therein.
		
		To alleviate the following computations, we use the notation $R := R_W$. Let $\Phi_{PPP}$ be a Poisson point process of intensity $1$. Let $\phi: \mathbb{R}^d \to \mathbb{R}^+$ be a non identically null, infinitely differentiable function with support included in $B(0, 1/4)$. Then, $c_0 = \phi\ast\phi/\|\phi\ast\phi\|_{\infty}$ is positive definite and has support included in $B(0, 1/2)$. Moreover $0 \leq c_0 \leq 1$. Let $\varepsilon \in (0, 1)$ with $1/\varepsilon \in \mathbb{N}$, that will be chosen small enough during in the proof. Let
		$$\rho := \varepsilon R/\left\lceil \frac{R}{R^{d/(d+2\beta)}} \right\rceil,\quad \sigma^2 = (R^d)^{-(d+\beta)/(2\beta+d)}.$$
		We consider a stationary centered Gaussian field $(N(x))_{x \in \mathbb{R}^d}$ with covariance function $c(x) = \sigma^2 c_0(x/\rho)$ and a Cox point process $\Phi_{Cox}$ with intensity field $(1+ \sin(N(x)))_{x \in \mathbb{R}^d}$. In order to apply Theorem 2.2 of \cite{tsybakov2003introduction} we prove that:
		\begin{enumerate}
			\item\label{pts:min1} The structure factor $S_c$ of $\Phi_{Cox}$ is $\Theta(\beta, L)$,
			\item\label{pts:min2} $|S_c(0) -1| \geq A |[-R, R]^d|^{-\beta/(2\beta +d)}$ for some constant $A > 0$ that does not depend on $R$,
			\item\label{pts:min3} $\chi^2(\Phi_{PPP}\cap [-R, R]^d, \Phi_{Cox} \cap [-R, R]^d) \leq \alpha$ where $\alpha < \infty$ does also not depend on $R$, 
			\item\label{pts:min4} The total variations of the reduced factorial cumulant measures of $\Phi_{Cox}$ satisfies $$\max_{m \in \{2, 3, 4\}}|\gamma_{red}^{(m)}|(\mathbb{R}^{d(m-1)}) \leq M.$$
		\end{enumerate}
		For the moment, assume that all previous points are satisfied. Then, using \eqref{eq:mini_int1}, we obtain
		\begin{align*}
			r_W \geq \inf_{\widehat{S}} \max_{j = 0, 1}\mathbb{E}_{\Phi_j \cap [-R, R]^d} \left[h_{\eta}(|[-R, R]^d|^{\frac{\beta}{2\beta+d}}|\widehat{S}(0; \Phi_j \cap [-R, R]^d) - S(0)|)\right],
		\end{align*}		
		where $\Phi_0 := \Phi_{PPP}$ and $\Phi_1 := \Phi_{Cox}$. Note that the expectations are with respect to the distributions of $\Phi_j \cap [-R, R]^d$. Then, employing the method of Chapter 2, Section 2 of \cite{tsybakov2003introduction}, we obtain $
		r_W \geq c_1 \inf_{\Psi} \max_{j = 0, 1}\mathbb{P}_{\Phi_j \cap [-R, R]^d}[\Psi \neq j],
		$
		where the infimum is over all tests $\Psi: \text{Conf}(\mathbb{R}^d) \to \{0, 1\}$, and $c_1$ is a constant that does not depend on $R$. Using Theorem 2.2 of \cite{tsybakov2003introduction}, we obtain $r_W \geq c_1 \exp(-\alpha)/2> 0$, which does not depend on $W$. This yields \eqref{eq:minimax_S}. 
		
		Now, we prove that points \ref{pts:min1}, \ref{pts:min2}, \ref{pts:min3} and \ref{pts:min4} are satisfied. We consider the point~\ref{pts:min1}. According to Lemma~\ref{lem:Sc}, the structure factor $S_c$ of $\Phi_{Cox}$ is
		\begin{align}\label{eq:Sc_min}
			S_c(k) &= 1 + \frac{e^{-\sigma^2 c_0(0)}}2\int_{\mathbb{R}^d} \left(e^{\sigma^2 c_0(x/\rho)} - e^{-\sigma^2 c_0(x/\rho)}\right) e^{-\bm{i} k\cdot x} dx \nonumber\\& = 1 + \rho^d \frac{e^{-\sigma^2 c_0(0)}}2\int_{B(0, 1)} \left(e^{\sigma^2 c_0(x)} - e^{-\sigma^2 c_0(x)}\right) e^{-\bm{i} \rho k\cdot x} dx.
		\end{align}
		Since $c_0$ has compact support, the Lebesgue dominated convergence theorem ensures that $S_c$ is infinitely differentiable. Let $l \geq 0$ be the smallest integer that is strictly smaller than $\beta$. In particular, $S_c$ is $l$-times differentiable. Moreover, for all $n = (n_1, \dots, n_d) \in \mathbb{N}^d$ such that $n_1 + \dots + n_d = l$, we have for $(k, k') \in \mathbb{R}^{2d}$:
		\begin{align*}
			|\partial_n^l S_c(k) - \partial_n^l S_c(k')| & \leq \rho^{d+l}  \int_{B(0, 1)} \frac12\left(e^{\sigma^2 c_0(x)} - e^{-\sigma^2 c_0(x)}\right) |x|^{l} \left|e^{\bm{i} \rho k\cdot x} - e^{\bm{i} \rho k'\cdot x}\right| dx \\& \leq e \rho^{d+l}  \sigma^2 \int_{B(0, 1)} c_0(x) |x|^{l} 2^{1 - (\beta -l)} \rho^{\beta -l}\left|k - k'\right|^{\beta -l}|x|^{\beta -l} dx \\& \leq 2e\rho^{d+\beta}  \sigma^2 \left(\int_{B(0, 1)} c_0(x) |x|^{\beta} dx\right) |k - k'|^{\beta -l}.
		\end{align*}
		The choices of $\rho$ and $\sigma^2$ ensure:
		\begin{align}\label{eq:check_number_minimax}
			\rho^{d+\beta} \sigma^2 \leq \varepsilon^{d+\beta} \frac{R^{d+\beta}}{\left(R/R^{d/(2\beta+d)}\right)^{d+\beta}} (R^d)^{-(d+\beta)/(2\beta+d)} = \varepsilon^{d+\beta} 
		\end{align}
		By reducing $\varepsilon$ if necessary, we have $S_c \in \Theta(\beta, L)$. Concerning point~\ref{pts:min2}, with ~\eqref{eq:Sc_min}, we get
		\begin{align*}
			|S_c(0) - 1| & = \rho^d \frac{e^{-\sigma^2 c_0(0)}}2\int_{B(0, 1)} \left(e^{\sigma^2 c_0(x)} - e^{-\sigma^2 c_0(x)}\right) dx \\& \geq \rho^d  \sigma^2 e^{-c_0(0)} \int_{B(0, 1)} e^{\sigma^2 c_0(x)} c_0(x) dx \geq \rho^d  \sigma^2 e^{-c_0(0)} \|c_0\|_1.
		\end{align*}
		Note that 
		\begin{align*}
			\rho^d \sigma^2 &\geq \varepsilon^{d}\frac{R^{d}}{\left(R/R^{d/(2\beta+d)}+1\right)^{d+\beta}} (R^d)^{-\frac{d+\beta}{2\beta +d}} 
			\geq 2^{-\beta - d} \varepsilon^d (R^d)^{-\frac{\beta}{2\beta +d}}.
		\end{align*}
		Subsequently, point~\ref{pts:min2} holds with $A = 2^{-\beta - d}\varepsilon^d e^{-c_0(0)} \|c_0\|_1.$ To verify point~\ref{pts:min3}, we use Lemma~\ref{lem:chi2}. If $\varepsilon = 1/n_0$ with $n_0 \in \mathbb{N}$ and $n_0 \geq 5$, then its assumptions are satisfied. Indeed, with computation similar to~\eqref{eq:check_number_minimax}, we get
		$
		8^d \sigma^4 \rho^{2d} \leq 8^d \varepsilon^{2d} (R^d)^{-2\beta/(2\beta +d)} \leq 8^d \varepsilon^{2d} < 1.
		$
		Hence Lemma~\ref{lem:chi2} ensures that 
		\begin{align*}
			&\chi^2(\Phi_{PPP}\cap [-R, R]^d, \Phi_{Cox} \cap [-R, R]^d)  \leq \left(1- 8^d \sigma^4 \rho^{2d} \right)^{-\frac{R^d}{\rho^d}} \\& \quad \leq \left(1- 8^d \varepsilon^{2d}(R^d)^{-\frac{2\beta}{2\beta +d}} \right)^{-\frac{(R^d)^{2\beta/(2\beta +d)}}{\varepsilon^d}} \leq \sup_{x \in [1, \infty)} (1 - 2d 8^d \varepsilon^{2d}/x)^{-x/\varepsilon^d} := \alpha.
		\end{align*}
		Accordingly, point~\eqref{pts:min3} holds. It remains to verify point~\ref{pts:min4}. Firstly,  using $\sinh(x) \leq x e^{x}$ for $x \geq 0$, we have 
		\begin{align*}
			|\gamma_{\text{red}}^{(2)}|(\mathbb{R}^{d}) &= \frac{e^{-\sigma^2 c_0(0)} \rho^d}2 \int_{B(0, 1)} (e^{\sigma^2 c_0(x)} - e^{-\sigma^2 c_0(x)}) dx \\&  \leq \rho^d \sigma^2 e  \int_{B(0, 1)} c_0(x) dx \leq \varepsilon^{2d}e  \int_{B(0, 1)} c_0(x) dx.
		\end{align*}
		Moreover, according to Lemma~\ref{lem:mini_bril_4}, 
		$|\gamma_{\text{red}}^{(3)}|(\mathbb{R}^{2d}) \leq K  \sigma^4 \rho^{2d},$ $|\gamma_{\text{red}}^{(4)}|(\mathbb{R}^{3d}) \leq K  \sigma^6 \rho^{3d},$
		for some numerical constant $K < \infty$. Since,
		$$\sigma^6 \rho^{3d} \leq \varepsilon^{3d} (R^d)^{3d/(2\beta +d)} R^{-3(d+\beta)/(2\beta +d)} \leq \varepsilon^{3d}, \quad \sigma^4 \rho^{2d} \leq \varepsilon^{2d},$$
		then point~\ref{pts:min4} holds, if $\varepsilon$ is small enough. Theorem 2.2. of~\cite{tsybakov2003introduction} concludes the proof.
	\end{proof}
	
	\begin{proof}[Proof of Theorem~\ref{thm:minimax_g}]
		Let $W \in \mathcal{W}$. Arguing as in the beginning of the proof of Theorem \ref{thm:minimax_S}, we obtain
		\begin{align*}
			r &:= \inf_{\widehat{g}} \sup_{g \in \Theta(\beta, L)} \sup_{\Phi \sim \mathcal{L}_{(g, M)}} \sup_{x_0 \in \mathbb{R}^d} \mathbb{E}\left[h(|W|^{\frac{\beta}{2\beta+d}}|\widehat{g}(x_0; \Phi \cap W) - g(x_0)|)\right] \\& \geq \inf_{\widehat{g}} \sup_{g \in \Theta(\beta, L)} \sup_{\Phi \sim \mathcal{L}_{(g, M)}} \mathbb{E}\left[h_{\eta}(|[-R, R]^d|^{\frac{\beta}{2\beta+d}}|\widehat{g}(0; \Phi \cap [-R, R]^d) - g(0)|)\right],
		\end{align*}
		where $\eta$ does not depends on $W$, $h_{\eta} := h(\cdot/\eta^{\beta/(2\beta+d)})$ and $R = R_W > 0$. As in the proof of  Theorem~\ref{thm:minimax_S}, the result will follow from the method of Chapter 2 of \cite{tsybakov2003introduction}.
		
		Let $\Phi_{PPP}$ be a Poisson point process of intensity $1$. Let $\phi: \mathbb{R}^d \to \mathbb{R}^+$ be a non identically null, infinitely differentiable function with support included in $B(0, 1/4)$. Then, $c_0 = \phi\ast\phi/\|\phi\ast\phi\|_{\infty}$ is positive definite and has support included in $B(0, 1/2)$. Moreover $0 \leq c_0 \leq 1$. Let $\varepsilon \in (0, 1)$, that will be chosen small enough during in the proof. Consider
		$$\rho := 1/\left\lceil R^{d/(d+2\beta)} \right\rceil,\quad \sigma^2 = \varepsilon R^{-d\beta/(2\beta+d)}.$$
		We consider a stationary centered Gaussian field $(N(x))_{x \in \mathbb{R}^d}$ with covariance function $c(x) = \sigma^2 c_0(x/\rho)$ and a Cox point process $\Phi_{Cox}$ with intensity field $(1+ \sin(N(x)))_{x \in \mathbb{R}^d}$. In order to apply Theorem 2.2 of \cite{tsybakov2003introduction} we prove that:
		\begin{enumerate}
			\item\label{pts:min1_g} The pair correlation function $g_c$ of $\Phi_{Cox}$ is $\Theta(\beta, L)$,
			\item\label{pts:min2_g} $|g_c(0) -1| \geq A |[-R, R]^d|^{-\beta/(2\beta +d)}$ for some constant $A > 0$ that does not depend on $R$,
			\item\label{pts:min3_g} $\chi^2(\Phi_{PPP}\cap [-R, R]^d, \Phi_{Cox} \cap [-R, R]^d) \leq \alpha$ where $\alpha < \infty$ does also not depend on $R$,  
			\item\label{pts:min4_g} The total variations of the reduced factorial cumulant measures of $\Phi_{Cox}$ satisfies $$\max_{m \in \{2, 3, 4\}}|\gamma_{red}^{(m)}|(\mathbb{R}^{d(m-1)}) \leq M.$$
		\end{enumerate}
		We check point \ref{pts:min1_g}. Recall from Lemma \ref{lem:Sc}, that
		$$\forall x \in \mathbb{R}^d,~g_c(x) = 1+ \frac{e^{-\sigma^2 c_0(0)}}2\left(e^{\sigma^2 c_0(x/\rho)} - e^{-\sigma^2 c_0(x/\rho)}\right).$$
		Let $n = (n_1, \dots, n_d) \in \mathbb{N}^d$ and $l = n_1 + \dots + n_d$. According to Faà-di-Bruno formula, 
		\begin{align*}
			\partial_n^l (g_c(x)-1) = \frac{e^{-\sigma^2c_0(0)}}2 \sum_{\pi \in [l]} \left(e^{\sigma^2 c_0(\frac{x}{\rho})} \prod_{b \in \pi} \frac{\sigma^2}{\rho^{|b|}} \partial_b c_0(\frac{x}{\rho}) + (-1)^{|\pi|}e^{-\sigma^2 c_0(\frac{x}{\rho})} \prod_{b \in \pi} \frac{\sigma^2}{\rho^{|b|}} \partial_b c_0(\frac{x}{\rho})\right),
		\end{align*}
		where in the partition $\pi$, the numbers $1, \dots, n_1$ correspond to a differentiation with respect to $x_1$, and $n_1+1, \dots, n_1+n_2$ to $x_2$ and so on. This yields, using $\sigma \leq 1$, 
		\begin{align*}
			\|\partial_n^l (g_c-1)\|_{\infty} \leq e \frac{\sigma^2}{\rho^l} \sum_{\pi \in [m]} \prod_{b \in \pi} \|\partial_b c_0\|_{\infty}.
		\end{align*}
		Let $l \geq 0$ be the smallest integer that is strictly smaller than $\beta$. Then, for $(x, y) \in \mathbb{R}^{2d}$, 
		\begin{align*}
			|\partial_n^l g_c(x) - \partial_n^l g_c(x)| &\leq (2 \|\partial_n^l (g_c-1)\|_{\infty})^{1-(\beta -l)} (\|\nabla \partial_n^l  (g_c-1)\|_{\infty} |x -y|)^{\beta-l} \\& \leq C_1 \left(\frac{\sigma^2}{\rho^{l}}\right)^{1 - (\beta-l)} \left(\frac{\sigma^2}{\rho^{l+1}}\right)^{\beta-l} |x -y|^{\beta-l} = C_1 \frac{\sigma^2}{\rho^{\beta}}|x -y|^{\beta-l},
		\end{align*}
		for some constant $C_1 < \infty$ that depends only on $c_0$.
		The choices of $\sigma^2$ and $\rho$ ensure that $c_1 \sigma^2/\rho^{\beta} \leq L$ for $\varepsilon$ small enough. This yields point \ref{pts:min1_g}. Concerning point \ref{pts:min2_g}, we have, using $\sinh(x) \geq x$ for $x \geq 0$, 
		\begin{align*}
			g_c(0) -1 =  \frac{e^{-\sigma^2 c_0(0)}}2\left(e^{\sigma^2 c_0(0)} - e^{-\sigma^2 c_0(0)}\right) \geq \frac{e^{-\varepsilon}}2 \sigma^2 c_0(0) = A |[-R, R]^d|^{-\beta/(2\beta +d)},
		\end{align*}
		with $A = \exp(-\varepsilon) \varepsilon^2 c_0(0)/2 > 0$. We deduce point \ref{pts:min3_g} from Lemma \ref{lem:chi2}:
		\begin{align*}
			\chi^2(\Phi_{PPP}\cap [-R, R]^d, \Phi_{Cox} \cap [-R, R]^d) &\leq (1- \varepsilon^2 8^d R^{-(d+\beta)/(2\beta +d)})^{\varepsilon R^{(d+\beta)/(2\beta +d)}} \\& \leq \sup_{x \geq 1} (1 - \varepsilon^2 8^d/x)^{\varepsilon x} =: \alpha,
		\end{align*}
		provided that $\varepsilon^2 8^d < 1$. Finally, since $\sigma$ and $\rho$ are smaller than 1, Lemma \ref{lem:mini_bril_4} ensures that point \ref{pts:min4_g} is satisfied. This concludes the proof, using Theorem 2.2. of \cite{tsybakov2003introduction}.
		
	\end{proof}

	\section*{Acknowledgment}
	
	The author sincerely thanks Frédéric Lavancier for thoroughly reviewing the first versions of the manuscript and for the enriching discussions, as well as Bart{\l}omiej B{\l}aszczyszyn for the stimulating discussions surrounding this work.
	
	%% if your bibliography is in bibtex format, uncomment commands:
	\bibliographystyle{acm} % Style BST file (imsart-number.bst or imsart-nameyear.bst)
	%\bibliography{references.bib}       % Bibliography file (usually '*.bib')

	%% or include bibliography directly:
	% \begin{thebibliography}{}
		% \bibitem{b1}
		% \end{thebibliography}
	
	\appendix
	\section{Proofs related to Section~\ref{sec:l2_risk}}\label{sec:proof_risk}
	
	\subsection{Variance study}
	
	The next lemma provides an upper bound on the variance of the multitaper estimators. Its main argument relies on Bessel's inequality:
	$$
	\sum_{i \in I} |\langle f_i, h \rangle|^2 \leq \|h\|_2^2,
	$$
	for any orthonormal family of function $(f_i)_{i \in I}$ of $L^2(\mathbb{R}^d)$ and $h \in L^2(\mathbb{R}^d)$. This allows us to exploit the orthogonality of the tapers in a non-asymptotic regime. This argument will be used extensively throughout the proofs.

	\begin{lemma}\label{lem:var_S_hat}
		Let $\Phi$ satisfying Assumption \ref{ass_rho_leb} with intensity $\lambda = 1$. Let $W$ be a subset of $\mathbb{R}^d$, $I$ be a discrete subset of $\mathbb{N}^d$ and let $k_0 \in \mathbb{R}^d$. We consider a family of orthonormal functions $(f_i)_{i \in I} \in L^2(\mathbb{R}^d)^{|I|}$. Then, 
		\begin{equation}
			\operatorname{Var}[\widehat{S}(k_0)] \leq \frac{2\|S\|_{\infty}^2}{|I|} \left(1 + \left(\sum_{i \in I} \|f_{i} \mathds{1}_{\mathbb{R}^d \setminus W}\|_2^2\right)^{\frac12} \right)^2 + \left(\frac1{|I|} \sum_{i \in I} \|f_i\|_4^2\right)^2|\gamma_{\text{red}}^{4}|,
		\end{equation} 
		where $
		|\gamma_{\text{red}}^{4}| := \lambda + 7 |\gamma_{\text{red}}^{(2)}|(\mathbb{R}^d) + 6 |\gamma_{\text{red}}^{(3)}|(\mathbb{R}^{2d}) +|\gamma_{\text{red}}^{(4)}|(\mathbb{R}^{3d}).$
	\end{lemma}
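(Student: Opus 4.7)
The plan is to expand $\operatorname{Var}[\widehat S(k_0)] = |I|^{-2}\sum_{i_1,i_2\in I}\operatorname{Cov}(|C_{i_1}|^2,|C_{i_2}|^2)$, decompose each covariance into a part that is quadratic in the second-order covariances of the linear statistics $T_i(k_0)$ and a residual fourth joint cumulant, control the quadratic part by exploiting the orthonormality of $(f_i)_{i\in I}$ through a Bessel-type argument, and control the residual through the standard bounds on cumulants of linear statistics of point processes recalled in Appendix~\ref{sec:lem_lin_stats}.

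First, since every $C_i$ is centered, the moment-to-cumulant formula applied to the four variables $C_{i_1},\overline{C_{i_1}},C_{i_2},\overline{C_{i_2}}$ kills all partitions containing a singleton and leaves only the three non-singleton pairings and the full block, so that
\begin{equation*}
\operatorname{Cov}\!\left(|C_{i_1}|^2,|C_{i_2}|^2\right) = \left|\mathbb{E}[C_{i_1}\overline{C_{i_2}}]\right|^2 + \left|\mathbb{E}[C_{i_1}C_{i_2}]\right|^2 + \kappa\!\left(C_{i_1},\overline{C_{i_1}},C_{i_2},\overline{C_{i_2}}\right).
\end{equation*}

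Second, I address the two quadratic terms. Writing $T_i = \sum_{x\in\Phi}h_i(x)$ with $h_i(x) = e^{-\bm{i}k_0\cdot x}f_i(x)\mathds{1}_W(x)$, Campbell's identity \eqref{e.prop_campbell} expresses $\mathbb{E}[C_{i_1}\overline{C_{i_2}}]$ as $\langle K h_{i_1}, h_{i_2}\rangle$, where $K$ is the Fourier multiplier with symbol $S$, a bounded operator of norm $\|S\|_\infty$ on $L^2(\mathbb{R}^d)$. The difficulty is that $(h_i)_{i\in I}$ is not orthonormal after truncation to $W$, so Bessel's inequality cannot be applied directly. I circumvent this by splitting $h_i = \tilde f_i - e_i$, where $\tilde f_i(x) := e^{-\bm{i}k_0\cdot x}f_i(x)$ defines an orthonormal family (a unimodular twist of $(f_i)$) and $e_i := \tilde f_i\mathds{1}_{\mathbb{R}^d\setminus W}$ is the boundary piece. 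Combining Bessel's inequality for the $\tilde f_i$'s with a pointwise Cauchy--Schwarz for the $e_i$'s through Minkowski in $\ell^2$ gives, for every $i_1\in I$,
\begin{equation*}
\sum_{i_2\in I}|\langle K h_{i_1}, h_{i_2}\rangle|^2 \le \|K h_{i_1}\|_2^2 \left(1 + \Bigl(\sum_{i\in I}\|e_i\|_2^2\Bigr)^{1/2}\right)^2.
\end{equation*}
The operator-norm bound $\|K h_{i_1}\|_2\le\|S\|_\infty\|h_{i_1}\|_2\le\|S\|_\infty$ followed by summation over $i_1$ then delivers $\sum_{i_1,i_2}|\mathbb{E}[C_{i_1}\overline{C_{i_2}}]|^2\le\|S\|_\infty^2|I|(1+\epsilon_W)^2$ with $\epsilon_W := (\sum_i\|f_i\mathds{1}_{\mathbb{R}^d\setminus W}\|_2^2)^{1/2}$. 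The second quadratic term $\sum|\mathbb{E}[C_{i_1}C_{i_2}]|^2$ is handled identically, using that $\overline{T_{i_2}}$ is the linear statistic with weight $\overline{h_{i_2}} = \overline{\tilde f_{i_2}} - \overline{e_{i_2}}$, where the family $(\overline{\tilde f_i})_{i\in I}$ is still orthonormal.

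Third, for $\kappa(C_{i_1},\overline{C_{i_1}},C_{i_2},\overline{C_{i_2}})$ I invoke the representation of joint cumulants of linear statistics in terms of factorial cumulant measures (Appendix~\ref{sec:lem_lin_stats}). Expanding according to the partition formula produces a sum of integrals of products of $h_{i_1},\overline{h_{i_1}},h_{i_2},\overline{h_{i_2}}$ against $\gamma^{(m)}_{\text{red}}$ for $2\le m\le 4$, together with a contribution from the intensity $\lambda=1$ coming from the fully diagonal terms. By stationarity, each such integral reduces to one over $\mathbb{R}^{d(m-1)}$, and is then controlled by Hölder's inequality with exponent $4$ on each of the four taper factors, yielding the bound $\|h_{i_1}\|_4^2\|h_{i_2}\|_4^2\,|\gamma^{(m)}_{\text{red}}|(\mathbb{R}^{d(m-1)})$. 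Since $\|h_i\|_4\le\|f_i\|_4$, summing over $i_1,i_2$ and dividing by $|I|^2$ produces the announced correction $(|I|^{-1}\sum\|f_i\|_4^2)^2|\gamma_{\text{red}}^4|$, with the combinatorial constants $1,7,6,1$ arising from the count of partitions and of the pairings with conjugated indices.

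The hard point is the one treated in the second step: the truncation to $W$ destroys the orthonormality of the tapers and would naively force a spectral analysis of $(f_i\mathds{1}_W)_{i\in I}$. The decomposition $h_i = \tilde f_i - e_i$ turns the entire boundary defect into the single additive correction $\epsilon_W$, so that the asymptotic variance $\|S\|_\infty^2/|I|$ is recovered in the regime $\epsilon_W\to 0$.
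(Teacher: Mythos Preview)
Your proposal is correct and follows essentially the same route as the paper: the paper expands the covariance via the Isserlis--Wick formula (Corollary~\ref{corol:cumulants_square_rvs}), bounds the fourth-order joint cumulant through Lemma~\ref{lem:bril_thin} (your Step~3), and handles the two pairing terms through Lemma~\ref{lem:bessel_k2}, whose proof is precisely your split $h_i=\tilde f_i-e_i$ combined with Bessel's inequality and Cauchy--Schwarz. The only cosmetic difference is that the paper encapsulates these two bounds in auxiliary lemmas rather than spelling them out inline.
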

	\begin{proof}
		Recall that $\widehat{S}(k_0) = |I|^{-1}\sum_{i \in I} |C_i(k_0)|^2$ where $(C_i(k_0))_{i \in I}$ is defined in \eqref{eq:def_Ti_Ci}. Using the bi-linearity of the covariance, we get
		\begin{align*}
			\operatorname{Var}[\widehat{S}(k_0)] = \frac1{|I|^2} \sum_{(i_1, i_2) \in I^2} \kappa_2(|C_{i_1}(k_0)|^2, |C_{i_2}(k_0)|^2).
		\end{align*}
		According to Corollary~\ref{corol:cumulants_square_rvs}, we have 
		\begin{align*}
			\kappa_2(|C_{i_1}(k_0)|^2, |C_{i_2}(k_0)|^2) = \sum_{\substack{\sigma \in \Pi[4],\\ \sigma \vee \tau = \mathbf{1}_{4}}} \prod_{b \in \sigma} \kappa(D_{i_j}(k_0); j \in b),
		\end{align*}
		where $\tau = 12|34$, $\vee$ denotes the least-upper-bound between partitions (see Section \ref{sec:cum}) and 
		$
		D_{i_1}(k_0) = C_{i_1}(k_0), ~ D_{i_2}(k_0) = \overline{C_{i_1}(k_0)},~
		D_{i_3}(k_0) = C_{i_2}(k_0), ~ D_{i_4}(k_0) = \overline{C_{i_2}(k_0)}.
		$
		Note that we are considering cumulants of centered random variables, so the sum runs over the partitions with blocks of size larger than $2$. Hence:
		\begin{align}\label{eq:lem_var_k4}
			\kappa_2(|C_{i_1}&(k_0)|^2, |C_{i_2}(k_0)|^2) = \sum_{\substack{\sigma \in \Pi_2[4],\\ \sigma \vee \tau = \mathbf{1}_{4}}} \prod_{b \in \sigma} \kappa(E_{i_j}(k_0); j \in b) \nonumber \\& = \kappa(T_{i_1}(k_0), \overline{T_{i_1}(k_0)}, T_{i_2}(k_0), \overline{T_{i_2}(k_0)}) + \sum_{\substack{\sigma \in \Pi_2[4],\\ \sigma \vee \tau = \mathbf{1}_{4}, |\sigma| = 2}} \prod_{b \in \sigma}\operatorname{Cov}\left[E_{i_{b_1}}(k_0), E_{i_{b_2}}(k_0)\right],
		\end{align}
		where $\Pi_2[4]$ denotes the sets of partitions of $[4]$ having only blocks of size larger than $2$ and
		$
		E_{i_1}(k_0) = T_{i_1}(k_0),~ E_{i_2}(k_0) = \overline{T_{i_1}(k_0)},~ E_{i_3}(k_0) = T_{i_2}(k_0), ~ E_{i_4}(k_0) = \overline{T_{i_2}(k_0)}.
		$
		We bound the cumulant corresponding to the block of order $4$ using Lemma \ref{lem:bril_thin}:
		\begin{align}\label{eq:weak_BM}
			|\kappa&(T_{i_1}(k_0), \overline{T_{i_1}(k_0)}, T_{i_2}(k_0), \overline{T_{i_2}(k_0)})| \leq \|f_{i_1}\|_4^2 \|f_{i_2}\|_4^2 \sum_{\pi \in \Pi[4]} |\gamma_{\text{red}}^{(|\pi|)}(\mathbb{R}^{d(|\pi|-1)}).
		\end{align}
		Subsequently,
		\begin{equation}\label{eq:lem_var_k4_final}
			\frac1{|I|^2} \sum_{(i_1, i_2) \in I^2}|\kappa(T_{i_1}(k_0), \overline{T_{i_1}(k_0)}, T_{i_2}(k_0), \overline{T_{i_2}(k_0)})| \leq \left(\frac1{|I|} \sum_{i \in I} \|f_i\|_4^2\right)^2 |\gamma_{\text{red}}^{4}|.
		\end{equation}
		We now focus on the covariance terms. Note that $\tau = 12|34$, so the only two partitions $\sigma \in \Pi_2[4]$ such that $\sigma \vee \tau = \mathbf{1}_{4}$ are $13|24$ and $14|23$. Accordingly, 
		\begin{align*}
			\sum_{\substack{\sigma \in \Pi[4],\\ \sigma \vee \tau = \mathbf{1}_{4}, |\sigma| = 2}} \prod_{b \in \sigma}\operatorname{Cov}\big[E_{i_{b_1}}(k_0), &E_{i_{b_2}}(k_0)\big] = \operatorname{Cov}\left[E_{i_{1}}(k_0), E_{i_{3}}(k_0)\right]\operatorname{Cov}\left[E_{i_{2}}(k_0), E_{i_{4}}(k_0)\right]\\& \qquad  +\operatorname{Cov}\left[E_{i_{1}}(k_0), E_{i_{4}}(k_0)\right]\operatorname{Cov}\left[E_{i_{2}}(k_0), E_{i_{3}}(k_0)\right]
			\\& = \left|\operatorname{Cov}\left[T_{i_1}(k_0), T_{i_2}(k_0)\right]\right|^2 + \left|\operatorname{Cov}\left[T_{i_1}(k_0), T_{i_2}(-k_0)\right]\right|^2.
		\end{align*}
		According to Lemma \ref{lem:bessel_k2}, we have
		\begin{multline}\label{eq:var_cov_same_freq_1}
			\frac1{|I|^2} \sum_{(i_1, i_2) \in I^2} \left(\left|\operatorname{Cov}\left[T_{i_1}(k_0), T_{i_2}(k_0)\right]\right|^2 +\left|\operatorname{Cov}\left[T_{i_1}(k_0), T_{i_2}(-k_0)\right]\right|^2\right)\\ \leq \frac{2\|S\|_{\infty}^2}{|I|} \left(1 + \left(\sum_{i \in I} \|f_i \mathds {1}_{\mathbb{R}^d \setminus W}\|_2^2\right)^{1/2} \right)^2, 
		\end{multline}
		which concludes the proof of this lemma.
		
	\end{proof}
	
	\subsection{Bias study}
	
	Next lemma studies the bias for the multitaper estimator. Note that the factor $\sqrt{d}$ appears only when $\beta > 1$ and is due to the assumption that $S \in \Theta(\beta, L)$, which concerns the coefficient of the gradient of $S$ and not their $\ell^2([d])$ norm.
	\begin{lemma}\label{lem:bias_s_hat}
		Let $\Phi$ satisfying Assumption \ref{ass_rho_leb} with intensity $\lambda = 1$. Let $W$ be a subset of $\mathbb{R}^d$, $I$ be a discrete subset of $\mathbb{N}^d$, $k_0 \in \mathbb{R}^d$ and let $(f_i)_{i \in I}$ be a family of functions with unit $L^2(\mathbb{R}^d)$ norm. Assume that $S \in \Theta(\beta, L)$ with $(\beta, L) \in (0, \infty)^2$ (see Definition~\ref{def:Hölder}). Assume that $\beta \leq 2$. Then, 
		\begin{equation}
			\left|\mathbb{E}\left[\widehat{S}(k_0)\right] - S(k_0)\right| \leq \frac{L \sqrt{d}}{|I|}\sum_{i \in I} \|f_i\|_{\dot{H}^{\beta/2}}^2 + \frac{4\|S\|_{\infty}}{|I|}\sum_{i \in I} \|f_i \mathds{1}_{\mathbb{R}^d \setminus W}\|_2.
		\end{equation}
	\end{lemma}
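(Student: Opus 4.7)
The plan is to compute $\mathbb{E}[|C_i(k_0)|^2]$ exactly via Campbell's formula, then split the resulting pointwise bias into a spectral localization part controlled by $\|f_i\|_{\dot{H}^{\beta/2}}^2$ and a spatial truncation part controlled by $\|f_i \mathds{1}_{\mathbb{R}^d \setminus W}\|_2$. Since $C_i(k_0) = T_i(k_0) - \mathbb{E}[T_i(k_0)]$ is centered, $\mathbb{E}[|C_i(k_0)|^2] = \operatorname{Var}[T_i(k_0)]$. Viewing $T_i(k_0)$ as the linear statistic against $x \mapsto e^{-\bm{i} k_0 \cdot x} f_i(x) \mathds{1}_W(x)$, the Campbell identity~\eqref{e.prop_campbell} together with the Fourier modulation rule $\mathcal{F}[e^{-\bm{i}k_0 \cdot} f_i \mathds{1}_W](k) = \mathcal{F}[f_i \mathds{1}_W](k - k_0)$ gives, after a change of variable,
\begin{equation*}
\mathbb{E}[|C_i(k_0)|^2] \;=\; \int_{\mathbb{R}^d} |\mathcal{F}[f_i \mathds{1}_W](k)|^2 \, S(k + k_0) \, dk.
\end{equation*}
Applying Plancherel to $\|f_i\|_2 = 1$ yields $S(k_0) = \int |\mathcal{F}[f_i](k)|^2 S(k_0)\, dk$, which allows us to decompose the pointwise bias as
\begin{equation*}
\mathbb{E}[|C_i(k_0)|^2] - S(k_0) \;=\; \underbrace{\int |\mathcal{F}[f_i](k)|^2 (S(k_0+k) - S(k_0))\, dk}_{=: A_i} \;+\; \underbrace{\int (|\mathcal{F}[f_i \mathds{1}_W]|^2 - |\mathcal{F}[f_i]|^2)\, S(k+k_0)\, dk}_{=: B_i}.
\end{equation*}

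For $A_i$ (spectral bias), I will use the Hölder regularity of $S$. When $\beta \leq 1$, the direct bound $|S(k_0+k) - S(k_0)| \leq L|k|^\beta$ integrates against $|\mathcal{F}[f_i](k)|^2$ to give $L \|f_i\|_{\dot{H}^{\beta/2}}^2$. When $1 < \beta \leq 2$ I will exploit the parity $|\mathcal{F}[f_i](-k)|^2 = |\mathcal{F}[f_i](k)|^2$ (which holds for real-valued tapers, as for the Hermite family used in Corollary~\ref{cor:L2_risk_herm}) to kill the linear Taylor contribution $\nabla S(k_0) \cdot k$ by odd symmetry. The second-order Taylor remainder $S(k_0+k) - S(k_0) - \nabla S(k_0) \cdot k$ is then estimated via the Hölder condition on $\nabla S$, writing $\int_0^1 (\nabla S(k_0+tk) - \nabla S(k_0)) \cdot k \, dt$ and applying the componentwise bound $L(t|k|)^{\beta-1}$; converting this to a Euclidean estimate costs an extra factor $\sqrt{d}$ and yields $|A_i| \leq \sqrt{d}\, L\, \|f_i\|_{\dot{H}^{\beta/2}}^2$. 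For $B_i$ (truncation bias), I will substitute $f_i \mathds{1}_W = f_i - f_i \mathds{1}_{\mathbb{R}^d \setminus W}$, expand $|\mathcal{F}[f_i \mathds{1}_W]|^2 - |\mathcal{F}[f_i]|^2 = -2 \operatorname{Re}(\mathcal{F}[f_i]\overline{\mathcal{F}[f_i \mathds{1}_{\mathbb{R}^d \setminus W}]}) + |\mathcal{F}[f_i \mathds{1}_{\mathbb{R}^d \setminus W}]|^2$, and combine Cauchy–Schwarz with Plancherel together with $\|f_i\|_2 = 1$ and $\|f_i \mathds{1}_{\mathbb{R}^d \setminus W}\|_2 \leq 1$ to bound $|B_i| \leq 4\|S\|_\infty \|f_i \mathds{1}_{\mathbb{R}^d \setminus W}\|_2$. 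Averaging over $i \in I$ and applying the triangle inequality then yields the stated estimate.

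The main obstacle I expect is the range $1 < \beta \leq 2$. Without the parity of $|\mathcal{F}[f_i]|^2$, the linear Taylor term would contribute a quantity of order $|\nabla S(k_0)| \int |\mathcal{F}[f_i](k)|^2 |k|\, dk$, which cannot be absorbed into $\|f_i\|_{\dot{H}^{\beta/2}}^2$. Exploiting the implicit symmetry of the tapers—consistent with their role in the subsequent Hermite-based Corollary~\ref{cor:L2_risk_herm}—is therefore the crucial step; the rest of the argument reduces to careful book-keeping of Cauchy–Schwarz and Plancherel estimates.
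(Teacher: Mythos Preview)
Your proposal is correct and follows essentially the same route as the paper: compute $\mathbb{E}[|C_i(k_0)|^2]$ via Campbell's formula~\eqref{e.prop_campbell}, split the bias into a spectral localization term bounded through the H\"older regularity of $S$ (using parity of $|\mathcal{F}[\cdot]|^2$ to cancel the linear Taylor term when $1<\beta\le 2$) and a truncation term bounded by Cauchy--Schwarz and Plancherel. The only cosmetic difference is that the paper subtracts the linear term at the level of the \emph{truncated} taper, exploiting that $k\mapsto|\mathcal{F}[f_i\mathds{1}_W](k)|^2$ is even, whereas you do it on the untruncated side; both are valid for real-valued tapers, and you are right to flag this implicit assumption, which the paper uses without comment.
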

	\begin{proof}
		Assume first that $\beta \leq 1$. Using the fact that $\|f_i\|_2 = 1$ for all $i \in I$, we get 
		\begin{align*}
			\left|\mathbb{E}\left[\widehat{S}(k_0)\right] - S(k_0)\right| \leq A +B, 
		\end{align*}
		where 
		\begin{align}\label{eq:a_{|b|}iais}
			& A := \frac{1}{|I|}\sum_{i \in I} \int_{\mathbb{R}^d} \left||\mathcal{F}[f_i \mathds{1}_{W}](k)|^2 - |\mathcal{F}[f_i](k)|^2\right|  \left(S(k_0 + k) + S(k_0)\right)dk,
			\\& B =  \frac{1}{|I|}\sum_{i \in I} \int_{\mathbb{R}^d} |\mathcal{F}[f_i](k)|^2 \left|S(k_0 + k) - S(k_0)\right|dk.  \nonumber
		\end{align}
		Since $S$ is $\beta$-Hölder with $\beta \leq 1$, we have
		\begin{align*}
			B \leq \frac{1}{|I|}\sum_{i \in I} \int_{\mathbb{R}^d} |\mathcal{F}[f_i](k)|^2 L |k|^{\beta} dk = \frac{L}{|I|}\sum_{i \in I} \|f_i\|_{\dot{H}^{\beta/2}}^2.
		\end{align*}
		Using the Cauchy-Schwarz inequality and the Plancherel theorem we obtain
		\begin{align*}
			A  &\leq \frac{2\|S\|_{\infty}}{|I|}\sum_{i \in I} \int_{\mathbb{R}^d} \left|\mathcal{F}[f_i \mathds{1}_{\mathbb{R}^d \setminus W}](k)|\right| \left||\mathcal{F}[f_i \mathds{1}_{W}](k)| + |\mathcal{F}[f_i](k)|\right| dk \\& \leq  \frac{2\|S\|_{\infty}}{|I|}\sum_{i \in I} \|\mathcal{F}[f_i \mathds{1}_{\mathbb{R}^d \setminus W}]\|_2 \||\mathcal{F}[f_i \mathds{1}_{W}]| + |\mathcal{F}[f_i]|\|_2 \leq \frac{4\|S\|_{\infty}}{|I|}\sum_{i \in I} \|f_i \mathds{1}_{\mathbb{R}^d \setminus W}\|_2,
		\end{align*}
		which concludes the proof when $\beta \leq 1$. Assume now that $\beta \in (1, 2]$. Hence $S$ is differentiable and using the fact that $k \mapsto  |\mathcal{F}[f_i \mathds{1}_{W}](k)|^2 \nabla S(k_0) \cdot k$ is even, we have
		\begin{align*}
			\left|\mathbb{E}\left[\widehat{S}(k_0)\right] - S(k_0)\right| = \left|\frac{1}{|I|}\sum_{i \in I} \int_{\mathbb{R}^d} |\mathcal{F}[f_i \mathds{1}_{W}](k)|^2 \left(S(k_0 + k) - S(k_0) - \nabla S(k_0) \cdot k \right)dk\right|  \leq A +B', 
		\end{align*}
		where $A$ is defined in~\eqref{eq:a_{|b|}iais} and 
		$$B' := \frac{1}{|I|}\sum_{i \in I} \int_{\mathbb{R}^d} |\mathcal{F}[f_i](k)|^2 \left|S(k_0 + k) - S(k_0) - \nabla S(k_0) \cdot k \right|dk.$$
		Since $S$ is $\beta$-Hölder with $\beta \geq 1$, we have
		\begin{align}\label{eq:biais_beta_s1}
			\left|S(k_0 +k) - S(k_0) - \nabla S(k_0) \cdot k \right| & = \left|\int_{0}^1 \left(\nabla S(k_0 + t k) - \nabla S(k_0)\right) \cdot k dt \right| \nonumber\\& \leq \int_{0}^1 \sqrt{d} L t^{\beta - 1} |k|^{\beta -1}  |k| dt  \leq L \sqrt{d} |k|^{\beta}.
		\end{align}
		Then, concluding as in the proof of the case $\beta \leq 1$, we obtain the result when $\beta \in (1, 2]$.
	\end{proof}
	
	\subsection{Proofs of Theorem~\ref{thm:L2_risk} and Corollary~\ref{cor:L2_risk_herm}}\label{sec:proof_l2_risk_end}
	
	In this section, gathering Lemmas~\ref{lem:var_S_hat} and~\ref{lem:bias_s_hat}, we prove Theorem~\ref{thm:L2_risk}. Then, we specify Theorem~\ref{thm:L2_risk} for the Hermite tapers with Corollary~\ref{cor:L2_risk_herm}.
	
	\begin{proof}[Proof of Theorem~\ref{thm:L2_risk}]
		The bias variance decomposition writes 
		\begin{align*}
			\text{s} := \mathbb{E}\left[\left|\widehat{S}(k_0) - S(k_0)\right|^2\right] &=  \operatorname{Var}[\widehat{S}(k_0)] + \left|\mathbb{E}\left[\widehat{S}(k_0)\right]  - S(k_0)\right|^2.
		\end{align*}
		According to Lemma~\ref{lem:var_S_hat} and Lemma~\ref{lem:bias_s_hat}, we have, by triangular inequality
		\begin{align*}
			&s \leq \Bigg(\frac{2\|S\|_{\infty}^2}{|I|} \left(1 + \left(\sum_{i \in I} \|f_{i} \mathds{1}_{\mathbb{R}^d \setminus W}\|_2^2\right)^{\frac12} \right)^2 + \left(\frac1{|I|} \sum_{i \in I} \|f_i\|_4^2\right)^2|\gamma_{\text{red}}^{4}|(\mathbb{R}^{3d}) \\& \qquad \qquad+ \left(\frac{\sqrt{d}L}{|I|r^{\beta}}\sum_{i \in I} \|f_i\|_{\dot{H}^{\beta/2}}^2 + \frac{4\|S\|_{\infty}}{|I|}\sum_{i \in I} \|f_i \mathds{1}_{\mathbb{R}^d \setminus W}\|_2\right)^2\Bigg)^{\frac12} \\& \leq \frac{\sqrt{2}\|S\|_{\infty}}{|I|^{\frac12}}+ \frac1{|I|} \sum_{i \in I} \|f_i\|_4^2 |\gamma_{\text{red}}^{4}|(\mathbb{R}^{3d})^{\frac12} + \frac{\sqrt{d}L}{|I|}\sum_{i \in I} \|f_i\|_{\dot{H}^{\beta/2}}^2 \\& \qquad\qquad + \sqrt{2}\|S\|_{\infty}\left(\frac1{|I|}\sum_{i \in I} \|f_{i} \mathds{1}_{\mathbb{R}^d \setminus W}\|_2^2\right)^{\frac12} + \frac{4\|S\|_{\infty}}{|I|}\sum_{i \in I} \|f_i \mathds{1}_{\mathbb{R}^d \setminus W}\|_2.
		\end{align*}
		We conclude using $\sqrt{2}+4 \leq 6$ and the Cauchy-Schwarz inequality:
		$$\sum_{i \in I} \|f_i \mathds{1}_{\mathbb{R}^d \setminus W}\|_2 \leq |I|^{\frac12} \left(\sum_{i \in I} \|f_{i} \mathds{1}_{\mathbb{R}^d \setminus W}\|_2^2\right)^{\frac12}.$$
	\end{proof}
	
	\begin{proof}[Proof of Corollary~\ref{cor:L2_risk_herm}]
		The change of variable $x \leftrightarrow x/r$ and Lemma~\ref{lem:tail_herm} applied with $\rho = R/r$ give $
		W\text{-}loc \leq C e^{-c' (R/r)^{3\theta}} \leq C e^{-c' |I|^{\frac{3\theta}{2d}}},
		$
		where $0 < c', C < \infty$ are constants. In the second inequality we used \eqref{cond:risk_herm_rd}. With \eqref{cond:risk_herm_I}, we obtain
		$	W\text{-}loc \leq C e^{-c |W|^{\frac{3\beta\theta}{d(2\beta+d)}}},$
		for constants that do not depend on $r, I$ and $W$. Moreover, Lemma~\ref{lem:norm_H12_herm} and Lemma~\ref{lem:L4_herm} yield
		\begin{align*}
			&\mathcal{F}\text{-}loc \leq \frac{L \sqrt{d}|I|^{\beta/(2d)}}{r^{\beta}}, \qquad 
			\frac1{|I|}\sum_{i \in I} \|\psi_i\|_4^2 \leq  c_{\psi}^d  \frac{\log(|I|^{\frac1d})^{\frac{d}{2}}}{r^{d/2}|I|^{1/4}},
		\end{align*}
		where $c_{\psi} \in (0, \infty)$ is a numerical constant. Condition \eqref{cond:risk_herm_rd} implies that $r \geq R/(\sqrt{2} |I|^{1/(2d)})$.
		Subsequently, we get, with Theorem~\ref{thm:L2_risk} 
		\begin{multline}
			\mathbb{E}\left[\left|\widehat{S}(k_0) - S(k_0)\right|^2\right]^{\frac12}  \leq \\ \frac{\sqrt{2}\|S\|_{\infty}}{|I|^{\frac12}}  + \frac{L \sqrt{d}}{2^{\frac{\beta}2}} \left(\frac{|I|}{|W|}\right)^{\frac{\beta}d} + \left(\frac{c_{\psi}}{\sqrt{2}}\right)^d|\gamma_{\text{red}}^{4}|^{\frac12} \frac{\log(|I|^{\frac1d})^{\frac{d}{2}}}{|W|^{\frac12}} + 6 C\|S\|_{\infty} e^{-c |W|^{\frac{3\beta\theta}{d(2\beta+d)}}}.\label{eq:risk_l2_almost_final}
		\end{multline}
		Using condition \eqref{cond:risk_herm_I} we conclude the proof with elementary computations.
	\end{proof}
	
	\section{Proof related to Section~\ref{sec:nn_asymp}}\label{sec:proof_conc}
	
	The results of Section~\ref{sec:nn_asymp} rely on the control the cumulants of the multitaper estimator. Lemma~\ref{lem:kappa_m_1}, which constitutes a first step for a workable bound on the cumulants of $\widehat{S}(k_0)$, is the core of the proof of Theorem~\ref{thm:conc}. It relies on the Isserlis-Wick formula of Corollary~\ref{corol:cumulants_square_rvs} that allows to express the joint cumulants of square of random variables as a sum of products of joint cumulants of these random variables. The key point of this lemma is to provide a bound on the cumulants of the multitaper estimator where the tapers indexes are summed (see $\Sigma$ defined in \eqref{eq:sigma}). To this end, we make essential use of the combinatorial Lemma~\ref{lem:som_comb}, which crucially exploits the connectivity condition of the partitions involved.
	
	\begin{lemma}\label{lem:kappa_m_1}
		Let $\Phi$ satisfying Assumption \ref{ass_rho_leb} with intensity $\lambda = 1$ and such that $S \in \Theta(\beta, L)$ with $\beta \in (0, 2]$ and $L < \infty.$ Let $W$ be a subset of $\mathbb{R}^d$ and $I$ be a discrete subset of $\mathbb{N}^d$. Assume that $(f_i)_{i \in I}$ is family of orthonormal functions that are $L^1(\mathbb{R}^d) \cap L^{\infty}(\mathbb{R}^d)$. If $\beta > 1$, assume for all $(i, i') \in I^2$, $\mathcal{F}[f_i](k) \mathcal{F}[f_{i'}](k) =  \mathcal{F}[f_i](-k) \mathcal{F}[f_{i'}](-k)$ for all $k \in \mathbb{R}^d$. Let $k_0 \in \mathbb{R}^d$. Then, for all $m \geq 2$, 
		\begin{align}\label{eq:kappa_m_1}
			|\kappa_m(\widehat{S}(k_0))| \leq 2^{m-1}(m-1)! \frac{S(k_0)^m}{|I|^{m-1}} +\sum_{\substack{\sigma \in \Pi[2m],\\ \sigma \vee \tau = \mathbf{1}_{2m}, |\sigma| \geq 2}} \prod_{b \in \sigma} K_{|b|},
		\end{align}
		where, for $\Sigma$ defined in \eqref{eq:sigma}:
		\begin{align}\label{eq:cum_X}
			K_1 = 0,~K_2 = \Sigma, \quad \forall s \geq 3,~K_s = \left(\frac1{|I|} \sum_{i \in I} a_{s}(i)^2\right)^{\frac{s}2} \sum_{\pi \in \Pi[s]} |\gamma_{\text{red}}^{(|\pi|)}|(\mathbb{R}^{d(|\pi|-1)}).
		\end{align}
		Finally, $a_{s}(i) := \|f_i\|_{s}$ for $s \geq 4$ and
		\begin{align*}
			a_{3}(i) := \left(3\sup_{i \in I} \|f_i \mathds{1}_{\mathbb{R}^d \setminus W}\|_3 \sup_{i \in I} \|f_i\|_3^2 +  3\sup_{i \in I} \|\mathcal{F}[f_i] \mathds{1}_{|\cdot| \geq |k_0|/3}\|_1\right)^{\frac13} \wedge  \|f_i\|_{3}.
		\end{align*}
	\end{lemma}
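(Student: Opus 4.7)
The first step is to pull the sum over tapers out of the cumulant via multilinearity, writing
\[
\kappa_m(\widehat S(k_0)) = \frac{1}{|I|^m} \sum_{i_1,\dots,i_m \in I} \kappa_m\bigl(|C_{i_1}(k_0)|^2,\dots,|C_{i_m}(k_0)|^2\bigr),
\]
and then to apply the Isserlis--Wick-type identity of Corollary~\ref{corol:cumulants_square_rvs} to each inner cumulant. With $\tau = 12|34|\cdots|(2m-1)(2m)$ and the $2m$-tuple defined by $D_{2j-1} = C_{i_j}(k_0)$ and $D_{2j} = \overline{C_{i_j}(k_0)}$, this identity expresses the cumulant of squares as
\[
\kappa_m(|C_{i_1}|^2,\dots,|C_{i_m}|^2) = \sum_{\substack{\sigma \in \Pi[2m] \\ \sigma \vee \tau = \mathbf{1}_{2m}}} \prod_{b \in \sigma} \kappa(D_j : j \in b).
\]
Since the $C_{i_j}$ are centered, any $\sigma$ having a block of size $1$ contributes zero and can be discarded.

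The analysis then splits according to the coarseness of $\sigma$. When $|\sigma| = m$, i.e.\ $\sigma$ is a perfect matching, the connectivity constraint $\sigma \vee \tau = \mathbf{1}_{2m}$ forces $\sigma \cup \tau$ to be a single $2m$-cycle, and a direct combinatorial count gives exactly $2^{m-1}(m-1)!$ such matchings. For each one, the corresponding product of pairwise covariances $\operatorname{Cov}(T_{i_a}(\pm k_0),T_{i_b}(\pm k_0))$ is handled in the spirit of Lemma~\ref{lem:bessel_k2}: the orthonormality of $(f_i)_{i\in I}$ produces a quasi-diagonal structure in $i$, and the cyclic constraint collapses the sum over $(i_1,\dots,i_m)$ down to one effective free index, yielding the leading term $2^{m-1}(m-1)! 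S(k_0)^m / |I|^{m-1}$. The border, smoothness, and residual $k_0 \leftrightarrow -k_0$ correlation remainders generated at this step are precisely what enter the definition of $\Sigma = K_2$, and they reappear whenever size-$2$ blocks occur inside a coarser partition.

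For the remaining partitions ($|\sigma| \le m-1$), at least one block has size $\ge 3$. I would bound each block cumulant $\kappa(D_j : j \in b)$ through Lemma~\ref{lem:bril_thin} by a product of the $L^{|b|}$-norms of the corresponding tapers times $\sum_{\pi \in \Pi[|b|]} |\gamma_{\text{red}}^{(|\pi|)}|(\mathbb{R}^{d(|\pi|-1)})$. For size-$3$ blocks the rough factor $\|f_i\|_3^3$ is sharpened using the Hölder regularity of $S$ and, when $\beta > 1$, the assumed Fourier symmetry of $\mathcal F[f_i]\mathcal F[f_{i'}]$, which is used to cancel the first-order term of a Taylor expansion of $S$ near $k_0$; this replaces $\|f_i\|_3^3$ by the mixed quantity $a_3(i)^3$ that controls border, high-frequency, and $L^3$ contributions at once. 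Summing over $(i_1,\dots,i_m)$, the crucial step is the combinatorial Lemma~\ref{lem:som_comb}, whose hypothesis is precisely $\sigma \vee \tau = \mathbf{1}_{2m}$, which lets one factor the joint sum $|I|^{-m} \sum_{i_1,\dots,i_m} \prod_{b\in\sigma}(\cdot)$ into the clean product $\prod_{b\in\sigma} K_{|b|}$.

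The main obstacle lies in this last factorization: the Brillinger block bounds carry $L^s$ norms that depend on the indices $i_j$, and the index sum does not naively decouple across blocks of $\sigma$. The connectivity $\sigma \vee \tau = \mathbf{1}_{2m}$ is precisely what permits one, via a combinatorial rearrangement, to upper bound the joint index sum by a product of block-averaged $\ell^s$-over-$i$ norms; together with Bessel's inequality for the size-$2$ blocks (feeding $K_2 = \Sigma$) and the Hölder-type estimate for the size-$\ge 3$ blocks (feeding $K_s$), this yields the stated bound.
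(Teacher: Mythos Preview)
Your overall architecture matches the paper's proof closely: multilinearity, Isserlis--Wick via Corollary~\ref{corol:cumulants_square_rvs}, discarding singletons by centering, the count $2^{m-1}(m-1)!$ for connected pair matchings, Lemma~\ref{lem:bril_thin} for blocks of size $\ge 3$, and Lemma~\ref{lem:som_comb} for the index-sum factorization. Two points need correction, and they are related.

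First, your attribution of the mechanism behind $a_3(i)$ is wrong. The sharpening of the size-$3$ block bound does \emph{not} use the H\"older regularity of $S$ or the Fourier-symmetry condition. It uses Lemma~\ref{lem:decor_freq_3}, a frequency-decorrelation estimate: for a block $b$ of size $3$ the three frequencies are $\epsilon_1 k_0,\epsilon_2 k_0,\epsilon_3 k_0$ with $\epsilon_s\in\{-1,1\}$, and necessarily $|\epsilon_1+\epsilon_2+\epsilon_3|\,|k_0|\ge |k_0|$. Lemma~\ref{lem:decor_freq_3} then bounds the triple cumulant by border terms $\|f_i\mathds{1}_{\mathbb R^d\setminus W}\|_3\|f_{i'}\|_3^2$ and by the Fourier tails $\|\mathcal F[f_i]\mathds{1}_{|\cdot|\ge |k_0|/3}\|_1$, which is exactly the content of $a_3(i)$. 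No Taylor expansion of $S$ near $k_0$ is involved for these blocks.

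Second, the H\"older regularity and the Fourier-symmetry assumption enter \emph{only} in bounding $K_2=\Sigma$, specifically in controlling the size-$2$ covariances. The paper splits the size-$2$ blocks of $\sigma$ into $\mathcal B_2^+(\sigma)$ (the two legs carry opposite frequency signs) and $\mathcal B_2^-(\sigma)$ (same signs). For $b=(b_1,b_2)\in\mathcal B_2^+(\sigma)$ one writes $\kappa(E_{i_j}:j\in b)=S(k_0)\,\mathds{1}_{i_{\lceil b_1/2\rceil}=i_{\lceil b_2/2\rceil}}+\Delta_b$ and expands the product over $\mathcal B_2^+(\sigma)$ binomially over subsets $P\subset\mathcal B_2^+(\sigma)$. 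Lemma~\ref{lem:som_comb} then yields $B_{\sigma,P}=|I|$ precisely when $|\sigma|=m$ and $|P|=m$, which produces the leading term $S(k_0)^m/|I|^{m-1}$; all other $(\sigma,P)$ feed into an intermediate quantity $\Sigma_0$ that is finally bounded by $\Sigma$. It is in the bound $\sum_{i,i'}|a_2^+(i,i')|^2\le |I|\,d L^2\sum_i\|f_i\|_{\dot H^\beta}^2+\text{(border)}$ that one actually uses $S\in\Theta(\beta,L)$ and, for $\beta>1$, the evenness of $\mathcal F[f_i]\mathcal F[f_{i'}]$ to kill the first-order Taylor term. Your sketch places these two ingredients on the wrong block sizes.
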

	\begin{proof}
		Using the $m$-linearity of the cumulants we get
		\begin{align*}
			\kappa_m(\widehat{S}(k_0)) = \frac1{|I|^{m}} \sum_{(i_1, \dots, i_m) \in I^m} \kappa(|C_{i_1}(k_0)|^2, \dots, |C_{i_m}(k_0)|^2).
		\end{align*}
		Recall that $(C_i(k_0))_{i \in I}$ are defined in \eqref{eq:def_Ti_Ci}. Then, using Corollary~\ref{corol:cumulants_square_rvs} and arguing as in the proof of Lemma~\ref{lem:var_S_hat}, we obtain
		\begin{align}\label{eq:isserlis_expl}
			\kappa(|C_{i_1}(k_0)|^2, \dots, |C_{i_m}(k_0)|^2) &= \sum_{\substack{\sigma \in \Pi_2[2m],\\ \sigma \vee \tau = \mathbf{1}_{2m}}} \prod_{b \in \sigma} \kappa(E_{i_j}(k_0); j \in b),
		\end{align}	
		where $\Pi_2[2m]$ denotes the sets of partitions of $[2m]$ having only blocks of size larger than $2$, $E_{i_{2p-1}}(k_0) = T_{i_p}(k_0)$ and $E_{i_{2p}}(k_0) = \overline{T_{i_p}(k_0)}$ for $p \in [m]$, and $\tau = 12|\dots|(2m-1)2m.$
		We bound the cumulants corresponding to the blocks of size larger than $3$ using Lemma \ref{lem:bril_thin}:
		\begin{align*}
			\Big|\kappa(E_{i_j}(&k_0); j \in b)\Big| \leq \prod_{j \in b} \|f_{i_{\lceil j/2\rceil}}\|_{|b|} \sum_{\pi \in \Pi[|b|]} |\gamma_{\text{red}}^{(|\pi|)}|(\mathbb{R}^{d(|\pi|-1)}).
		\end{align*}
		Moreover, when $|b| = 3$, Lemma \ref{lem:decor_freq_3} yields
		\begin{multline*}
			\Big|\kappa(E_{i_j}(k_0); j \in b)\Big|  \\ \leq  3 \sum_{\pi \in \Pi[3]} |\gamma_{\text{red}}^{(|\pi|)}|(\mathbb{R}^{d(|\pi|-1)})  \left(\sup_{i \in I} \|f_i \mathds{1}_{\mathbb{R}^d \setminus W}\|_3 \sup_{i \in I} \|f_i\|_3^2 +  \sup_{i \in I} \|\mathcal{F}[f_i] \mathds{1}_{|\cdot| \geq |k_0|/3}\|_1 \right).
		\end{multline*}
		Indeed, the sum of frequencies involved in the functions $f_i$ is of the form
		$k_0  \sum_{s = 1}^3 \epsilon_s,$
		with $(\epsilon_s)_{s\in [3]} \in \{-1, 1\}^3$. Hence 	$|k_0  \sum_{s = 1}^3 \epsilon_s| \geq |k_0|$.
		Accordingly,
		\begin{multline*}
			|	\kappa_m(|C_{i_1}(k_0, r)|^2, \dots, |C_{i_m}(k_0)|^2)| \\\leq \sum_{\substack{\sigma \in \Pi_2[2m],\\ \sigma \vee \tau = \mathbf{1}_{2m}}} \prod_{\substack{b \in \sigma \\ |b| \geq 3}} \prod_{j \in b} a_{|b|}(i_{\lceil j/2\rceil})   \sum_{\pi \in \Pi[|b|]} |\gamma_{\text{red}}^{(|\pi|)}|(\mathbb{R}^{d(|\pi|-1)}) \prod_{\substack{b \in \sigma \\|b| = 2}} |\kappa(E_{i_j}(k_0); j \in b)|.
		\end{multline*}
		We have thus obtained:
		\begin{align*}
			|\kappa_m(\widehat{S}(k_0))| \leq \sum_{\substack{\sigma \in \Pi_2[2m],\\ \sigma \vee \tau = \mathbf{1}_{2m}}}  \left(\prod_{\substack{b \in \sigma \\ |b| \geq 3}}  \sum_{\pi \in \Pi[|b|]} |\gamma_{\text{red}}^{(|\pi|)}|(\mathbb{R}^{d(|\pi|-1)})\right) A_{\sigma},
		\end{align*}
		with 
		\begin{align*}
			A_{\sigma}:= \frac1{|I|^m} \sum_{(i_1, \dots, i_m) \in I^m} \prod_{\substack{b \in \sigma \\ |b| \geq 3}} \prod_{j \in b} a_{|b|}(i_{\lceil j/2\rceil}) \prod_{\substack{b \in \sigma \\|b| = 2}} |\kappa(E_{i_j}(k_0); j \in b)|.
		\end{align*}
		
		To continue the proof, we introduce some notations. Denote by $\mathcal{B}_2(\sigma)$ the blocks of size $2$ of $\sigma$. Let $\varepsilon_p = 1$ if $p$ is even and $-1$ otherwise. Moreover, denote
		$$\mathcal{B}_2^+(\sigma):= \{b = (b_1, b_2) \in \mathcal{B}_2(\sigma)|~\varepsilon_{b_1} \varepsilon_{b_1} = +1\},$$
		and $\mathcal{B}_2^-(\sigma) := \mathcal{B}_2(\sigma) \setminus \mathcal{B}_2^+(\sigma).$ Finally, for $b = (b_1, b_2) \in \mathcal{B}_2(\sigma)$, let
		$\delta_b := \mathds{1}_{i_{\lceil b_1/2 \rceil} = i_{\lceil b_2/2 \rceil}}.$
		We introduce these sets since we anticipate, for $b = (b_1, b_2) \in \mathcal{B}_2^+(\sigma)$, 
		\begin{align*}
			\kappa(E_{i_j}(k_0); j \in b) &= \int_{\mathbb{R}^d} \mathcal{F}[f_{i_{\lceil \frac{b_1}2 \rceil}} \mathds{1}_{W}](k + \varepsilon_{b_1} k_0) \overline{\mathcal{F}[f_{i_{\lceil \frac{b_2}2 \rceil}} \mathds{1}_{W}]}(k + \varepsilon_{b_2} k_0)S(k) dk \\& \simeq S(k_0) \langle f_{i_{\lceil \frac{b_1}2 \rceil}}, f_{i_{\lceil \frac{b_2}2 \rceil}}\rangle = S(k_0) \delta_b,
		\end{align*}
		whereas for $b = (b_1, b_2) \in \mathcal{B}_2^-(\sigma)$, with $b_1$ even and $b_2$ odd,
		\begin{align*}
			\kappa(E_{i_j}(k_0); j \in b) \simeq S(k_0) \langle \mathcal{F}[f_{i_{\lceil \frac{b_1}2 \rceil}}](\cdot + k_0), \mathcal{F}[f_{i_{\lceil \frac{b_2}2 \rceil}}](\cdot - k_0)\rangle.
		\end{align*}
		To take into account this previous heuristic, we denote, for $b \in \mathcal{B}_2^+(\sigma)$, 
		$$\Delta_b := 	\kappa(E_{i_j}(k_0); j \in b) - S(k_0) \delta_b.$$ 
		In the term $A_{\sigma}$, we expand the product over the blocks of size two of $\sigma$ as follows:
		\begin{align*}
			\prod_{\substack{b \in \sigma \\|b| = 2}} |\kappa(E_{i_j}(k_0); j \in b)| &=  \prod_{b \in \mathcal{B}_2^-(\sigma)} |\kappa(E_{i_j}(k_0); j \in b)|  \prod_{b \in \mathcal{B}_2^+(\sigma)} \left(S(k_0) \delta_b + \Delta_b \right) \\& = \sum_{P \in \mathcal{P}(\mathcal{B}_2^+(\sigma))}  \prod_{b \in \mathcal{B}_2^-(\sigma)} |\kappa(E_{i_j}(k_0); j \in b)|  \prod_{b \in P} S(k) \delta_b \prod_{b \in P^c} \Delta_b, 
		\end{align*}
		where $\mathcal{P}(\mathcal{B}_2^+(\sigma))$ denote the subsets of $\mathcal{B}_2^+(\sigma)$.
		This yields:
		$$A_{\sigma} = \frac1{|I|^m} \sum_{P \in \mathcal{P}(\mathcal{B}_2^+(\sigma))} S(k_0)^{|P|} B_{\sigma, P},$$
		where
		$$B_{\sigma, P} := \sum_{(i_1, \dots, i_m) \in I^m} \prod_{\substack{b \in \sigma \\ |b| \geq 3}} \prod_{j \in b} a_{|b|}(i_{\lceil j/2\rceil}) \prod_{b \in \mathcal{B}_2^-(\sigma)} |\kappa(E_{i_j}(k_0); j \in b)|  \prod_{b \in P} \delta_b \prod_{b \in P^c} \Delta_b,$$
		where we recall that $\delta_b$ and $\Delta_b$ depend on indexes $i_1, \dots, i_m$. We distinguish several cases. Assume that $\sigma$ contains only blocks of size $2$ and that $|P| = m$. In that case, according to Lemma \ref{lem:som_comb}, 
		$B_{\sigma, P} = |I|.$
		Otherwise, Lemma \ref{lem:som_comb} yields:
		\begin{align*}
			B_{\sigma, P} \leq \prod_{\substack{b \in \sigma \\ |b| \geq 3}} \left(\sum_{i \in I} a_{|b|}(i)^2\right)^{\frac{|b|}2} \left(\sum_{(i, i') \in I^2} |a_2^{-}(i, i')|^2 \right)^{\frac{|\mathcal{B}_2^-(\sigma)|}2} \left(\sum_{(i, i') \in I^2} |a_2^{+}(i, i')|^2 \right)^{\frac{|P^c|}2},
		\end{align*}
		where
		\begin{align*}
			&a_2^{-}(i, i') := \int_{\mathbb{R}^d} \mathcal{F}[f_{i} \mathds{1}_{W}](k + k_0) \overline{\mathcal{F}[f_{i'} \mathds{1}_{W}]}(k - k_0)S(k) dk,
			\\& a_2^{+}(i, i') := \int_{\mathbb{R}^d} \mathcal{F}[f_{i} \mathds{1}_{W}](k + k_0) \overline{\mathcal{F}[f_{i'} \mathds{1}_{W}]}(k + k_0)S(k) dk - S(k_0) \mathds{1}_{i = i'}.
		\end{align*}
		Note that, using $S(\cdot) = S(-\cdot)$ and changes of variable, we have $|a_2^{\pm}(i, i')| = |a_2^{\pm}(i', i)|$.  If $\sigma$ contains at least one block of size larger than three, we have, using the binomial theorem:
		\begin{align*}
			A_{\sigma} &\leq \frac1{|I|^m}  \prod_{\substack{b \in \sigma \\ |b| \geq 3}} \left(\sum_{i \in I} a_{|b|}(i)^2\right)^{\frac{|b|}2} \left(\sum_{(i, i') \in I^2} |a_2^{-}(i, i')|^2 \right)^{\frac{|\mathcal{B}_2^-(\sigma)|}2} \left(S(k_0) + \left(\sum_{(i, i') \in I^2} |a_2^{+}(i, i')|^2 \right)^{\frac12}\right)^{|\mathcal{B}_2^+(\sigma)|} \\& \leq \prod_{\substack{b \in \sigma \\ |b| \geq 3}} \left(\frac1{|I|}\sum_{i \in I} a_{|b|}(i)^2\right)^{\frac{|b|}2} \prod_{b \in \mathcal{B}_2(\sigma)} \Sigma_0,
		\end{align*}
		where
		\begin{align*}
			\Sigma_0 := \frac{S(k_0)}{|I|} + \frac1{|I|}\left(\sum_{(i, i') \in I^2} |a_2^{+}(i, i')|^2 \right)^{\frac12} +  \frac1{|I|}\left(\sum_{(i, i') \in I^2} |a_2^{-}(i, i')|^2 \right)^{\frac12}.
		\end{align*}
		Indeed, for the term $|I|^m$, we use the fact that
		$2m = 2 |\mathcal{B}_2(\sigma)| + \sum_{b \in \sigma, ~|b| \geq 3} |b|.$
		If $\sigma$ contains only blocks of size $2$, we obtain:
		\begin{align*}
			&A_{\sigma} \leq \frac{S(k_0)^m}{|I|^{m-1}} + \frac1{|I|^m}\left(\sum_{(i, i') \in I^2} |a_2^{-}(i, i')|^2 \right)^{\frac{|\mathcal{B}_2^-(\sigma)|}2} \sum_{P \in \mathcal{P}(\mathcal{B}_2^+(\sigma)), |P| < m} S(k_0)^{|P|}  \left(\sum_{(i, i') \in I^2} |a_2^{+}(i, i')|^2 \right)^{\frac{|P^c|}2} \\&\quad \leq \frac{S(k_0)^m}{|I|^{m-1}} + \prod_{\substack{b \in \sigma \\ |b| \geq 3}} \left(\frac1{|I|}\sum_{i \in I} a_{|b|}(i)^2\right)^{\frac{|b|}2} \prod_{b \in \mathcal{B}_2(\sigma)} \Sigma_0.
		\end{align*}
		Gathering both cases and using the fact that $|\{\sigma \in \Pi[2m]|~|\sigma| = m,~ \sigma \vee \tau = \mathbf{1}_{2m}\}| =  2^{m-1}(m-1)!$ (exercise 3.5 page 92 of~\cite{mccullagh2018tensor}), we get
		\begin{multline*}
			|\kappa_m(\widehat{S}(k_0))| \leq 2^{m-1}(m-1)! \frac{S(k_0)^m}{|I|^{m-1}} \\ + \sum_{\substack{\sigma \in \Pi_2[2m],\\ \sigma \vee \tau = \mathbf{1}_{2m}}}  \left(\prod_{\substack{b \in \sigma \\ |b| \geq 3}}  \sum_{\pi \in \Pi[|b|]} |\gamma_{\text{red}}^{(|\pi|)}|(\mathbb{R}^{d(|\pi|-1)})\right)  \left(\frac1{|I|}\sum_{i \in I} a_{|b|}(i)^2\right)^{\frac{|b|}2} \prod_{b \in \mathcal{B}_2(\sigma)} \Sigma_0,
		\end{multline*}
		where in the second term we use the fact that $\mathds{1}_{|\sigma| = m} + \mathds{1}_{|\sigma| < m} = 1$. The end of the proof consists in bounding $\Sigma_0$ by $\Sigma$ (see \eqref{eq:sigma}). According to Lemma \ref{lem:decor_freq_2}, 
		\begin{align*}
			\frac1{|I|}\left(\sum_{(i, i') \in I^2} |a_2^{-}(i, i')|^2 \right)^{\frac12} \leq 2 \|S\|_{\infty} \left(\left(\frac1{|I|}\sum_{i \in I} \|f_i \mathds{1}_{\mathbb{R}^d \setminus W}\|_2^2\right)^{\frac12} + \left(\frac1{|I|}\sum_{i \in I} \|\mathcal{F}[f_i] \mathds{1}_{|\cdot| \geq |k_0|}\|_2^2\right)^{\frac12} \right).
		\end{align*}
		We also have, according to Lemma \ref{lem:bessel_k2}.
		\begin{align*}
			\frac1{|I|}\left(\sum_{(i, i') \in I^2} |a_2^{-}(i, i')|^2 \right)^{\frac12} \leq \frac{\|S\|_{\infty}}{|I|^{\frac12}} + \|S\|_{\infty}\left(\frac1{|I|}\sum_{i \in I} \|f_i \mathds{1}_{\mathbb{R}^d \setminus W}\|_2^2\right)^{\frac12}.
		\end{align*}
		For the sum related to $a_2^{+}$, we obtain, using the triangular inequality, the Cauchy-Schwarz inequality, the Plancherel theorem and the fact that $(f_i)_{i \in I}$ is orthogonal:
		\begin{align*}
			|a_2^{+}(i, i')| \leq \|S\|_{\infty} \left(\|f_i \mathds{1}_{\mathbb{R}^d \setminus W}\|_2  +\|f_{i'}\mathds{1}_{\mathbb{R}^d \setminus W}\|_2 \right) + |\langle \mathcal{F}[f_i], \mathcal{F}[f_{i'}] (S(\cdot-k_0) - S(k_0)) \rangle|.
		\end{align*}
		Assume that $\beta \leq 1$. Then, using again the fact that $(f_i)_{i \in I}$ is orthogonal we have:
		\begin{align*}
			\frac1{|I|}\left(\sum_{(i, i') \in I^2} |a_2^{+}(i, i')|^2 \right)^{\frac12} \leq L\left(\frac1{|I|^2}\sum_{i \in I} \|f_i \|_{\dot{H}^{\beta}}^2\right)^{\frac12}+ 2 \|S\|_{\infty} \left(\frac1{|I|}\sum_{i \in I} \|f_i \mathds{1}_{\mathbb{R}^d \setminus W}\|_2^2\right)^{\frac12}.
		\end{align*}
		Assume that $\beta \in (1, 2]$. In that case $\mathcal{F}[f_i] \mathcal{F}[f_{i'}]$ is even. Hence $\langle \mathcal{F}[f_i], \mathcal{F}[f_{i'}] \nabla S(k_0) \cdot k \rangle = 0$. Accordingly, arguing as in the proof of Lemma \ref{lem:bias_s_hat}, we obtain
		\begin{align*}
			\frac1{|I|}\left(\sum_{(i, i') \in I^2} |a_2^{+}(i, i')|^2 \right)^{\frac12} \leq \sqrt{d}L\left(\frac1{|I|^2}\sum_{i \in I} \|f_i \|_{\dot{H}^{\beta}}^2\right)^{\frac12}+ 2 \|S\|_{\infty} \left(\frac1{|I|}\sum_{i \in I} \|f_i \mathds{1}_{\mathbb{R}^d \setminus W}\|_2^2\right)^{\frac12}.
		\end{align*}
		The previous bounds yields $\Sigma_0 \leq \Sigma$, which concludes the proof.
	\end{proof}
	
	The next lemma exploits the connectivity condition of partitions arising in the computation of joint cumulants of the squares of the linear statistics associated with the tapers. This condition allows for combinatorial simplifications due to the orthogonality of the tapers. The key point is to provide a bound involving the sums over indices $i, i' \in I$ of the sequences $(a_2^b(i, i'))_{b \in \mathcal{B}_2(\sigma)}$. Indeed, $a_2^b(i, i')$ typically corresponds to a scalar product between the taper $f_i$ and a function depending on $b$. In this context, Bessel's inequality yields a sharper bound on $\sum_{(i, i') \in I^2} |a_2^b(i, i')|^2$ than the trivial bound $\sum_{(i, i') \in I^2} |a_2^b(i, i')|^2 \leq |I|^2 \sup_{(i, i')} |a_2^b(i, i')|^2$, for which the use of the combinatorial structure would not have been necessary. However, this refinement is essential to show that the non-asymptotic correlation term $a_2$ in Theorem~\ref{thm:conc} is negligible at sufficiently large frequencies.
	
	\begin{lemma}\label{lem:som_comb}
		Let $I$ be a discrete subset of $\mathbb{N}^d$.  Let $m \geq 2$ and $\sigma$ be a partition of $[2m]$ having block of size at least $2$. Assume that $\sigma \vee \tau = \mathbf{1}_m$, where $\tau = 12|\dots|(2m-1) 2m$. Denote by $\mathcal{B}_2(\sigma)$ the blocks of size $2$ of $\sigma$. For a partition $b \in \mathcal{B}_2(\sigma)$, we denote $b_1 < b_2$ such that $b = (b_1, b_2)$.
		Let $s \leq m$. Consider $P$ a subset of cardinal $s$ of $\mathcal{B}_2(\sigma)$. 
		Finally consider non-negative numbers $(a_n(i))_{n \geq 3, i \in I}$ and $(a_2^b(i, i'))_{b \in \mathcal{B}_2(\sigma), (i, i')\in I^2}$.
		Denote:
		$$B_{\sigma, P}:= \sum_{(i_1, \dots, i_m) \in I^m} \prod_{\substack{b \in \sigma \\ |b| \geq 3}} \prod_{j \in b} a_{|b|}(i_{\lceil j/2\rceil}) \prod_{b \in \mathcal{B}_2(\sigma) \setminus P} a_2^b(i_{\lceil b_1/2 \rceil}, i_{\lceil b_2/2 \rceil})  \prod_{b \in P}  \mathds{1}_{i_{\lceil b_1/2 \rceil} = i_{\lceil b_2/2 \rceil}}.$$
		Then, $B_{\sigma, P}= |I|$ if $|\sigma| = m = s$. Otherwise, we have 
		\begin{equation}\label{eq:B_2and3}
			B_{\sigma, P} \leq \prod_{\substack{b \in \sigma \\ |b| \geq 3}} \left(\sum_{i \in I} |a_{|b|}(i)|^2\right)^{\frac{|b|}2} \prod_{b \in \mathcal{B}_2(\sigma) \setminus P} \left(\sum_{(i, i') \in I^2} |a_2^b(i, i')|^2 \right)^{\frac12}.
		\end{equation}
	\end{lemma}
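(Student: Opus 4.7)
My plan is to treat the two cases separately. Case~1 (the exact equality $B_{\sigma, P} = |I|$ when $|\sigma| = m = s$) reduces to a purely combinatorial argument using the connectivity hypothesis. Case~2 (the bound~\eqref{eq:B_2and3}) is an exercise in iterated Cauchy--Schwarz, exploiting the fact that each index $i_p$, $p \in [m]$, appears in exactly two ``slots'' among the factors, since $\lceil \cdot / 2 \rceil^{-1}(p) = \{2p-1, 2p\}$.

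For Case~1, since $|\sigma| = m$ and $\sum_{b \in \sigma} |b| = 2m$, every block of $\sigma$ has size $2$, and $s = m$ forces $P = \mathcal{B}_2(\sigma)$. Hence the product reduces to a product of $m$ Kronecker deltas and $B_{\sigma,P}$ counts configurations of $(i_1,\dots,i_m)$ satisfying all these equality constraints. The hypothesis $\sigma \vee \tau = \mathbf{1}_{2m}$, pushed through the quotient map $\lceil \cdot / 2 \rceil : [2m] \to [m]$, is equivalent to connectivity of the multigraph on $[m]$ whose edges are $\{\lceil b_1/2 \rceil, \lceil b_2/2 \rceil\}$ for $b \in \sigma$. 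Since each delta imposes equality along one such edge, connectivity forces all $i_1, \dots, i_m$ to coincide, yielding $B_{\sigma, P} = |I|$.

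For Case~2, my plan is to peel off the indices one at a time by Cauchy--Schwarz. For each block $b \in \sigma$ with $|b| \ge 3$, the factor $\prod_{j \in b} a_{|b|}(i_{\lceil j/2 \rceil})$ is multiplicatively separable across the vertices $\lceil j/2 \rceil$, so a H\"older-type inequality with exponent $|b|$, applied at each such vertex, yields the factor $\bigl(\sum_{i \in I} a_{|b|}(i)^2\bigr)^{|b|/2}$. For each size-$2$ block $b \notin P$, Cauchy--Schwarz on the sum over the two indices produces the factor $\bigl(\sum_{(i,i')\in I^2} |a_2^b(i,i')|^2\bigr)^{1/2}$. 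The deltas from blocks $b \in P$ identify pairs of indices without adding new sums; they simply reduce the effective set of free indices. The $2$-regularity of the structure (each $i_p$ belongs to exactly two slots) is what makes all the Cauchy--Schwarz exponents balance out to the claimed form.

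The main obstacle is the bookkeeping: coordinating the Cauchy--Schwarz steps so that each factor's exponent ends up exactly $1/2$ in the $\ell^2$ norm, and handling the interplay between the deltas from $P$ and the remaining size-$2$ factors. In particular, a delta may identify two indices that also appear as the two arguments of a size-$2$ factor $a_2^b$ with $b \notin P$, forcing that factor onto the diagonal $a_2^b(i,i)$. Controlling such diagonal contributions without losing additional powers of $|I|$ is the delicate point; this is precisely where the ``otherwise'' hypothesis (excluding the most degenerate configuration of Case~1) intervenes, guaranteeing that enough indices remain free for the Cauchy--Schwarz steps to produce the product bound stated in~\eqref{eq:B_2and3}.
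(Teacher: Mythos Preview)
Your Case~1 argument is correct and matches the paper's. For Case~2 your intuition is sound---the $2$-regularity of the index structure is precisely the engine---but the proposal stops short of an actual proof in two places, and neither is mere bookkeeping.

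First, the claim that a ``H\"older-type inequality with exponent $|b|$'' at each vertex yields $\bigl(\sum_i a_{|b|}(i)^2\bigr)^{|b|/2}$ for a size-$\ge 3$ block $b$ is not right as stated. The exponent $|b|/2$ does not come from a H\"older step over a single index; it comes from the fact that each index $i_p$ contributes exactly two atomic factors $a_{|b|}(i_p)$ (possibly from different blocks), so the relevant bound is $\sum_i a_{|b|}(i)\,a_{|b'|}(i)\le(\sum a_{|b|}^2)^{1/2}(\sum a_{|b'|}^2)^{1/2}$ applied once per index. One then has to reassemble these $1/2$-powers across all indices touching the block to recover $|b|/2$, and this requires distinguishing indices that live only in size-$\ge 3$ blocks from those shared with size-$2$ blocks.

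Second, and more seriously, for the size-$2$ factors $a_2^b(i,i')$ that are \emph{not} separable, ``peeling off one index at a time'' does not close up without an additional ingredient. The missing lemma is the cycle inequality
\[
\sum_{(i_1,\dots,i_N)\in I^N} \prod_{n=1}^N c^n(i_n,i_{n+1}) \;\le\; \prod_{n=1}^N \Bigl(\sum_{(i,i')\in I^2} c^n(i,i')^2\Bigr)^{1/2}
\qquad (i_{N+1}:=i_1),
\]
which the paper proves by induction on $N$ and then applies after reducing the graph via the delta constraints. Your paragraph on the ``main obstacle'' correctly flags that the delta reductions are delicate, but gives no procedure. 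The paper's procedure is to iteratively contract each delta edge in the quotient graph on $[m]$, checking at each step that connectivity is preserved; this is exactly where the exclusion of the $|\sigma|=m=s$ case enters, since only then can a contraction collapse the graph to a pair of parallel edges, yielding $|I|$ rather than the product bound. After contraction the surviving labels each still appear exactly twice, the cycle inequality handles the size-$2$ part, and pairwise Cauchy--Schwarz handles the size-$\ge 3$ part.

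In short: your plan has the right shape, but the cycle Cauchy--Schwarz inequality and the delta-contraction argument with its connectivity check are the substance of the proof, not routine bookkeeping. As written the proposal does not establish~\eqref{eq:B_2and3}.
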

	\begin{proof}
		For the proof, we introduce graphs to account for the connectivity condition $\sigma \vee \tau = \mathbf{1}_{[2m]}$. Let $G = G(\sigma)$ be an undirected graph whose vertices are the union of the blocks of $\sigma$ and the blocks of $\tau$. An edge is drawn between two vertices $b$ and $b'$ if $b \cap b' \neq \emptyset$. The condition $\sigma \vee \tau = \mathbf{1}_{[2m]}$ is equivalent to $G$ being connected, that is, having a single connected component (see Section 3.2 of~\cite{mccullagh2018tensor}).
		
		For a vertex $v = (v_1, \dots, v_{|v|})$ in $G$, we call $v_1, \dots, v_{|v|}$ the labels of $v$. We now define $G_2$ as a graph having the same edges as $G$, but where each label $n$ appearing in the blocks of $G$ is replaced by $\lceil n/2 \rceil$. Since the vertices of $G$ corresponding to blocks of the partition $\tau$ yield in $G_2$ vertices where the two labels are equal, we remove such vertices from $G_2$.
		
		Note that the labels of $G_2$ belong to $\{1, \dots, m\}$. Moreover, for any vertex $v$ of $G_2$ with only two labels $v_1$ and $v_2$, we necessarily have $v_1 \neq v_2$. To see this, assume the contrary. If $v_1 = v_2$, then $v$ comes from a vertex of $G$ of the form $(2n-1, 2n)$ for some $n \in [m]$, corresponding to a block of $\sigma$. In that case, the block $(2n-1, 2n)$ is only connected to the vertex of $G$ corresponding to the block $(2n-1, 2n)$ of $\tau$, and conversely. Therefore, $G$ would not be connected, which contradicts the assumption.
		
		The graph $G_2$ satisfies two key properties: $G_2$ is connected (since $G$ is connected) and each number $n \in [m]$ appears exactly twice in the labels of $G_2$.
		
		For the following, we introduce additional notations. Consider a graph $G_0$ whose labels belong to $[m]$. Denote by $\mathcal{V}_2(G_0)$ the set of vertices of $G_0$ with exactly two labels, and by $\mathcal{V}_3(G_0)$ the set of vertices of $G_0$ with more than three labels. We also denote by $P_{\mathcal{V}}(G_0)$ the (possibly empty) set of vertices of $G_0$ corresponding to the blocks of $\sigma$ belonging to $P$.
		
		In the following, products of the form $\prod_{j \in v}$, where $v \in \mathcal{V}_2(G_0) \cup \mathcal{V}_3(G_0)$, iterate over the labels of the vertex $v$. We denote by $\mathcal{L}(G_0)$ the set of distinct labels of $G_0$ (without repetition). Finally, for a vertex $v$ of $G_0$ with exactly two labels, we write $v = (v_1, v_2)$ with $v_1 < v_2$. With the previous notations, we can rewrite $B_{\sigma, P}$ as
		$$B_{\sigma, P} := \sum_{i_l \in I; l \in \mathcal{L}(G_2)} \prod_{v \in \mathcal{V}_3(G_2)} \prod_{j \in v} a_{|v|}(i_j) \prod_{v \in \mathcal{V}_2(G_2) \setminus P_{\mathcal{V}}(G_2)} a_2^v(i_{v_1}, i_{v_2})  \prod_{v \in P_{\mathcal{V}}(G_2)}  \mathds{1}_{i_{v_1} = i_{v_2}}.$$

		Consider the case where $\sigma$ contains at least one block of size larger than three, i.e. $|\sigma| < m$.
		
		We proceed inductively. If $P_{\mathcal{V}}$ is empty, we do nothing. Otherwise, we take into account one constraint $\mathds{1}_{i_{v_1} = i_{v_2}}$ in the sum (for instance, the one such that $(v_1, v_2)$ is the smallest for the lexicographical order). Specifically, we remove the vertex $v =(v_1, v_2)$ in $G_2$ and change the value of the label $v_1$ appearing in the other vertex $v'$ of $G_2$ by $v_2$. We obtain a new graph $G_2'$. Recall that every label appears only twice in $G_2$, so $v_1$ appears in only one block distinct of $(v_1, v_2)$). The case $v' = (v_2, v_1)$ is impossible since $\sigma$ contains at least one block of size larger than three. Indeed, otherwise $v, v'$ would be a connected component of $G_2$, distinct from $G_2$, which contradicts the fact that $G_2$ is connected. Hence, $v$ was connected to two (and only two) distinct vertices $v'$ and $v''$. We add in $G_2'$ an edge between $v'$ and $v''$.
		
		With that procedure, $G_2'$ remains connected. Moreover, $G_2'$ has one fewer block than $G_2$ and contains the labels of $G_2$, except for $v_1$. Since we removed and added once the label $v_2$, every label in $G_2'$ appears exactly twice. Finally, we have $|P_{\mathcal{V}}(G_2')| = |P_{\mathcal{V}}(G_2)| - 1$. The previous discussion yields
		$$
		B_{\sigma, P} = \sum_{i_l \in I,\ l \in \mathcal{L}(G_2')} \prod_{v \in \mathcal{V}_3(G_2')} \prod_{j \in v} a_{|v|}(i_j) \prod_{v \in \mathcal{V}_2(G_2') \setminus P_{\mathcal{V}}(G_2')} a_2^v(i_{v_1}, i_{v_2})  \prod_{v \in P_{\mathcal{V}}(G_2')}  \mathds{1}_{i_{v_1} = i_{v_2}}.
		$$
		
		Note that the number of terms in the product over $\mathcal{V}_2(G_2') \setminus P_{\mathcal{V}}(G_2')$ is unchanged and still equals $|\mathcal{B}_2(\sigma)| - s$. Since $G_2'$ satisfies the same properties as $G_2$ that we used previously, we can iterate this procedure to obtain
		$$
		B_{\sigma, P} = \sum_{i_l \in I,\ l \in \mathcal{L}(G_2^s)} \prod_{v \in \mathcal{V}_3(G_2^s)} \prod_{j \in v} a_{|v|}(i_j) \prod_{v \in \mathcal{V}_2(G_2^s) \setminus P_{\mathcal{V}}(G_2^s)} a_2^v(i_{v_1}, i_{v_2}),
		$$
		where $G_2^s$ is a graph having $m-s$ labels, each appearing exactly twice.
		To upper bound the last quantity, we establish the following intermediate result. Let $N \geq 2$ and $(c^n(i, i'))_{n \leq \mathbb{N}, (i, i') \in I^2}$ a sequence of non-negative numbers. Then
		\begin{align}\label{eq:rec_cs}
			\sum_{(i_1, \dots, i_N) \in I^N} \prod_{n = 1}^N c^n(i_{n}, i_{n+1}) \leq \prod_{n = 1}^N\left(\sum_{(i, i') \in I^2} c^n(i, i')^2\right)^{\frac12}, 
		\end{align}
		with the convention $i_{N+1} = i_1$. We prove it by induction. For $N = 2$, it follows from the Cauchy-Schwarz inequality:
		\begin{align*}
			\sum_{i_1, i_2 \in I^N} c^1(i_1, i_2) c^2(i_2, i_1) &\leq \sum_{i_1 \in I} \left(\sum_{i' \in I} c^1(i_1, i')^2\right)^{\frac12} \left(\sum_{i \in I} c^2(i, i_1)^2\right)^{\frac12} \\& \leq \left(\sum_{(i, i') \in I^2} c^1(i, i')^2\right)^{\frac12} \left(\sum_{(i, i') \in I^2} c^2(i, i')^2\right)^{\frac12}.
		\end{align*}
		Fix $N \geq 2$. Assume that the inequality \eqref{eq:rec_cs} is true when the product of its left-hand side is over $N$ terms. Then, by induction we have
		\begin{align*}
			&\sum_{(i_1, \dots, i_{N+1}) \in I^{N+1}} \prod_{n = 1}^{N+1} c^n(i_{n}, i_{n+1})  = \sum_{(i_1, \dots, i_N) \in I^N}  \prod_{n = 1}^{N-1} c^n(i_{n}, i_{n+1}) \left(\sum_{i_{N+1} \in I} c^{N}(i_N, i_{N+1}) c^{N+1}(i_{N+1}, i_1)\right) \\& \qquad\leq \prod_{n = 1}^{N-1} \left(\sum_{(i, i') \in I^2} c^n(i, i')^2\right)^{\frac12} \left(\sum_{(i, i') \in I^2}  \left(\sum_{i_{N+1} \in I} c^{N}(i, i_{N+1}) c^{N+1}(i_{N+1}, i')\right)^2 \right)^{\frac12} \\& \qquad \leq \prod_{n = 1}^{N+1}\left(\sum_{(i, i') \in I^2} c^n(i, i')^2\right)^{\frac12},
		\end{align*}
		which proves \eqref{eq:rec_cs}. 
		
		We now have the tools to upper bound $B_{\sigma, P}$. We introduce
		\begin{align*}
			\mathcal{L}_{3} := \{l \in \mathcal{L}(G_2^s)|~l \text{ appears only in vertices having more than 3 labels}\}.
		\end{align*}
		Then, $B_{\sigma, P} = B_{\sigma, P}^{3} B_{\sigma, P}^{23}$ where
		\begin{align*}
			& B_{\sigma, P}^{3} := \sum_{i_l; l \in	\mathcal{L}_3} \prod_{v \in \mathcal{V}_3(G_2^s)} \prod_{j \in v} a_{|b|}(i_j)^{\mathds{1}_{j \in  \mathcal{L}_3}},
			\\& B_{\sigma, P}^{23} := \sum_{i_l; l \in \mathcal{L} \setminus \mathcal{L}_3} \prod_{v \in \mathcal{V}_3(G_2^s)} \prod_{j \in v} a_{|v|}(i_j)^{\mathds{1}_{j \in \mathcal{L} \setminus \mathcal{L}_3}} \prod_{v \in \mathcal{V}_2(G_2^s) \setminus P_{\mathcal{V}}(G_2^s)} a_2^v(i_{v_1}, i_{v_2}),
		\end{align*}
		We now bound $B_{\sigma, P}^3$. For each label $l \in \mathcal{L}_3$, denote by $v(l)$ and $v'(l)$ the (possibly identical) vertices of size larger than three in which $l$ appears. Using the Cauchy--Schwarz inequality, we get:
		\begin{align*}
			B_{\sigma, P}^{3} = \prod_{l \in \mathcal{L}_3} \sum_{i \in I} a_{|v(l)|}(i) \, a_{|v'(l)|}(i) \leq \prod_{l \in \mathcal{L}_3} \left(\sum_{i \in I} |a_{|v(l)|}(i)|^2 \right)^{\frac{1}{2}} \left(\sum_{i \in I} |a_{|v'(l)|}(i)|^2 \right)^{\frac{1}{2}}.
		\end{align*}
		Next, consider the following product:
		\[
		p := \prod_{v \in \mathcal{V}_3(G_2^s)} \prod_{j \in v} a_{|v|}(i_j)^{\mathds{1}_{j \in \mathcal{L} \setminus \mathcal{L}_3}}.
		\]
		The number of terms in $p$ is equal to $2m - 2|\mathcal{B}_2(\sigma)| - 2|\mathcal{L}_3|$. Since this number is even, we can group the terms of $p$ two by two. Moreover, because each label appearing in $p$ appears exactly once, we can perform this grouping such that each pair contains distinct labels.
		
		Using the fact that each label of $G_2^s$ appears exactly twice, we can rewrite $B_{\sigma, P}$ in the form of the left-hand side of \eqref{eq:rec_cs}. Applying this result yields:
		\begin{align}\label{eq:bound_B23}
			B_{\sigma, P}^{23} \leq \prod_{l \in \mathcal{L}_{23}} \left(\sum_{i \in I} |a_{|v_3(l)|}(i)|^2 \right)^{\frac{1}{2}} \prod_{v \in \mathcal{V}_2(G_2^s) \setminus P_{\mathcal{V}}(G_2^s)} \left(\sum_{(i, i') \in I^2} a_2^v(i, i')^2\right)^{\frac{1}{2}},
		\end{align}
		where
		\[
		\mathcal{L}_{23} := \left\{ l \in \mathcal{L}(G_2^s) \mid l \text{ appears only in one vertex of size } 2 \right\},
		\]
		and for each $l \in \mathcal{L}_{23}$, $v_3(l)$ denotes the vertex of size larger than three in which it appears. Note that, to obtain \eqref{eq:bound_B23}, we used, after applying \eqref{eq:rec_cs}, the fact that for vertices of size larger than three:
		\[
		\sum_{(i', i) \in I^2} \left( a_{n}(i)^2 \, a_{n'}(i') \right)^2 = \left( \sum_{i \in I} a_n(i)^2 \right) \left( \sum_{i' \in I} a_{n'}(i')^2 \right), \quad \text{for} \ n, n' \geq 3,
		\]
		together with the fact that the grouping of terms in $p$ is performed so that each pair contains distinct labels.

		Since the effect on $\mathcal{V}_2(G_2) \setminus \mathcal{P}(G_2)$ and $\mathcal{V}_3(G_2)$ of the procedure to go from $G_2$ to $G_2^s$ was only to change their labels, we have
		$$
		\prod_{v \in \mathcal{V}_2(G_2^s) \setminus P_{\mathcal{V}}(G_2^s)} \left(\sum_{(i, i') \in I^2} a_2^v(i, i')^2\right)^{\frac12} = \prod_{b \in \mathcal{B}_2(\sigma)} \left(\sum_{(i, i') \in I^2} a_2^b(i, i')^2\right)^{\frac12},
		$$
		and 
		\begin{multline*}
			\prod_{l \in \mathcal{L}_3} \left(\sum_{i \in I} |a_{ |v(l)|}(i)|^2 \right)^{\frac12}  \left(\sum_{i \in I} |a_{ |v'(l)|}(i)|^2 \right)^{\frac12} \prod_{l \in \mathcal{L}_{23}} \left(\sum_{i \in I} |a_{|v_3(l)|}(i)|^2 \right)^{\frac12} \\ =\prod_{\substack{b \in \sigma; \\ |b| \geq 3}} \prod_{j \in b} \left(\sum_{i \in I} a_{|b|}(i)^2\right)^{\frac12} = \prod_{\substack{b \in \sigma; \\ |b| \geq 3}} \left(\sum_{i \in I} a_{|b|}(i)^2\right)^{\frac{|b|}2}.
		\end{multline*}
		It yields \eqref{eq:B_2and3} when $|\sigma| < m$.
		
		Now, assume that $\sigma$ contains only blocks of size two, i.e. $|\sigma| = m$. Then,
		$$B_{\sigma, P} = \sum_{i_l \in I; l \in \mathcal{L}(G_2)} \prod_{v \in \mathcal{V}_2(G_2) \setminus P_{\mathcal{V}}(G_2)} a_2^v(i_{v_1}, i_{v_2})  \prod_{v \in P_{\mathcal{V}}(G_2)}  \mathds{1}_{i_{v_1} = i_{v_2}}.$$
		Using similar connectivity arguments as in the case $|\sigma| < m$, we reduce by $s$ the dimension $|\mathcal{L}(G_2)|$ of the sum over the indices $i_l$ for $l \in \mathcal{L}(G_2)$, by introducing a sequence of graphs $G_2^2, \dots, G_2^s$, provided that $G_2^2, \dots, G_2^{s-1}$ do not reduce to two vertices. This situation cannot occur if $s \leq m-1$, since only one vertex is removed at each step and we start with a graph having $m$ vertices. 	Accordingly, if $s \leq m-1$, using \eqref{eq:rec_cs}, we obtain \eqref{eq:B_2and3}, as in the case $|\sigma| = m$. Finally, when $m = s$, the term $B_{\sigma, P}$ reduces to
		$$
		B_{\sigma, P} = \sum_{i_l \in I,; l \in \mathcal{L}(G_2)} \prod_{v \in G_2} \mathds{1}_{i_{v_1} = i_{v_2}}.
		$$
		Reducing the dimension of the sum by $s-1$, using the previously mentioned method, yields:
		$$
		B_{\sigma, P} = \sum_{i_1 \in I, i_2 \in I} \mathds{1}_{i_1 = i_2} \mathds{1}_{i_2 = i_1} = |I|.
		$$
		This concludes the proof of the lemma.
		
	\end{proof}
	
	The next lemma shows that the Brillinger-mixing assumption on the total variation of reduced factorial cumulant measures extends to the total variation of the cumulant measures of linear statistics.This result is specific to point processes, where the discrete nature of the linear statistics implies that their cumulants of order $m \geq 2$ involve factorial cumulants of orders less than or equal to $m$.

	\begin{lemma}\label{lem:tv_cum_measure}
		Let $s \geq 1$ and $q, \gamma > 0$. Then
		\begin{align*}
			\sum_{\pi \in \Pi[s]} q^{|\pi|-1} ((|\pi|-1)!)^{1+\gamma} \leq 2^s (1+q)^{s-1} ((s-1)!)^{1+\gamma}.
		\end{align*}
	\end{lemma}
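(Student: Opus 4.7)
The plan is to prove the inequality by induction on $s$. Writing
\begin{equation*}
L_s(q,\gamma) := \sum_{\pi \in \Pi[s]}q^{|\pi|-1}((|\pi|-1)!)^{1+\gamma} = \sum_{k=1}^{s} S(s,k)\,q^{k-1}((k-1)!)^{1+\gamma},
\end{equation*}
where $S(s,k)$ denotes the Stirling numbers of the second kind (the number of partitions of $[s]$ into $k$ non-empty blocks), the base case $s = 1$ reduces to the trivial inequality $1 \leq 2$.

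For the inductive step from $s-1$ to $s$, I would apply the classical recursion $S(s,k) = k S(s-1,k) + S(s-1,k-1)$ to decompose $L_s$ into two pieces. The first piece, $A := \sum_k k\,S(s-1,k)\,q^{k-1}((k-1)!)^{1+\gamma}$, has effective support $k \in \{1,\dots,s-1\}$ (the $k=s$ term vanishes since $S(s-1,s)=0$), so bounding $k \leq s-1$ yields $A \leq (s-1)L_{s-1}(q,\gamma)$. For the second piece, the reindexing $k' = k-1$ together with the factorization $(k'!)^{1+\gamma} = (k')^{1+\gamma}((k'-1)!)^{1+\gamma}$ gives
\begin{equation*}
B := \sum_k S(s-1,k-1) q^{k-1}((k-1)!)^{1+\gamma} = q\sum_{k'=1}^{s-1} (k')^{1+\gamma} S(s-1,k') q^{k'-1}((k'-1)!)^{1+\gamma},
\end{equation*}
and bounding $(k')^{1+\gamma} \leq (s-1)^{1+\gamma}$ gives $B \leq q(s-1)^{1+\gamma} L_{s-1}(q,\gamma)$.

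Combining these bounds and invoking the inductive hypothesis $L_{s-1}(q,\gamma) \leq 2^{s-1}(1+q)^{s-2}((s-2)!)^{1+\gamma}$, the inductive step reduces to the elementary inequality
\begin{equation*}
(s-1)[1 + q(s-1)^\gamma] \leq 2(1+q)(s-1)^{1+\gamma},
\end{equation*}
after using $((s-1)!)^{1+\gamma}/((s-2)!)^{1+\gamma} = (s-1)^{1+\gamma}$. This simplifies to $(s-1)^\gamma(2+q) \geq 1$, which holds for $s \geq 2$, $q \geq 0$, $\gamma \geq 0$ since $(s-1)^\gamma \geq 1$ and $2+q \geq 2$.

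There is no real obstacle; the proof is essentially bookkeeping. The only slightly delicate point is the factorization of $(k'!)^{1+\gamma}$ that produces the extra $(k')^{1+\gamma}$ factor, which is exactly what is needed to make the recursion close with the claimed constants. The factor $2^s$ on the right-hand side has precisely enough room to absorb the resulting constant.
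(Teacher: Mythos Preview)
Your proof is correct and takes a genuinely different route from the paper's. The paper argues directly: it first pulls out the factor $((s-1)!)^{\gamma}$ via the crude bound $((|\pi|-1)!)^{1+\gamma} \leq ((s-1)!)^{\gamma}(|\pi|-1)!$, then uses the explicit formula $S(s,l) = \sum_{n_1+\dots+n_l=s} \frac{s!}{n_1!\cdots n_l!\, l!}$ and bounds each summand by $1$ to reduce the inner sum to the composition count $\binom{s-1}{l-1}$, finishing with the binomial identity $\sum_{l} q^{l-1}\binom{s}{l} \leq 2^s(1+q)^{s-1}$. Your inductive argument via the recursion $S(s,k)=kS(s-1,k)+S(s-1,k-1)$ avoids the explicit Stirling formula entirely and makes transparent where each of the three factors $2^s$, $(1+q)^{s-1}$, $((s-1)!)^{1+\gamma}$ comes from: the slack in the final inequality $(2+q)(s-1)^{\gamma}\geq 1$ shows precisely how much room the constant $2^s$ provides. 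Both proofs are short; yours is slightly more self-contained, while the paper's gives a marginally sharper intermediate bound on $\sum_l (l-1)!\,q^{l-1}S(s,l)$ before passing to the final estimate.
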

	\begin{proof}
		We denote by $S(s, l)$ the Stirling number of the second kind. We have
		\begin{align*}
			\sum_{\pi \in \Pi[s]} q^{|\pi|-1} ((|\pi|-1)!)^{1+\gamma} \leq  ((s-1)!)^{\gamma} \sum_{l = 1}^s (l-1)! q^{l-1} S(s, l).
		\end{align*}
		According to equation (4.5) of \cite{saulis2012limit}, $
		S(s, l) = \sum_{n_1 + \dots + n_l = s} \frac{s!}{n_1! \dots n_l! l!},$
		where the sum is over the decompositions of the integer $s$ into positive integers. Hence,
		\begin{align*}
			\sum_{\pi \in \Pi[s]} q^{|\pi|-1} ((|\pi|-1)!)^{1+\gamma} &\leq  ((s-1)!)^{1+\gamma} s \sum_{l = 1}^s \frac1{l} q^{l-1}\sum_{n_1 + \dots + n_l = s} \frac1{n_1! \dots n_l!} \\& \leq ((s-1)!)^{1+\gamma} s \sum_{l = 1}^s \frac1{l} q^{l-1} \sum_{n_1 + \dots + n_l = s} 1  = ((s-1)!)^{1+\gamma} s \sum_{l = 1}^s \frac1{l} q^{l-1} \binom{s-1}{l-1},
		\end{align*}
		where in the second line we used equation (4.2) of \cite{saulis2012limit}. To conclude, we use 
		\begin{align*}
			s \sum_{l = 1}^s \frac1{l} q^{l-1} \binom{s-1}{l-1} = \sum_{l = 1}^s q^{l-1} \binom{s}{l} \leq 2^s (1+q)^{s-1}.
		\end{align*}
	\end{proof}
	The following lemma controls the cumulants appearing on the right-hand side of~\eqref{eq:kappa_m_1}, by introducing suitable random variables: a Gaussian distribution to handle non-asymptotic correlations between tapers in second-order cumulants, and a Gamma distribution to account for higher-order cumulants. Its key idea is to separate second-order contributions from higher-order ones.
	
	\begin{lemma}\label{lem:kappa_m_2}
		Let $\Phi$ satisfying Assumption \ref{ass_rho_leb} with intensity $\lambda = 1$, $I$ be a discrete subset of $\mathbb{N}$ and $(f_i)_{i \in I}$ be a family of $L^2(\mathbb{R}^d)$. Assume that $\Phi$ satisfies Assumption \ref{def:brill_mix} with parameters $(c, q, \gamma) \in (0, \infty)^3$. Assume \eqref{cond:rI}. We denote
		\begin{equation}
			\mathcal{A} := \sum_{\substack{\sigma \in \Pi[2m],\\ \sigma \vee \tau = \mathbf{1}_{2m}, |\sigma| \geq 2}} \prod_{b \in \sigma} K_{|b|},
		\end{equation}
		where $(K_s)_{s \geq 0}$ is defined in \eqref{eq:cum_X}. If \eqref{ass:cesaro_f} is satisfied with $c_f \in (1, \infty)$, $\eta_1, \eta_2 \geq 0$ and $\rho > 0$, then 
		\begin{align}\label{eq:kappa_m_2}
			\mathcal{A} &\leq  \left(\frac{4^{m} m!}{\sqrt{2\pi}}\right)^{\gamma} \left(\frac{m!}{2\sqrt{2\pi}} (4 \Sigma)^m +   \frac{(m!)^2}{2\sqrt{2\pi}} \left(32\frac{ \Lambda^2 c^{1/2} (1+q)^{3/2} \log(|I|^{\frac1d})^{\frac12}}{\rho^{d/2}|I|^{2\eta_1+\eta_2/2}}\right)^m\right),
		\end{align}
		where $\Lambda$ is defined in \eqref{eq:Lambda}. If \eqref{ass:cesaro_f} is satisfied with $c_f \in (1, \infty)$, $\eta_1, \eta_2 \geq 0$ and $\rho > 0$, then
		\begin{align}\label{eq:kappa_m_2_sf}
			\mathcal{A} &\leq  \left(\frac{4^{m} m!}{\sqrt{2\pi}}\right)^{\gamma} \left(\frac{m!}{2\sqrt{2\pi}} (4 \Sigma)^m +   \frac{(m!)^2}{2\sqrt{2\pi}} \left(32\frac{ \lceil c_f \rceil^2 c^{2/3} (1+q)^{4/3} }{\rho^{d/3}|I|^{2\eta_1}}\right)^m\right).
		\end{align}
	\end{lemma}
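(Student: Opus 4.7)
The plan is to apply the method of cumulants. The combinatorial sum $\mathcal{A}$ has the shape of a joint cumulant of products: by the product-of-cumulants formula (Corollary~\ref{corol:cumulants_square_rvs}), for any random variable $X$,
\begin{equation*}
\kappa_m(X^2) = \sum_{\substack{\sigma \in \Pi[2m]\\\sigma \vee \tau = \mathbf{1}_{2m}}} \prod_{b\in\sigma} \kappa_{|b|}(X).
\end{equation*}
The idea is therefore to find an auxiliary variable $X$ whose cumulants $\kappa_{|b|}(X)$ dominate $K_{|b|}$, and then to bound $\kappa_m(X^2)$ directly via its moment generating function.

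First, I will bound each $K_s$ for $s\geq 3$ so that it matches a Gamma cumulant up to a $((s-1)!)^{\gamma}$ factor. Substituting Assumption~\ref{def:brill_mix} into the definition of $K_s$ and applying Lemma~\ref{lem:tv_cum_measure} gives
\begin{equation*}
\sum_{\pi \in \Pi[s]} |\gamma_{\text{red}}^{(|\pi|)}|(\mathbb{R}^{d(|\pi|-1)}) \leq 2^s c(1+q)^{s-1} ((s-1)!)^{1+\gamma}.
\end{equation*}
Combined with \eqref{ass:cesaro_f} (respectively \eqref{ass:cesaro_f3}) for $s \geq 4$, and with the definition of $a_3(i)$ together with the constant $\Lambda$ in \eqref{eq:Lambda} for $s=3$, this yields
\begin{equation*}
K_s \leq ((s-1)!)^{\gamma} \cdot \alpha\,\beta^s (s-1)!,
\end{equation*}
for two explicit choices of $(\alpha,\beta)$ corresponding to the two bounds \eqref{eq:kappa_m_2} and \eqref{eq:kappa_m_2_sf}. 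Condition~\eqref{cond:rI} ensures $\beta$ is small enough for the Gamma generating function $(1-\beta t)^{-\alpha}$ to have a useful radius of convergence.

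Second, I introduce independent auxiliary variables $G \sim \mathcal{N}(0,\Sigma)$ and $Y$ centered Gamma with shape $\alpha$ and scale $\beta$, so that $\kappa_s(Y) = \alpha\beta^s(s-1)!$ for $s\geq 2$. Setting $X = G + Y$, I have $\kappa_2(X) \geq \Sigma = K_2$ and $\kappa_s(X) \geq K_s/((s-1)!)^{\gamma}$ for $s\geq 3$. All quantities involved are nonnegative, so
\begin{equation*}
\mathcal{A} \leq \Big(\sup_{\sigma}\prod_{b\in\sigma} ((|b|-1)!)^{\gamma}\Big)\cdot \kappa_m(X^2),
\end{equation*}
and log-convexity of the factorial together with $\sum_b (|b|-1) \leq 2m-1$ gives, after a Stirling estimate, $\prod_b ((|b|-1)!)^{\gamma} \leq (4^m m!/\sqrt{2\pi})^{\gamma}$. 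To bound $\kappa_m(X^2)$ itself I split $X^2 = G^2 + 2GY + Y^2$ and use independence. The Gaussian part $G^2$ contributes via a Cauchy contour of radius comparable to $1/(4\Sigma)$, yielding $\frac{m!}{2\sqrt{2\pi}}(4\Sigma)^m$; the Gamma contribution $2GY+Y^2$ is bounded on a contour of radius comparable to $1/(32B)$ (where $B$ is the second term in the lemma's statement), giving $\frac{(m!)^2}{2\sqrt{2\pi}}(32B)^m$. The extra factor of $m!$ in the Gamma piece reflects the fact that squares of Gamma variables have sub-exponential rather than sub-Gaussian cumulant growth.

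The main obstacle is Step~1 for $s=3$: the asymmetric definition of $a_3(i)$, which truncates either at the taper norms or at a bound involving $\Lambda$ (absorbing both the boundary term $\|f_i \mathds{1}_{\mathbb{R}^d\setminus W}\|_3$ and the low-frequency tail $\|\mathcal{F}[f_i]\mathds{1}_{|\cdot|\geq |k_0|/3}\|_1$), must be shown to yield exactly the Gamma pattern $(s-1)!\alpha\beta^s$ with the same $\alpha,\beta$ as for $s\geq 4$; this is why condition~\eqref{cond:rI} is imposed. A secondary difficulty is Step~3, where the explicit Stirling prefactors $m!/(2\sqrt{2\pi})$ and $(m!)^2/(2\sqrt{2\pi})$ must be carried through the contour bound, amounting to a careful analysis of the radius of convergence of $\sum_m \kappa_m(X^2) t^m/m!$.
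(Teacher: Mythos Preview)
Your overall architecture matches the paper's: bound $K_s$ for $s\geq 3$ via Lemma~\ref{lem:tv_cum_measure} and the Brillinger-mixing assumption, extract the factor $((s-1)!)^{\gamma}$, and then introduce an auxiliary variable $\overline{X}$ that is the sum of an independent Gaussian (variance $\Sigma$) and a centered Gamma whose cumulants dominate the residual $K_s/((s-1)!)^{\gamma}$. The paper uses exactly this construction, with Gamma shape parameter taken equal to the integer $\Lambda$ (respectively $\lceil c_f\rceil$), and checks separately the case $s=3$ using condition~\eqref{cond:rI} just as you anticipate. One minor point: the factor $\sup_\sigma \prod_b ((|b|-1)!)^{\gamma}$ is not obtained by log-convexity but by the multinomial inequality $k_1!\cdots k_n!\leq (k_1+\cdots+k_n)!$, giving $\prod_b(|b|-1)!\leq (2m)!$, and then $(2m)!\leq 4^m(m!)^2/\sqrt{2\pi}$ by Stirling.

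The real gap is in your final step. You propose to bound $\kappa_m(X^2)$ by splitting $X^2=G^2+2GY+Y^2$ and ``using independence'', but $G^2$, $2GY$, $Y^2$ are \emph{not} independent even when $G$ and $Y$ are, so cumulants do not split across this decomposition; a contour argument on the cumulant generating function of $X^2$ would require controlling $\log\mathbb{E}[e^{t(G+Y)^2}]$, which does not factor. The paper sidesteps this entirely: instead of retaining the connectivity constraint and working with $\kappa_m(X^2)$, it \emph{drops} the constraint $\sigma\vee\tau=\mathbf{1}_{2m}$ (harmless because every $\kappa_s(\overline{X})\geq 0$) and bounds the connected sum by the full moment $\mathbb{E}[\overline{X}^{2m}]=\sum_{\sigma\in\Pi_2[2m]}\prod_b\kappa_{|b|}(\overline{X})$. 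This moment is then computed by a direct binomial expansion, exploiting independence of $G$ and $Y$ at the level of the \emph{linear} pieces: $\mathbb{E}[\overline{X}^{2m}]=\sum_j\binom{2m}{2j}a^{2j}b^{2m-2j}\mathbb{E}[\mathcal{N}^{2j}]\,\mathbb{E}[(\Gamma-\mathbb{E}\Gamma)^{2m-2j}]$, with the centered Gamma moments bounded by writing $\Gamma$ as a sum of $\Lambda$ i.i.d.\ exponentials. An integral representation of the resulting sum (the identity $\sum_{j\leq m}d^j/j!=\frac{1}{m!}\int_0^\infty (d+t)^m e^{-t}\,dt$) then separates the Gaussian and Gamma contributions cleanly, and Stirling on $(2m)!/m!$ and $(2m)!$ produces exactly the prefactors $m!/(2\sqrt{2\pi})$ and $(m!)^2/(2\sqrt{2\pi})$. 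Replacing your contour step by this moment computation would close the gap.
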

	\begin{proof}
		Using the fact that $\Phi$ satisfies Assumption \ref{def:brill_mix}, Lemma \ref{lem:tv_cum_measure}, and the inequality $k_1! \dots k_n! \leq (k_1 + \dots + k_n)!$ for all integers $k_1, \dots, k_n$, we obtain
		\begin{align*}
			\prod_{|b| \geq 3}  \sum_{\pi \in \Pi[|b|]} |\gamma_{\text{red}}^{(|\pi|)}|(\mathbb{R}^{d(|\pi|-1)}) &\leq \prod_{|b| \geq 3} c 2^{|b|}(1+q)^{|b|-1} ((|b|-1)!)^{1+\gamma} \\& \leq ((2m)!)^{\gamma} \prod_{|b| \geq 3} c 2^{|b|}(1+q)^{|b|-1} (|b|-1)!.
		\end{align*}
		We define $\kappa_1 = 0,~\kappa_2 = \Sigma$ and
		\begin{align*}
			&\kappa_3 = c2^4(1+q)^2\left(3 \left(\sup_{i \in I} \|f_i \mathds{1}_{\mathbb{R}^d \setminus W}\|_3 \sup_{i \in I} \|f_i\|_3^2 +  \sup_{i \in I} \|\mathcal{F}[f_i] \mathds{1}_{|\cdot| \geq |k_0|/3}\|_1\right) \wedge \frac{c_f}{\rho^{d/2}|I|^{3 \eta_1}}\right), 
			\\&\forall s \geq 4,~\kappa_s = c_f \frac{c \rho^d \log(|I|^{\frac1d}) }{(1+q) |I|^{\eta_2} } \left(2 \frac{1+q}{\rho^{\frac{d}2}|I|^{\eta_1}}\right)^{s} (s-1)!.
		\end{align*}
		Then, using Assumptions \eqref{ass:cesaro_f} and \eqref{ass:cesaro_f3}, we get
		\begin{align}\label{eq:see_kappa_X}
			\mathcal{A} \leq \left(\frac{4^{m} m!}{\sqrt{2\pi}}\right)^{\gamma} \sum_{\substack{\sigma \in \Pi_2[2m],\\ \sigma \vee \tau = \mathbf{1}_{2m}}} \prod_{b \in \sigma} \kappa_{|b|},
		\end{align}
		where in the last line we used the fact that for $m \geq 2$, $(2m)! \leq 4^m (m!)^2/\sqrt{2\pi}$, obtained by Stirling approximation~\cite{gronwall1918gamma}. 
		
		We prove \eqref{eq:kappa_m_2}. To do so, we consider 
		$$X = \Sigma^{\frac12} \mathcal{N} + 2\frac{(c (1+q)^3)^{1/4} \log(|I|^{\frac1d})^{\frac14}}{\rho^{d/4}|I|^{\eta_1+\eta_2/4}}\Gamma,$$ 
		where $\mathcal{N}$ is a standard Gaussian random variable and $\Gamma$ is an Gamma random variable with parameters 
		$\left(\Lambda, 1\right)$. Moreover, $\mathcal{N}$ and $\mathcal{E}$ are independents. We also denote $\overline{X} = X - \mathbb{E}[X]$. We introduced $\overline{X}$ because $\kappa_s \leq \kappa_s(\overline{X})$, for all $s \geq 1$.
		Indeed, $\kappa_1 = \kappa_1(\overline{X}) = 0$, 
		$$\kappa_2 = \Sigma \leq \Sigma + \Lambda \left(2\frac{(c (1+q)^3)^{1/4}  \log(|I|^{\frac1d})^{\frac14}}{\rho^{d/4}|I|^{\eta_1+\eta_2/4}}\right)^2= \kappa_2(\overline{X}).$$
		For $s = 3$, we have
		\begin{align*}
			\kappa_3 &= \kappa_3 \left(\frac{\rho^{d/4}|I|^{\eta_1+\eta_2/4}}{2(c (1+q)^3)^{1/4} \log(|I|^{\frac1d})^{\frac14}}\right)^3 \frac12 \left(2\frac{(c (1+q)^3)^{1/4} \log(|I|^{\frac1d})^{\frac14}}{\rho^{d/4}|I|^{\eta_1+\eta_2/4}}\right)^3 2! \leq \kappa_3(\overline{X}),
		\end{align*}
		where in the second line we used $c \rho^{d} \log(|I|^{\frac1d}) \geq (1+q) |I|^{\eta_2}$. Finally, for $s \geq 4$, using again $c \rho^{d} \log(|I|^{\frac1d})  \geq (1+q) |I|^{\eta_2}$, we have 
		\begin{align*}
			\kappa_s &= c_f \frac{c \rho^d \log(|I|^{\frac1d})}{(1+q) |I|^{\eta_2} } \left(2 \frac{1+q}{\rho^{\frac{d}2}|I|^{\eta_1}}\right)^{s} (s-1)! \leq  c_f \left(\frac{c \rho^d \log(|I|^{\frac1d})}{(1+q) |I|^{\eta_2} }  \right)^{s/4} \left(2 \frac{1+q}{\rho^{\frac{d}2}|I|^{\eta_1}}\right)^{s} (s-1)! \leq \kappa_s(\overline{X}).
		\end{align*}
		Hence, with equation 3.2.6 of \cite{peccati2011wiener}, we obtain 
		\begin{align}\label{eq:A_int}
			\mathcal{A}\leq \left(\frac{4^{m} m!}{\sqrt{2\pi}}\right)^{\gamma} \sum_{\substack{\sigma \in \Pi_2[2m],\\ \sigma \vee \tau = \mathbf{1}_{2m}}} \prod_{b \in \sigma} \kappa_s(\overline{X}) \leq \left(\frac{4^{m} m!}{\sqrt{2\pi}}\right)^{\gamma} \mathbb{E}[\overline{X}^{2m}].
		\end{align}
		Let $a = \Sigma^{\frac12}$ and $b = 2(c (1+q)^3)^{1/4}\log(|I|^{\frac1d})^{\frac14}/(\rho^{d/4}|I|^{\eta_1+\eta_2/4})$. We have
		\begin{align*}
			\mathbb{E}[\overline{X}^{2m}] &= \mathbb{E}\left[\left(a\mathcal{N} +b\left(\Gamma - \mathbb{E}[\Gamma]\right)\right)^{2m}\right] = \sum_{j = 0}^{2m} \binom{2m}{j} a^j b^{2m-j}\mathbb{E}[\mathcal{N}^j] \mathbb{E}[\left(\Gamma - \mathbb{E}[\Gamma]\right)^{2m-j}] \\& = \sum_{j = 0}^{m} \binom{2m}{2j} a^{2j} b^{2m-2j}\mathbb{E}[\mathcal{N}^{2j}] \mathbb{E}[\left(\Gamma - \mathbb{E}[\Gamma]\right)^{2m-2j}].
		\end{align*}
		Let $(\mathcal{E}_n)_{n \in [\Lambda]}$ be i.i.d. standard exponential random variables. Then, for even $j \geq 0$, we have
		\begin{align*}
			\mathbb{E}[\left(\Gamma - \mathbb{E}[\Gamma]\right)^{j}]^{1/j} &= \mathbb{E}\left[\left(\sum_{n = 1}^{\Lambda} \mathcal{E}_n - \mathbb{E}[\mathcal{E}_n]\right)^{j}\right]^{1/j} \leq \sum_{n = 1}^{\Lambda}  \mathbb{E}\left[\left(\mathcal{E}_n - \mathbb{E}[\mathcal{E}_n]\right)^{j}\right]^{1/j} \\& = \Lambda \left( j! \sum_{n = 0}^{j} \frac{(-1)^n}{n!} \right)^{1/j}\leq \Lambda \left(j!\right)^{1/j}.
		\end{align*}		
		Using the expression of the even moments of the Gaussian distribution, we obtain
		\begin{align*}
			\mathbb{E}[\overline{X}^{2m}] &\leq  \sum_{j = 0}^{m} \binom{2m}{2j} a^{2j} b^{2m-2j}\frac{(2j)!}{2^j j!}\Lambda^{2m-2j} (2m-2j)! = (2m)! (b \Lambda)^{2m}\sum_{j = 0}^m \frac1{j!} d^j,
		\end{align*}
		where $d = a^2/(2b^2\Lambda^2)$. Using the binomial theorem, we get
		\begin{align*}
			\sum_{j = 0}^m \frac1{j!} d^j = \frac1{m!}\int_0^{\infty} \left(d+t\right)^m e^{-t} dt \leq \frac{2^{m-1}}{m!}(d^m+m!). 
		\end{align*}
		Accordingly,
		\begin{align*}
			\mathbb{E}[\overline{X}^{2m}] &\leq \frac{(2m)!}{m!} 2^{m-1} \frac{a^{2m}}{2^m} + (2m)! (b\Lambda)^{2m} 2^{m-1}\\& \leq  \frac{m!}{2\sqrt{2\pi}} (4 \Sigma)^m +   \frac{(m!)^2}{2\sqrt{2\pi}} \left(32\frac{ \Lambda^2 c^{1/2} (1+q)^{3/2}\log(|I|^{\frac1d})^{\frac12}}{\rho^{d/2}|I|^{2\eta_1+\eta_2/2}}\right)^m,
		\end{align*}
		which concludes the proof of \eqref{eq:kappa_m_2}, recalling~\eqref{eq:A_int}. The proof of \eqref{eq:kappa_m_2_sf} is done in a similar way, by considering:
		$$X = \Sigma^{\frac12} \mathcal{N} + 2\frac{(c (1+q)^2)^{1/3}}{\rho^{d/6}|I|^{\eta_1}}\Gamma,$$ 
		where $\mathcal{N}$ is a standard Gaussian random variable and $\Gamma$ is an Gamma random variable with parameters 
		$\left(\lceil c_f \rceil, 1\right)$.
	\end{proof}
	To prove Theorem~\ref{thm:conc}, we combine the previous lemmas with Theorem~\ref{thm:sc}.
	\begin{proof}[Proof of Theorem \ref{thm:conc}]
		
		According to lemma \ref{lem:kappa_m_1} and Lemma \ref{lem:kappa_m_2}, we have
		\begin{align*}
			|\kappa_m(\widehat{S}(k_0))| \leq A_1+ 	A_2 + 	A_3 \wedge A_4,
		\end{align*}
		where
		\begin{align*}
			&A_1 := 2^{m-1}(m-1)! \frac{S(k_0)^m}{|I|^{m-1}}, \quad	
			A_2 := \left(\frac{4^{m} m!}{\sqrt{2\pi}}\right)^{\gamma} \frac{m!}{2\sqrt{2\pi}} (4 \Sigma)^m ,
			\\& A_3 := \left(\frac{4^{m} m!}{\sqrt{2\pi}}\right)^{\gamma}\frac{(m!)^2}{2\sqrt{2\pi}} \left(32\frac{ \Lambda^2 c^{1/2} (1+q)^{3/2} \log(|I|^{\frac1d})^{\frac12}}{\rho^{d/2}|I|^{2\eta_1+\eta_2/2}}\right)^m,
			\\& A_4:= \left(\frac{4^{m} m!}{\sqrt{2\pi}}\right)^{\gamma}\frac{(m!)^2}{2\sqrt{2\pi}} \left(32\frac{ \lceil c_f \rceil^2 c^{2/3} (1+q)^{4/3} }{\rho^{d/3}|I|^{2\eta_1}}\right)^m.
		\end{align*}
		We have, since $m \geq 2$, 
		\begin{align*}
			A_1 \leq \frac{m!}2 \left(\frac{2 S(k_0)}{|I|}\right)^{m-2}\frac{2 S(k_0)^2}{|I|}.
		\end{align*}
		Then, after computations, we get 
		\begin{align*}
			A_2 \leq \left(\frac{m!}2\right)^{1+\gamma} \frac{16}{\sqrt{2\pi}} \left(\frac{32}{\sqrt{2\pi}}\right)^{\gamma} \Sigma^2 (4^{1+\gamma}\Sigma)^{m-2}.
		\end{align*}
		Finally,
		\begin{align*}
			A_3 \wedge A_4 \leq \left(\frac{m!}2\right)^{2+\gamma} \frac{2^{11}}{\sqrt{2\pi}} \left(\frac{32}{\sqrt{2\pi}}\right)^{\gamma} B_{\infty}^{2}  (32B_{\infty})^{m-2}.
		\end{align*}
		Theorem \ref{thm:sc} and numerical computations conclude the proof. 
	\end{proof}
	Using the results of Section \ref{sec:herm}, we specialize Theorem \ref{thm:conc} to Hermite tapers. 
	\begin{proof}[Proof of Corollary \ref{cor:conc_herm}]
		
		In the following, $K$ denotes a finite constant that may change from line to line. We have:
		\begin{align*}
			|\widehat{S}(k_0) - S(k_0)| \leq 	|\widehat{S}(k_0) - \mathbb{E}[\widehat{S}(k_0)]|+ 	|S(k_0)- \mathbb{E}[\widehat{S}(k_0)]|. 
		\end{align*}
		According to Lemma \ref{lem:bias_s_hat} and arguing as in the proof of Corollary \ref{cor:L2_risk_herm}, the bias term is bounded as:
		\begin{align*}
			|S(k_0)- \mathbb{E}[\widehat{S}(k_0)]| \leq \frac{K}{|W|^{\beta/(2\beta +d)}}.
		\end{align*}
		We consider the variance term $|\widehat{S}(k_0) - \mathbb{E}[\widehat{S}(k_0)]|$. According to Lemma \ref{lem:L4_herm}, Assumptions~\eqref{ass:cesaro_f} and \eqref{ass:cesaro_f3} are satisfied with $\rho = r$, $\eta_1 = 1/12$ and $\eta_2 = 1/6$. To apply Theorem~\ref{thm:conc}, we check that condition~\eqref{cond:rI} is satisfied. 
		\begin{align*}
			\frac{(1+q) |I|^{\eta_2}}{c \rho^{d} \log(|I|^{\frac{1}d})} \leq \frac{\sqrt{2} (1+q) |I|^{\frac23}}{c \log(2^{\frac1d})|W|} \leq \frac{\sqrt{2} (1+q) c_3^{\frac23}}{c \log(2^{\frac1d})} |W|^{\frac23 \frac{2\beta}{2\beta +d} -1} = \frac{\sqrt{2} (1+q) c_3^{\frac23}}{c \log(2^{\frac1d})} |W|^{- \frac{2 \beta +3d}{6\beta + 3d}} \leq 1,
		\end{align*}
		provided that $|W|$ is large enough, say $|W| \geq v_0$ for some $v_0 > 0$. Moreover, with similar computations, we obtain
		\begin{align*}
			\frac{1+q}{c \rho^d} \leq 	\sqrt{2} \frac{1+q}{c} |W|^{-\frac{d+\beta}{2\beta+d}} \leq 1,
		\end{align*}
		by increasing $v_0$ if necessary.
		Accordingly, one can apply Theorem \ref{thm:conc}. It remains to bound $\Sigma$ and $B_{\infty}$.With the arguments of the proof of Corollary~\ref{cor:L2_risk_herm} (see Section~\ref{sec:proof_l2_risk_end}), we obtain
		\begin{align*}
			\Sigma & \leq \frac{S(k_0)}{|I|} + K\left( \frac{L}{|I|^{\frac12}}  \left(\frac{|I|}{|W|}\right)^{\frac{\beta}d}  + \|S\|_{\infty}  e^{-c' |I|^{\frac{3\theta}{2d}}}\right) + 2\|S\|_{\infty} \sup_{i \in I} \|\psi_i \mathbf{1}_{|\cdot| \geq |k_0|}\|_2 \\& \leq K\frac{L + \|S\|_{\infty}}{|W|^{2\beta/(2\beta +d)}} + K\|S\|_{\infty} \left( \sup_{i \in I} \|\psi_i \mathbf{1}_{|\cdot| \geq |k_0|}\|_2 \wedge \frac{1}{|W|^{\beta/(2\beta +d)}}\right).
		\end{align*}
		Note that when $\beta \geq 1$, we use the fact that $|I| \geq c_d |\{i \in \mathbb{N}^d|~|i|_{\infty} < i_{\max}\}|$, for a constant $c_d > 0$ depending only on the dimension $d$. Assume that $|k_0| \geq c_3^{\frac1d} 2 |W|^{-\frac{1}{2\beta +d}}$. Then, using assumptions \eqref{cond:rd_herm1} and \eqref{cond:I_herm1}, we prove that $r|k_0| \geq \sqrt{2} |I|^{1/(2d)} \geq \sqrt{i_{\text{max}} + i_{\text{max}}^{1/3+\theta}}$. So, we can apply Lemma \ref{lem:tail_herm}, to get
		\begin{align*}
			\sup_{i \in I} \|\mathcal{F}[f_i] \mathbf{1}_{|\cdot| \geq |k_0|}\|_2& \leq K e^{-c'(r |k_0|)^{3\theta}} \leq K e^{-c'(\sqrt{2} |I|^{\frac1{2d}})^{3\theta}} \leq \frac{K}{|W|^{2\beta/(2\beta +d)}}.
		\end{align*}
		Accordingly, if $|k_0| \geq c_3^{\frac1d} 2 |W|^{-\frac{1}{2\beta +d}}$, then $
		\Sigma \leq K(L + \|S\|_{\infty})/|W|^{2\beta/(2\beta +d)}.$
		Without any assumption on $k_0$, we have $
		\Sigma  \leq K(L+\|S\|_{\infty})/|W|^{\beta/(2\beta +d)}.$
		To upper bound $B_{\infty}$, we start with the case $|k_0| \geq c_3^{\frac1d} 2 |W|^{-\frac{1}{2\beta +d}}$. Using again Lemma \ref{lem:tail_herm} and assumptions \eqref{cond:rd_herm1}, \eqref{cond:I_herm1}, we prove that
		\begin{align*}
			\sup_{i \in I} \|\psi_i \mathds{1}_{\mathbb{R}^d \setminus W}\|_3 \sup_{i \in I} \|\psi_i\|_3^2 +  \sup_{i \in I} \|\psi_i \mathds{1}_{|\cdot| \geq |k_0|/3}\|_1 \leq K e^{-K_1 |W|^{\alpha}},
		\end{align*}
		for positive constants $\alpha > 0$ and $K_1 > 0$. Subsequently, with assumptions \eqref{cond:rd_herm1} and \eqref{cond:I_herm1}, we obtain, for some $\alpha' > 0$,
		\begin{align*}
			\Lambda \leq K \left\lceil c_f \vee e^{-K_1 |W|^{\alpha}} |W|^{\alpha'}\right\rceil \leq K.
		\end{align*}
		This yields, using $2\eta_1 + \eta_2/2 = 1/4$,
		\begin{align*}
			B_{\infty} \leq K \frac{c^{1/2} (1+q)^{3/2} \log(|W|)^{\frac{d}2}}{\rho^{d/2}|I|^{1/4}} \leq K \frac{c^{1/2} (1+q)^{3/2} \log(|W|)^{\frac{d}2}}{|W|^{\frac12}}.
		\end{align*}
		Finally, without any assumption on $k_0$, we get 
		\begin{align*}
			B_{\infty} \leq K \frac{c^{\frac23}(1+q)^{\frac43}}{r^{d/3} |I|^{\frac16}} \leq  K \frac{c^{\frac23}(1+q)^{\frac43}}{|W|^{\frac13}}.
		\end{align*}
		This concludes the proof of this lemma.
	\end{proof}
	
	\section{Proofs related to Section \ref{sec:dd}}\label{sec:dd_proof}
	
	To prove Theorem~\ref{thm:dd}, we introduce the following convenient transformation of the data-driven criterion. With the notations of Section~\ref{sec:dd} and~\ref{def:k0_sep}, we consider, for all $I \subset \mathbb{N}^d$:
	\begin{align}\label{eq:def_R_modif}
		\Delta_I := \widehat{\mathcal{R}}(I) - \frac{1}{N} \sum_{j = 1}^N \left(\widehat{S}_I(k_j) - S_{\frac{1}{2}}(k_j)\right)^2 - \frac{1}{N} \sum_{j = 1}^N \widehat{S}_p(\widetilde{k_j})^2 - 2 \widehat{S}_p(\widetilde{k_j}) S_{\frac{1}{2}}(\widetilde{k_j}).
	\end{align}
	For the following proofs, it is convenient to rewrite $\Delta_I$ as:
	\begin{equation}\label{eq:def_R_modif_fact}
		\Delta_I =\frac{2}N \sum_{j = 1}^N (\widehat{S}_I(k_j) - S_{\frac12}(k_j))(S_{\frac12}(\widetilde{k_j}) - \widehat{S}_p(\widetilde{k_j})),
	\end{equation}
	provided that $S(k_j) = S(\widetilde{k_j})$ for all $j \in [N]$. Theorem~\ref{thm:dd} is a direct consequence of the following lemma.

	\begin{lemma}\label{lem:exp_delta}
		Suppose the assumptions of Theorem~\ref{thm:dd} hold. Then, 
		\begin{align*}
			\mathbb{E}\left[\max_{I \in \mathcal{I}} |\Delta_I|\right] = o\left(|\mathcal{I}|^{\frac{1}{2}} |W|^{-\frac{2\beta}{2\beta + d}}\right),  \text{ as }  |W| \to \infty.
		\end{align*}
	\end{lemma}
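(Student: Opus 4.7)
The plan is to reduce the expected maximum to a uniform second-moment bound, then expand $\mathbb{E}[\Delta_I^2]$ into diagonal and off-diagonal parts, and show that the diagonal is tamed by averaging over many frequency pairs while the off-diagonal vanishes thanks to the $k_0$-allowed separation. The starting point is the Cauchy--Schwarz bound
\begin{equation*}
\mathbb{E}\Big[\max_{I \in \mathcal{I}} |\Delta_I|\Big] \;\leq\; \Big(\sum_{I \in \mathcal{I}} \mathbb{E}[\Delta_I^2]\Big)^{1/2} \;\leq\; |\mathcal{I}|^{1/2} \max_{I \in \mathcal{I}} \mathbb{E}[\Delta_I^2]^{1/2},
\end{equation*}
which reduces the claim to establishing $\max_{I \in \mathcal{I}} \mathbb{E}[\Delta_I^2] = o(|W|^{-4\beta/(2\beta+d)})$ as $|W|\to\infty$.

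Using the factored form \eqref{eq:def_R_modif_fact}, which is available because the $k_0$-allowed assumption gives $S_{1/2}(k_j)=S_{1/2}(\widetilde{k_j})$, I write $X_j := \widehat{S}_I(k_j) - S_{1/2}(k_j)$ and $Y_j := S_{1/2}(\widetilde{k_j}) - \widehat{S}_p(\widetilde{k_j})$ and expand
\begin{equation*}
\mathbb{E}[\Delta_I^2] \;=\; \frac{4}{N^2} \sum_{j,j' \in [N]} \mathbb{E}[X_j Y_j X_{j'} Y_{j'}].
\end{equation*}
For the diagonal $j=j'$, Cauchy--Schwarz gives $\mathbb{E}[X_j^2 Y_j^2] \leq \mathbb{E}[X_j^4]^{1/2}\mathbb{E}[Y_j^4]^{1/2}$. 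A fourth-moment analogue of Theorem~\ref{thm:L2_risk} (valid because the Brillinger-mixing hypothesis up to order $8$ controls the joint cumulants of the squared linear statistics $|C_i(k)|^2$ through an extension of Lemma~\ref{lem:var_S_hat}) yields $\mathbb{E}[X_j^4]^{1/2}=O(|W|^{-2\beta/(2\beta+d)})$ uniformly over $I\in\mathcal{I}$, while $\mathbb{E}[Y_j^4]^{1/2}=O(1)$ since the pilot is bounded in $L^4$. The diagonal contribution is therefore $O(|W|^{-2\beta/(2\beta+d)}/N)$, and this is $o(|W|^{-4\beta/(2\beta+d)})$ provided $N \gg |W|^{2\beta/(2\beta+d)}$, which follows from $N\geq A_W^{2d(d-1)/(d+1)}$ together with $A_W\to\infty$ and the lower bound on $|I|$ built into the definition of $\mathcal{I}$.

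The off-diagonal contribution is the heart of the proof. I would expand each summand $\mathbb{E}[X_j Y_j X_{j'} Y_{j'}]$ through the multivariate cumulant formula \eqref{def_kappa}, obtaining a sum over partitions of $\{X_j,Y_j,X_{j'},Y_{j'}\}$ of products of joint cumulants of squared linear statistics. Applying the Isserlis--Wick-type identity of Corollary~\ref{corol:cumulants_square_rvs} to each such cumulant further reduces the problem to joint cumulants of the one-dimensional quantities $T_i(\pm k_j)$, $T_i(\pm\widetilde{k}_j)$, $T_i(\pm k_{j'})$, $T_i(\pm\widetilde{k}_{j'})$. The frequency-separation clause~\ref{pts:sep} of the $k_0$-allowed definition makes the second-order cross-covariances small, because $|\mathcal{F}[f_i\mathbf{1}_W]|^2$ is concentrated near $0$ and the sums/differences of the involved frequencies are bounded away from zero by $c_2|k_0|$; higher-order cumulants are controlled by Brillinger-mixing up to order~$8$ as in Lemma~\ref{lem:bril_thin} and the sums over tapers are handled by Bessel-type orthogonality, as in Lemma~\ref{lem:som_comb}. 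Summing over $(j,j')\in[N]^2$, the non-clustering clause~\ref{pts:no_clust} bounds the number of pairs whose frequency differences fall in a ball of radius $a$ by $c_3(N(a/|k_0|)^{d-1}+1)$, and integrating against the tapers' Fourier-localization profile produces the desired off-diagonal gain.

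The main obstacle will be this last off-diagonal step: tracking which frequency differences enter each cumulant in the expansion, combining the Bessel-type orthogonality bounds on taper sums with the non-clustering count from~\ref{pts:no_clust}, and simultaneously exploiting the across-$j$ independence of the thinnings $(\Phi_1^j,\Phi_2^j)$ (which conditionally on $\Phi$ are mutually independent for different $j$) to cancel or decouple the terms where the cumulant decomposition would otherwise be too crude. The exponent $2d(d-1)/(d+1)$ appearing in the hypothesis $N\geq A_W^{2d(d-1)/(d+1)}$ is precisely the threshold at which the gain from the $(a/|k_0|)^{d-1}$ factor in~\ref{pts:no_clust} exactly balances the $1/N^2$ normalization, after integration across all scales $a$ compatible with the Fourier concentration of the Hermite tapers dilated by $r$ as in \eqref{eq:rI_dd}.
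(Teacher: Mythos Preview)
Your reduction to a uniform second-moment bound and the subsequent cumulant expansion are exactly what the paper does, but your diagonal step contains a genuine gap. You assert that the diagonal contribution is $O(|W|^{-2\beta/(2\beta+d)}/N)$ and that this is $o(|W|^{-4\beta/(2\beta+d)})$ because $N\gg |W|^{2\beta/(2\beta+d)}$. Neither claim is justified. First, $\mathbb{E}[X_j^4]^{1/2}=O(|W|^{-2\beta/(2\beta+d)})$ is \emph{not} uniform over $I\in\mathcal{I}$: for $I$ near the lower end of $\mathcal{I}$ the variance term $1/|I|$ dominates and exceeds $|W|^{-2\beta/(2\beta+d)}$ by a factor $A_W^{d(d-1)/(d+1)-\eta}$. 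Second, bounding $\mathbb{E}[Y_j^4]^{1/2}$ by $O(1)$ throws away the $1/|I_p|$ gain that is essential. Third, and most concretely, the inequality $N\gg |W|^{2\beta/(2\beta+d)}$ simply does not follow from $N\ge A_W^{2d(d-1)/(d+1)}$: take $d=2$, $\beta=2$, $|k_0|$ fixed, so that $A_W\asymp |W|^{1/6}$ and the hypothesis only gives $N\gtrsim |W|^{2/9}$, far short of $|W|^{2/3}$.

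The paper avoids this by \emph{not} splitting into diagonal and off-diagonal. Instead it writes $\mathbb{E}[\Delta_I^2]=\mathbb{E}[\Delta_I]^2+\operatorname{Var}[\Delta_I]$ and expands $\operatorname{Var}[\Delta_I]$ via the fourth-order cumulant identity (Corollary~\ref{corol:cumulants_square_rvs}) into three blocks: a product of two second-order covariances, a first-moment times a third-order joint cumulant, and a full fourth-order joint cumulant (Lemmas~\ref{lem:dd_22}, \ref{lem:dd_13}, \ref{lem:dd_4}). In that decomposition the ``diagonal'' piece appears naturally as $\operatorname{Var}[\widehat{S}_I(k_j)]\,\operatorname{Var}[\widehat{S}_p(\widetilde{k}_j)]\lesssim 1/(|I|\,|I_p|)$, and it is precisely the product $1/(|I|\,|I_p|)$ that, combined with the lower bound $|I|\wedge|I_p|\ge c_5\,|W|^{2\beta/(2\beta+d)}A_W^{-(d(d-1)/(d+1)-\eta)}$ and $N\ge A_W^{2d(d-1)/(d+1)}$, produces $1/(N|I||I_p|)\le A_W^{-2\eta}|W|^{-4\beta/(2\beta+d)}=o(|W|^{-4\beta/(2\beta+d)})$. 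Your Cauchy--Schwarz step keeps only one of these two factors; to repair your argument you must retain both, which in effect forces you back to the paper's decomposition.
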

	
	The proof of the previous lemma is the main objective of this section. Before proceeding with it, we first show how to deduce Theorem~\ref{thm:dd}.

	\begin{proof}[Proof of Theorem~\ref{thm:dd}]
		By definition, we have, for all $I \in \mathcal{I}$,
		$
		\widehat{\mathcal{R}}(\widehat{I}) \leq \widehat{\mathcal{R}}(I).
		$
		This yields:
		\begin{align*}
			\frac{1}{N} \sum_{j = 1}^N \left(\widehat{S}_{\widehat{I}}(k_j) - S_{\frac{1}{2}}(k_j)\right)^2 &\leq \frac{1}{N} \sum_{j = 1}^N \left(\widehat{S}_I(k_j) - S_{\frac{1}{2}}(k_j)\right)^2 + \Delta_{I} - \Delta_{\widehat{I}} \\
			&\leq \frac{1}{N} \sum_{j = 1}^N \left(\widehat{S}_I(k_j) - S_{\frac{1}{2}}(k_j)\right)^2 + 2 \max_{I' \in \mathcal{I}} |\Delta_{I'}|.
		\end{align*}
		Taking expectations in the previous inequality and applying Lemma~\ref{lem:exp_delta} completes the proof.
	\end{proof}
	
	To prove Lemma~\ref{lem:exp_delta}, we upper bound both the expectation and the variance of $\Delta_I$. We introduce the next notation to highlight the dominant terms in the forthcoming bounds.
	\begin{definition}
		In this section, $\mathcal{E}(|I|, |I_p|)$ denotes a non negative exponentially small quantity such that 
		$$\mathcal{E}(|I|, |I_p|, \theta) \leq K\left(e^{-|I_p|^{3\theta/(2d)}/K} + e^{-|I|^{3\theta/(2d)}/K}\right),$$
		for a constant $K \in (0, \infty)$ and where $\theta \in (0, 2/3)$ is the parameter of equation \eqref{eq:rI_dd}. When the context is clear, we use the abbreviation $\mathcal{E} = \mathcal{E}(|I|, |I_p|, \theta).$
	\end{definition}
	
	The next four lemmas provide upper bounds on the terms appearing in $\mathbb{E}[\Delta_I]$ and $\operatorname{Var}[\Delta_I]$. The following computations rely on the two general underlying ideas, summarized as follows:
	
	\begin{enumerate}
		\item In general, cumulants of order $m \geq 4$ of linear statistics associated with a family of tapers are controlled by the product of the $L^m(\mathbb{R}^d)$-norms of the tapers (see Lemma~\ref{lem:bril_thin}). However, when one of the Hermite tapers $\psi_i$ has an index $i$ appearing only once in a cumulant, and when we sum the square of this cumulant over $i \in I$, we instead use Lemma~\ref{lem:bessel_km} to exploit the orthogonality of the tapers.
		
		\item For cumulants of order two or three, two cases are distinguished. If the cumulant involves tapers index $i \in I$ and/or $i_p  \in I_p$ with $|I|$ and $|I_p|$ small enough we exploit frequency decorrelation via Lemmas~\ref{lem:decor_freq_2} and~\ref{lem:decor_freq_3}. Otherwise, we take advantage of the orthogonality between tapers using Lemmas~\ref{lem:bessel_k2} and~\ref{lem:bessel_km}.
	\end{enumerate}
	
	\begin{lemma}\label{lem:exp_delta_I}
		Suppose that the assumptions of Theorem~\ref{thm:dd} hold. Then, 
		\begin{align*}
			\forall I \in \mathcal{I},~|\mathbb{E}[\Delta_I ]| \leq K\Bigg(\left(\frac{|I||I_p|}{|W|^{2}}\right)^{\frac{\beta}d} + \frac{\log(|W|)^d}{|W|} +\frac{1}{|W| |k_0|^d}  + \mathcal{E}\Bigg),
		\end{align*}
		where $K < \infty$ does not depend on $I$, $r$, $W$, $N$ and $|k|$.
	\end{lemma}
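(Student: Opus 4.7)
The plan is to decompose $\mathbb{E}[\Delta_I]$ using the factored form \eqref{eq:def_R_modif_fact}, isolate a product of biases and a cross-covariance, and bound each separately. The bias factors are controlled directly by Lemma \ref{lem:bias_s_hat}, while the cross-covariance requires a careful treatment combining an Isserlis--Wick expansion with the frequency-separation built into the $k_0$-allowed assumption.

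Applying the identity $\mathbb{E}[(X-a)(b-Y)] = (\mathbb{E}[X]-a)(b-\mathbb{E}[Y]) - \operatorname{Cov}[X,Y]$ with $X=\widehat{S}_I(k_j)$, $Y=\widehat{S}_p(\widetilde{k_j})$, $a=S_{\frac12}(k_j)$, $b=S_{\frac12}(\widetilde{k_j})$, and using $S(k_j)=S(\widetilde{k_j})$, one obtains
\begin{align*}
\mathbb{E}[\Delta_I] = -\frac{2}{N}\sum_{j=1}^{N}\Bigl(\operatorname{Cov}[\widehat{S}_I(k_j),\widehat{S}_p(\widetilde{k_j})] + B_I(k_j)\,B_p(\widetilde{k_j})\Bigr),
\end{align*}
where $B_I(k):=S_{\frac12}(k)-\mathbb{E}[\widehat{S}_I(k)]$ denotes the bias of the multitaper estimator built on the $1/2$-thinning $\Phi_1^j$, which satisfies Assumption \ref{ass_rho_leb} with intensity $\tfrac12$ and structure factor $S_{\frac12}$. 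Invoking Lemma \ref{lem:bias_s_hat} for this thinned process, together with the Hermite estimates of Lemma \ref{lem:norm_H12_herm} and Lemma \ref{lem:tail_herm}, yields $|B_I(k)|\leq K(|I|/|W|)^{\beta/d}+\mathcal{E}$ and the analogous bound for $|B_p|$; their product accounts for the $(|I||I_p|/|W|^2)^{\beta/d}$ term in the statement.

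For the covariance, I expand $\operatorname{Cov}[|C_i^1(k_j)|^2,|C_{i_p}^2(\widetilde{k_j})|^2]$ via Corollary \ref{corol:cumulants_square_rvs}, obtaining a fourth-order joint cumulant $\kappa_4^{i,i_p}$ plus terms of the form $|\operatorname{Cov}[T_i^1(\epsilon k_j),T_{i_p}^2(\epsilon'\widetilde{k_j})]|^2$ with $\epsilon,\epsilon'\in\{\pm1\}$. The fourth-order term is handled by the thinned version of Lemma \ref{lem:bril_thin}: it is bounded by $\|f_i\|_4^2\|f_{i_p}\|_4^2$ times a constant depending on $|\gamma_{\text{red}}^{(m)}|(\mathbb{R}^{d(m-1)})$ for $m\leq 4$; averaging this over $i\in I$ and $i_p\in I_p$ and applying Lemma \ref{lem:L4_herm} together with $|I|\vee|I_p|\leq c_6 |W|^{2\beta/(2\beta+d)}$ produces the $\log(|W|)^d/|W|$ term.

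The main obstacle is the second-order cross-covariance. Arguing as in the derivation of \eqref{e.prop_campbell} and exploiting Lemma \ref{lem:cov_thin}, one verifies that for complementary thinnings,
\begin{align*}
\operatorname{Cov}[T_i^1(\epsilon k_j),T_{i_p}^2(\epsilon'\widetilde{k_j})] = \tfrac14\!\int \mathcal{F}[f_i\mathds{1}_W](k - \epsilon k_j)\,\overline{\mathcal{F}[f_{i_p}\mathds{1}_W](k - \epsilon'\widetilde{k_j})}\,(S(k)-1)\,dk.
\end{align*}
By Definition \ref{def:k0_sep}, all four spectral offsets $|\epsilon k_j - \epsilon'\widetilde{k_j}|$ are bounded below by $c|k_0|$. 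Splitting $\mathcal{F}[f_i\mathds{1}_W]=\mathcal{F}[f_i]-\mathcal{F}[f_i\mathds{1}_{\mathbb{R}^d\setminus W}]$ and absorbing the truncation contributions into $\mathcal{E}$ via Lemma \ref{lem:tail_herm}, I apply two successive Bessel inequalities exploiting the orthonormality of the Hermite Fourier families $(\mathcal{F}[f_i])_{i\in I}$ and $(\mathcal{F}[f_{i_p}])_{i_p\in I_p}$; the resulting overlap integral, concentrated where both shifted spectral windows have significant mass, is of order $|k_0|^{-d}$ thanks to the spectral localization of Hermite tapers combined with the minimum separation $c|k_0|$, and the $1/|W|$ prefactor emerges from the $1/(|I||I_p|)$ normalization together with $|I|\vee|I_p|\leq c_6|W|^{2\beta/(2\beta+d)}$. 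The delicate step is the precise quantification of the $|k_0|^{-d}$ decay via the Hermite reproducing kernel applied to the shifted spectral windows, refining the trivial Bessel bound of order $1/|I|$ to the sharper $1/(|W||k_0|^d)$ required by the statement.
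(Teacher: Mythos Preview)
Your overall decomposition is correct and matches the paper: bias product via Lemma~\ref{lem:bias_s_hat}, fourth-order cumulant via Lemma~\ref{lem:bril_thin} and Lemma~\ref{lem:L4_herm}, and second-order cross-covariances as the remaining piece. The gap is in this last piece.

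You try to obtain the $1/(|W|\,|k_0|^d)$ bound in one stroke, via ``two successive Bessel inequalities'' and a ``Hermite reproducing kernel'' argument, and you openly flag this as the delicate step. As written it does not go through. The Bessel bound from Lemma~\ref{lem:bessel_k2} only yields $K/(|I|\vee|I_p|)$; your claim that the $1/|W|$ factor ``emerges from the $1/(|I||I_p|)$ normalization together with $|I|\vee|I_p|\le c_6|W|^{2\beta/(2\beta+d)}$'' is in the wrong direction, since an \emph{upper} bound on $|I|\vee|I_p|$ gives a \emph{lower} bound on $1/(|I|\vee|I_p|)$. And no reproducing-kernel refinement is available here: the two taper families live at different dilation scales $r$ and $r_p$, so they are not jointly orthonormal.

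What the paper actually does is a case split driven by the relation between the frequency gap and the spectral width of the tapers. If $r c_2|k_0|\ge \sqrt{2}\,|I|^{1/(2d)}$ and the analogous inequality holds for $(r_p,I_p)$, then Lemma~\ref{lem:decor_freq_2} together with the Hermite tail bound (Lemma~\ref{lem:tail_herm}) gives an exponentially small contribution in $(r|k_0|)^{3\theta}$, which under \eqref{eq:rI_dd} is bounded by $K/(|W|\,|k_0|^d)+\mathcal{E}$. If instead $r c_2|k_0|\le \sqrt{2}\,|I|^{1/(2d)}$ (or the analogue for $I_p$), the Bessel bound $K/(|I|\vee|I_p|)$ from Lemma~\ref{lem:bessel_k2} applies, \emph{and} the failed inequality combined with $r\asymp R/|I|^{1/(2d)}$ forces $|I|\vee|I_p|\ge (c_2/\sqrt{2})^{d}\,|W|\,|k_0|^{d}$, so the Bessel bound is again at most $K/(|W|\,|k_0|^d)$. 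This dichotomy is the missing idea; once you insert it, the rest of your proposal lines up with the paper's proof.
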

	\begin{proof}
		According to \eqref{eq:def_R_modif_fact},
		\begin{equation*}
			\Delta_I =\frac{2}N \sum_{j = 1}^N (\widehat{S}_I(k_j) - S_{\frac12}(k_j))(S_{\frac12}(\widetilde{k_j}) - \widehat{S}_p(\widetilde{k_j})).
		\end{equation*}
		For $s \in \{1, 2\}$, denote by $C_i^s$ the quantity of equation \eqref{eq:def_Ti_Ci} computed with $\Phi_s$. Hence,
		\begin{equation*}
			\mathbb{E}[\Delta_I] =\frac{2}N \sum_{j = 1}^N \frac1{|I||I_p|}\sum_{(i_1, i_2) \in I \times I_p} \mathbb{E}[(|C_{i_1}^1(k_j)|^2 - S_{\frac12}(k_j))(S_{\frac12}(\widetilde{k_j}) - |C_{i_2}^2(\widetilde{k_j})|^2)].
		\end{equation*}
		Using Corollary \ref{corol:cumulants_square_rvs}, we get
		\begin{equation*}
			|\mathbb{E}[\Delta_I]| \leq \frac{2}N \sum_{j = 1}^N \frac1{|I||I_p|}\sum_{(i_1, i_2) \in I \times I_p} (A + B + C + D),
		\end{equation*}
		where 
		\begin{align*}
			&A := |\mathbb{E}[|C_{i_1}^1(k_j)|^2] - S_{\frac12}(k_j)| |\mathbb{E}[|C_{i_2}^2(\widetilde{k_j})|^2] - S_{\frac12}(\widetilde{k_j})|,
			\\& B := |\mathbb{E}[C_{i_1}^1(k_j) C_{i_2}^2(\widetilde{k_j})]|^2,~ C := |\mathbb{E}[C_{i_1}^1(k_j) \overline{C_{i_2}^2(\widetilde{k_j})}]|^2,
			\\& D:= \kappa(C_{i_1}^1(k_j), \overline{C_{i_1}^1(k_j)}, C_{i_2}^2(\widetilde{k_j}), \overline{C_{i_2}^2(\widetilde{k_j})}).
		\end{align*}
		During the proof, we denote by $K $ a finite constant that may change from line to line. We start by bounding the sum concerning $A$. According to Lemmas \ref{lem:bias_s_hat}, \ref{lem:norm_H12_herm} and \ref{lem:tail_herm}, we have:
		\begin{align*}
			\frac1{|I||I_p|}&\sum_{(i_1, i_2) \in I \times I_p} A \leq  \prod_{\substack{(\rho, J) \in \\  \{(r, I), (r_p, I_p)\}}} \left(\frac{\sqrt{d} L}{2|J|\rho^{\beta}} \sum_{i \in J} \|\psi_i\|_{\dot{H}^{\frac{\beta}2}}^2 + \frac{4}{|J|}\sum_{i \in J} \|\psi_i \mathds{1}_{\mathbb{R}^d \setminus W/\rho}\|_2\right) \\&  \leq  K \prod_{\substack{(\rho, J) \in \\  \{(r, I), (r_p, I_p)\}}}\left(\left(\frac{|J|^{\frac{1}{2d}}}{\rho}\right)^{\beta} +  e^{-c' (R/\rho)^{3\theta}} \right).
		\end{align*}
		In the second line we used assumption \eqref{eq:rI_dd}. Using again this assumption and the fact that $|J|\leq |W|$, we obtain
		\begin{align}\label{eq:exp_crit_A}
			\frac1{|I||I_p|}\sum_{(i_1, i_2) \in I \times I_p} A \leq K \prod_{\substack{(\rho, J) \in \\  \{(r, I), (r_p, I_p)\}}} \left(\left(\frac{|J|}{|W|}\right)^{\frac{\beta}d} + \mathcal{E}\right)   \leq K\left( \frac{(|I||I_p|)^{\frac{\beta}d}}{|W|^{2\frac{\beta}d}} +  \mathcal{E}\right).
		\end{align}
		Now, we consider $B$ and $C$. The idea is to exploit the exponential decorrelation between linear statistics associated with tapers evaluated at distinct frequencies when the number of tapers is small. When this condition does not hold, we instead rely on the variance reduction resulting from the orthogonality of the tapers. Assume that $\rho c_2 |k_0| \geq \sqrt{2} |J|^{\frac{1}{2d}}$ for $(\rho, J) \in \{(r, I), (r_p, I_p)\}$. According to Lemma~\ref{lem:decor_freq_2}, we have
		\begin{align*}
			B^{\frac12} &\leq K \left(\sum_{s = 1}^2 \|\rho_s^{-d/2}\psi_{i_s}(\cdot/\rho_s) \mathds{1}_{\mathbb{R}^d \setminus [-R, R]^d} \|_2 + \|\rho_s^{-d/2}\mathcal{F}[\psi_{i_s}(\cdot/\rho_s)] \mathds{1}_{|\cdot| \geq |k_{j} - \widetilde{k_{j}}|/2} \|_2\right) \\& = K \left(\sum_{s = 1}^2 \|\psi_{i_s} \mathds{1}_{\mathbb{R}^d \setminus [-R/\rho_s, R/\rho_s]^d} \|_2 + \|\mathcal{F}[\psi_{i_s}] \mathds{1}_{|\cdot| \geq \rho_s|k_{j} - \widetilde{k_{j}}|/2} \|_2\right),
		\end{align*}
		where $\rho_1 = r$ and $\rho_2 = r_p$. The $k_0$-allowed assumption ensures that $|k_j - \tilde{k_j}| \geq c_2 |k_0|$. Using Lemma~\ref{lem:tail_herm}, we get, for some $c' > 0$,
		\begin{align*}
			\frac1{|I||I_p|}\sum_{\substack{i_1 \in I \\ i_2 \in I_p}} B \leq K \sum_{\substack{(\rho, J) \in \\ \{(r, I), (r_p, I_p)\}}} e^{-c' (R/\rho)^{3\theta}} + e^{-c' (c_2 \rho |k_0|)^{3\theta}}.
		\end{align*}
		Moreover, using assumption \eqref{eq:rI_dd}, we obtain
		$
		(\rho |k_0|)^2 \geq \sqrt{2} |J|^{\frac{1}{2d}} \rho |k_0| \geq \sqrt{2} |W|^{\frac1d} |k_0|.
		$
		This yields,
		\begin{align}\label{eq:exp_crit_B_exp}
			\frac1{|I||I_p|}\sum_{\substack{i_1 \in I \\ i_2 \in I_p}} B \leq K \left(\frac1{|W||k|^d} +  \mathcal{E}\right),
		\end{align}
		when $\rho c_2 |k_0| \geq \sqrt{2} |J|^{\frac{1}{2d}}$ for $(\rho, J) \in \{(r, I), (r_p, I_p)\}$
		
		Assume now that $\rho c_2 |k_0| \leq \sqrt{2} |J|^{\frac{1}{2d}}$ for $(\rho, J) = (r, I)$ or $(\rho, J ) = (r_p, I_p)$. According to Lemma~\ref{lem:bessel_k2} and Lemma \ref{lem:tail_herm}, we have
		\begin{align*}
			\frac1{|I||I_p|}\sum_{\substack{i_1 \in I \\ i_2 \in I_p}} B &\leq \frac{K}{|I| \vee |I_p|}  +  \frac{K}{|I|}\sum_{i_1 \in I} \|r^{-\frac{d}2}\psi_{i_1}\left(\frac{\cdot}r\right) \mathds{1}_{\mathbb{R}^d \setminus W}\|_2^2+ \frac{K}{|I_p|}\sum_{i_2 \in I_p} \|r_p^{-\frac{d}2}\psi_{i_2}\left(\frac{\cdot}{r_p}\right) \mathds{1}_{\mathbb{R}^d \setminus W}\|_2^2 \\& \leq K \left(\frac{1}{|I| \vee |I_p|} + \mathcal{E}\right).
		\end{align*}
		If $r |k_0| c_2 \leq \sqrt{2} |I|^{\frac{1}{2d}}$, we have, using \eqref{eq:rI_dd}
		$
		\sqrt{2} |I|^{\frac{1}{2d}} \geq c_2 |k_0| |W|^{\frac1d}/|I|^{\frac1{2d}} 
		$
		Subsequently, 
		$
		|I| \vee |I_p| \geq |I| \geq \left(c_2/\sqrt{2}\right)^d |W| |k_0|^d.
		$
		If $r_p |k_0| c_2 \leq \sqrt{2} |I_p|^{\frac{1}{2d}}$, we have the same lower bound for $|I| \vee |I_p|$. Accordingly, the bound \eqref{eq:exp_crit_B_exp} is always valid.
		
		Since we also have $|k_j + \tilde{k_j}| \geq c_2 |k_0|$, the technique for bounding $B$ can be applied to obtain
		\begin{align}\label{eq:exp_crit_C}
			\frac1{|I||I_p|}\sum_{\substack{i_1 \in I \\ i_2 \in I_p}}& C \leq  K \left( \frac{1}{|W| |k_0|^d}   +  \mathcal{E}\right).
		\end{align}
		It remain to consider $D$. According to Lemma \ref{lem:bril_thin}
		\begin{align*}
			|D| \leq K \| r^{-\frac{d}{2}}\psi_{i_1}\left(\frac{\cdot}r\right)\|_4^2 \| r_p^{-\frac{d}{2}}\psi_{i_2}\left(\frac{\cdot}{r_p}\right)\|_4^2 = K (r r_p)^{-\frac{d}{2}} \|\psi_{i_1}\|_4^2 \|\psi_{i_2}\|_4^2.
		\end{align*}
		Using Lemma \ref{lem:L4_herm} and assumption \eqref{eq:rI_dd}, we obtain
		\begin{align}\label{eq:exp_crit_D}
			\frac1{|I||I_p|}\sum_{(i_1, i_2) \in I \times I_p} |D| \leq K \frac{(\log(|I|^{\frac1d})\log(|I_p|^{\frac1d}))^{\frac{d}{2}}}{r^{\frac{d}{2}} |I|^{1/4} r_p^{\frac{d}{2}} |I_p|^{1/4}}  \leq K \frac{(\log(|W|)^d}{|W|}.
		\end{align}
		Gathering the bounds \eqref{eq:exp_crit_A}, \eqref{eq:exp_crit_B_exp}, \eqref{eq:exp_crit_C} and \eqref{eq:exp_crit_D}, we obtain the result.
	\end{proof}
	To upper bound the variance of $\Delta_I$, we proceed in three steps with Lemmas \ref{lem:dd_22}, \ref{lem:dd_13} and~\ref{lem:dd_4}. 
	\begin{lemma}\label{lem:dd_22}
		Suppose the assumptions of Theorem \ref{thm:dd} hold and denote 
		\begin{align*}
			&C_1 := \kappa(\widehat{S}_{I}(k_{j_1}), \widehat{S}_{I}(k_{j_2}))\kappa(\widehat{S}_{0}(\tilde{k_{j_1}}),  \widehat{S}_{0}(\tilde{k_{j_2}})), 
			\\& C_2 := \kappa(\widehat{S}_{I}(k_{j_1}), \widehat{S}_{0}(\tilde{k_{j_2}}))\kappa(\widehat{S}_{0}(\tilde{k_{j_1}}),  \widehat{S}_{I}(k_{j_2})).
		\end{align*}
		Then, we have, for $s \in \{1, 2\}$
		\begin{align}\label{lem:var_crit_22_1}
			\frac1{N^2} \sum_{\substack{1 \leq j_1 \leq N \\ 1 \leq j_2 \leq N}}|C_s| \leq K \left(\frac{1}{|I||I_p|} \left( \left(\frac{(|I_p| \vee |I|)^{\frac1{d}}}{|W|^{\frac1d} |k_0|}\right)^{\zeta}+\frac1N\right) +  \frac{\log(|W|)^{2d}}{|W|^2} + \mathcal{E}(|I|, |I_p|)\right),
		\end{align}
		where $K < \infty$ is a constant and $\zeta = d-1$.
	\end{lemma}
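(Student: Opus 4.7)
The plan is to expand each cumulant appearing in $C_1$ and $C_2$ by using the multilinearity of cumulants in the definition $\widehat{S}_I(k) = |I|^{-1}\sum_{i \in I}|C_i(k)|^2$, and then apply the Isserlis--Wick formula of Corollary~\ref{corol:cumulants_square_rvs} to each term $\kappa(|C_{i_1}(k_{j_1})|^2, |C_{i_2}(k_{j_2})|^2)$. This produces three classes of contributions: a fourth-order joint cumulant $\kappa(T_{i_1}(k_{j_1}), \overline{T_{i_1}(k_{j_1})}, T_{i_2}(k_{j_2}), \overline{T_{i_2}(k_{j_2})})$, and the squared covariance terms $|\operatorname{Cov}(T_{i_1}(k_{j_1}), T_{i_2}(k_{j_2}))|^2$ and $|\operatorname{Cov}(T_{i_1}(k_{j_1}), \overline{T_{i_2}(k_{j_2})})|^2$. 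The same expansion applies to the pilot half (with $I$ replaced by $I_p$, $r$ by $r_p$, and the $k_{j_s}$ by $\widetilde{k}_{j_s}$), and for $C_2$ the roles of $I$ and $I_p$ are mixed within each factor but the combinatorial structure is unchanged.

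For the fourth-order joint cumulants I would apply Lemma~\ref{lem:bril_thin} under the Brillinger-mixing hypothesis to bound each such term by the product of the $L^4(\mathbb{R}^d)$-norms of the dilated Hermite tapers. Summing over $(i_1, i_2) \in I^2$ (resp.\ $I_p^2$) and normalising, Lemma~\ref{lem:L4_herm} together with the choice~\eqref{eq:rI_dd} of the dilation factor yields a contribution of order $\log(|W|)^d/|W|$ per cumulant; taking the product of the two cumulants in $C_1$ or $C_2$ produces the $\log(|W|)^{2d}/|W|^2$ term in~\eqref{lem:var_crit_22_1}.

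For the covariance-squared terms I would proceed by dichotomy on the size of $|I|^{1/d} \vee |I_p|^{1/d}$ relative to the products $\rho|k_{j_1} \pm k_{j_2}|$, $\rho|k_{j_1} \pm \widetilde{k}_{j_2}|$, etc., which control the frequency separations at stake. When these quantities are large, Lemma~\ref{lem:decor_freq_2} combined with the tail estimate of Lemma~\ref{lem:tail_herm} produces decay in the relevant frequency separation, up to an exponentially small border-effect remainder absorbed into $\mathcal{E}(|I|, |I_p|)$. In the opposite regime, Lemma~\ref{lem:bessel_k2} converts these sums into a single $1/(|I| \vee |I_p|)$ factor at the cost of discarding the frequency separation. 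In either case, the resulting pairwise bound depends only on $|k_{j_1} - k_{j_2}|$ and $|k_{j_1} + k_{j_2}|$ (or the appropriate mixed separations for $C_2$).

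The final step is to sum the pairwise bound over $(j_1, j_2) \in [N]^2$. This is where the $k_0$-allowed assumption, and specifically the non-clustering condition (point~\ref{pts:no_clust} of Definition~\ref{def:k0_sep}), plays the essential role: for every dyadic scale $a$, the number of indices $j_2$ with one of the above separations at most $a$ is at most $c_3(N(a/|k_0|)^{d-1} + 1)$. Summing the decay estimate along a dyadic decomposition of distances translates the spherical geometry of the $k_0$-allowed frequencies into the exponent $\zeta = d-1$ and produces the main term $((|I| \vee |I_p|)^{1/d}/(|W|^{1/d}|k_0|))^{d-1}/(|I||I_p|)$. The additive $1/N$ stems from the near-diagonal pairs $j_1 \simeq j_2$ and from the constant ``$+1$'' in the non-clustering inequality, where frequency decorrelation is unavailable and only the orthogonality of the Hermite tapers can be exploited. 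The main obstacle is the bookkeeping: carefully enumerating the Isserlis--Wick terms across both the $I$ and $I_p$ sides at once, identifying which frequency combination $k_{j_s} \pm k_{j_{s'}}$, $k_{j_s} \pm \widetilde{k}_{j_{s'}}$, $\widetilde{k}_{j_s} \pm \widetilde{k}_{j_{s'}}$ controls each summand, and verifying that the dyadic integration of pointwise bounds yields precisely the exponent $d-1$ rather than $d$.
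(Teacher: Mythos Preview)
Your proposal is correct and follows essentially the same approach as the paper's proof: the Isserlis--Wick expansion via Corollary~\ref{corol:cumulants_square_rvs}, the $L^4$ bound on the fourth-order cumulant via Lemma~\ref{lem:bril_thin} and Lemma~\ref{lem:L4_herm}, the dichotomy on the covariance-squared terms (Lemma~\ref{lem:decor_freq_2} when the frequency separation is large relative to the taper scale, Lemma~\ref{lem:bessel_k2} otherwise), and the use of the non-clustering condition from Definition~\ref{def:k0_sep} to sum over $(j_1,j_2)$. The only minor difference is that the paper does not use a dyadic decomposition in the final summation: it applies a single hard threshold at scale $\sqrt{2}\,|I|^{1/(2d)}/r$ (resp.\ $|I_p|^{1/(2d)}/r_p$), bounds the ``far'' regime directly by $\mathcal{E}$, and counts the ``close'' pairs once via point~\ref{pts:no_clust}, which already yields the exponent $\zeta=d-1$; the product of the two single-cumulant bounds then produces the $\delta_1,\delta_2,\delta_3$ cross terms that combine into the displayed estimate.
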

	\begin{proof}
		During the proof, $K$ denotes a constant that may change from line to line.
		We start by proving \eqref{lem:var_crit_22_1} for $s = 1$. By bi-linearity, and using Corollary \ref{corol:cumulants_square_rvs}, we obtain:
		\begin{align}\label{eq:k_SI_SI}
			\kappa(\widehat{S}_{I}(k_{j_1}), \widehat{S}_{I}(k_{j_2}))= \frac1{|I|^2} \sum_{(i_1, i_2) \in I^2}& \kappa(C^1_{i_1}(k_{j_1}), \overline{C^1_{i_1}(k_{j_1})}, C^1_{i_2}(k_{j_2}), \overline{C^1_{i_2}(k_{j_2})}) \nonumber \\& + |\kappa(C^1_{i_1}(k_{j_1}), C^1_{i_2}(k_{j_2}))|^2 + |\kappa(C^1_{i_1}(k_{j_1}), C^1_{i_2}(-k_{j_2}))|^2,
		\end{align}
		where for $s \in \{1, 2\}$, $C_i^s$ denotes the quantity of equation \eqref{eq:def_Ti_Ci} computed with $\Phi_s$. Using Lemma \ref{lem:bril_thin}, Lemma \ref{lem:L4_herm} and assumption \eqref{eq:rI_dd}, we get
		\begin{align}\label{eq:bound_kappa4}
			\frac1{|I|^2} \sum_{(i_1, i_2) \in I^2}& |\kappa(C^1_{i_1}(k_{j_1}), \overline{C^1_{i_1}(k_{j_1})}, C^1_{i_2}(k_{j_2}), \overline{C^1_{i_2}(k_{j_2})})| \leq K \frac{\log(|I|^{\frac1d})^d}{r^d |I|^{\frac12}} \leq K \frac{\log(|W|)^d}{|W|}.
		\end{align}
		To bound the other terms, we will take benefit of the decorrelation between distinct frequencies. We start with the one involving $|\kappa(C^1_{i_1}(k_{j_1}), C^1_{i_2}(k_{j_2}))|^2$. Assume that $|k_{j_1} - k_{j_2}| r \geq \sqrt{2} |I|^{\frac1{2d}}$. According to Lemma \ref{lem:decor_freq_2}, Lemma \ref{lem:tail_herm} and condition \eqref{eq:rI_dd}, we have
		\begin{align*}
			\frac1{|I|^2} \sum_{(i_1, i_2) \in I^2}|\kappa(C^1_{i_1}(k_{j_1}), C^1_{i_2}(k_{j_2}))|^2 \leq K \mathcal{E},
		\end{align*}
		On the opposite, if $|k_{j_1} - k_{j_2}| r \leq \sqrt{2} |I|^{\frac1{2d}}$, then using Lemma \ref{lem:bessel_k2} and Lemma~\ref{lem:tail_herm}, we obtain
		\begin{align*}
			\frac1{|I|^2} \sum_{(i_1, i_2) \in I^2}|\kappa(C^1_{i_1}(k_{j_1}), C^1_{i_2}(k_{j_2}))|^2 \leq K \left(\frac1{|I|} + \mathcal{E}\right).
		\end{align*}
		We obtain the same upper bound for the term of \eqref{eq:k_SI_SI} involving $|\kappa(C^1_{i_1}(k_{j_1}), C^1_{i_1}(-k_{j_2}))|^2$. Accordingly, we have
		\begin{align*}
			|\kappa(\widehat{S}_{I}(k_{j_1}), \widehat{S}_{I}(k_{j_2}))| \leq  K\left( \frac{\log(|W|)^d}{|W|} + \left(\mathds{1}_{|k_{j_1} - k_{j_2}| r \leq \sqrt{2}|I|^{\frac1{2d}}} +  \mathds{1}_{|k_{j_1} + k_{j_2}| r \leq \sqrt{2}|I|^{\frac1{2d}}}\right) \frac1{|I|} + \mathcal{E}\right).
		\end{align*}
		The same line of reasoning also applies to bound $\kappa(\widehat{S}_{0}(\widetilde{k_{j_1}}),  \widehat{S}_{0}(\widetilde{k_{j_2}}))$, yielding 
		\begin{align*}
			|\kappa(\widehat{S}_{0}(\widetilde{k_{j_1}}),  \widehat{S}_{0}(\widetilde{k_{j_2}}))| \leq K\left(\frac{\log(|W|)^d}{|W|} + \left(\mathds{1}_{|\widetilde{k_{j_1}} - \widetilde{k_{j_2}}| r_p \leq \sqrt{2}|I_p|^{\frac1{2d}}} +  \mathds{1}_{|\widetilde{k_{j_1}} + \widetilde{k_{j_2}}| r_p \leq \sqrt{2}|I_p|^{\frac1{2d}}}\right) \frac1{|I_p|} + \mathcal{E}\right).
		\end{align*}
		Subsequently, we get
		\begin{align*}
			\frac{1}{N^2} \sum_{1 \leq j_1, j_2 \leq N} |\kappa(\widehat{S}_{I}(k_{j_1}), \widehat{S}_{I}(k_{j_2}))\kappa(\widehat{S}_{0}(\tilde{k_{j_1}}),  \widehat{S}_{0}(\tilde{k_{j_2}}))| \leq K \left( \frac{\log(|W|)^{2d}}{|W|^2}+  \delta + \mathcal{E}\right),
		\end{align*}
		with 
		\begin{align*}
			&\delta =   \frac{\log(|W|)^d}{|W|} \frac1{|I| N^2}\sum_{1 \leq j_1, j_2 \leq N} \left(\mathds{1}_{|k_{j_1} - k_{j_2}| r \leq \sqrt{2}|I|^{\frac1{2d}}} +  \mathds{1}_{|k_{j_1} + k_{j_2}| r \leq \sqrt{2}|I|^{\frac1{2d}}}\right) \\& \quad  +  \frac{\log(|W|)^d}{|W|} \frac1{|I_p|N^2}\sum_{1 \leq j_1, j_2 \leq N}\left(\mathds{1}_{|\widetilde{k_{j_1}} - \widetilde{k_{j_2}}| r_p \leq \sqrt{2}|I_p|^{\frac1{2d}}} +  \mathds{1}_{|\widetilde{k_{j_1}} + \widetilde{k_{j_2}}| r_p \leq \sqrt{2}|I_p|^{\frac1{2d}}}\right) \\& \quad \qquad + \frac1{|I||I_p|}\frac1{N^2}\sum_{1 \leq j_1, j_2 \leq N}\left(\mathds{1}_{|k_{j_1} - k_{j_2}| \leq \sqrt{2}\frac{|I|^{\frac1{2d}}}{r} \wedge \frac{|I_p|^{\frac1{2d}}}{r_p}} +  \mathds{1}_{|k_{j_1} + k_{j_2}| r_p \leq\sqrt{2}\frac{|I|^{\frac1{2d}}}{r} \wedge \frac{|I_p|^{\frac1{2d}}}{r_p}}\right) \\& =: \delta_1 + \delta_2 + \delta_3.
		\end{align*}
		With the $k_0$-allowed assumption, we obtain
		\begin{align*}
			\delta_1 &\leq \frac{\log(|W|)^d}{|W|} \frac{c_3}{N^2|I|} \left( N^2 \left(\frac{|I|^{\frac1{2d}}}{r |k_0|}\right)^{\zeta} + N\right) \leq K  \frac{\log(|W|)^d}{|W||I|} \left(\left(\frac{|I|^{\frac1{d}}}{|W|^{\frac1d} |k_0|}\right)^{\zeta}+\frac1{N}\right).
		\end{align*}
		Similar computations yield
		\begin{align*}
			&\delta_2\leq K \frac{\log(|W|)^d}{|W||I_p|}\left(\left(\frac{|I_p|^{\frac1{d}}}{|W|^{\frac1d} |k_0|}\right)^{\zeta} + \frac1N\right),
			~ \delta_3 \leq K \frac1{|I||I_p|} \left( \left(\frac{(|I_p| \wedge |I|)^{\frac1{d}}}{|W|^{\frac1d} |k_0|}\right)^{\zeta}+\frac1N\right).
		\end{align*}
		Using the fact that $|I| \vee |I_p| \leq c_5 |W|^{2\beta/(2\beta +d)}$, for upper bounding $\delta_1$ and $\delta_2$, we obtain
		\begin{align*}
			\delta \leq \frac{K}{|I||I_p|} \left( \left(\frac{(|I_p| \vee |I|)^{\frac1{d}}}{|W|^{\frac1d} |k_0|}\right)^{\zeta}+\frac1N\right).
		\end{align*}
		This concludes the proof of \eqref{lem:var_crit_22_1} for $s = 1$. The proof of \eqref{lem:var_crit_22_1} for $s = 2$ follows the same lines, but the details differs. By bi-linearity, and using Corollary \ref{corol:cumulants_square_rvs}, we obtain:
		\begin{align}\label{eq:k_SI_S0}
			\kappa(\widehat{S}_{I}(k_{j_1}), \widehat{S}_{0}(\widetilde{k_{j_2}}))= \frac1{|I| |I_p|} \sum_{\substack{i_1 \in I\\ i_2 \in I_p}}& \kappa(C^1_{i_1}(k_{j_1}), \overline{C^1_{i_1}(k_{j_1})}, C^2_{i_2}(\widetilde{k_{j_2}}), \overline{C^2_{i_2}(\widetilde{k_{j_2}})}) \nonumber \\& + |\kappa(C^1_{i_1}(k_{j_1}), C^2_{i_2}(\widetilde{k_{j_2}}))|^2 + |\kappa(C^1_{i_1}(k_{j_1}), C^2_{i_2}(-\widetilde{k_{j_2}}))|^2.
		\end{align}
		Using Lemma \ref{lem:bril_thin}, Lemma \ref{lem:L4_herm} and assumption \eqref{eq:rI_dd}, we obtain, as in \eqref{eq:bound_kappa4},
		\begin{align*}
			\frac1{|I| |I_p|} \sum_{\substack{i_1 \in I\\ i_2 \in I_p}}& |\kappa(C^1_{i_1}(k_{j_1}), \overline{C^1_{i_1}(k_{j_1})}, C^2_{i_2}(\widetilde{k_{j_2}}), \overline{C^2_{i_2}(\widetilde{k_{j_2}})})| \leq K \frac{\log(|W|)^{d}}{|W|}.
		\end{align*}
		Assume that $|k_{j_1} - \widetilde{k_{j_2}}| \geq \sqrt{2} (|I|^{\frac1{2d}}/r \vee |I_p|^{\frac1{2d}}/r_p) $. Using Lemma \ref{lem:decor_freq_2} and Lemma~\ref{lem:tail_herm} we have
		\begin{align}\label{eq:decor_C1_C2_1}
			\frac1{|I| |I_p|} \sum_{\substack{i_1 \in I\\ i_2 \in I_p}} |\kappa(C^1_{i_1}(k_{j_1}), C^2_{i_2}(\widetilde{k_{j_2}}))|^2~ \leq K \mathcal{E}.
		\end{align}
		Suppose that $|k_{j_1} - \widetilde{k_{j_2}}| \leq \sqrt{2} (|I|^{\frac1{2d}}/r \vee |I_p|^{\frac1{2d}}/r_p)$. Using Lemma \ref{lem:bessel_k2} and Lemma~\ref{lem:tail_herm}, we obtain
		\begin{align}\label{eq:decor_C1_C2_2}
			\frac1{|I| |I_p|} \sum_{\substack{i_1 \in I\\ i_2 \in I_p}} |\kappa(C^1_{i_1}(k_{j_1}), C^2_{i_2}(\widetilde{k_{j_2}}))|^2 \leq K \left(\frac1{|I| \vee |I_p|} + \mathcal{E}\right).
		\end{align}
		We obtain a similar bound for the term involving $|\kappa(C^1_{i_1}(k_{j_1}), C^2_{i_2}(-\widetilde{k_{j_2}}))|^2$, yielding
		\begin{multline*}
			|\kappa(\widehat{S}_{I}(k_{j_1}), \widehat{S}_{0}(\widetilde{k_{j_2}}))|\leq  K \left(\frac{\log(|W|)^{d}}{|W|} +\mathcal{E} \right) \\+  K \left(\mathds{1}_{|k_{j_1} - \widetilde{k_{j_2}}| \leq \sqrt{2} \frac{|I|^{\frac1{2d}}}r \vee \frac{|I_p|^{\frac1{2d}}}{r_p}} +  \mathds{1}_{|k_{j_1} + \widetilde{k_{j_2}}| r \leq \sqrt{2}\frac{|I|^{\frac1{2d}}}r \vee \frac{|I_p|^{\frac1{2d}}}{r_p}}\right) \frac1{|I| \vee |I_p|}.
		\end{multline*}
		By exchanging $j_1$ and $j_2$, we have
		\begin{multline*}
			|\kappa(\widehat{S}_{0}(\widetilde{k_{j_1}}), \widehat{S}_{I}(k_{j_2}))|\leq  K \left(\frac{\log(|W|)^{d}}{|W|} + \mathcal{E} \right) \\+  K \left(\mathds{1}_{|\widetilde{k_{j_1}} - k_{j_2}| \leq \sqrt{2} \frac{|I|^{\frac1{2d}}}r \vee \frac{|I_p|^{\frac1{2d}}}{r_p}} +  \mathds{1}_{|\widetilde{k_{j_1}} + k_{j_2}| r \leq \sqrt{2}\frac{|I|^{\frac1{2d}}}r \vee \frac{|I_p|^{\frac1{2d}}}{r_p}}\right) \frac1{|I| \vee |I_p|}.
		\end{multline*}
		The previous inequalities gives
		\begin{align*}
			\frac{1}{N^2} \sum_{1 \leq j_1, j_2 \leq N} |\kappa(\widehat{S}_{I}(k_{j_1}), \widehat{S}_{0}(\widetilde{k_{j_2}}))\kappa(\widehat{S}_{0}(\widetilde{k_{j_1}}), \widehat{S}_{I}(k_{j_2}))|   &\leq  K \left(\left( \frac{\log(|W|)^{d}}{|W|}+  \mathcal{E}\right)^2 +\delta\right) \\& \leq  K \left( \frac{\log(|W|)^{2d}}{|W|^2}+  \mathcal{E} +\delta\right),
		\end{align*}
		where 
		\begin{multline*}
			\delta \leq \frac{\log(|W|)^{d}}{|W|N^2 |I| \vee |I_p|} \Bigg(\sum_{1 \leq j_1, j_2 \leq N} \mathds{1}_{|\widetilde{k_{j_1}} - k_{j_2}| \leq \sqrt{2} \frac{|I|^{\frac1{2d}}}r \vee \frac{|I_p|^{\frac1{2d}}}{r_p}} +  \mathds{1}_{|\widetilde{k_{j_1}} + k_{j_2}| r \leq \sqrt{2}\frac{|I|^{\frac1{2d}}}r \vee \frac{|I_p|^{\frac1{2d}}}{r_p}} \\\qquad \qquad\qquad \qquad \qquad+ \mathds{1}_{|\widetilde{k_{j_2}} - k_{j_1}| \leq \sqrt{2} \frac{|I|^{\frac1{2d}}}r \vee \frac{|I_p|^{\frac1{2d}}}{r_p}} +  \mathds{1}_{|\widetilde{k_{j_2}} + k_{j_1}| r \leq \sqrt{2}\frac{|I|^{\frac1{2d}}}r \vee \frac{|I_p|^{\frac1{2d}}}{r_p}} \Bigg) \\ \qquad + \frac1{N^2 (|I| \vee |I_p|)^2}\sum_{1 \leq j_1, j_2 \leq N} \mathds{1}_{|\widetilde{k_{j_1}} - k_{j_2}| \leq \sqrt{2} \frac{|I|^{\frac1{2d}}}r \vee \frac{|I_p|^{\frac1{2d}}}{r_p}} +  \mathds{1}_{|\widetilde{k_{j_1}} + k_{j_2}| r \leq \sqrt{2}\frac{|I|^{\frac1{2d}}}r \vee \frac{|I_p|^{\frac1{2d}}}{r_p}}.
		\end{multline*}
		Using the $k_0$-allowed assumption, condition \eqref{eq:rI_dd} and $|I| \leq c_5 |W|^{2\beta/(2\beta +d)}$, we obtain
		\begin{align*}
			|\delta| &\leq  K \left( \frac{\log(|W|)^{d}}{|W||I| \vee |I_p|}+  \frac{1}{(|I| \vee |I_p|)^2} \right) \left(\left(\frac{(|I| \vee |I_p|)^{\frac1d}}{|W|^{\frac1d} |k_0|}\right)^{\zeta} + \frac1N\right) \\& \leq \frac{K}{|I||I_p|} \left( \left(\frac{(|I_p| \vee |I|)^{\frac1{d}}}{|W|^{\frac1d} |k_0|}\right)^{\zeta}+\frac1N\right).
		\end{align*}
		This ends the proof for the case $s = 2$.
	\end{proof}
	
	\begin{lemma}\label{lem:dd_13}
		Suppose the assumptions of Theorem \ref{thm:dd} hold. Denote 
		\begin{align*}
			&B_1 := \mathbb{E}[\widehat{S}_{I}(k_{j_1}) - S_{\frac12}(k_{j_1})]\kappa(\widehat{S}_{0}(\tilde{k_{j_1}}), \widehat{S}_{I}(k_{j_2}), \widehat{S}_{0}(\tilde{k_{j_2}})),
			\\& B_2 :=   \mathbb{E}[\widehat{S}_{0}(\tilde{k_{j_1}}) - S_{\frac12}(\tilde{k_{j_1}})]\kappa(\widehat{S}_{I}(k_{j_1}), \widehat{S}_{I}(k_{j_2}), \widehat{S}_{0}(\tilde{k_{j_2}})),
			\\& B_3 := \mathbb{E}[\widehat{S}_{I}(k_{j_2}) - S_{\frac12}(k_{j_2})]\kappa(\widehat{S}_{I}(k_{j_1}), \widehat{S}_{0}(\tilde{k_{j_1}}), \widehat{S}_{0}(\tilde{k_{j_2}})),
			\\& B_4 = \mathbb{E}[\widehat{S}_{0}(\tilde{k_{j_2}}) - S_{\frac12}(\tilde{k_{j_2}})]\kappa(\widehat{S}_{I}(k_{j_1}), \widehat{S}_{0}(\tilde{k_{j_1}}), \widehat{S}_{I}(k_{j_2})).
		\end{align*}
		Then, for all $s \in \{1, 4\}$,
		\begin{align}\label{lem:var_crit_3_1}
			\frac1{N^2} \sum_{1 \leq j_1, j_2 \leq N} |B_s|\leq  K\left(b(I, I_p) + b(I_p, I) + b+ \mathcal{E}(|I|, |I_p|)\right),
		\end{align}
		for a constant $K \in (0, \infty)$ and where, for $\zeta = d-1$,
		\begin{align*}
			&b(J_1, J_2) :=  \left(\frac{|J_1|}{|W|}\right)^{\frac{\beta}d}\left(\frac1{|J_1| |J_2|^{\frac12}} \left(\left(\frac{ |J_2|^{\frac1d}}{|W|^{\frac1d}|k_0|}\right)^{\zeta}+\frac1N\right) +\frac{\log(|W|)^d}{|W| |J_1|}\right), 
			\\& b:= \frac{\log(|W|)^d}{|W|^2 |k_0|^d} + \frac{\log(|W|)^d}{|W|^{\frac{5}3}} +  \frac1{|k_0|^{2d} |W|^2}.
		\end{align*}
	\end{lemma}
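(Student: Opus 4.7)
I will bound $\frac{1}{N^2}\sum_{j_1,j_2} |B_s|$ for $s=1$; the other three cases are symmetric in the roles of $(I,r,\Phi^1)$ and $(I_p,r_p,\Phi^2)$, modulo swapping $j_1 \leftrightarrow j_2$. The overall strategy is to bound separately the bias factor $\mathbb{E}[\widehat{S}_I(k_{j_1}) - S_{1/2}(k_{j_1})]$ and the third-order joint cumulant, then combine the two estimates and sum over the frequency pairs using the $k_0$-allowed property.

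\textbf{Step 1 (bias factor).} By Lemma~\ref{lem:bias_s_hat} applied to the thinned process $\Phi^1$ (whose structure factor is $S_{1/2}$, see the remark following the description of the cross-validation procedure), combined with the Sobolev and tail estimates on Hermite functions from Lemmas~\ref{lem:norm_H12_herm} and~\ref{lem:tail_herm}, together with the dilation \eqref{eq:rI_dd}, I obtain
\[
\bigl|\mathbb{E}[\widehat{S}_I(k_{j_1}) - S_{\tfrac12}(k_{j_1})]\bigr| \;\leq\; K\left(\left(\frac{|I|}{|W|}\right)^{\beta/d} + \mathcal{E}(|I|,|I_p|,\theta)\right),
\]
uniformly in $j_1$. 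This is the same argument already carried out in the proof of Lemma~\ref{lem:exp_delta_I} for the term $A$, so I will simply cite it.

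\textbf{Step 2 (third-order cumulant expansion).} Writing each $\widehat{S}_I(k)$ and $\widehat{S}_0(\tilde k)$ as averages of $|C_i(k)|^2$ (see Definition~\ref{def:multi_tap}) and invoking the multilinearity of cumulants, the joint cumulant $\kappa_3 := \kappa(\widehat{S}_0(\tilde k_{j_1}),\widehat{S}_I(k_{j_2}),\widehat{S}_0(\tilde k_{j_2}))$ equals
\[
\frac{1}{|I||I_p|^2}\sum_{i_0\in I,~i,i'\in I_p} \kappa\bigl(|C_{i}^{2}(\tilde k_{j_1})|^2,\,|C^{1}_{i_0}(k_{j_2})|^2,\,|C_{i'}^{2}(\tilde k_{j_2})|^2\bigr).
\]
Applying Corollary~\ref{corol:cumulants_square_rvs} (the Isserlis–Wick formula for cumulants of squares), each inner cumulant decomposes into a sum over connected partitions $\sigma \in \Pi[6]$ (with blocks of size $\geq 2$) of products of joint cumulants of $(C^{s}_{\cdot}(\pm k))$'s. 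Partitions with a block of size $\geq 3$ will be controlled by the mixing bound of Lemma~\ref{lem:bril_thin} together with the $L^4$–bound on Hermite tapers (Lemma~\ref{lem:L4_herm}), producing the logarithmic term $\log(|W|)^d/|W|$ appearing in $b$.

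\textbf{Step 3 (blocks of size two).} The delicate contributions come from partitions made only of pairs. Following the dichotomy explicit in the paper, for each pair of arguments $(C^{s}_{i}(\pm \star),C^{s'}_{i'}(\pm\star))$ I distinguish two regimes:

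\emph{Small–tapers regime} (product of the dilation with the frequency gap below $\sqrt{2}|J|^{1/(2d)}$): I use Lemma~\ref{lem:decor_freq_2} to bound the pairwise covariance by a term that becomes exponentially small, absorbed into $\mathcal{E}$.

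\emph{Large–tapers regime}: I use Lemma~\ref{lem:bessel_k2} to gain the orthogonality factor $1/(|I|\vee|I_p|)$, up to exponentially small border contributions.

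After taking the product over the three pair-blocks of $\sigma$ and summing over $i_0,i,i'$, the leading surviving contribution is of order $1/(|I_p|\sqrt{|I|})$ or $1/(|I_p|^{3/2})$ times one indicator that is non-negligible only when two of the frequencies among $\{\tilde k_{j_1},k_{j_2},\tilde k_{j_2}\}$ (or their opposites) are within $\sqrt{2}(|I|^{1/(2d)}/r \vee |I_p|^{1/(2d)}/r_p)$ of each other.

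\textbf{Step 4 (summation over frequency pairs).} Multiplying by the bias bound of Step~1 and summing the indicators over $(j_1,j_2)\in[N]^2$, I invoke Definition~\ref{def:k0_sep}\ref{pts:no_clust}: for every fixed $j$, the number of $j'$ with a given frequency at distance $\leq a$ is at most $c_3(N(a/|k_0|)^{d-1}+1)$. Taking $a=\sqrt{2}(|I|^{1/(2d)}/r \vee |I_p|^{1/(2d)}/r_p)$ and using \eqref{eq:rI_dd} to rewrite $|I|^{1/(2d)}/r$ as $|I|^{1/d}|W|^{-1/d}$ (up to constants), this produces the factor
\[
\left(\frac{(|I|\vee|I_p|)^{1/d}}{|W|^{1/d}|k_0|}\right)^{\zeta} + \frac{1}{N},
\]
with $\zeta = d-1$. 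Combining with the bias $(|I|/|W|)^{\beta/d}$ gives the $b(I,I_p)$ and $b(I_p,I)$ contributions, while the cross terms from partitions mixing a block of size three with a pair yield the residual $b$. The constant $K$ absorbs the number of partitions and the mixing constant $M$ from the assumption on $|\gamma^{(m)}_{\mathrm{red}}|$.

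\textbf{Main obstacle.} The bookkeeping of Step~3 is the delicate part: each of the partitions $\sigma\in\Pi[6]$ with only pairs leads to a different arrangement of pairings between the six arguments $(C^{s}_{\cdot}(\pm\star))$, and the same indicator does not come out of every pairing — one must verify that the two frequencies being paired are always one of $k_{j_a}\pm\tilde k_{j_b}$, $\tilde k_{j_a}\pm\tilde k_{j_b}$, etc., covered by the $k_0$-allowed assumption. The same care is needed for the partitions containing one block of size three (involving three of the six arguments), where Lemma~\ref{lem:decor_freq_3} must be invoked; fortunately the resulting exponential decay is absorbed by $\mathcal{E}$ because at least one frequency difference is $\gtrsim|k_0|$ by the separation point \ref{pts:sep} of Definition~\ref{def:k0_sep}. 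Once this case analysis is complete, the summation in Step~4 is routine.
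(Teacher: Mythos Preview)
Your overall architecture matches the paper's: bound the bias by $(|I|/|W|)^{\beta/d}$ via Lemma~\ref{lem:bias_s_hat}, expand the third joint cumulant with Corollary~\ref{corol:cumulants_square_rvs}, then sum over $(j_1,j_2)$ using the $k_0$-allowed property. However, two points in your sketch are genuinely wrong and would not yield the stated bound.

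\textbf{The two regimes in Step~3 are reversed.} Exponential decorrelation via Lemma~\ref{lem:decor_freq_2} combined with Lemma~\ref{lem:tail_herm} is available only when the dilated frequency gap \emph{exceeds} the threshold $\sqrt{2}|J|^{1/(2d)}$, not when it is below. When $r|\text{gap}|<\sqrt{2}|J|^{1/(2d)}$ you must fall back on Lemma~\ref{lem:bessel_k2}, which gives the $1/(|I|\vee|I_p|)$ factor, and it is precisely the indicator of this regime that is later summed with Definition~\ref{def:k0_sep}\ref{pts:no_clust}.

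\textbf{The $3+3$ partitions are not exponentially small and are the source of $b$.} Your Main-obstacle paragraph claims that for blocks of size three ``the resulting exponential decay is absorbed by $\mathcal{E}$''. This fails. In the paper's category~III with the index $i_2\in I$ appearing in only one $3$-block (e.g.\ $125|346$), the other block involves only tapers indexed in $I_p$, and Lemma~\ref{lem:decor_freq_3} gives exponential decay only when $r_p c_1|k_0|\gtrsim|I_p|^{1/(2d)}$, i.e.\ when $|I_p|\lesssim|W||k_0|^d$. In that regime the bound $e^{-c|k_0||W|^{1/d}}$ is converted to $1/(|k_0|^{2d}|W|^2)$. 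In the complementary regime $|I_p|>\sqrt{2}|W||k_0|^d$, Lemma~\ref{lem:decor_freq_3} is useless and the paper applies Lemma~\ref{lem:bessel_km} to exploit orthogonality of $(\psi_i)_{i\in I_p}$, yielding $\log(|W|)^d/(|W||I_p|)\leq\log(|W|)^d/(|W|^2|k_0|^d)$. These two cases produce exactly the first and third terms of $b$, which your sketch does not account for. Similarly, the categories~III/IV with $i_2$ in both $3$-blocks, and category~II$_a$, require Lemma~\ref{lem:bessel_km} (not just Lemma~\ref{lem:bril_thin}) to extract the factor $1/|I|$ that enters $b(I,I_p)$. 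Finally, your phrase ``partitions mixing a block of size three with a pair'' does not describe any connected partition of $[6]$ with blocks of size $\geq2$: the only shapes are $6$, $4{+}2$, $3{+}3$, $2{+}2{+}2$; the $\log(|W|)^d/|W|^{5/3}$ term in $b$ comes from the single $6$-block via the $L^6$ estimate in Lemma~\ref{lem:L4_herm}, not from any mixed shape.
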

	\begin{proof}
		We only need to prove the upper bound for $|B_1|$ and $|B_2|$, since by exchanging $j_1$ and $j_2$ we have
		\begin{align*}
			\sum_{1 \leq j_1, j_2 \leq N} |B_3| = \sum_{1 \leq j_1, j_2 \leq N} |B_1|, ~\sum_{1 \leq j_1, j_2 \leq N} |B_4| = \sum_{1 \leq j_1, j_2 \leq N} |B_2|.
		\end{align*}
		We start with $B_1$. Using Lemmas \ref{lem:bias_s_hat} and \ref{lem:norm_H12_herm} together with assumption \eqref{eq:rI_dd} we have 
		\begin{align*}
			|\mathbb{E}[\widehat{S}_{I}(k_{j_1}) - S_{\frac12}(k_{j_1})]| \leq K \frac{|I|^{\frac{\beta}{2d}}}{r^{\beta}} \leq K \left(\frac{|I|}{|W|}\right)^{\frac{\beta}d}.
		\end{align*}
		For $s \in \{1, 2\}$, we denote by $C_i^s$ the quantity of equation \eqref{eq:def_Ti_Ci} computed with $\Phi_s$. By multi-linearity, we have
		\begin{align*}
			\kappa(\widehat{S}_{0}&(\tilde{k_{j_1}}), \widehat{S}_{I}(k_{j_2}), \widehat{S}_{0}(\tilde{k_{j_2}})) \\& = \frac1{|I|| |I_p|^2} \sum_{i_2\in I, (i_1, i_3) \in I_p} \kappa(C^2_{i_1}(\widetilde{k_{j_1}}) \overline{C^2_{i_1}(\widetilde{k_{j_1}})}, C^1_{i_2}(k_{j_2}) \overline{C^1_{i_2}(k_{j_2})}, C^2_{i_3}(\widetilde{k_{j_2}}) \overline{C^2_{i_3}(\widetilde{k_{j_2}})}).
		\end{align*}
		We denote
		\begin{align*}
			&A_1 := C^2_{i_1}(\widetilde{k_{j_1}}),~A_2:= \overline{C^2_{i_1}(\widetilde{k_{j_1}})},~ A_3:= C^1_{i_2}(k_{j_2}),~ A_4 := \overline{C^1_{i_2}(k_{j_2})}, ~A_5 := C^2_{i_3}(\widetilde{k_{j_2}}), A_6 := \overline{C^2_{i_3}(\widetilde{k_{j_2}})}.
		\end{align*}
		Using Corollary \ref{corol:cumulants_square_rvs}, we expand $\kappa(A_1 A_2, A_3 A_4, A_5 A_6)$:
		\begin{align*}
			\kappa(A_1 A_2, A_3 A_4, A_5 A_6) = \sum_{\substack{\sigma \in \Pi[6], \\ \sigma \vee \tau_6 = 1_{6}}} \prod_{b \in \sigma} \kappa(A_s; s \in b),
		\end{align*}
		where $\tau_{6} = 12|34|56$ and $1_6 = 123456$. Since the random variables considered are centered, the sum runs only over the partitions having blocks of size at least $2$. In the following, we use the notation 
		\begin{align*}
			\kappa_{\pi} = \prod_{b \in \pi} \kappa(A_s; s \in b).
		\end{align*}
		In the sum over $\sigma$, 5 categories of partitions appear, denoted by $\textbf{I}$, $\textbf{II}$, $\textbf{III}$, $\textbf{IV}$ and $\textbf{V}$. To fix ideas, the partitions involved are as follows (refer to \cite{mccullagh2018tensor}, Table 1):
		\[
		\begin{aligned}
			\textbf{I:} \quad & 123456 \\[0.5em]
			\textbf{II:} \quad & 13|2456,\quad 14|2456,\quad 15|2346,\quad 16|2345, \\
			& 23|1456,\quad 24|1356,\quad 25|1346,\quad 26|1345, \\
			& 35|1246,\quad 36|1245,\quad 45|1236,\quad 46|1235 \\[0.5em]
			\textbf{III:} \quad & 123|456,\quad 124|356,\quad 125|346,\quad 126|345, \\
			& 234|156,\quad 134|256 \\[0.5em]
			\textbf{IV:} \quad & 135|246,\quad 235|146,\quad 145|236,\quad 136|245 \\[0.5em]
			\textbf{V:} \quad & 13|25|46,\quad 13|26|45,\quad 14|25|36,\quad 14|26|35, \\
			& 15|23|46,\quad 15|24|36,\quad 16|23|45,\quad 16|24|35
		\end{aligned}
		\]
		We bound the cumulant of the partition of category $\textbf{I}$. According to Lemma \ref{lem:bril_thin}, 
		\begin{align*}
			|\kappa_{123456}| &\leq K \|r_p^{-\frac{d}2}\psi_{i_1}\left(\frac{\cdot}{r_p}\right)\|_6^2 \|r^{-\frac{d}2}\psi_{i_2}\left(\frac{\cdot}r\right)\|_6^2 \|r_p^{-\frac{d}2}\psi_{i_3}\left(\frac{\cdot}{r_p}\right)\|_6^2 = K r_p^{-\frac{4d}3} r^{-\frac{2d}3} \|\psi_{i_1}\|_6^2 \|\psi_{i_2}\|_6^2 \|\psi_{i_3}\|_6^2. 
		\end{align*}
		Then, using Lemma \ref{lem:L4_herm}, we get
		\begin{align*}
			\frac1{|I|| |I_p|^2} \sum_{\substack{i_2\in I \\ (i_1, i_3) \in I_p}} |\kappa_{123456}| &\leq K \frac{\log(|W|)^d}{r_p^{\frac{4d}3} |I_p|^{\frac{4}{9}}r^{\frac{2d}3} |I|^{\frac{2}{9}}} = K \frac{\log(|W|)^d|I_p|^{\frac29}|I|^{\frac19}}{r_p^{\frac{4d}3} |I_p|^{\frac{2}{3}}r^{\frac{2d}3} |I|^{\frac{1}{3}}} \\& \leq K \frac{\log(|W|^d)|W|^{1/3}}{|W|^2} = K \frac{\log(|W|)^d}{|W|^{\frac53}},
		\end{align*}
		where in the last line we used assumption \eqref{eq:rI_dd}.
		
		For the partitions of class $\textbf{II}$, two types of terms appear, depending on either the block of size $2$ contains a taper of index $i_2 \in I$ or not. 
		In the first case, the corresponding partitions are given by 
		\begin{align*}
			\textbf{II}_a := \{13|2456,~14|2456, ~23|1456,~24|1356, 35|1246,~36|1245,~45|1236,~46|1235\}.
		\end{align*}
		We bound the term $35|1246$ and the computations are similar for the others. By Lemma \ref{lem:bril_thin}, we have
		\begin{align}\label{eq:bound_kappa_4_cst}
			|\kappa_{1246}| \leq K \|r_p^{-\frac{d}2} \psi_{i_1}\left(\frac{\cdot}{r_p}\right)\|_4^2 \|r^{-\frac{d}2} \psi_{i_2}\left(\frac{\cdot}r\right)\|_4 \|r_p^{-\frac{d}2} \psi_{i_3}\left(\frac{\cdot}{r_p}\right)\|_4.
		\end{align} 
		By Cauchy-Schwarz inequality, we have
		\begin{align*}
			&\frac1{|I|| |I_p|^2} \sum_{\substack{i_2\in I \\ (i_1, i_3) \in I_p}} |\kappa_{35|1246}| \\& \leq \frac1{|I_p| r_p^{\frac{d}2}} \sum_{i_1 \in I_p} \|\psi_{i_1}\|_4^2 \sqrt{\frac1{|I| |I_p| } \sum_{\substack{i_2\in I \\ i_3 \in I_p}} \frac{\| \psi_{i_2}\|_4^2 \| \psi_{i_3}\|_4^2}{(r r_p)^{\frac{d}2} }} \sqrt{\frac1{|I| |I_p|} \sum_{\substack{i_2\in I \\ i_3 \in I_p}} |\kappa(C^1_{i_2}(k_{j_2})), C^2_{i_3}(\widetilde{k_{j_2}})|^2 } \\& \leq \frac{K \log(|W|)^d}{|W|} \sqrt{\frac1{|I| |I_p|} \sum_{\substack{i_2\in I \\ i_3 \in I_p}} |\kappa(C^1_{i_2}(k_{j_2})), C^2_{i_3}(\widetilde{k_{j_2}})|^2 },
		\end{align*} 
		where in the last line, we used Lemma \ref{lem:L4_herm} and assumption  \eqref{eq:rI_dd}. Using the fact that the family  $(r^{-\frac{d}2} \psi_i(\cdot/r) e^{-\bm{i} k_{j_2} \cdot})_{i \in \mathbb{N}^d}$ is orthogonal, we have by Lemma \ref{lem:cov_thin}
		\begin{align*}
			\frac1{|I| |I_p|} \sum_{\substack{i_2\in I \\ i_3 \in I_p}} |\kappa(C^1_{i_2}(k_{j_2})), C^2_{i_3}(\widetilde{k_{j_2}})|^2 \leq  K \left(\frac{1}{|I|} + \mathcal{E}\right). 
		\end{align*}
		Accordingly, 
		\begin{align*}
			\frac1{|I|| |I_p|^2} \sum_{\substack{i_2\in I \\ (i_1, i_3) \in I_p}} |\kappa_{35|1246}| \leq K \left(\frac{\log(|W|)^d}{|W||I|} + \mathcal{E}\right).
		\end{align*} 
		By performing identical computations for the other partitions of category $\textbf{II}_a$, we obtain
		\begin{align*}
			\sum_{\sigma \in \textbf{II}_a} |\kappa_{\sigma}| \leq K \left(\frac{\log(|W|)^d}{|W||I|} + \mathcal{E}\right).
		\end{align*}
		We now consider the case where the block of size $2$ does not contain tapers of index $i_2$, i.e. the partitions
		\begin{align*}
			\textbf{II}_b := \{15|2346,~16|2345~25|1346,~26|1345\}.
		\end{align*}
		We detail the bound of the term $15|2346$. Using Lemma \ref{lem:bril_thin}, Cauchy-Schwarz inequality and Lemma~\ref{lem:L4_herm}, we have 
		\begin{align*}
			&2_b := \frac1{|I|| |I_p|^2} \sum_{\substack{i_2\in I \\ (i_1, i_3) \in I_p}} |\kappa_{15|2346}| \\& \leq K \frac1{|I| r^{\frac{d}2}} \sum_{i_2 \in I} \| \psi_{i_2}\|_4^2\sqrt{\frac1{|I_p|^2} \sum_{\substack{i_1 \in I_p \\ i_3 \in I_p}} \frac{\|\psi_{i_2}\|_4^2 \| \psi_{i_3}\|_4^2}{r_p^{d} }} \sqrt{\frac1{|I_p|^2}\sum_{\substack{i_1 \in I_p \\ i_3 \in I_p}} | \kappa(C^2_{i_1}(\widetilde{k_{j_1}}), C^2_{i_3}(\widetilde{k_{j_2}}))|^2} \\& \leq \frac{K \log(|W|)^d}{|W|}  \sqrt{\frac1{|I_p|^2}\sum_{\substack{i_1 \in I_p \\ i_3 \in I_p}} | \kappa(C^2_{i_1}(\widetilde{k_{j_1}}), C^2_{i_3}(\widetilde{k_{j_2}}))|^2}.
		\end{align*}
		Using \eqref{eq:decor_C1_C2_1} and \eqref{eq:decor_C1_C2_2}, we get
		\begin{align*}
			\frac1{|I_p|^2}\sum_{\substack{i_1 \in I_p \\ i_3 \in I_p}} | \kappa(C^2_{i_1}(\widetilde{k_{j_1}}), C^2_{i_3}(\widetilde{k_{j_2}}))|^2 \leq  \frac{K}{|I_p|} \mathds{1}_{|\widetilde{k_{j_1}} - \widetilde{k_{j_2}}| r_p\leq \sqrt{2}  |I_p|^{\frac1{2d}}}  + K \mathcal{E}.
		\end{align*}
		This yields, using the $k_0$-allowed assumption:
		\begin{align*}
			\frac1{N^2} \sum_{1 \leq j_1, j_2 \leq N}|2_b| &\leq   \frac{K \log(|W|)^d}{|W| |I_p|^{\frac12}} \frac{N}{N^2} \left(N \left(\frac{ |I_p|^{\frac1d}}{|W|^{\frac1d}|k_0|}\right)^{\zeta}+1\right)+ K \mathcal{E} \\& \leq K \left(\frac1{|I| |I_p|^{\frac12}} \left(\left(\frac{ |I_p|^{\frac1d}}{|W|^{\frac1d}|k_0|}\right)^{\zeta}+\frac1N\right) \right).
		\end{align*}
		Now, we consider the partitions of categories $\textbf{III}$ and $\textbf{IV}$ such that $i_2$ appears once in each blocks of size $3$, i.e. 
		\begin{align*}
			\{123|456,~ 124|356,~ 126|345,~ 135|246,~ 235|146,~ 145|236,~ 136|245\}.
		\end{align*}
		We detail the computations for the partition $123|456$, that can be directly used for bounding the other terms. Using the Cauchy-Schwarz inequality, we have
		\begin{multline*}
			\sum_{\substack{i_2 \in I \\ (i_1, i_3) \in I_p}} |\kappa_{123|456}| \leq \\	\sum_{(i_1, i_3) \in I_p} \sqrt{\sum_{i_2\in I} \left| \kappa(C^2_{i_1}(\widetilde{k_{j_1}}), \overline{C^2_{i_1}(\widetilde{k_{j_1}})}, C^1_{i_2}(k_{j_2}))\right|^2} \sqrt{\sum_{i_2\in I} \left| \kappa(\overline{C^1_{i_2}(k_{j_2})}, C^2_{i_3}(\widetilde{k_{j_2}}), \overline{C^2_{i_3}(\widetilde{k_{j_2}})})\right|^2}. 
		\end{multline*} 
		Since $(r^{-\frac{d}2} \psi_i(\cdot/r) e^{-\bm{i} k_{j_2} \cdot})_{i \in \mathbb{N}^d}$ is orthogonal, we can apply Lemma \ref{lem:bessel_km} to get 
		\begin{align*}
			\sum_{i_2\in I} \left| \kappa(C^2_{i_1}(\widetilde{k_{j_1}}), \overline{C^2_{i_1}(\widetilde{k_{j_1}})}, C^1_{i_2}(k_{j_2}))\right|^2 \leq K \|r_p^{-\frac{d}2} \psi_{i_2}\left(\frac{\cdot}{r_p}\right)\|_4^4 + K \delta,
		\end{align*}
		with 
		\begin{align*}
			\delta =  |I| \|r_p^{-\frac{d}2} \psi_{i_2}\left(\frac{\cdot}{r_p}\right) \mathds{1}_{\mathbb{R}^d \setminus W}\|_3+ \sum_{i \in I} \|r^{-\frac{d}2} \psi_i\left(\frac{\cdot}r\right) \mathds{1}_{\mathbb{R}^d \setminus W}\|_3.
		\end{align*}
		Using Lemma \ref{lem:tail_herm} and assumption \eqref{eq:rI_dd}, we get, after computations,
		$
		|\delta| \leq K |I| \mathcal{E}
		$.
		Subsequently, 
		\begin{align*}
			\sum_{\substack{i_2 \in I \\ (i_1, i_3) \in I_p}} |\kappa_{123|456}| \leq K\sum_{(i_1, i_3) \in I_p} \left( \frac{\|\psi_{i_1}\|_4^2 \|\psi_{i_3}\|_4^2}{r_p^{d}} +  \mathcal{E} \left(\frac{\|\psi_{i_1}\|_4^2 }{r_p^{\frac{d}2}}+ \frac{\|\psi_{i_3}\|_4^2 }{r_p^{\frac{d}2}} +  \mathcal{E} \right)\right).
		\end{align*} 
		Using Lemma \ref{lem:L4_herm}, we obtain
		\begin{align*}
			\frac1{|I||I_p|^2}\sum_{\substack{i_2 \in I \\ (i_1, i_3) \in I_p}} |\kappa_{123|456}| \leq K \left( \frac{\log(|W|)^d}{|W| |I|} +   \mathcal{E} \right).
		\end{align*} 
		For the partitions of category $\textbf{III}$ such that $i_2 \in I$ appears in only one block of size $3$, i.e.
		\begin{align*}
			\{125|346,~ 126|345,~ 234|156,~ 134|256 \},
		\end{align*}
		we use the decorrelation between frequencies in the cumulants of size $3$ where only the indexes belonging to $I_p$ are present. We detail the computations on $125|346$ and they can be used to bound the other terms. 
		Assume that $|I_p| \leq \sqrt{2} |W| |k_0|^d$. According to Lemma \ref{lem:decor_freq_3}, we have
		
		\begin{align*}
			|\kappa_{125}| &\leq K \sup_{i \in I_p} \left( \|r_p^{-\frac{d}2} \psi_i\left(\frac{\cdot}{r_p}\right) \mathds{1}_{\mathbb{R}^d\setminus W}\|_3 r_p^{-\frac{d}6}+ \|r_p^{-\frac{d}2} \mathcal{F}[\psi_i\left(\frac{\cdot}{r_p}\right)] \mathds{1}_{|\cdot| \geq |\widetilde{k_{j_2}}|/3}\|_1 \right) \\& = K \sup_{i \in I_p} \left( \| \psi_i \mathds{1}_{\mathbb{R}^d\setminus \frac{W}{r_p}}\|_3 r_p^{-\frac{d}3}+ \|\psi_i \mathds{1}_{|\cdot| \geq c_1 r_p|k_0|/3}\|_1 \right),
		\end{align*}
		where we used the fact that $\psi_i$ is bounded by $1$ and of unit $L^2(\mathbb{R}^d)$ norm in the first line. Since $|I_p| \leq \sqrt{2} |W| |k_0|^d$, we can apply Lemma \ref{lem:tail_herm} to get
		\begin{align*}
			|\kappa_{125}| \leq K\left(e^{-c_6|k_0| |W|^{\frac1d}} + \mathcal{E}\right),
		\end{align*}
		for some constant $c_6 > 0.$ Using Lemma \ref{lem:decor_freq_3} again, we prove that $	|\kappa(A_3, A_4, A_6)|$ is bounded by a constant. Hence, 
		\begin{align*}
			\frac1{|I||I_p|^2}\sum_{\substack{i_2 \in I \\ (i_1, i_3) \in I_p}} |\kappa_{125|346}| \leq K\left(e^{-c_6|k_0| |W|^{\frac1d}} + \mathcal{E}\right) \leq  K\left(\frac1{|k_0|^{2d} |W|^2} + \mathcal{E}\right).
		\end{align*}
		On the opposite, assume that  $|I_p| > \sqrt{2} |W| |k_0|^d$. The Cauchy-Schwarz inequality yields
		\begin{align*}
			&\sum_{\substack{i_2 \in I \\ (i_1, i_3) \in I_p}} |\kappa_{125|346}| \\& \leq \sum_{\substack{i_2 \in I \\ i_1 \in I_p}} \sqrt{\sum_{i_3 \in I_p} |\kappa(C^1_{i_1}(\widetilde{k_{j_1}}), \overline{C^2_{i_1}(\widetilde{k_{j_1}})}, C^2_{i_3}(\widetilde{k_{j_2}}))|^2 } \sqrt{\sum_{i_3 \in I_p} |\kappa(C^1_{i_1}(k_{j_2}), \overline{C^1_{i_1}(k_{j_2})}, \overline{C^2_{i_3}(\widetilde{k_{j_2}})})|^2 }.
		\end{align*}
		Using Lemma \ref{lem:bessel_km} and Lemma \ref{lem:norm_H12_herm}, we obtain after computations
		\begin{align*}
			\frac1{|I| |I_p|^2}	\sum_{\substack{i_2 \in I \\ (i_1, i_3) \in I_p}} |\kappa_{125|346}| &\leq K\left(\frac{\log(|W|)^d}{|W| |I_p|} + \mathcal{E}\right) \leq  K\left(\frac{\log(|W|)^d}{|W|^2 |k_0|^d} + \mathcal{E}\right),
		\end{align*}
		since $|I_p| > \sqrt{2} |W| |k_0|^d$. Subsequently, we have proved that in both cases
		\begin{align*}
			\frac1{|I| |I_p|^2}	\sum_{\substack{i_2 \in I \\ (i_1, i_3) \in I_p}} |\kappa_{125|346}| \leq  K\left(\frac{\log(|W|)^d}{|W|^2 |k_0|^d} + \frac1{|k_0|^{2d} |W|^2}  + \mathcal{E}\right),
		\end{align*}
		In the study of $B_1$, it remains to consider the partitions of category~$\textbf{V}$. All these terms are bounded by the following steps, that we details on $13|25|46$. Assume that $|\widetilde{k_{j_1}} - \widetilde{k_{j_2}}| r_p \geq \sqrt{2} |I_p|^{\frac1{2d}}$. Then, using Lemmas \ref{lem:decor_freq_2} and \ref{lem:tail_herm}, we get
		$
		|\kappa(A_2, A_5)| \leq K \mathcal{E}.
		$
		This leads to  
		\begin{align*}
			\frac1{|I| |I_p|^2}	\sum_{\substack{i_2 \in I \\ (i_1, i_3) \in I_p}} |\kappa_{13|25|46}|  \leq K \mathcal{E},
		\end{align*}
		when $|\widetilde{k_{j_1}} - \widetilde{k_{j_2}}| r_p \geq \sqrt{2} |I_p|^{\frac1{2d}}$. Assume now that $|\widetilde{k_{j_1}} - \widetilde{k_{j_2}}| r_p \leq \sqrt{2} |I_p|^{\frac1{2d}}$. Using the Cauchy-Schwarz inequality we obtain
		\begin{align*}
			&\sum_{\substack{i_2 \in I \\ (i_1, i_3) \in I_p}} |\kappa_{13|25|46}| \leq \\& \sum_{\substack{i_1, i_2 \in I_p}} |\kappa(\overline{C^2_{i_1}(\widetilde{k_{j_1}})}, C^2_{i_3}(\widetilde{k_{j_2}}))| \sqrt{\sum_{i_2 \in I} |\kappa(C^2_{i_1}(\widetilde{k_{j_1}}, C^1_{i_2}(k_{j_2})))|^2} \sqrt{\sum_{i_2 \in I} |\kappa(C^2_{i_3}(\widetilde{k_{j_2}}, C^1_{i_2}(k_{j_2}))|^2}.
		\end{align*}
		Then, using twice Lemma \ref{lem:bessel_k2}, we obtain 
		\begin{align*}
			\sum_{\substack{i_2 \in I \\ (i_1, i_3) \in I_p}} |\kappa_{13|25|46}| &\leq K \sum_{\substack{i_1, i_2 \in I_p}} |\kappa(\overline{C^2_{i_1}(\widetilde{k_{j_1}})}, C^2_{i_3}(\widetilde{k_{j_2}}))| \\& \leq K |I_p| \sqrt{\sum_{\substack{i_1, i_2 \in I_p}} |\kappa(\overline{C^2_{i_1}(\widetilde{k_{j_1}})}, C^2_{i_3}(\widetilde{k_{j_2}}))|^2} \leq K |I_p|^{\frac32}.
		\end{align*}
		Accordingly, using the $k_0$-allowed assumption, we get
		\begin{align*}
			\frac1{|I| |I_p|^2 N^2}	\sum_{1 \leq j_1, j_2 \leq N} \sum_{\substack{i_2 \in I \\ (i_1, i_3) \in I_p}} |\kappa_{13|25|46}| &\leq \frac{K}{N^2 |I| |I_p|^{\frac12}} \sum_{1 \leq j_1, j_2 \leq N} \mathds{1}_{|\widetilde{k_{j_1}} - \widetilde{k_{j_2}}| r_p \leq \sqrt{2} |I_p|^{\frac1{2d}}} \\& \leq  K \left(\frac1{|I| |I_p|^{\frac12}} \left(\left(\frac{ |I_p|^{\frac1d}}{|W|^{\frac1d}|k_0|}\right)^{\zeta}+\frac1N\right) \right).
		\end{align*}
		This concludes the study of $B_1$. We obtain the corresponding bound for $B_2$ by exchanging $I_p$ and $I$, and the bounds for $B_3$ and $B_4$ follow by exchanging $j_1$ and $j_2$. 
	\end{proof}

	\begin{lemma}\label{lem:dd_4}
		Suppose the assumptions of Theorem \ref{thm:dd} hold. Then,
		\begin{multline*}
			\frac1{N^2} \sum_{\substack{1 \leq j_1 \leq N \\ 1 \leq j_2 \leq N}} |\kappa(\widehat{S}_{I}(k_{j_1}), \widehat{S}_{0}(\tilde{k_{j_1}}), \widehat{S}_{I}(k_{j_2}), \widehat{S}_{0}(\tilde{k_{j_2}}))| \leq \\ K\left(\frac{1}{|I||I_p|} \left(\left(\frac{(|I| \vee |I_p|)^{\frac1d}}{|W|^{\frac1d} |k_0|}\right)^{\zeta} + \frac1N\right) + \frac{\log(|W|)^d}{|W|^{\frac{5}3}}  + \frac1{|k_0|^{2d} |W|^2} +  \frac{\log(|W|)^{d}}{|W|^2 |k_0|^d}\right) + \mathcal{E},
		\end{multline*}
		for a constant $K \in (0, \infty)$ and $\zeta = d-1$.
	\end{lemma}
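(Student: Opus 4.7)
The plan is to follow the template established in the proof of Lemma~\ref{lem:dd_13}, but now for a fourth-order joint cumulant of products. By multi-linearity of cumulants,
\begin{align*}
\kappa_4 := \kappa(\widehat{S}_{I}(k_{j_1}), \widehat{S}_{0}(\tilde{k_{j_1}}), \widehat{S}_{I}(k_{j_2}), \widehat{S}_{0}(\tilde{k_{j_2}})) = \frac{1}{|I|^2 |I_p|^2}\sum_{\substack{(i_1,i_3)\in I^2\\(i_2,i_4)\in I_p^2}} \kappa\!\left(|C^1_{i_1}(k_{j_1})|^2,|C^2_{i_2}(\tilde{k_{j_1}})|^2,|C^1_{i_3}(k_{j_2})|^2,|C^2_{i_4}(\tilde{k_{j_2}})|^2\right),
\end{align*}
and then apply Corollary~\ref{corol:cumulants_square_rvs} to expand each inner cumulant of squares as a sum over partitions $\sigma$ of $[8]$ with $\sigma \vee \tau = \mathbf{1}_8$, where $\tau = 12|34|56|78$. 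Because all the $C^s_i$'s are centered, only partitions with block sizes at least two contribute. The labels $1,2$ refer to $C^1_{i_1}(k_{j_1})$ and its conjugate, $3,4$ to $C^2_{i_2}(\tilde k_{j_1})$ and its conjugate, and so on.

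Next I would group the contributing partitions by their block profile: (8); (6,2), (5,3), (4,4); (4,2,2), (3,3,2); and (2,2,2,2). For each profile the bounds are obtained by combining three ingredients already used extensively in Lemmas~\ref{lem:exp_delta_I}, \ref{lem:dd_22} and \ref{lem:dd_13}: (i) Lemma~\ref{lem:bril_thin} to bound cumulants of order $m\geq 4$ by a product of $L^m$-norms of the tapers, then Lemma~\ref{lem:L4_herm} to control the resulting Cesàro means (this yields factors of $\log(|W|)^d/|W|^{\alpha}$ for appropriate $\alpha \in \{1,4/3,5/3\}$ after using condition~\eqref{eq:rI_dd}); (ii) for blocks of size 2 or 3 in which at least one index occurs only once, Lemmas~\ref{lem:bessel_k2} and~\ref{lem:bessel_km}, which exploit the orthogonality of the Hermite tapers to gain a factor $1/|I|$ or $1/|I_p|$ after the Cauchy--Schwarz step; (iii) for 2- or 3-blocks whose indices belong to the same side ($I$ or $I_p$) and involve two different frequencies, the frequency decorrelation estimates of Lemmas~\ref{lem:decor_freq_2}, \ref{lem:decor_freq_3} (valid when $|J|^{1/(2d)}/\rho \lesssim |k_0|$, $J \in \{I,I_p\}$) combined with Lemma~\ref{lem:tail_herm}, which either give $\mathcal{E}$ or contribute the indicator functions counted via the $k_0$-allowed assumption.

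For every profile except $(2,2,2,2)$, applying (i) to the largest block and (ii)--(iii) to the smaller ones leads, after summing over $1\le j_1,j_2\le N$ and using Definition~\ref{def:k0_sep}\ref{pts:no_clust} to bound the indicator sums by $c_3(N(a/|k_0|)^{d-1}+1)$, to contributions of the form $\log(|W|)^d/|W|^{5/3}$ (from (8), (5,3), (6,2)-like terms), $\log(|W|)^d/(|W|^2|k_0|^d)$ and $1/(|k_0|^{2d}|W|^2)$ (from (3,3,2)-profiles where both size-3 blocks are pure $I_p$ or pure $I$ groups at distinct frequencies), and $(|I||I_p|)^{-1}((|I|\vee|I_p|)^{1/d}/(|W|^{1/d}|k_0|))^{\zeta}$ (from (4,2,2) and most (3,3,2) profiles). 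These exactly match the displayed bound. The $(2,2,2,2)$ profile is the delicate one: here I would mimic the chain inequality~\eqref{eq:rec_cs} from Lemma~\ref{lem:som_comb} to control $\sum |\kappa(C,C)|^2$-type sums along the cycle of covariances induced by the connected partition, which yields the dominant $1/(|I||I_p|)(1/N + ((|I|\vee|I_p|)^{1/d}/(|W|^{1/d}|k_0|))^{\zeta})$ contribution, with the indicator counting again provided by the $k_0$-allowed assumption.

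The main obstacle will be the combinatorics: there are many more connected partitions of $[8]$ with $\sigma\vee\tau=\mathbf{1}_8$ than of $[6]$, and one must systematically verify that in every one of them there exists either a block exploitable via Lemma~\ref{lem:bril_thin}+Lemma~\ref{lem:L4_herm}, or a pair/triple of indices allowing a Bessel-type orthogonality gain, or a 2-/3-block of same-side indices at different frequencies amenable to frequency decorrelation. The rest (sum over $j_1,j_2$, use of the $k_0$-allowed structure, use of condition~\eqref{eq:rI_dd} and of $|I|,|I_p|\leq c_6|W|^{2\beta/(2\beta+d)}$) is routine bookkeeping. Finally, gathering Lemmas~\ref{lem:exp_delta_I}, \ref{lem:dd_22}, \ref{lem:dd_13} and this one, a Chebyshev-type bound applied to $\Delta_I$ and a union bound over the at-most-$|\mathcal{I}|$ candidate sets delivers Lemma~\ref{lem:exp_delta}, from which Theorem~\ref{thm:dd} follows as already shown.
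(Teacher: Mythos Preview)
Your plan is correct and mirrors the paper's proof: the same expansion via Corollary~\ref{corol:cumulants_square_rvs} over connected partitions of $[8]$, the same toolkit (Lemma~\ref{lem:bril_thin} with Lemma~\ref{lem:L4_herm} for large blocks, Lemmas~\ref{lem:bessel_k2}/\ref{lem:bessel_km} for orthogonality gains, Lemmas~\ref{lem:decor_freq_2}/\ref{lem:decor_freq_3} for frequency separation, and the $k_0$-allowed counting), and the same identification of the $(2,2,2,2)$ cycle as the term requiring the chained Cauchy--Schwarz bound. The only difference is organizational: the paper sorts the partitions into twelve explicit structural categories (further refining your block profiles by whether a block contains a pair from $\tau$) and lists them exhaustively, whereas you group by block profile and leave the within-profile case distinctions implicit---which is exactly the ``combinatorics obstacle'' you flag.
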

	\begin{proof}
		We proceed as in the previous lemma. By multi-linearity and using Corollary~\ref{corol:cumulants_square_rvs}, we have
		\begin{align*}
			\kappa(\widehat{S}_{I}(k_{j_1}), \widehat{S}_{0}(\tilde{k_{j_1}}), \widehat{S}_{I}(k_{j_2}), \widehat{S}_{0}(\tilde{k_{j_2}})) = \frac1{|I|^2 |I_p|^2} \sum_{\substack{(i_1, i_3) \in I^2 \\ (i_2, i_4) \in I_p^2}}\sum_{\substack{\sigma \in \Pi[8], \\ \sigma \vee \tau_8 = 1_{8}}} \prod_{b \in \sigma} \kappa(A_s; s \in b),
		\end{align*}
		where
		\begin{align*}
			&A_1 := C^1_{i_1}(k_{j_1}),~A_2 := C^1_{i_1}(-k_{j_1}),~A_3 := C^2_{i_2}(\widetilde{k_{j_1}}),~A_4 := C^2_{i_2}(-\widetilde{k_{j_1}}),
			\\&A_5 := C^1_{i_3}(k_{j_2}),~A_6 := C^1_{i_3}(-k_{j_2}),~A_7 := C^2_{i_4}(\widetilde{k_{j_2}}),~A_8 := C^2_{i_4}(-\widetilde{k_{j_2}}).
		\end{align*}
		In the sum over the partitions, there are $12$ categories that appear. To fix the idea, one representative of each of them is given by (refer to \cite{mccullagh2018tensor}, Table 1):
		\begin{align*}
			&\textbf{I}~:~12345678~(1), \quad \textbf{II}~: 123457|68~(24), \quad \textbf{III}~: 12345|678~(24), \quad \textbf{IV}~: 13578| 246~(32), 
			\\& \textbf{V}~: 1235|4678~(24), ~ \textbf{VI}~: 1357|2468~(8), \quad \textbf{VII}~: 1235|47|68~(96),~\textbf{VIII}~: 1357|24|68~(48)
			\\& \textbf{IX}~: 123|457|68~(96), ~ \textbf{X}~: 124|567|28~(48),~ \textbf{XI}~: 137|258|46~(96),~\textbf{XII}~: 13|25|47|68~(48).
		\end{align*}
		and the number of partitions inside each category is written between the parenthesis.
		In the following, we use the notation
		\begin{align*}
			\kappa_{\textbf{C}}:= \frac1{|I|^2 |I_p|^2} \sum_{\substack{(i_1, i_3) \in I^2 \\ (i_2, i_4) \in I_p^2}} \sum_{\sigma \in \textbf{C}} \prod_{b \in \sigma} |\kappa(A_s; s \in b)|,
		\end{align*}
		where $\textbf{C}$ is a category of partitions.
		We start by bounding the partitions containing a block of size larger than 6. For the partitions of category $\textbf{I}$, we have, according to Lemma \ref{lem:bril_thin}, 
		\begin{align*}
			|\kappa(A_i; i \in [8])| & \leq K \|r^{-\frac{d}{2}}\psi_{i_1}(\frac{\cdot}{r})\|_8^{2} \|r_p^{-\frac{d}{2}}\psi_{i_2}\left(\frac{\cdot}{r_p}\right)\|_8^{2}\|r^{-\frac{d}{2}}\psi_{i_3}(\frac{\cdot}{r})\|_8^{2}\|r_p^{-\frac{d}{2}}\psi_{i_4}\left(\frac{\cdot}{r_p}\right)\|_8^{2} \\& = r^{-\frac{3d}2} \|\psi_{i_1}\|_8^2 \|\psi_{i_3}\|_8^2~r_p^{-\frac{3d}2} \|\psi_{i_2}\|_8^2 \|\psi_{i_4}\|_8^2.
		\end{align*}
		Using Lemma \ref{lem:L4_herm} and assumption \eqref{eq:rI_dd}, we obtain
		\begin{align*}
			\kappa_{\textbf{I}}  \leq K \frac{\log(|W|)^d}{r^{\frac{3d}2} |I|^{\frac5{12}} r_p^{\frac{3d}2} |I_p|^{\frac5{12}}} \leq \frac{K \log(|W|)^d (|I| |I_p|)^{\frac{1}{3}} }{|W|^3} \leq \frac{K \log(|W|)^d}{|W|^{\frac{7}{3}}}.
		\end{align*}
		For the partitions of category $\textbf{II}$, we bound the cumulant of size two by a constant (using Lemma \ref{lem:cov_thin}) and using Lemma \ref{lem:bril_thin}, we obtain
		\begin{multline*}
			\kappa_{\textbf{II}} \leq K \sup_{J_s} \frac1{|J_1||J_2|}\sum_{i_1 \in J_1, i_2 \in J_2} \|r(J_1)^{-\frac{d}{2}}\psi_{i_1}(\frac{\cdot}{r(J_1)})\|_6 \|r(J_2)^{-\frac{d}{2}}\psi_{i_2}\left(\frac{\cdot}{r(J_2)}\right)\|_6 \\ \frac1{|J_3||J_4|}\sum_{i_3 \in J_3, i_4 \in J_4} \|r(J_3)^{-\frac{d}{2}}\psi_{i_3}(\frac{\cdot}{r(J_3)})\|_6^{2}\|r(J_4)^{-\frac{d}{2}}\psi_{i_4}\left(\frac{\cdot}{r(J_4)}\right)\|_6^{2},
		\end{multline*}
		where $(J_s)_{s \in [4]}$ takes values in $\{I, I_p\}$ and exactly two $J_s$ are equal to $I$. Moreover, $r(I) = r$ and $r(I_p) = r_p$. Note that $J_1$ and $J_2$ are determined by the block of size $2$ of the partitions. Using Lemma \ref{lem:L4_herm} and assumption \eqref{eq:rI_dd}, we get after computations
		\begin{align*}
			\kappa_{\textbf{II}} \leq K \sup_{J_s} \prod_{s \in \{1, 2\}} \frac{\log(|W|)^{d/6}}{r(J_s)^{\frac{d}3} |J_s|^{\frac1{9}}} \prod_{s \in \{3, 4\}} \frac{\log(|W|)^{d/3}}{r(J_s)^{\frac{2d}3} |J_s|^{\frac2{9}}} \leq \frac{K \log(|W|)^d}{|W|^{\frac{5}3}}.
		\end{align*}
		For the partitions of category $\textbf{III}$, the block of size $3$ contains one index $i_s$, $s \in~[4]$ appearing twice. Accordingly, using Lemma \ref{lem:bril_thin}, we bound them as
		\begin{multline*}
			\kappa_{\textbf{III}} \leq K \sup_{J_s} \frac1{|J_1|}\sum_{i_1 \in J_1} \|r(J_1)^{-\frac{d}{2}}\psi_{i_1}(\frac{\cdot}{r(J_1)})\|_3 \|r(J_1)^{-\frac{d}{2}}\psi_{i_1}(\frac{\cdot}{r(J_1)})\|_5 \frac1{|J_2|}\sum_{i_2 \in J_2} \|r(J_2)^{-\frac{d}{2}}\psi_{i_2}(\frac{\cdot}{r(J_2)})\|_3^2 \\ \frac1{|J_3||J_4|}\sum_{i_3 \in J_3, i_4 \in J_4} \|r(J_3)^{-\frac{d}{2}}\psi_{i_3}(\frac{\cdot}{r(i_3)})\|_5^{2}\|r(J_4)^{-\frac{d}{2}}\psi_{i_4}\left(\frac{\cdot}{r(i_4)}\right)\|_5^{2},
		\end{multline*}
		where $(J_s)_{s \in [4]}$ takes values in $\{I, I_p\}$ and exactly two $J_s$ are equal to $I$. Moreover, $r(I) = r$ and $r(I_p) = r_p$. By Cauchy-Schwarz, we have
		\begin{multline*}
			\frac1{|J_1|}\sum_{i_1 \in J_1} \|r(J_1)^{-\frac{d}{2}}\psi_{i_1}(\frac{\cdot}{r(J_1)})\|_3 \|r(J_1)^{-\frac{d}{2}}\psi_{i_1}(\frac{\cdot}{r(J_1)})\|_5\\ \leq \sqrt{\frac1{|J_1|}\sum_{i_1 \in J_1} \|r(J_1)^{-\frac{d}{2}}\psi_{i_1}(\frac{\cdot}{r(J_1)})\|_3^2} \sqrt{\frac1{|J_1|}\sum_{i_1 \in J_1} \|r(J_1)^{-\frac{d}{2}}\psi_{i_1}(\frac{\cdot}{r(J_1)})\|_5^2}.
		\end{multline*}
		Using Lemma \ref{lem:L4_herm} and assumption \eqref{eq:rI_dd}, we get after computations
		\begin{align*}
			\kappa_{\textbf{III}} &\leq K \sup_{J_s} \frac1{r(J_1)^{\frac{d}6}|J_1|^{\frac1{12} }} \left(\frac{\log(|W|)^{2d/5}}{r(J_1)^{\frac{3d}5}|J_1|^{\frac1{5}}}\right)^{\frac12} \frac1{r(J_2)^{\frac{d}3}|J_1|^{\frac1{6}}} \prod_{s \in \{3, 4\}} \frac{\log(|W|)^{2d/5}}{r(J_s)^{\frac{3d}5} |J_s|^{\frac1{5}}}  \leq \frac{K \log(|W|)^d}{|W|^{\frac{7}{4}}}.
		\end{align*}
		For the partitions of category $\textbf{IV}$, the block of size $3$ does not contain indexes $i_s$, $s \in~[4]$ appearing twice. Accordingly, using Lemma \ref{lem:bril_thin}, we bound them as
		\begin{multline*}
			\kappa_{\textbf{IV}} \leq K \sup_{J_s} \prod_{s \in [3]} \frac1{|J_s|}\sum_{i_s \in J_s} \|r(J_s)^{-\frac{d}{2}}\psi_{i_s}(\frac{\cdot}{r(J_s)})\|_3  \|r(J_s)^{-\frac{d}{2}}\psi_{i_s}(\frac{\cdot}{r(J_s)})\|_5 \\ \frac1{|J_4|}\sum_{i_4 \in J_4} \|r(J_4)^{-\frac{d}{2}}\psi_{i_4}\left(\frac{\cdot}{r(i_4)}\right)\|_5^{2},
		\end{multline*}
		where $(J_s)_{s \in [4]}$ takes values in $\{I, I_p\}$ and exactly two $J_s$ are equal to $I$. Moreover, $r(I) = r$ and $r(I_p) = r_p$. Using first the Cauchy-Schwarz inequality, then Lemma \ref{lem:L4_herm} and finally assumption \eqref{eq:rI_dd}, we get after computations
		\begin{align*}
			\kappa_{\textbf{III}} \leq K \sup_{J_s} \frac{\log(|W|)^{2d/5}}{r(J_4)^{\frac{3d}5} |J_4|^{\frac1{5}}}\prod_{s \in [3]} \left(\frac1{r(J_s)^{\frac{d}3}|J_s|^{\frac1{6}}} \frac{\log(|W|)^{2d/5}}{r(J_s)^{\frac{3d}5} |J_s|^{\frac1{5}}} \right)^{\frac12}   \leq \frac{K \log(|W|)^d}{|W|^{\frac{37}{20}}}.
		\end{align*}
		For the partitions of category $\textbf{V}$ or $\textbf{VI}$, all the indexes belong to a block of size $4$. Accordingly, using Lemma \ref{lem:bril_thin}, we bound them as
		\begin{align*}
			\kappa_{\textbf{V}} + \kappa_{\textbf{VI}} \leq K \sup_{J_s} \prod_{s \in [4]} \frac1{|J_s|}\sum_{i_s \in J_s} \|r(J_s)^{-\frac{d}{2}}\psi_{i_s}(\frac{\cdot}{r(J_s)})\|_4^2,
		\end{align*}
		where $(J_s)_{s \in [4]}$ takes values in $\{I, I_p\}$ and exactly two $J_s$ are equal to $I$. Moreover, $r(I) = r$ and $r(I_p) = r_p$. Using Lemma \ref{lem:L4_herm} and assumption \eqref{eq:rI_dd}, we get after computations
		\begin{align*}
			\kappa_{\textbf{V}} + \kappa_{\textbf{VI}}  \leq K \sup_{J_s} \prod_{s \in [4]} \frac{\log(|W|)^{\frac{d}2}}{r(J_s)^{\frac{d}2} |J_s|^{\frac14}} \leq K \frac{\log(|W|)^{2d}}{|W|^2}.
		\end{align*}
		Gathering the previous bounds, we obtain
		\begin{align*}
			\frac1{N^2} \sum_{1 \leq j_1, j_2 \leq N} \kappa_{\textbf{I}} + \dots + \kappa_{\textbf{VI}} \leq \frac{K \log(|W|)^d}{|W|^{\frac{5}3}}.
		\end{align*}
		For the other categories of partitions, we rely on decorrelation between distinct frequencies. We start with the partitions of category $\textbf{VII}$. They differ from the ones of category $\textbf{VIII}$ by the fact that the block of size $4$ contains a block of $12|34|56|78$, i.e. that one index $i_s,~s \in [4]$ appears twice in this block. To fix the idea, the full list of such partitions is given by
		\begin{align*}
			&1235|47|68,~1235|67|48,~1236|47|58,~1236|57|48,~1237|45|68,~1237|46|58,~1238|45|67
			\\&1238|46|57 ,~1245|37|68 ,~1245|67|38 ,~1246|37|58 ,~1246|57|38 ,~1247|35|68 ,~1247|36|58
			\\&1248|35|67 ,~1248|36|57 ,~1257|36|48 ,~1257|46|38 ,~1258|36|47 ,~1258|37|46 ,~1267|35|48
			\\&1267|45|38 ,~1268|35|47 ,~1268|37|45 ,~1345|27|68 ,~1345|67|28 ,~1346|27|58 ,~1346|57|28
			\\&1347|25|68 ,~1347|26|58 ,~1348|25|67 ,~1348|26|57 ,~1356|27|48 ,~1356|47|28 ,~1378|25|46
			\\&1378|26|45 ,~1456|27|38 ,~1456|37|28 ,~1478|25|36 ,~1478|26|35 ,~1567|23|48 ,~1567|24|38
			\\&1568|23|47 ,~1568|24|37 ,~1578|23|46 ,~1578|24|36 ,~1678|23|45 ,~1678|24|35 ,~2345|17|68
			\\&2345|67|18 ,~2346|17|58 ,~2346|57|18 ,~2347|15|68 ,~2347|16|58 ,~2348|15|67 ,~2348|16|57
			\\&2356|17|48 ,~2356|47|18 ,~2378|15|46 ,~2378|16|45 ,~2456|17|38 ,~2456|37|18 ,~2478|15|36
			\\&2478|16|35 ,~2567|13|48 ,~2567|14|38 ,~2568|13|47 ,~2568|14|37 ,~2578|13|46 ,~2578|14|36
			\\&2678|13|45 ,~2678|14|35 ,~3457|16|28 ,~3457|26|18 ,~3458|16|27 ,~3458|17|26 ,~3467|15|28
			\\&3467|25|18 ,~3468|15|27 ,~3468|17|25 ,~3567|14|28 ,~3567|24|18 ,~3568|14|27 ,~3568|17|24
			\\&3578|14|26 ,~3578|16|24 ,~3678|14|25 ,~3678|15|24 ,~4567|13|28 ,~4567|23|18 ,~4568|13|27
			\\&4568|17|23 ,~4578|13|26 ,~4578|16|23 ,~4678|13|25 ,~4678|15|23
		\end{align*}
		Note that the cumulants related to these partitions can be written
		\begin{align*}
			q_{\textbf{VII}} := \sum_{\substack{i_s \in J_s \\ s \in [4]}} \kappa(D_{i_1}, \overline{D_{i_1}},  D_{i_{2}}, D_{i_{3}}) \kappa(\overline{D_{i_2}}, D_{i_4}) \kappa(\overline{D_{i_3}}, \overline{D_{i_4}}),
		\end{align*}
		where $(J_s)_{s \in [4]}$ takes values in $\{I, I_p\}$ and exactly two $J_s$ are equal to $I$. Moreover, if $i \in I$, then $D_i = C_i^1(k)$ for a proper frequency $k \in \{\pm k_{j_1}, \pm k_{j_2}\}$, and $i\in I_p$, then   $D_i = C_i^2(k)$ for a proper frequency $k \in \{\pm \widetilde{k_{j_1}}, \widetilde{\pm k_{j_2}}\}$. Applying three times the Cauchy-Schwarz inequality, we get
		\begin{align*}
			|q_{\textbf{VII}}| &\leq \sum_{\substack{i_s \in J_s \\ s \in [3]}} |\kappa(D_{i_1}, \overline{D_{i_1}},  D_{i_{2}}, D_{i_{3}})| \sqrt{\sum_{i_4 \in J_4} |\kappa(\overline{D_{i_2}}, D_{i_4})|^2} \sqrt{\sum_{i_4 \in J_4} | \kappa(\overline{D_{i_3}}, \overline{D_{i_4}})|^2} \\& \leq \sum_{\substack{i_1 \in J_1}} \sqrt{\sum_{\substack{i_2 \in J_2 \\ i_3 \in J_3}} |\kappa(D_{i_1}, \overline{D_{i_1}},  D_{i_{2}}, D_{i_{3}})|^2 } \sqrt{\sum_{\substack{i_2 \in J_3 \\ i_4\in J_4}} |\kappa(\overline{D_{i_2}}, D_{i_4})|^2} \sqrt{\sum_{\substack{i_3 \in J_3 \\ i_4\in J_4}} | \kappa(\overline{D_{i_3}}, \overline{D_{i_4}})|^2}.
		\end{align*}
		Using Lemmas \ref{lem:bessel_km} and \ref{lem:L4_herm}, we obtain:
		\begin{align*}
			\sum_{\substack{i_1 \in J_1}} \sqrt{\sum_{\substack{i_2 \in J_2 \\ i_3 \in J_3}} |\kappa(D_{i_1}, \overline{D_{i_1}},  D_{i_{2}}, D_{i_{3}})|^2 } & \leq  \sum_{\substack{i_1 \in J_1}} \|r(J_1) \psi_{i_1}(\frac{\cdot}{r(J_1)})\|_6^2 \sqrt{\sum_{\substack{i_s \in J_s}} \|r(J_s) \psi_{i_s}(\frac{\cdot}{r(J_s)})\|_6^2  } + \mathcal{E} \\& \leq K \left( |J_1| (|J_2| \wedge |J_3|)^{\frac12} |J_1|^{\frac19}   (|J_2| \wedge |J_3|)^{\frac1{18}}\frac{\log(|W|)^d}{|W|} + \mathcal{E}\right),
		\end{align*}
		where in the second line $s = 2$ if $|J_3| \geq |J_2|$ and $s = 3$ otherwise, 
		The next step is to note that, due to the connectivity constraint, in all the partitions of category $\textbf{VII}$, at most one block of size $2$ contains frequencies $\epsilon_1 k_{j}$ and $\epsilon_2 \widetilde{k_{j}}$ for $(\epsilon_1, \epsilon_2) \in \{-1, 1\}^2$ and $j \in \{j_1, j_2\}$. The other block of size $2$ contains frequencies associated to $j_1$ and $j_2$. With the $k_0$-allowed assumption and Lemma \ref{lem:bessel_k2}, we get
		\begin{multline*}
			\frac1{N^2} \sum_{1 \leq j_1, j_2 \leq N} \sqrt{\sum_{\substack{i_2 \in J_2 \\ i_4\in J_4}} |\kappa(\overline{D_{i_2}}, D_{i_4})|^2} \sqrt{\sum_{\substack{i_3 \in J_3 \\ i_4\in J_4}} | \kappa(\overline{D_{i_3}}, \overline{D_{i_4}})|^2} \\ \leq K(|J_2| \wedge |J_4|)^{\frac12} \left( \left(\frac{(|J_2| \vee |J_4|)^{\frac1d}}{|W|^{\frac1d} |k_0|}\right)^{\zeta} + \frac1N\right)(|J_3| \wedge |J_4|)^{\frac12} \\ +  K(|J_3| \wedge |J_4|)^{\frac12} \left( \left(\frac{(|J_3| \vee |J_4|)^{\frac1d}}{|W|^{\frac1d} |k_0|}\right)^{\zeta} + \frac1N\right)(|J_2| \wedge |J_4|)^{\frac12} + \mathcal{E}.
		\end{multline*}
		This gives after computations (where we use $a b = (a\vee b)(a \wedge b)$):
		\begin{align*}
			\frac1{N^2} \sum_{1 \leq j_1, j_2 \leq N} |\kappa_{\textbf{VII}}| &\leq \frac{K}{|I|^{\frac{8}9}|I_p|^{\frac{8}9}} \frac{\log(|W|)^d}{|W|}  \left(\left(\frac{(|I| \vee |I_p|)^{\frac1d}}{|W|^{\frac1d} |k_0|}\right)^{\zeta} + \frac1N\right) + \mathcal{E} \\& \leq \frac{K}{|I||I_p|} \left(\left(\frac{(|I| \vee |I_p|)^{\frac1d}}{|W|^{\frac1d} |k_0|}\right)^{\zeta} + \frac1N\right) + \mathcal{E}.
		\end{align*}
		
		Now, we consider the partitions of category \textbf{VIII}. They are given by 
		\begin{align*}
			&1357|24|68,~1357|26|48,~1357|46|28,~1358|24|67,~1358|26|47,~1358|27|46,~1367|24|58
			\\&1367|25|48 ,~1367|45|28 ,~1368|24|57 ,~1368|25|47 ,~1368|27|45 ,~1457|23|68 ,~1457|26|38
			\\&1457|36|28 ,~1458|23|67 ,~1458|26|37 ,~1458|27|36 ,~1467|23|58 ,~1467|25|38 ,~1467|35|28
			\\&1468|23|57 ,~1468|25|37 ,~1468|27|35 ,~2357|14|68 ,~2357|16|48 ,~2357|46|18 ,~2358|14|67
			\\&2358|16|47 ,~2358|17|46 ,~2367|14|58 ,~2367|15|48 ,~2367|45|18 ,~2368|14|57 ,~2368|15|47
			\\&2368|17|45 ,~2457|13|68 ,~2457|16|38 ,~2457|36|18 ,~2458|13|67 ,~2458|16|37 ,~2458|17|36
			\\&2467|13|58 ,~2467|15|38 ,~2467|35|18 ,~2468|13|57 ,~2468|15|37 ,~2468|17|35
		\end{align*}
		We follow the same idea than in the study of the partitions of category~\textbf{VII}, but this times the cumulants appearing are of the form
		\begin{align*}
			p := \sum_{\substack{i_s \in J_s; \\ s \in [4]}} \kappa(D_{i_1}, \overline{D_{i_2}},  D_{i_{3}}, D_{i_{4}}) \kappa(\overline{D_{i_1}}, D_{i_2}) \kappa(\overline{D_{i_3}}, \overline{D_{i_4}}),
		\end{align*}
		where $(J_s)_{s \in [4]}$ takes values in $\{I, I_p\}$ and exactly two $J_s$ are equal to $I$. Moreover, if $i \in I$, then $D_i = C_i^1(k)$ for a proper frequency $k \in \{\pm k_{j_1}, \pm k_{j_2}\}$, and $i\in I_p$, then   $D_i = C_i^2(k)$ for a proper frequency $k \in \{\pm \widetilde{k_{j_1}}, \widetilde{\pm k_{j_2}}\}$. Using the Cauchy-Schwarz inequality, we obtain
		\begin{align*}
			|p| \leq \sqrt{\sum_{\substack{i_s \in J_s; \\ s \in [4]}} |\kappa(D_{i_1}, \overline{D_{i_2}},  D_{i_{3}}, D_{i_{4}})|^2} \sqrt{\sum_{\substack{i_1 \in J_1 \\ i_2\in J_2}} |\kappa(\overline{D_{i_1}}, D_{i_2})|^2} \sqrt{ \sum_{\substack{i_3\in J_3 \\ i_4\in J_4}} |\kappa(\overline{D_{i_3}}, \overline{D_{i_4}})|^2}.
		\end{align*}
		Then, using first Lemma \ref{lem:bessel_km} (with one $J_s$ corresponding to $I$), then Lemma \ref{lem:L4_herm} and finally assumption \eqref{eq:rI_dd} we get
		\begin{align*}
			\sqrt{\sum_{\substack{i_s \in J_s; \\ s \in [4]}} |\kappa(D_{i_1}, \overline{D_{i_2}},  D_{i_{3}}, D_{i_{4}})|^2} \leq K |I_p|^{1+\frac{1}{9}} |I|^{\frac12 + \frac{1}{18}} \frac{\log(|W|)^{d/2}}{|W|} + \mathcal{E}.
		\end{align*}
		\iffalse
		Contrary to the case of the category \textbf{VII}, both blocks of size $2$ contains frequencies $\epsilon_1 k_{j}$ and $\epsilon_2 \widetilde{k_{j}}$ for $(\epsilon_1, \epsilon_2) \in \{-1, 1\}^2$ and $j \in \{j_1, j_2\}$. If the  arguing as in \eqref{eq:exp_crit_B} and \eqref{eq:exp_crit_C}, we get
		\begin{align*}
			\sqrt{\sum_{\substack{i_1 \in J_1 \\ i_2\in J_2}} |\kappa(\overline{D_{i_1}}, D_{i_2})|^2} \sqrt{ \sum_{\substack{i_3\in J_3 \\ i_4\in J_4}} |\kappa(\overline{D_{i_3}}, \overline{D_{i_4}})|^2} \leq K |I| |I_p| \left(\frac1{|W| |k_0|^d} + \mathcal{E}\right),
		\end{align*}
		and subsequently
		\begin{align*}
			\frac1{|I_p|^2 |I|^2} \sum_{\substack{(i_1, i_3) \in I, \\ (i_2, i_4) \in I_p}} |p| \leq K \frac{|I_p|^{\frac19}}{|W|^2 |k_0|^d |I|^{\frac49}} + \mathcal{E}. 
		\end{align*}
		Otherwise, we apply the argument used for the the category \textbf{VII}, to get
		\fi
		As for the partitions of category \textbf{VII}, at most one block of size $2$ contains frequencies $\epsilon_1 k_{j}$ and $\epsilon_2 \widetilde{k_{j}}$ for $(\epsilon_1, \epsilon_2) \in \{-1, 1\}^2$ and $j \in \{j_1, j_2\}$. The other block of size $2$ contains frequencies associated to $j_1$ and $j_2$. Accordingly, we apply the arguments used for the the category \textbf{VII}, to get
		\begin{align*}
			\frac1{N^2} \sum_{1 \leq j_1, j_2 \leq N} |\kappa_{\textbf{VIII}}| \leq \frac{K}{|I||I_p|} \left(\left(\frac{(|I| \vee |I_p|)^{\frac1d}}{|W|^{\frac1d} |k_0|}\right)^{\zeta} + \frac1N\right) + \mathcal{E}.
		\end{align*}
		The partitions of category \textbf{IX}, \textbf{X} and \textbf{XI}, are classed depending on the number of blocks of size $3$ containing a block of $12|34|56|78$ (one for $\textbf{IX}$, two for \textbf{X} and zero for \textbf{XI}). 
		Partitions in category \textbf{X} are the most constrained, since each of the two blocks of size~$3$ contains an index $i_s$, for some $s \in [4]$, that appears twice. We first bound their contribution. The proofs for the other categories follow similar lines and can be readily adapted.
		\begin{align*}
			&123|567|48,~123|568|47,~123|578|46,~123|678|45,~124|567|38,~124|568|37,~124|578|36
			\\&124|678|35 ,~125|347|68 ,~125|348|67 ,~125|378|46 ,~125|478|36 ,~126|347|58 ,~126|348|57
			\\&126|378|45 ,~126|478|35 ,~127|345|68 ,~127|346|58 ,~127|356|48 ,~127|456|38 ,~128|345|67
			\\&128|346|57 ,~128|356|47 ,~128|456|37 ,~134|567|28 ,~134|568|27 ,~134|578|26 ,~134|678|25
			\\&156|347|28 ,~156|348|27 ,~156|378|24 ,~156|478|23 ,~178|345|26 ,~178|346|25 ,~178|356|24
			\\&178|456|23 ,~234|567|18 ,~234|568|17 ,~234|578|16 ,~234|678|15 ,~256|347|18 ,~256|348|17
			\\&256|378|14 ,~256|478|13 ,~278|345|16 ,~278|346|15 ,~278|356|14 ,~278|456|13
		\end{align*}
		The cumulants related the partitions of category $\textbf{X}$ can be written, leveraging the connectivity condition:
		\begin{align*}
			q_{\textbf{X}} := \sum_{\substack{i_s \in J_s \\ s \in [4]}} 
			\kappa(D_{i_1}, \overline{D_{i_1}}, D_{i_2})\,
			\kappa(D_{i_3}, \overline{D_{i_3}}, D_{i_4})\,
			\kappa(\overline{D_{i_2}}, \overline{D_{i_4}}),
		\end{align*}
		where $(J_s)_{s \in [4]}$ takes values in $\{I, I_p\}$, and exactly two of the $J_s$ are equal to $I$. Moreover, if $i \in I$, then $D_i = C_i^1(k)$ for an appropriate frequency $k \in \{\pm k_{j_1}, \pm k_{j_2}\}$, and if $i \in I_p$, then $D_i = C_i^2(k)$ for a suitable frequency $k \in \{\pm \widetilde{k_{j_1}}, \pm \widetilde{k_{j_2}}\}$. Applying the Cauchy–Schwarz inequality twice, we obtain:
		\begin{align*}
			|&q_{\textbf{X}}| \leq \sum_{\substack{i_s \in J_s \\ s \in \{1, 3, 4\}}} \sqrt{\sum_{i_2 \in J_2} |\kappa(D_{i_1}, \overline{D_{i_1}}, D_{i_2})|^2 } |\kappa(D_{i_3}, \overline{D_{i_3}}, D_{i_4})| \sqrt{\sum_{i_2 \in J_2} |\kappa(D_{i_2}, D_{i_4})|^2} \\& \leq \sum_{i_1 \in J_1} \sqrt{\sum_{i_2 \in J_2} |\kappa(D_{i_1}, \overline{D_{i_1}}, D_{i_2})|^2 } \sum_{i_3 \in J_3} \sqrt{\sum_{i_4 \in J_4} |\kappa(D_{i_3}, \overline{D_{i_3}}, D_{i_4})|^2} \sqrt{\sum_{\substack{i_2 \in J_2 \\ i_4 \in J_4}} |\kappa(D_{i_2}, D_{i_4})|^2}.
		\end{align*}
		Using Lemmas \ref{lem:bessel_km} and \ref{lem:L4_herm}, we obtain 
		\begin{align*}
			|q_{\textbf{X}}| &\leq K |J_1| |J_3| \frac{\log(|W|)^{d}}{|W|} \sqrt{\sum_{\substack{i_2 \in J_2 \\ i_4 \in J_4}} |\kappa(D_{i_2}, D_{i_4})|^2} + \mathcal{E}.
		\end{align*}
		Two cases may arise. In the first, the cumulant $\kappa(D_{i_2}, D_{i_4})$ contains frequencies associated with the same index $j_1$ (or $j_2$). In the second, it contains frequencies associated with distinct indices $j_1$ and $j_2$. We begin with the first case. Suppose that
		$|J_2|^{1/(2d)}/r(J_2) \vee |J_4|^{1/(2d)}/r(J_4) \leq \sqrt{2} c_2 |k_0|.$ By applying Lemma~\ref{lem:tail_herm}, we obtain, for some constant $c_6 > 0$,
		
		\begin{align*}
			\frac1{|I_p|^2 |I|^2} \sum_{\substack{(i_1, i_3) \in I, \\ (i_2, i_4) \in I_p}} |q_{\textbf{X}}| &\leq K e^{-c_6 |k_0| |W|^{\frac{1}d}} + \mathcal{E} \leq \frac{K}{|k_0|^{2d} |W|^2} + \mathcal{E}.
		\end{align*}
		Now assume that $|J_2|^{\frac1{2d}}/r(J_2) \vee |J_4|^{\frac1{2d}}/r(J_4) \geq \sqrt{2} c_2 |k_0|$. Subsequently, with assumption~\eqref{eq:rI_dd}, we obtain
		$
		|J_2| \wedge |J_4| \geq c_7 |k_0|^d |W|,
		$
		for $c_7 > 0$. Moreover, according to Lemma~\ref{lem:bessel_km}, we have
		\begin{align*}
			\frac1{|I_p|^2 |I|^2} \sum_{\substack{(i_1, i_3) \in I, \\ (i_2, i_4) \in I_p}} |q_{\textbf{X}}| &\leq K \frac{\log(|W|)^{d}}{|W|} \frac{(|J_2| \wedge |J_4|)^{\frac12}}{|J_2| |J_4|}+ \mathcal{E} \leq K\frac{\log(|W|)^{d}}{|W|^2 |k_0|^d} \frac1{(|I| \wedge |I_p|)^{\frac12}}+ \mathcal{E}.
		\end{align*}
		Now we consider the second case, where the block of size two contains frequencies associated to distinct indices $j_1$ and $j_2$. With the $k_0$-allowed assumption and Lemma \ref{lem:tail_herm}, we obtain
		\begin{align*}
			\frac1N \sum_{1\leq j_1, j_2 \leq N} \frac1{|I_p|^2 |I|^2} \sum_{\substack{(i_1, i_3) \in I, \\ (i_2, i_4) \in I_p}} |q_{\textbf{X}}| \leq K \frac{\log(|W|)^d}{|W| |J_2| |J_4|} \left(\left(\frac{(|J_2| \vee |J_4|)^{\frac1d}}{|W|^{\frac1d} |k_0|}\right)^{\zeta} + \frac1N\right) + \mathcal{E}.  
		\end{align*}
		Since $|J_2| \vee |J_4| \leq |I| \vee |I_p|$, we have
		\begin{align*}
			\frac1{|J_2| |J_4|} \left(\frac{(|J_2| \vee |J_4|)^{\frac1d}}{|W|^{\frac1d} |k_0|}\right)^{\zeta} &\leq \frac1{(|I|\wedge|I_p|)^2}\left(\frac{(|I| \vee |I_p|)^{\frac1d}}{|W|^{\frac1d} |k_0|}\right)^{\zeta}.
		\end{align*}
		This yields, using $|I|\vee |I_p| \leq |W|^{2\beta/(2\beta +d)}$:
		\begin{align*}
			\frac1N \sum_{1\leq j_1, j_2 \leq N} |\kappa_{\textbf{X}}| \leq  K\left(\frac{\log(|W|)^{d}}{|W|^2 |k_0|^d} + \frac{1}{|k_0|^{2d} |W|^2} + \frac{1}{|I| |I_p| }\left(\left(\frac{(|I| \vee |I_p|)^{\frac1d}}{|W|^{\frac1d} |k_0|}\right)^{\zeta} +  \frac1N\right)\right) +\mathcal{E}.
		\end{align*}
		The partitions of category \textbf{IX} are given by
		\begin{align*}
			&123|457|68,~123|458|67,~123|467|58,~123|468|57,~124|357|68,~124|358|67,~124|367|58
			\\&124|368|57 ,~125|367|48 ,~125|368|47 ,~125|467|38 ,~125|468|37 ,~126|357|48 ,~126|358|47
			\\&126|457|38 ,~126|458|37 ,~127|358|46 ,~127|368|45 ,~127|458|36 ,~127|468|35 ,~128|357|46
			\\&128|367|45 ,~128|457|36 ,~128|467|35 ,~134|257|68 ,~134|258|67 ,~134|267|58 ,~134|268|57
			\\&135|278|46 ,~135|478|26 ,~135|678|24 ,~136|278|45 ,~136|478|25 ,~136|578|24 ,~137|256|48
			\\&137|456|28 ,~137|568|24 ,~138|256|47 ,~138|456|27 ,~138|567|24 ,~145|278|36 ,~145|378|26
			\\&145|678|23 ,~146|278|35 ,~146|378|25 ,~146|578|23 ,~147|256|38 ,~147|356|28 ,~147|568|23
			\\&148|256|37 ,~148|356|27 ,~148|567|23 ,~156|237|48 ,~156|238|47 ,~156|247|38 ,~156|248|37
			\\&157|234|68 ,~157|346|28 ,~157|348|26 ,~158|234|67 ,~158|346|27 ,~158|347|26 ,~167|234|58
			\\&167|345|28 ,~167|348|25 ,~168|234|57 ,~168|345|27 ,~168|347|25 ,~178|235|46 ,~178|236|45
			\\&178|245|36 ,~178|246|35 ,~235|478|16 ,~235|678|14 ,~236|478|15 ,~236|578|14 ,~237|456|18
			\\&237|568|14 ,~238|456|17 ,~238|567|14 ,~245|378|16 ,~245|678|13 ,~246|378|15 ,~246|578|13
			\\&247|356|18 ,~247|568|13 ,~248|356|17 ,~248|567|13 ,~257|346|18 ,~257|348|16 ,~258|346|17
			\\&258|347|16 ,~267|345|18 ,~267|348|15 ,~268|345|17 ,~268|347|15
		\end{align*}
		The cumulants related to these partitions can be written (leveraging the connectivity condition $\sigma \vee 12|34|56|78 = 1$),
		\begin{align*}
			q_{\textbf{IX}} := \sum_{\substack{i_s \in J_s \\ s \in [4]}} \kappa(D_{i_1}, \overline{D_{i_1}},  D_{i_{2}}) \kappa(\overline{D_{i_2}}, D_{i_3}, D_{i_4}) \kappa(\overline{D_{i_3}}, \overline{D_{i_4}}),
		\end{align*}
		where $(J_s)_{s \in [4]}$ takes values in $\{I, I_p\}$ and exactly two $J_s$ are equal to $I$. Moreover, if $i \in I$, then $D_i = C_i^1(k)$ for a proper frequency $k \in \{\pm k_{j_1}, \pm k_{j_2}\}$, and $i\in I_p$, then   $D_i = C_i^2(k)$ for a proper frequency $k \in \{\pm \widetilde{k_{j_1}}, \widetilde{\pm k_{j_2}}\}$. Applying three times the Cauchy-Schwarz inequality, we get
		\begin{align*}
			|q_{\textbf{IX}}| &\leq \sum_{\substack{i_s \in J_s;\\ s\in \{1, 3, 4\}}} \sqrt{\sum_{i_2 \in J_2} |\kappa(D_{i_1}, \overline{D_{i_1}}, D_{i_2})|^2}\sqrt{\sum_{i_2 \in J_2} |\kappa(\overline{D_{i_2}}, D_{i_3}, D_{i_4})|^2} |\kappa(D_{i_3}, D_{i_4})| \\& \leq \sum_{\substack{i_1 \in J_1\\ i_4 \in J_4}} \sqrt{\sum_{i_2 \in J_2} |\kappa(D_{i_1}, \overline{D_{i_1}}, D_{i_2})|^2}\sqrt{\sum_{\substack{i_2 \in J_2\\i_3 \in J_3}} |\kappa(\overline{D_{i_2}}, D_{i_3}, D_{i_4})|^2} \sqrt{\sum_{i_3 \in J_3} |\kappa(D_{i_3}, D_{i_4})|^2} \\& \leq \sum_{i_1 \in J_1} \sqrt{\sum_{i_2 \in J_2} |\kappa(D_{i_1}, \overline{D_{i_1}}, D_{i_2})|^2}\sqrt{\sum_{\substack{i_s \in J_s;\\ s\in \{2, 3, 4\}}} |\kappa(\overline{D_{i_2}}, D_{i_3}, D_{i_4})|^2} \sqrt{\sum_{\substack{i_3 \in J_3 \\ i_4 \in J_4}} |\kappa(D_{i_3}, D_{i_4})|^2}.
		\end{align*}
		Then, arguing as in the study of the partitions of category \textbf{X}, we obtain
		\begin{align*}
			\frac1N \sum_{1\leq j_1, j_2 \leq N} |\kappa_{\textbf{IX}}| \leq K\left(\frac{\log(|W|)^{d}}{|W|^2 |k_0|^d} + \frac{1}{|k_0|^{2d} |W|^2} + \frac{1}{|I| |I_p| }\left(\left(\frac{(|I| \vee |I_p|)^{\frac1d}}{|W|^{\frac1d} |k_0|}\right)^{\zeta} +  \frac1N\right)\right) +\mathcal{E}.
		\end{align*}
		We consider the partitions of category \textbf{XI}. They are given by
		\begin{align*}
			&135|247|68,~135|248|67,~135|267|48,~135|268|47,~135|467|28,~135|468|27,~136|247|58
			\\&136|248|57 ,~136|257|48 ,~136|258|47 ,~136|457|28 ,~136|458|27 ,~137|245|68 ,~137|246|58
			\\&137|258|46 ,~137|268|45 ,~137|458|26 ,~137|468|25 ,~138|245|67 ,~138|246|57 ,~138|257|46
			\\&138|267|45 ,~138|457|26 ,~138|467|25 ,~145|237|68 ,~145|238|67 ,~145|267|38 ,~145|268|37
			\\&145|367|28 ,~145|368|27 ,~146|237|58 ,~146|238|57 ,~146|257|38 ,~146|258|37 ,~146|357|28
			\\&146|358|27 ,~147|235|68 ,~147|236|58 ,~147|258|36 ,~147|268|35 ,~147|358|26 ,~147|368|25
			\\&148|235|67 ,~148|236|57 ,~148|257|36 ,~148|267|35 ,~148|357|26 ,~148|367|25 ,~157|236|48
			\\&157|238|46 ,~157|246|38 ,~157|248|36 ,~157|368|24 ,~157|468|23 ,~158|236|47 ,~158|237|46
			\\&158|246|37 ,~158|247|36 ,~158|367|24 ,~158|467|23 ,~167|235|48 ,~167|238|45 ,~167|245|38
			\\&167|248|35 ,~167|358|24 ,~167|458|23 ,~168|235|47 ,~168|237|45 ,~168|245|37 ,~168|247|35
			\\&168|357|24 ,~168|457|23 ,~235|467|18 ,~235|468|17 ,~236|457|18 ,~236|458|17 ,~237|458|16
			\\&237|468|15 ,~238|457|16 ,~238|467|15 ,~245|367|18 ,~245|368|17 ,~246|357|18 ,~246|358|17
			\\&247|358|16 ,~247|368|15 ,~248|357|16 ,~248|367|15 ,~257|368|14 ,~257|468|13 ,~258|367|14
			\\&258|467|13 ,~267|358|14 ,~267|458|13 ,~268|357|14 ,~268|457|13.
		\end{align*}
		The cumulants related to these partitions can be written (leveraging the connectivity condition $\sigma \vee 12|34|56|78 = 1$),
		\begin{align*}
			q_{\textbf{XI}} := \sum_{\substack{i_s \in J_s \\ s \in [4]}} \kappa(D_{i_1}, D_{i_2},  D_{i_{3}}) \kappa(\overline{D_{i_1}}, \overline{D_{i_2}}, D_{i_4}) \kappa(\overline{D_{i_3}}, \overline{D_{i_4}}),
		\end{align*}
		where $(J_s)_{s \in [4]}$ takes values in $\{I, I_p\}$ and exactly two $J_s$ are equal to $I$. Moreover, if $i \in I$, then $D_i = C_i^1(k)$ for a proper frequency $k \in \{\pm k_{j_1}, \pm k_{j_2}\}$, and $i\in I_p$, then   $D_i = C_i^2(k)$ for a proper frequency $k \in \{\pm \widetilde{k_{j_1}}, \widetilde{\pm k_{j_2}}\}$. Applying four times the Cauchy-Schwarz inequality, we get
		\begin{align*}
			|q_{\textbf{XI}}| &\leq \sum_{\substack{i_s \in J_s;\\ s\in \{2, 3, 4\}}} \sqrt{\sum_{i_1 \in J_1} |\kappa(D_{i_1}, D_{i_2}, D_{i_3})|^2}\sqrt{\sum_{i_1\in J_1} |\kappa(\overline{D_{i_1}}, \overline{D_{i_2}}, D_{i_4})|^2} |\kappa(D_{i_3}, D_{i_4})| \\& \leq \sum_{\substack{i_3 \in J_3\\ i_4 \in J_4}} \sqrt{\sum_{\substack{i_1 \in J_1\\ i_2 \in J_2}} |\kappa(D_{i_1}, D_{i_2}, D_{i_3})|^2}\sqrt{\sum_{\substack{i_1 \in J_1\\ i_2 \in J_2}} |\kappa(\overline{D_{i_1}}, \overline{D_{i_2}}, D_{i_4})|^2} |\kappa(D_{i_3}, D_{i_4})| \\& \leq \sum_{i_4 \in J_4} \sqrt{\sum_{\substack{i_s \in J_s;\\ s \in \{1, 2, 3\}}} |\kappa(D_{i_1}, D_{i_2}, D_{i_3})|^2}\sqrt{\sum_{\substack{i_1 \in J_1\\ i_2 \in J_2}} |\kappa(\overline{D_{i_1}}, \overline{D_{i_2}}, D_{i_4})|^2} \sqrt{\sum_{i_3 \in J_3} |\kappa(D_{i_3}, D_{i_4})|^2} \\& \leq 
			\sqrt{\sum_{\substack{i_s \in J_s;\\ s \in \{1, 2, 3\}}} |\kappa(D_{i_1}, D_{i_2}, D_{i_3})|^2}\sqrt{\sum_{\substack{i_s \in J_s;\\ s \in \{1, 2, 4\}}} |\kappa(\overline{D_{i_1}}, \overline{D_{i_2}}, D_{i_4})|^2} \sqrt{\sum_{\substack{i_3 \in J_3\\ i_4 \in J_4}} |\kappa(D_{i_3}, D_{i_4})|^2}.
		\end{align*}
		Arguing as in the case of the partitions of category $\textbf{X}$, we obtain
		\begin{align*}
			\frac1N \sum_{1\leq j_1, j_2 \leq N} |\kappa_{\textbf{XI}}| \leq K\left(\frac{\log(|W|)^{d}}{|W|^2 |k_0|^d} + \frac{1}{|k_0|^{2d} |W|^2} + \frac{1}{|I| |I_p| }\left(\left(\frac{(|I| \vee |I_p|)^{\frac1d}}{|W|^{\frac1d} |k_0|}\right)^{\zeta} +  \frac1N\right)\right) +\mathcal{E}.
		\end{align*}
		It remains to consider the partitions of category \textbf{XII}. They consist in 
		\begin{align*}
			&16|28|35|47,~68|14|27|35,~16|27|48|35,~16|27|38|45,~38|27|15|46,~28|14|36|57
			\\&58|27|13|46 ,~45|67|18|23 ,~16|23|58|47 ,~25|46|38|17 ,~24|68|15|37 ,~14|68|25|37
			\\&38|14|57|26 ,~58|13|47|26 ,~58|46|17|23 ,~16|28|45|37 ,~18|46|23|57 ,~45|68|17|23
			\\&27|15|48|36 ,~38|67|14|25 ,~67|15|23|48 ,~38|24|15|67 ,~16|23|48|57 ,~18|35|47|26
			\\&16|24|37|58 ,~58|14|27|36 ,~67|24|18|35 ,~13|48|57|26 ,~68|15|23|47 ,~38|17|45|26
			\\&45|28|13|67 ,~24|18|36|57 ,~28|13|46|57 ,~58|14|37|26 ,~16|24|38|57 ,~68|13|25|47
			\\&28|15|46|37 ,~68|27|13|45 ,~67|13|25|48 ,~67|28|14|35 ,~24|68|17|35 ,~28|36|15|47
			\\&18|25|36|47 ,~45|18|37|26 ,~58|24|36|17 ,~25|36|17|48 ,~17|48|35|26 ,~18|46|25|37
		\end{align*}
		The cumulants related to these partitions can be written, leveraging the connectivity condition $\sigma \vee 12|34|56|78 = 1$,
		\begin{align*}
			q_{\textbf{XII}} := \sum_{\substack{i_s \in J_s \\ s \in [4]}} 
			\kappa(D_{i_1}, D_{i_2}) \kappa(\overline{D_{i_2}}, D_{i_3}) \kappa(\overline{D_{i_3}}, D_{i_4}) \kappa(\overline{D_{i_4}}, \overline{D_{i_1}}).
		\end{align*}
		where $(J_s)_{s \in [4]}$ takes values in $\{I, I_p\}$, and exactly two of the $J_s$ are equal to $I$. Moreover, if $i \in I$, then $D_i = C_i^1(k)$ for an appropriate frequency $k \in \{\pm k_{j_1}, \pm k_{j_2}\}$, and if $i \in I_p$, then $D_i = C_i^2(k)$ for a suitable frequency $k \in \{\pm \widetilde{k_{j_1}}, \pm \widetilde{k_{j_2}}\}$. Applying the Cauchy–Schwarz inequality four times, we obtain
		\begin{align*}
			|q_{\textbf{XII}}| \leq \sqrt{\sum_{\substack{i_1 \in J_1 \\ i_2 \in J_2}} |\kappa(D_{i_1}, D_{i_2})|^2} \sqrt{\sum_{\substack{i_2 \in J_2 \\ i_3 \in J_3}} |\kappa(\overline{D_{i_2}}, D_{i_3})|^2} \sqrt{\sum_{\substack{i_3 \in J_3 \\ i_4 \in J_4}} |\kappa(\overline{D_{i_3}}, D_{i_4})|^2}  \sqrt{\sum_{\substack{i_1 \in J_1 \\ i_4 \in J_4}} |\kappa(D_{i_1}, D_{i_4})|^2}.
		\end{align*}
		Note that, due to the connectivity condition, at least one block contains two frequencies, one indexed by $j_1$ and the other by $j_2$. Since the partition is connected to $12|34|56|78$, the difference between these two frequencies is at least $|k| c_7$ with $c_7 > 0$. Accordingly, we use the $k_0$-assumption and Lemma \ref{lem:bessel_k2} to get
		\begin{align*}
			\frac1{N^2} \sum_{1 \leq j_1, j_2 \leq N}|q_{\textbf{XII}}| \leq K \prod_{s = 1}^4(|J_s| \wedge |J_{s+1}|)^{\frac12} \left(\left(\frac{(|I| \vee |I_p|)^{\frac1d}}{|W|^{\frac1d} |k|^d}\right)^{\zeta}+\frac1N\right) + \mathcal{E},
		\end{align*}
		with $J_5 := J_1$. Using $ \prod_{s = 1}^4(|J_s| \wedge |J_{s+1}|)^{\frac12} \leq |I| |I_p|$, we obtain
		\begin{align*}
			\frac1{N^2} \sum_{1 \leq j_1, j_2 \leq N}|\kappa_{\textbf{XII}}| \leq K  \frac1{|I| |I_p|}\left(\left(\frac{(|I| \vee |I_p|)^{\frac1d}}{|W|^{\frac1d} |k|^d}\right)^{\zeta}+\frac1N\right) + \mathcal{E}.
		\end{align*}
		Gathering all the previous bounds, we conclude the proof of this lemma.
	\end{proof}
	With the previous lemmas, are now in position to prove Lemma \ref{lem:exp_delta}.
	\begin{proof}[Proof of Lemma \ref{lem:exp_delta}]
		By Jensen's inequality, we have 
		\begin{align*}
			\mathbb{E}\left[\max_{I \in \mathcal{I}} |\Delta_I|\right] & \leq \mathbb{E}\left[\max_{I \in \mathcal{I}} |\Delta_I|^2\right]^{\frac12} \leq \sqrt{|\mathcal{I}| \max_{I \in \mathcal{I}}(\mathbb{E}[\Delta_I]^2 + \operatorname{Var}[\Delta_I])}.
		\end{align*}
		We prove that $\max_{I \in \mathcal{I}}(\mathbb{E}[\Delta_I]^2 + \operatorname{Var}[\Delta_I]) = o(|W|^{-4\beta/(2\beta +d)})$. Throughout the proof, $K$ denotes a generic constant that may vary from line to line. We start with $\max_{I \in \mathcal{I}}\mathbb{E}[\Delta_I]^2$. According to Lemma~\ref{lem:exp_delta_I}, we have
		$$|\mathbb{E}[\Delta_I ]| \leq K\Bigg(\frac{(|I||I_p|)^{\frac{\beta}d}}{|W|^{2\frac{\beta}d}} + \frac{\log(|W|)^d}{|W|} +\frac{1}{|W| |k_0|^d}  + \mathcal{E}(|I|, |I_p|)\Bigg).$$
		Since $|I| \wedge |I_p| \geq \log(|W|)^{2d/(3\theta)+\eta}$, we have
		\begin{align}\label{eq:E}
			\mathcal{E} \leq K e^{-\frac1K \log(|W|)^{1+3\theta\eta/(2d)}} = o\left(|W|^{-\frac{4\beta}{2\beta +d}}\right).
		\end{align}  We always have
		$\log(|W|)^d/|W| =o(|W|^{-2\beta/(2\beta +d)})$. Moreover,
		\begin{align*}
			\frac{1}{|W| |k_0|^d} = A_W^{-d} \frac1{|W| |W|^{-d/(2\beta +d)}} = A_W^{-d} \frac1{|W|^{2\beta/(2\beta +d)}} = o(|W|^{-2\beta/(2\beta +d)}),
		\end{align*}
		since $A_W \to \infty$ as $|W| \to \infty$. Since $|I| \leq c_5|W|^{2\beta/(2\beta +d)}$ and $|I_p| = o(|W|^{2\beta/(2\beta +d)})$, we have
		\begin{align*}
			\frac{(|I||I_p|)^{\frac{\beta}d}}{|W|^{2\frac{\beta}d}} = o \left(\left(\frac{|W|^{\frac{4\beta}{2\beta +d}}}{|W|^2}\right)^{\beta/d}\right) = o(|W|^{-2\beta/(2\beta +d)}).
		\end{align*}
		All previous $o$ are uniform over $I \in \mathcal{I}$. Accordingly $
		\max_{I \in \mathcal{I}}\mathbb{E}[\Delta_I]^2 = o(|W|^{-4\beta/(2\beta +d)}).$
		Concerning the variance, using Corollary \ref{corol:cumulants_square_rvs}, we obtain
		\begin{align*}
			\operatorname{Var}[\Delta_I ] = \frac4{N^2} \sum_{1 \leq j_1, j_2 \leq N} (A + B + C),
		\end{align*}
		with 
		\begin{align*}
			&A := \kappa(\widehat{S}_{I}(k_{j_1}), \widehat{S}_{0}(\tilde{k_{j_1}}), \widehat{S}_{I}(k_{j_2}), \widehat{S}_{0}(\tilde{k_{j_2}})),
			\\&B := \mathbb{E}[\widehat{S}_{I}(k_{j_1}) - S_{\frac12}(k_{j_1})]\kappa(\widehat{S}_{0}(\tilde{k_{j_1}}), \widehat{S}_{I}(k_{j_2}), \widehat{S}_{0}(\tilde{k_{j_2}})) 
			 + \mathbb{E}[\widehat{S}_{0}(\tilde{k_{j_1}}) - S_{\frac12}(\tilde{k_{j_1}})]\kappa(\widehat{S}_{I}(k_{j_1}), \widehat{S}_{I}(k_{j_2}), \widehat{S}_{0}(\tilde{k_{j_2}}))
			\\& \quad + \mathbb{E}[\widehat{S}_{I}(k_{j_2}) - S_{\frac12}(\tilde{k_{j_2}})]\kappa(\widehat{S}_{I}(k_{j_1}), \widehat{S}_{0}(\tilde{k_{j_1}}), \widehat{S}_{0}(\tilde{k_{j_2}}))
			- \mathbb{E}[\widehat{S}_{0}(\tilde{k_{j_2}}) - S_{\frac12}(\tilde{k_{j_2}})]\kappa(\widehat{S}_{I}(k_{j_1}), \widehat{S}_{0}(\tilde{k_{j_1}}), \widehat{S}_{I}(k_{j_2})),
			\\& C := \kappa(\widehat{S}_{I}(k_{j_1}), \widehat{S}_{I}(k_{j_2}))\kappa(\widehat{S}_{0}(\tilde{k_{j_1}}),  \widehat{S}_{0}(\tilde{k_{j_2}})) + \kappa(\widehat{S}_{I}(k_{j_1}), \widehat{S}_{0}(\tilde{k_{j_2}}))\kappa(\widehat{S}_{0}(\tilde{k_{j_1}}),  \widehat{S}_{I}(k_{j_2})).
		\end{align*}
		According to Lemma \ref{lem:dd_22}, 
		\begin{align*}
			\frac4{N^2} \sum_{1 \leq j_1, j_2 \leq N} |C| \leq  K \left(a_1 +  \frac{\log(|W|)^{2d}}{|W|^2} + \mathcal{E} \right),
		\end{align*}
		with 
		\begin{align*}
			a_1 := \frac{1}{|I||I_p|} \left( \left(\frac{(|I_p| \vee |I|)^{\frac1{d}}}{|W|^{\frac1d} |k_0|}\right)^{d-1}+\frac1N\right).
		\end{align*}
		By \eqref{eq:E}, we have $
		\mathcal{E}  = o\left(|W|^{-4\beta/(2\beta +d)}\right).$ Moreover, we have $
		\log(|W|)^{2d}/|W|^2 =  o\left(|W|^{-4\beta/(2\beta +d)}\right).$
		Using the assumptions concerning the cardinal of the tapers index set of $\mathcal{I}$, we get
		\begin{align*}
			a_1 & \leq K\left( \frac1{(|I| \wedge |I_p|)^{1+1/d}} \left(\frac1{|k| |W|^{\frac1d}}\right)^{d-1}+ \frac1{N(|I| \wedge |I_p|)^2}\right) \\& \leq  K\left(\frac{A_{W}^{d-1-\eta(1+1/d)}}{|W|^{\frac{2\beta}{2\beta +d} \frac{d+1}{d}}} \left(\frac{1}{ A_W |W|^{\frac1d - \frac1{2\beta +d}} }\right)^{d-1}+ \frac{A_{W}^{\frac{2d(d-1)}{d+1}-\eta(1+1/d)}}{N|W|^{4\beta/(2\beta +d)}}\right) \\& \leq KA_{W}^{-\eta(1+1/d)} |W|^{-\frac{4\beta}{2\beta +d}} = o\left(|W|^{-\frac{4\beta}{2\beta +d}}\right),
		\end{align*}
		since $N \geq A_{W}^{2d(d-1)/(d+1)}$ and $A_W \to \infty$ as $|W| \to \infty$.
		According to Lemma \ref{lem:dd_13}, 
		\begin{align*}
			\frac4{N^2} \sum_{1 \leq j_1, j_2 \leq N} |B|\leq  K\left(b(I, I_p) + b(I_p, I) + b+ \mathcal{E}(|I|, |I_p|)\right),
		\end{align*}
		where $b(I, I_p), b(I_p, I)$ and $b$ are defined in the statement of Lemma \ref{lem:dd_13}. For $b$, we have, using the fact that $4\beta/(2\beta +d) \leq 4/3 < 5/3$ when $\beta \leq 2$ and $d \geq 2$:
		\begin{align*}
			b &= \frac{\log(|W|)^d}{|W|^2 |k_0|^d} + \frac{\log(|W|)^d}{|W|^{\frac{5}3}} +  \frac1{|k_0|^{2d} |W|^2} \\& = \frac{\log(|W|)^d |W|^{\frac{d}{2\beta +d}}}{|W|^2 A_W^d}+ \frac{|W|^{\frac{2d}{2\beta +d}}}{A_W^{2d} |W|^2} + o(|W|^{-\frac{4\beta}{2\beta+d}}) \\& = \frac{\log(|W|)^d}{|W|^{\frac{d}{2\beta +d}}A_W^d} |W|^{-\frac{4\beta}{2\beta +d}} + A_W^{-2d}|W|^{-\frac{4\beta}{2\beta +d}} + o(|W|^{-\frac{4\beta}{2\beta+d}}) = o(|W|^{-\frac{4\beta}{2\beta+d}}).
		\end{align*}
		We bound $b(I, I_p)$. We have $
		b(I, I_p) \leq K(b_1 + b_2),$
		with 
		\begin{align*}
			&b_1 := \left(\frac{|I|}{|W|}\right)^{\frac{\beta}d} |I_p|^{\frac12} \frac{1}{|I| |I_p|} \left(\frac{|I_p|^{\frac1d}}{|W|^{\frac1d}|k_0|}\right)^{d-1},
			\\& b_2 := \left(\frac{|I|}{|W|}\right)^{\frac{\beta}d} \left(\frac1{|W| \log(|W|)^{\eta}} + \frac{1}{N |I| |I_p|^{\frac12}}  \right).
		\end{align*}
		Using the computations for $a_1$ and
		\begin{align*}
			\left(\frac{|I|}{|W|}\right)^{\frac{\beta}d} |I_p|^{\frac12} \leq K |W|^{-\frac{\beta}d \frac{d}{2\beta +d}} |W|^{\frac{\beta}{2\beta +d}} = K,
		\end{align*}
		we obtain $b_1 = o(|W|^{-4\beta/(2\beta+d)}).$ Concerning $b_2$, we have
		\begin{align*}
			b_2 \leq \frac{K}{|W|^{\frac{3\beta +d}{2\beta +d}}\log(|W|)^{\eta}} + \frac{K}{|W|^{\frac{\beta}{2\beta +d}}} \frac{A_W^{\frac32 \frac{d(d-1)}{d+1}}}{A_W^{\frac{2d(d-1)}{d+1}}|W|^{\frac32 \frac{2\beta}{2\beta +d}}} = o(|W|^{-\frac{4\beta}{2\beta+d}}),
		\end{align*}
		since $\beta \leq d$ and   $A_W \to \infty$ as $|W| \to \infty$. The bound for $b(I_p, I)$ is obtained by exchanging $I$ and $I_p$. It remains to consider $A$. According to Lemma \ref{lem:dd_4}:
		\begin{align*}
			\frac4{N^2}\sum_{1 \leq j_1, j_2 \leq N} |A| \leq K\left(a_1 +b\right) +  \mathcal{E}.
		\end{align*}
		We have already proved that  $a_1+b+\mathcal{E} = o(|W|^{-4\beta/(2\beta+d)})$. 
		Since all the previous $o$ are uniform over $I \in \mathcal{I}$, we have obtained $
		\max_{I \in \mathcal{I}}\operatorname{Var}[\Delta_I]^2 = o(|W|^{-4\beta/(2\beta +d)}).$
		This concludes the proof.
	\end{proof}
	
	\section{Technical results concerning joint cumulants of linear statistics of point processes}\label{sec:lem_lin_stats}
	
	This section gathers bounds on the joint cumulants of linear statistics of point processes. Since Section~\ref{sec:dd} requires handling joint statistics involving thinnings of a given point process, the results presented here are established in this general framework. The structure of the section is as follows. Lemma \ref{lem:cov_thin} is related to second order cumulants. Lemma~\ref{lem:bril_thin} provides a general but coarse bound on high-order cumulants, without using the orthogonality of the tapers or any decorrelation between frequencies. Lemmas~\ref{lem:bessel_k2} and~\ref{lem:bessel_km} exploit the orthogonality of the tapers, while Lemmas~\ref{lem:decor_freq_2} and~\ref{lem:decor_freq_3} address decorrelation between frequencies.

	\begin{lemma}\label{lem:cov_thin}
		Let $\Phi$ be a stationary point process with intensity $\lambda$, pair correlation function $g$ and structure factor $S$. Let $p \in [0, 1]$. Let $\Phi_p$ and $\Phi_{p}'$ be two independent $p$-thinning of $\Phi$. We denote $\Phi_{1- p} := \Phi \setminus \Phi_p$ and $\Phi_{1- p}' := \Phi \setminus \Phi_{p}'$. Let $f_1, f_2 \in L^1(\mathbb{R}^d) \cap L^2(\mathbb{R}^d).$ Then, 
		
		\begin{align*}
			&a := \operatorname{Cov}\left[\sum_{x \in \Phi_p} f_1(x), \sum_{x \in \Phi_{p}} f_2(x)\right] = \lambda p \int_{\mathbb{R}^d} \mathcal{F}[f_1](k) \overline{\mathcal{F}[f_2](k)} (1-p + p S(k)) dk,
			\\&b := \operatorname{Cov}\left[\sum_{x \in \Phi_p} f_1(x), \sum_{x \in \Phi_{1 - p}} f_2(x)\right] = (2\pi)^{\frac{d}2}\lambda^2 p (1-p) \int_{\mathbb{R}^d} \mathcal{F}[f_1](k) \overline{\mathcal{F}[f_2](k)} \mathcal{F}[g-1](k) dk,
			\\&c:=\operatorname{Cov}\left[\sum_{x \in \Phi_p} f_1(x), \sum_{x \in \Phi_{p}'} f_2(x)\right] = \lambda^2 p^2 \int_{\mathbb{R}^d} \mathcal{F}[f_1](k) \overline{\mathcal{F}[f_2](k)} S(k) dk,
			\\&d:=\operatorname{Cov}\left[\sum_{x \in \Phi_p} f_1(x), \sum_{x \in \Phi_{1-p}'} f_2(x)\right] = \lambda^2 p (1-p) \int_{\mathbb{R}^d} \mathcal{F}[f_1](k) \overline{\mathcal{F}[f_2](k)} S(k) dk.
		\end{align*}
	\end{lemma}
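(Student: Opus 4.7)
The plan rests on two core tools: Campbell's covariance identity \eqref{e.prop_campbell} and the conditional independence of the $p$-thinnings given the mother process $\Phi$. I would prove the first identity $a$ first and then reuse it repeatedly as a building block for $b$, $c$ and $d$.

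For $a$, the key observation is that $\Phi_p$ is itself a stationary point process with intensity $p\lambda$ whose second order factorial moment measure is $p^2\rho^{(2)}$ (each ordered pair of distinct points of $\Phi$ survives independently with probability $p^2$). Its pair correlation function is therefore unchanged and equal to $g$, so the definition \eqref{eq:def_S} applied with the new intensity $p\lambda$ yields a structure factor $S_{\Phi_p}(k)=1-p+pS(k)$. Applying \eqref{e.prop_campbell} directly to $\Phi_p$ then produces $a$ immediately.

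For $c$ and $d$, I would exploit the fact that $\Phi_p$ and $\Phi_p'$ are conditionally independent given $\Phi$. The tower property and conditional factorization give
$$\mathbb{E}\left[\sum_{x\in\Phi_p} f_1(x)\sum_{y\in\Phi_p'} f_2(y)\,\Big|\,\Phi\right]=p^2\sum_{x\in\Phi} f_1(x)\sum_{y\in\Phi} f_2(y),$$
so that, after taking the unconditional expectation and subtracting the product of marginal means, the covariance collapses to $p^2\operatorname{Cov}\bigl[\sum_{x\in\Phi} f_1,\sum_{y\in\Phi} f_2\bigr]$, and $c$ follows from Campbell's identity applied to $\Phi$. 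For $d$, the decomposition $\sum_{x\in\Phi_{1-p}'} f_2=\sum_{x\in\Phi} f_2-\sum_{x\in\Phi_p'} f_2$ combined with bilinearity reduces the problem to $c$ and to $\operatorname{Cov}\bigl[\sum_{x\in\Phi_p} f_1,\sum_{y\in\Phi} f_2\bigr]$, obtained by the same conditioning argument.

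For $b$, I would use the analogous decomposition but with the \emph{correlated} complement: $\sum_{x\in\Phi_{1-p}} f_2=\sum_{x\in\Phi} f_2-\sum_{x\in\Phi_p} f_2$. Bilinearity reduces $b$ to $\operatorname{Cov}\bigl[\sum_{x\in\Phi_p} f_1,\sum_{y\in\Phi} f_2\bigr]-a$; the first term equals $p\lambda\int \mathcal{F}[f_1]\overline{\mathcal{F}[f_2]}\,S(k)\,dk$ by conditioning on $\Phi$ and Campbell. Subtracting $a$ leaves an integrand proportional to $S(k)-1$, which by \eqref{eq:def_S} and the paper's Fourier convention equals $\lambda(2\pi)^{d/2}\mathcal{F}[g-1](k)$, producing the advertised factor $(2\pi)^{d/2}\lambda^2 p(1-p)$. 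I do not anticipate any substantive obstacle: the entire lemma is a careful unwinding of conditional expectations against Campbell's identity, and the only real care required is bookkeeping of the Fourier convention and the parity of $g-1$ needed to identify $S-1$ with $\mathcal{F}[g-1]$.
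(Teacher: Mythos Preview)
Your argument is correct and rests on the same core idea as the paper---conditioning on $\Phi$---with only cosmetic differences in packaging. For $a$, you identify the structure factor of the thinned process as $1-p+pS$ and invoke \eqref{e.prop_campbell}; the paper simply declares this case standard and omits it. For $c$ and $d$, your conditional-independence argument is exactly what the paper does (it phrases it as the law of total covariance with Bernoulli selectors, noting the conditional-covariance term vanishes). For $b$, you obtain the result by the subtraction $\Phi_{1-p}=\Phi\setminus\Phi_p$ and reduce to $a$ and a cross term, whereas the paper computes directly via the law of total covariance with the Bernoulli variables $(B_x)$; the two computations are algebraically equivalent, and your remark about needing the parity of $g-1$ to reconcile $S-1$ with $(2\pi)^{d/2}\lambda\,\mathcal{F}[g-1]$ is precisely the bookkeeping the paper also relies on.
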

	\begin{proof}
		The proof of this lemma proceeds by conditioning on $\Phi$ and applying the law of total covariance. The argument for $a$, which is a standard result, follows the same steps as those used for $b$, $c$, and $d$, and is therefore omitted. We focus on the latter three, which are less classical.
		Note that
		\begin{align*}
			b := \operatorname{Cov}\left[\sum_{x \in \Phi} B_x f_1(x), \sum_{x \in \Phi} (1- B_x)f_2(x)\right],
		\end{align*}
		for i.i.d. Bernouilli random variables $(B_x)_{x \in \Phi}$ of parameter $p$. Using the law of the total covariance, we obtain
		\begin{align*}
			b &= \mathbb{E}\left[\operatorname{Cov}\left[\sum_{x \in \Phi} B_x f_1(x), \sum_{x \in \Phi} (1- B_x)f_2(x)| \Phi\right]\right] + \operatorname{Cov}\left[ \mathbb{E}\left[\sum_{x \in \Phi} B_xf_1(x)|\Phi\right],  \mathbb{E}\left[\sum_{x \in \Phi} (1 -B_x) f_2(x)|\Phi\right] \right] \\& = \mathbb{E}\left[\sum_{x, y \in \Phi} f_1(x) \overline{f_2(x)} \operatorname{Cov}(B_x, 1 - B_y)\right] + p(1 - p) \operatorname{Cov}\left[ \sum_{x \in \Phi} f_1(x), \sum_{x \in \Phi} f_2(x) \right].
		\end{align*}
		According to the Plancherel theorem and using \eqref{e.prop_campbell}, we have
		\begin{align*}
			b &=(2\pi)^{\frac{d}2} \lambda p(1-p) \left(-  \int_{\mathbb{R}^d} \mathcal{F}[f_1](k) \overline{\mathcal{F}[f_2](k)} dk +  \int_{\mathbb{R}^d} \mathcal{F}[f_1]\overline{\mathcal{F}[f_2]} (1+ \lambda \mathcal{F}[g-1]) \right) \\& =  (2\pi)^{\frac{d}2}\lambda^2 p (1-p) \int_{\mathbb{R}^d} \mathcal{F}[f_1](k) \overline{\mathcal{F}[f_2](k)} \mathcal{F}[g-1](k) dk,
		\end{align*}
		which concludes the proof for this term. Concerning $c$, we have
		\begin{align*}
			c := 	\operatorname{Cov}\left[\sum_{x \in \Phi} B_x f_1(x), \sum_{x \in \Phi} B_x'f_2(x)\right],
		\end{align*}
		for i.i.d. Bernouilli random variables $(B_x, B_x')_{x \in \Phi}$ of parameter $p$. In the law of the total covariance, the first term vanishes due to the independence between $(B_x)_{x \in \Phi}$ and $(B_x')_{x \in \Phi}$. Hence
		\begin{align*}
			c = p^2 \operatorname{Cov}\left[\sum_{x \in \Phi} f_1(x), \sum_{x \in \Phi} f_2(x)\right],
		\end{align*}
		which gives the result for $c$ using \eqref{e.prop_campbell}. The proof of $d$ consists in considering i.i.d. Bernouilli $(B_x')_{x \in \Phi}$ of parameter $1-p$ instead of $p$.
	\end{proof}
	
	\begin{lemma}\label{lem:bril_thin}
		Let $\Phi$ be a stationary point process with intensity $\lambda$ and $p \in [0, 1]$.  Let $m \geq 2$. Denote by $(\Phi_p^j)_{1 \leq j \leq m}$ $p$-independent-thinnings of $\Phi$, that are independent between them. Moreover, let $\Phi_{1- p}^j = \Phi \setminus \Phi_p^j$. Consider $(f_i)_{i = 1, \dots, m} \in L^1(\mathbb{R}^d) \cap L^2(\mathbb{R}^d)$. Let $\tau: [m] \to \{p, 1-p\}$ and $\varphi : [m] \to [m]$. Then, 
		\begin{align*}
			\left|\kappa\left(\sum_{x \in \Phi_{\tau(s)}^{\varphi(s)}} f_s(x); s \in [m]\right)\right| \leq  K\prod_{s = 1}^m \|f_s\|_m,
		\end{align*}
		where $K < \infty$ depends only on $p$, $\tau$, $m$, $\lambda$ and $|\gamma_{\text{red}}^{(n)}|(\mathbb{R}^{d(n-1)})$ for $2 \leq n \leq m$.  In particular if $p = 1$, $\tau \equiv 1$ and $\varphi \equiv 1$, we have $K = \sum_{\pi \in \Pi[m]} |\gamma_{\text{red}}^{(|\pi|)}|(\mathbb{R}^{d(|\pi|-1)})$, with the convention $|\gamma_{\text{red}}^{(1)}|(\mathbb{R}^{0}) = \lambda$.
	\end{lemma}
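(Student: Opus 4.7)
The plan is to reduce the thinned setting to a formula for joint cumulants of linear statistics of the original stationary process, and then to bound each term by generalized Hölder after exploiting stationarity.

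First I would view each thinning $\Phi_p^j$ as induced by i.i.d. Bernoulli$(p)$ marks. More precisely, attach to each $x \in \Phi$ an independent mark $\mathbf{B}_x = (B_x^{(1)},\dots,B_x^{(m)}) \in \{0,1\}^m$ with i.i.d. Bernoulli$(p)$ components (the $j$-th coordinate selects points of $\Phi_p^j$). Then each linear statistic can be written
\[
\sum_{x \in \Phi_{\tau(s)}^{\varphi(s)}} f_s(x) = \sum_{x \in \Phi} g_{\tau(s)}\!\bigl(B_x^{(\varphi(s))}\bigr)\, f_s(x),
\]
with $g_p(B)=B$ and $g_{1-p}(B)=1-B$. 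The crucial point is that $|g_{\tau(s)}(B)| \leq 1$ pointwise.

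Next I would apply the classical expansion of joint cumulants of linear statistics of the marked point process $\tilde\Phi := \{(x,\mathbf{B}_x) : x \in \Phi\}$ in terms of its factorial cumulant measures:
\[
\kappa\!\left(\sum_{x \in \Phi_{\tau(s)}^{\varphi(s)}} f_s(x);~s \in [m]\right) = \sum_{\pi \in \Pi[m]} \int \prod_{b \in \pi} \Bigl[\prod_{s \in b} g_{\tau(s)}(B_b^{(\varphi(s))}) f_s(y_b)\Bigr] \, \gamma^{(|\pi|)}_{\tilde\Phi}(d(y_b,\mathbf{B}_b);b \in \pi).
\]
Because the marks are i.i.d. and independent of $\Phi$, the factorial cumulant measure of $\tilde\Phi$ factorizes as $\gamma^{(|\pi|)}_{\tilde\Phi} = \bigl(\bigotimes_{b \in \pi} P_{\mathbf{B}}\bigr) \otimes \gamma^{(|\pi|)}$ where $\gamma^{(|\pi|)}$ is the factorial cumulant measure of $\Phi$ (I would verify this via Möbius inversion from the analogous product formula for $\rho^{(|\pi|)}_{\tilde\Phi}$, which is standard). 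Integrating out the Bernoulli marks yields real constants
\[
c_{b,\pi,\tau,\varphi} := \mathbb{E}\!\left[\prod_{s \in b} g_{\tau(s)}(B^{(\varphi(s))})\right], \qquad |c_{b,\pi,\tau,\varphi}| \leq 1,
\]
and reduces the joint cumulant to a signed sum of products of $f_s$'s integrated against $\gamma^{(|\pi|)}$.

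Finally I would bound each partition-term using stationarity of $\Phi$ to change variables $z_b = y_b - y_{|\pi|}$, turning $\gamma^{(|\pi|)}(dy)$ into $\gamma_{\text{red}}^{(|\pi|)}(dz_1,\dots,dz_{|\pi|-1})\,dy_{|\pi|}$ (with the convention that $|\pi|=1$ gives $\lambda\, dy$). For fixed $z$, the remaining $y_{|\pi|}$-integral
\[
\int \prod_{s=1}^m |f_s(y_{|\pi|}+z_{b(s)})|\,dy_{|\pi|}
\]
is the integral of a product of $m$ translates of $|f_s|$, so by generalized Hölder with exponents $m$ and translation invariance of Lebesgue measure it is bounded by $\prod_{s=1}^m \|f_s\|_m$. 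Summing over partitions gives the constant $K$ in the stated form and, in the special case $p=1$, the explicit formula $K = \sum_{\pi \in \Pi[m]} |\gamma_{\text{red}}^{(|\pi|)}|(\mathbb{R}^{d(|\pi|-1)})$ (with $|\gamma_{\text{red}}^{(1)}|(\mathbb{R}^0) = \lambda$).

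The main obstacle is the clean justification of the marked-process cumulant expansion and the factorization of $\gamma^{(|\pi|)}_{\tilde\Phi}$; once this is in place the Hölder step is routine. A minor subtlety is ensuring the generalized Hölder is applied with the right exponent so that the bound lands on $\|f_s\|_m$ uniformly in the block structure of $\pi$, which is why the symmetric choice of exponent $m$ for all $m$ functions is used rather than block-dependent exponents.
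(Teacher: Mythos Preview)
Your proposal is correct and reaches the same H\"older-type bound as the paper, but the route to the partition expansion is genuinely different. The paper does not use the marked point process $\tilde\Phi$ at all; instead it applies the law of total cumulance, conditioning on $\Phi$: this produces an outer partition $\pi\in\Pi[m]$, and within each block $b$ the conditional cumulant of the Bernoulli marks forces all points to coincide and all thinning labels $\varphi(s)$ to agree, leaving a factor $\kappa_{|b|}(B)$ times a single-point product $\prod_{s\in b} f_s(x)$. One is then left with a joint cumulant over $\Phi$ of the block-products $g_b=\prod_{s\in b}f_s$, which is expanded via a second partition $\pi'$ of the blocks of $\pi$ into factorial cumulant measures of $\Phi$; the stationarity + generalized H\"older step is then exactly as you describe.

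Your approach collapses this double partition sum into a single one by working directly with $\gamma^{(n)}_{\tilde\Phi}$ and using the factorization $\gamma^{(n)}_{\tilde\Phi}=\bigl(\bigotimes P_{\mathbf B}\bigr)\otimes\gamma^{(n)}$ (which is correct and follows from the analogous identity for $\rho^{(n)}_{\tilde\Phi}$ by M\"obius inversion). The resulting coefficients are Bernoulli \emph{moments} $c_b=\mathbb E\bigl[\prod_{s\in b}g_{\tau(s)}(B^{(\varphi(s))})\bigr]$ rather than the Bernoulli \emph{cumulants} appearing in the paper; the two expansions are related by the moment--cumulant relation block by block, which is why they agree. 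Your organization is arguably cleaner for the purpose of this bound (fewer indices, and $|c_b|\le 1$ is immediate), while the paper's conditioning makes the vanishing structure at $p=1$ more transparent: there $\kappa_{|b|}(B)=0$ for $|b|\ge 2$, so only the singleton partition survives, giving the explicit constant $K=\sum_{\pi\in\Pi[m]}|\gamma_{\mathrm{red}}^{(|\pi|)}|(\mathbb R^{d(|\pi|-1)})$. Your argument recovers the same constant since $g_p\equiv 1$ gives $c_b\equiv 1$ for every block.
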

	\begin{proof}
		Let $(B_x^j)_{x \in \Phi, j \in [m]}$ be i.i.d. Bernouilli random variables of parameter $p$. We use the notation, $\tau_s(y) = y \mathds{1}_{\tau_s = p} + (1-y) \mathds{1}_{\tau_s = 1- p}$. According to the law of the total cumulance, we have
		\begin{align*}
			\kappa &:= \kappa\left(\sum_{x \in \Phi_{\tau(s)}^{\varphi(s)}} f_s(x); s \in [m]\right)= \sum_{\pi \in \Pi[m]} \kappa\left(\kappa\left(\sum_{x \in \Phi} \tau_s(B_x^{\varphi(s)})f_s(x)|\Phi; s \in b\right); b \in \pi\right) \\& = \sum_{\pi \in \Pi[m]} \kappa\left(  \sum_{\substack{x_s \in \Phi;\\ s \in b}} \kappa(\tau_s(B_{x_s}^{\varphi(s)}); s \in b) f_s(x_s); b \in \pi\right).
		\end{align*}
		Using the independence of $(B_x^j)_{x \in \Phi}$, for $j \in [m]$, we obtain
		$$\sum_{\substack{x_s \in \Phi;\\ s \in b}} \kappa(\tau_s(B_{x_s}^{\varphi(s)}); s \in b) f_s(x_s) = \mathds{1}_{\#\{\varphi(s)| s \in b\} = 1}\sum_{\substack{x_s \in \Phi;\\ s \in b}} \kappa(\tau_s(B_{x_s}^{\varphi(s_0)}); s \in b) f_s(x_s),$$
		where $s_0 = \min(b)$. Moreover, we have
		\begin{align*}
			\kappa(\tau_s(B_{x_s}^{\varphi(s_0)}); s \in b) &=  (-1)^{\#\{\tau_s = 1| s \in b\}}\kappa(B_{x_s}^{\varphi(s_0)}; s \in b) = (-1)^{\#\{\tau_s = 1| s \in b\}} \mathds{1}_{\#\{x_s|s \in b\}=  1}\kappa_{|b|}(B),
		\end{align*}
		where $B$ is a Bernouilli random variable of parameter $p$. This yields
		\begin{align}\label{eq:kappa_thin_interm}
			\kappa & = \sum_{\pi \in \Pi[m]} A \times B,
		\end{align}	
		with \begin{align*}
			&A:= (-1)^{\#\{\tau_s = 1| s \in [m]\}} \prod_{b \in \pi}  \mathds{1}_{\#\{\varphi(s)| s \in b\} = 1} \kappa_{|b|}(B),
			\quad B := \kappa\left(\sum_{x \in \Phi} \prod_{s \in b} f_s(x); b \in \pi\right).
		\end{align*}
		The quantity $A$ does not depend on the functions $(f_i)_{1 \leq i \leq m}$. Accordingly, it remains to bound $B$. Denote $g_b(x) := \prod_{s \in b} f_s(x)$. Then, 
		\begin{align*}
			B = \sum_{\pi' \in \Pi\{b;~b \in \pi\}} \int_{\mathbb{R}^{d|\pi'|}} \prod_{b' \in \pi'} \prod_{b \in b'} g_b(x_{b'}) \gamma^{(|\pi'|)}(x_{b'}; b' \in \pi') =: \sum_{\pi' \in \Pi\{b;~b \in \pi\}} B_{\pi'},
		\end{align*}
		where $ \Pi\{b;~b \in \pi\}$ denotes the set of partitions of $\{b;~b \in \pi\}$. If $|\pi'| = 1$, we have using the Hölder's inequality,
		\begin{align*}
			|B_{\pi'}| \leq \lambda \int_{\mathbb{R}^{d}} \prod_{i \in [m]} |f_{i}(x)| dx \leq \lambda \prod_{s = 1}^m \|f_s\|_m. 
		\end{align*}
		We use the same argument when $|\pi| > 1$. Let $b_0' \in \pi'$ be a fixed block. By stationarity, and using again the Hölder's inequality, we obtain:
		\begin{align}\label{eq:bound_kappa_thin}
			|B_{\pi'}| & \leq \int_{\mathbb{R}^{d(|\pi'| -1)}}  \int_{\mathbb{R}^{d}}  \prod_{b \in b_0'} |g_{b}(x_{b_0'})| \prod_{b' \in \pi' \setminus\{b_0'\}} \prod_{b \in b'} |g_{b}(x_{b'}+x_{b_0'})|dx_{b_0'} |\gamma_{red}^{(|\pi'|)} |(dx_b'; b \in \pi \setminus\{b_0'\}) \nonumber \\& \leq \int_{\mathbb{R}^{d(|\pi'| -1)}}   \prod_{b \in b_0'} \prod_{s \in b} \|f_s\|_{m} \prod_{b' \in \pi' \setminus\{b_0'\}} \prod_{b \in b'} \prod_{s \in b} \|f_{s}(\cdot+x_b')\|_{m} |\gamma_{red}^{(|\pi'|)} |(dx_b'; b' \in \pi' \setminus\{b_0'\})  \nonumber\\& = |\gamma_{red}^{(|\pi|)} |(\mathbb{R}^{d(|\pi'|-1)}) \prod_{s = 1}^m \|f_s\|_m.
		\end{align}
		This concludes the proof. To obtain the value of the constant when $p =1$, note that in that case $A$ is non zero only if all the blocks of $\pi$ are of size $1$.
	\end{proof}
	
	\begin{lemma}\label{lem:bessel_k2}
		Let $\Phi$ be a stationary point process with intensity $\lambda$ and pair correlation function $g$ and $p \in [0, 1]$. Denote by $(\Phi_p^j)_{j \in [2]}$ two $p$-independent-thinnings of $\Phi$, that are independent between them. Denote $\Phi_{1- p}^j = \Phi \setminus \Phi_p^j$. Let $I_1$ and $I_2$ be two discrete subsets of $\mathbb{N}^d$. Let $(f_i)_{i \in\mathbb{N}^d}$ and $(h_i)_{i \mathbb{N}^d}$ be two orthogonal families of $L^2(\mathbb{R}^d)$. Let $\tau [2] \mapsto \{p, 1-p\}^2$ and $\varphi: [2] \mapsto [2]$. Then, we have for all set $W \subset \mathbb{R}^d$,
		\begin{multline*}
			\frac1{|I_1| |I_2|} \sum_{\substack{i_1 \in I_1 \\ i_2 \in I_2}} \left|\operatorname{Cov}\left(\sum_{x \in \Phi_{\tau(1)}^{\varphi(1)} \cap W} f_{i_1}(x), \sum_{x \in \Phi_{\tau(2)}^{\varphi(2)} \cap W} h_{i_2}(x)\right)\right|^2  \leq \\ \frac{K}{|I_1| \vee |I_2|} \left(1+ \mathds{1}_{|I_1| \geq |I_2|}\left(\sum_{\substack{i_1 \in I_1}} \|f_{i_1} \mathds{1}_{\mathbb{R}^d \setminus W}\|_2^2\right)^{\frac12} + \mathds{1}_{|I_1| < |I_2|}\left(\sum_{\substack{i_2 \in I_2}} \|h_{i_2} \mathds{1}_{\mathbb{R}^d \setminus W}\|_2^2\right)^{\frac12}\right)^2.
		\end{multline*}
		where $K < \infty$ depends only on $p$, $\tau$, $\varphi$, $\lambda$, $\|S\|_{\infty}$ and $\|g-1\|_1$. In particular if $p = 1$, $\tau \equiv 1$ and $\varphi \equiv 1$, we have $K = (\lambda \|S\|_{\infty})^2$. 
	\end{lemma}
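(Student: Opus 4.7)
The plan is first to reduce all four combinations of $(\tau,\varphi)$ to a single expression via Lemma~\ref{lem:cov_thin}. In each case the covariance can be written
\begin{equation*}
\operatorname{Cov}\Bigl(\sum_{x \in \Phi^{\varphi(1)}_{\tau(1)} \cap W} f_{i_1}(x),\, \sum_{x \in \Phi^{\varphi(2)}_{\tau(2)} \cap W} h_{i_2}(x)\Bigr) = \int_{\mathbb{R}^d} \mathcal{F}[f_{i_1}\mathds{1}_W](k)\,\overline{\mathcal{F}[h_{i_2}\mathds{1}_W](k)}\,T(k)\,dk,
\end{equation*}
where $T$ is the kernel produced by one of the four identities $a,b,c,d$ of Lemma~\ref{lem:cov_thin}. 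A case-by-case inspection gives $\|T\|_\infty \leq K_0$ for a finite constant depending only on $(p,\tau,\varphi,\lambda,\|S\|_\infty,\|g-1\|_1)$: in cases $a$, $c$, $d$ the kernel is a linear combination of a constant and $S$, hence controlled by $\|S\|_\infty$; in case~$b$ it is proportional to $\mathcal{F}[g-1]$, controlled by $(2\pi)^{-d/2}\|g-1\|_1$ via the standard Fourier bound. In the specialization $p=1$, $\tau\equiv 1$, $\varphi\equiv 1$, only case~$a$ with $p=1$ occurs, giving $T(k) = \lambda S(k)$ and hence $K_0 = \lambda\|S\|_\infty$.

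The key step is to exploit the orthogonality of $(f_{i_1})_{i_1 \in \mathbb{N}^d}$ through Bessel's inequality. Assume without loss of generality that $|I_1| \geq |I_2|$, the other case being handled symmetrically by swapping the roles of $(f_{i_1}, I_1)$ and $(h_{i_2}, I_2)$. Writing $f_{i_1}\mathds{1}_W = f_{i_1} - f_{i_1}\mathds{1}_{\mathbb{R}^d \setminus W}$ and applying Plancherel's theorem,
\begin{equation*}
\operatorname{Cov} = \langle f_{i_1},\, u_{i_2}\rangle - \langle f_{i_1}\mathds{1}_{\mathbb{R}^d \setminus W},\, u_{i_2}\rangle, \qquad u_{i_2} := \mathcal{F}^{-1}\bigl[\overline{T}\,\mathcal{F}[h_{i_2}\mathds{1}_W]\bigr].
\end{equation*}
Since $(f_{i_1})_{i_1\in\mathbb{N}^d}$ is orthonormal in $L^2(\mathbb{R}^d)$, Bessel's inequality yields
\begin{equation*}
\sum_{i_1 \in I_1} |\langle f_{i_1}, u_{i_2}\rangle|^2 \leq \|u_{i_2}\|_2^2 \leq \|T\|_\infty^2\,\|h_{i_2}\mathds{1}_W\|_2^2 \leq K_0^2,
\end{equation*}
while Cauchy-Schwarz supplies $|\langle f_{i_1}\mathds{1}_{\mathbb{R}^d \setminus W}, u_{i_2}\rangle| \leq K_0\,\|f_{i_1}\mathds{1}_{\mathbb{R}^d \setminus W}\|_2$ and therefore $\sum_{i_1 \in I_1} |\langle f_{i_1}\mathds{1}_{\mathbb{R}^d \setminus W}, u_{i_2}\rangle|^2 \leq K_0^2 \sum_{i_1 \in I_1}\|f_{i_1}\mathds{1}_{\mathbb{R}^d \setminus W}\|_2^2$.

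To combine the two contributions without inflating the constant, the plan is to apply Minkowski's inequality on the $\ell^2(I_1)$-norm of the covariance, obtaining
\begin{equation*}
\Bigl(\sum_{i_1 \in I_1} |\operatorname{Cov}|^2\Bigr)^{1/2} \leq K_0 + K_0 \Bigl(\sum_{i_1 \in I_1}\|f_{i_1}\mathds{1}_{\mathbb{R}^d \setminus W}\|_2^2\Bigr)^{1/2}.
\end{equation*}
Squaring, summing over $i_2 \in I_2$ (the right-hand side does not depend on $i_2$), dividing by $|I_1||I_2|$, and using $|I_1| = |I_1| \vee |I_2|$ then yields the announced inequality with $K = K_0^2$. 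In the specialization $p = 1$, $\tau \equiv 1$, $\varphi \equiv 1$, this produces precisely $K = (\lambda\|S\|_\infty)^2$.

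The main obstacle is organizational rather than analytic: one must (i) check uniformly across the four identities of Lemma~\ref{lem:cov_thin} that the Fourier kernel $T$ admits a common $L^\infty$-bound (the only nontrivial case being~$b$, where Assumption~\ref{ass_rho_leb} is used through $g-1 \in L^1(\mathbb{R}^d)$), and (ii) transfer the truncation $\mathds{1}_W$ away from the orthogonal family before invoking Bessel, since the truncated system $(f_{i_1}\mathds{1}_W)_{i_1 \in \mathbb{N}^d}$ is no longer orthogonal in $L^2(\mathbb{R}^d)$. This transfer is exactly what generates the additive correction $(\sum_{i_1 \in I_1}\|f_{i_1}\mathds{1}_{\mathbb{R}^d \setminus W}\|_2^2)^{1/2}$ appearing in the bound.
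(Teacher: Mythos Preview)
Your proposal is correct and follows essentially the same approach as the paper: reduce to a bounded Fourier kernel via Lemma~\ref{lem:cov_thin}, split off the truncation $\mathds{1}_W$ from the orthogonal family, apply Bessel's inequality to the main term and Cauchy--Schwarz to the remainder, then combine via the triangle (Minkowski) inequality in $\ell^2(I_1)$. The only cosmetic difference is that you phrase the inner products in the spatial domain through $u_{i_2}$ while the paper stays in the Fourier domain, and you spell out the four cases of Lemma~\ref{lem:cov_thin} more explicitly than the paper's terse ``where $s$ is bounded''.
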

	\begin{proof}
		According to Lemma \ref{lem:cov_thin}, we have
		\begin{align*}
			\operatorname{Cov}\left(\sum_{x \in \Phi_{\tau(1)}^{\varphi(1)} \cap W} f_{i_1}(x), \sum_{x \in \Phi_{\tau(2)}^{\varphi(2)} \cap W} h_{i_2}(x)\right) = \int_{\mathbb{R}^d} \mathcal{F}[f_{i_1} \mathds{1}_W](k) \overline{\mathcal{F}[h_{i_2} \mathds{1}_W]}(k) s(k) dk,
		\end{align*}
		where $s$ is bounded. Assume that $|I_1| \geq |I_2|$. We use the decomposition
		\begin{align*}
			I_{i_1, i_2} := \int_{\mathbb{R}^d} \mathcal{F}[f_{i_1} \mathds{1}_W](k) \overline{\mathcal{F}[h_{i_2} \mathds{1}_W]}(k) s(k) dk = \int_{\mathbb{R}^d} \mathcal{F}[f_{i_1}](k) \overline{\mathcal{F}[\mathds{1}_W h_{i_2}]}(k) s(k) dk + \delta,
		\end{align*}
		where 
		\begin{align*}
			\delta = \int_{\mathbb{R}^d} \mathcal{F}[f_{i_1} \mathds{1}_{\mathbb{R}^d \setminus W}](k) \overline{\mathcal{F}[h_{i_2} \mathds{1}_W]}(k) s(k) dk.
		\end{align*}
		Using the Cauchy-Schwarz inequality and the Plancherel theorem, we get
		\begin{align*}
			|\delta| \leq \|s\|_{\infty} \|f_{i_1} \mathds{1}_{\mathbb{R}^d \setminus W}\|_2 \|h_{i_2} \mathds{1}_W\|_2 \leq  \|s\|_{\infty} \|f_{i_1} \mathds{1}_{\mathbb{R}^d \setminus W}\|_2.
		\end{align*}
		Accordingly, using the fact that $(\mathcal{F}[f_i])_{i \in \mathbb{N}^d}$ is orthogonal, we obtain 
		\begin{align*}
			\left(\sum_{\substack{i_1 \in I_1 \\ i_2 \in I_2}} |I_{i_1, i_2}|^{2}\right)^{\frac12} &\leq \left(\sum_{\substack{i_1 \in I_1 \\ i_2 \in I_2}} |\langle \mathcal{F}[f_{i_1}], \mathcal{F}[\mathds{1}_W h_{i_2}] s\rangle|^2\right)^{\frac12} + \left(\sum_{\substack{i_1 \in I_1 \\ i_2 \in I_2}} \delta^2\right)^{\frac12} \\& \leq \left(\sum_{\substack{i_2 \in I_2}} \|\mathcal{F}[\mathds{1}_W h_{i_2}] s\|_2^2\right)^{\frac12} + |I_2|^{\frac12} \|s\|_{\infty} \left(\sum_{\substack{i_1 \in I_1}} \|f_{i_1} \mathds{1}_{\mathbb{R}^d \setminus W}\|_2^2\right)^{\frac12} \\& \leq |I_2|^{\frac12} \|s\|_{\infty}\left(1+ \left(\sum_{\substack{i_1 \in I_1}} \|f_{i_1} \mathds{1}_{\mathbb{R}^d \setminus W}\|_2^2\right)^{\frac12}\right).
		\end{align*}
		This yields
		\begin{align*}
			\frac1{|I_1| |I_2|}\sum_{\substack{i_1 \in I_1 \\ i_2 \in I_2}} |I_{i_1, i_2}|^{2} \leq \frac{\|s\|_{\infty}^2}{|I_1| \vee |I_2|} \left(1+ \left(\sum_{\substack{i_1 \in I_1}} \|f_{i_1} \mathds{1}_{\mathbb{R}^d \setminus W}\|_2^2\right)^{\frac12}\right)^2.
		\end{align*}
		The proof for the case $|I_1| < |I_2|$ follows by symmetry.
	\end{proof}
	
	\begin{lemma}\label{lem:decor_freq_2}
		Let $\Phi$ be a stationary point process with intensity $\lambda$ and pair correlation function $g$ and $p \in [0, 1]$. Denote by $(\Phi_p^j)_{j \in [2]}$ two $p$-independent-thinnings of $\Phi$, that are independent between them. Denote $\Phi_{1- p}^j = \Phi \setminus \Phi_p^j$. Let $f_1, f_2$ be in $L^1(\mathbb{R}^d) \cap L^2(\mathbb{R}^d)$ with $\|f_1\|_2 = \|f_2\|_2 = 1$ and $k_1, k_2 \in \mathbb{R}^d$. Let $\tau [2] \mapsto \{p, 1-p\}^2$ and $\varphi: [2] \mapsto [2]$. Then, we have for all set $W \subset \mathbb{R}^d$,
		\begin{multline*}
			\left|\operatorname{Cov}\left(\sum_{x \in \Phi_{\tau(1)}^{\varphi(1)} \cap W} f_1(x) e^{- \bm{i} k_1 \cdot x}, \sum_{x \in \Phi_{\tau(2)}^{\varphi(2)} \cap W} f_2(x) e^{- \bm{i} k_2 \cdot x}\right)\right| \\ \leq  K   \sum_{s = 1}^2 \|f_s\mathds{1}_{\mathbb{R}^d \setminus W}\|_2+ \|\mathcal{F}[f_s] \mathds{1}_{|\cdot| \geq |k_1 - k_2|/2}\|_{2},
		\end{multline*}
		where $K < \infty$ depends only on $p$, $\tau$, $\varphi$, $\lambda$, $\|S\|_{\infty}$ and $\|g-1\|_1$. In particular if $p = 1$, $\tau \equiv 1$ and $\varphi \equiv 1$, we have $K = \lambda \|S\|_{\infty}$. 
	\end{lemma}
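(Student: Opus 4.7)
The plan is to start from Lemma~\ref{lem:cov_thin}, which in each of the four thinning configurations $(\tau,\varphi)$ represents the covariance as a single spectral integral
\[
\int_{\mathbb{R}^d} \mathcal{F}[f_1 \mathds{1}_W e^{-\bm{i} k_1 \cdot}](\xi) \,\overline{\mathcal{F}[f_2 \mathds{1}_W e^{-\bm{i} k_2 \cdot}](\xi)}\, s(\xi)\, d\xi,
\]
where $s$ is (up to a multiplicative constant depending on $\lambda$ and $p$) one of $S$, $1-p+pS$, or $(2\pi)^{d/2}\mathcal{F}[g-1]$. In every case, $\|s\|_{\infty}$ is bounded by a constant depending only on $\lambda$, $p$, $\tau$, $\varphi$, $\|S\|_{\infty}$ and $\|g-1\|_1$; when $p=1$, $\tau\equiv 1$, $\varphi\equiv 1$ one simply has $s = \lambda S$. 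Using the translation identity $\mathcal{F}[h\, e^{-\bm{i} k\cdot}] = \mathcal{F}[h](\cdot - k)$ and the change of variable $\eta = \xi - k_1$, I rewrite the above as
\[
\int_{\mathbb{R}^d} \mathcal{F}[f_1 \mathds{1}_W](\eta)\,\overline{\mathcal{F}[f_2 \mathds{1}_W](\eta - (k_2-k_1))}\, s(\eta + k_1)\, d\eta.
\]

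The heart of the argument is a frequency decoupling. Set $\kappa := k_1 - k_2$ and split $\mathbb{R}^d = A \sqcup A^c$ with $A := \{\eta \in \mathbb{R}^d\mid |\eta| \geq |\kappa|/2\}$. On $A$ the factor $\mathcal{F}[f_1 \mathds{1}_W](\eta)$ lives at frequencies of modulus at least $|\kappa|/2$, and on $A^c$ the reverse triangle inequality forces $|\eta + \kappa| \geq |\kappa|/2$, so that $\mathcal{F}[f_2 \mathds{1}_W](\eta+\kappa)$ is then evaluated at large frequencies. Applying Cauchy--Schwarz on each piece, together with Plancherel and $\|f_s \mathds{1}_W\|_2 \leq \|f_s\|_2 = 1$, yields
\[
\Bigl|\operatorname{Cov}(\cdots)\Bigr| \leq \|s\|_{\infty} \sum_{s=1}^2 \bigl\|\mathcal{F}[f_s \mathds{1}_W]\, \mathds{1}_{|\cdot|\geq |\kappa|/2}\bigr\|_2.
\]

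To conclude, I pass from $f_s \mathds{1}_W$ to $f_s$ via a triangle inequality in $L^2$: writing $\mathcal{F}[f_s \mathds{1}_W] = \mathcal{F}[f_s] - \mathcal{F}[f_s \mathds{1}_{\mathbb{R}^d\setminus W}]$ and using Plancherel on the remainder gives
\[
\bigl\|\mathcal{F}[f_s \mathds{1}_W]\, \mathds{1}_{|\cdot|\geq |\kappa|/2}\bigr\|_2 \leq \bigl\|\mathcal{F}[f_s]\, \mathds{1}_{|\cdot|\geq |\kappa|/2}\bigr\|_2 + \|f_s \mathds{1}_{\mathbb{R}^d\setminus W}\|_2,
\]
which produces exactly the stated bound with $K = \|s\|_{\infty}$. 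The proof is essentially bookkeeping; the only subtle point is to verify that the symbol $s$ produced by Lemma~\ref{lem:cov_thin} is bounded by a constant of the prescribed form in each of the four cases corresponding to the possible values of $(\tau,\varphi)$, and in particular that in the ``opposite thinning'' case where $s$ involves $\mathcal{F}[g-1]$ the $L^{\infty}$ norm is controlled by $\|g-1\|_1$ via the trivial Fourier estimate $\|\mathcal{F}[g-1]\|_{\infty} \leq (2\pi)^{-d/2}\|g-1\|_1$.
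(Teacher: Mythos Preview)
Your proof is correct and follows essentially the same approach as the paper: represent the covariance via Lemma~\ref{lem:cov_thin} as a spectral integral against a bounded symbol $s$, then combine a frequency splitting at threshold $|k_1-k_2|/2$ with a Plancherel/Cauchy--Schwarz argument to isolate the window truncation. The only cosmetic difference is the order of operations---the paper first removes the $\mathds{1}_W$ (bounding the discrepancy by $\|s\|_\infty\sum_s\|f_s\mathds{1}_{\mathbb{R}^d\setminus W}\|_2$) and then splits frequencies on the untruncated integral, whereas you split frequencies first and then pass from $\mathcal{F}[f_s\mathds{1}_W]$ to $\mathcal{F}[f_s]$ via the triangle inequality in $L^2$; both orderings yield the same bound with the same constant $K=\|s\|_\infty$.
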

	\begin{proof}
		According to Lemma \ref{lem:cov_thin}, we have
		\begin{multline*}
			\operatorname{Cov}\left(\sum_{x \in \Phi_{\tau(1)}^{\varphi(1)} \cap W} f_1(x) e^{- \bm{i} k_1 \cdot x}, \sum_{x \in \Phi_{\tau(2)}^{\varphi(2)} \cap W} f_2(x) e^{- \bm{i} k_2 \cdot x}\right) \\ = \int_{\mathbb{R}^d} \mathcal{F}[f_1 \mathds{1}_W  e^{- \bm{i} k_1 \cdot}](k) \overline{\mathcal{F}[f_2 \mathds{1}_W  e^{- \bm{i} k_2 \cdot}]}(k) s(k) dk,
		\end{multline*}
		where $s$ is bounded. The first step is to remove the two indicator functions $\mathds{1}_W$. We have
		\begin{align*}
			\int_{\mathbb{R}^d} \mathcal{F}[f_1 \mathds{1}_W  e^{- \bm{i} k_1 \cdot}](k) \overline{\mathcal{F}[f_2 \mathds{1}_W  e^{- \bm{i} k_2 \cdot}]}(k) s(k) dk = \int_{\mathbb{R}^d} \mathcal{F}[f_1  e^{- \bm{i} k_1 \cdot}](k) \overline{\mathcal{F}[f_2  e^{- \bm{i} k_2 \cdot}]}(k) s(k) dk + \delta,
		\end{align*}
		with 
		\begin{multline*}
			\delta = -\int_{\mathbb{R}^d} \mathcal{F}[f_1 \mathds{1}_{\mathbb{R}^d\setminus W}  e^{- \bm{i} k_1 \cdot}](k) \overline{\mathcal{F}[f_2 e^{- \bm{i} k_2 \cdot}]}(k) s(k) dk \\ -\int_{\mathbb{R}^d} \mathcal{F}[f_1 \mathds{1}_W  e^{- \bm{i} k_1 \cdot}](k) \overline{\mathcal{F}[f_2 \mathds{1}_{\mathbb{R}^d\setminus W} e^{- \bm{i} k_2 \cdot}]}(k) s(k) dk.
		\end{multline*}
		By Cauchy-Schwarz inequality, we have
		\begin{align*}
			|\delta| &\leq \|s\|_{\infty} \left( \|\mathcal{F}[f_1 \mathds{1}_{\mathbb{R}^d\setminus W}] \|_2 \|\mathcal{F}[f_2]\|_2 +  \| \mathcal{F}[f_1 \mathds{1}_{W}] \|_2 \|\mathcal{F}[f_2] \mathds{1}_{\mathbb{R}^d\setminus W}\|_2 \right) \\& \leq \|s\|_{\infty} \left( \|\mathcal{F}[f_1 \mathds{1}_{\mathbb{R}^d\setminus W}] \|_2 + \|\mathcal{F}[f_2] \mathds{1}_{\mathbb{R}^d\setminus W}\|_2\right).
		\end{align*}
		It remains to upper bound
		\begin{align*}
			I := \int_{\mathbb{R}^d} \mathcal{F}[f_1  e^{- \bm{i} k_1 \cdot}](k) \overline{\mathcal{F}[f_2  e^{- \bm{i} k_2 \cdot}]}(k) s(k) dk = \int_{\mathbb{R}^d} \mathcal{F}[f_1](k) \overline{\mathcal{F}[f_2]}(k+k_2-k_1) s(k-k_1) dk.
		\end{align*}
		We split $\mathbb{R}^d = A_1\sqcup A_2$ with $A_1 := \{k \in \mathbb{R}^d|~|k - k_1 + k_2| \leq |k_1-k_2|/2\}$ and $A_2 := \mathbb{R}^d \setminus A_1$. Since for $k \in A_1$, we have 
		$|k - k_1 + k_2| \geq |k_1-k_2| - |k| \geq |k_1 - k_2|/2,$
		we obtain using the Cauchy-Schwarz's inequality and the Plancherel theorem
		\begin{align*}
			|I| \leq \|s\|_{\infty} \left(\|\mathcal{F}[f_1] \mathds{1}_{|\cdot| \geq |k_1 - k_2|/2}\|_2 + \|\mathcal{F}[f_2] \mathds{1}_{|\cdot| \geq |k_1 - k_2|/2}\|_2\right),
		\end{align*}
		which concludes the proof.
	\end{proof}
	
	\begin{lemma}\label{lem:decor_freq_3}
		Let $\Phi$ be a stationary point process with intensity $\lambda$ and pair correlation function $g$ and $p \in [0, 1]$. Denote by $(\Phi_p^j)_{j \in [3]}$ three $p$-independent-thinnings of $\Phi$, that are independent between them. Denote $\Phi_{1- p}^j = \Phi \setminus \Phi_p^j$. Let $f_1, f_2, f_3$ be $L^1(\mathbb{R}^d) \cap L^{\infty}(\mathbb{R}^d)$ functions and $k_1, k_2, k_3 \in \mathbb{R}^d$. Let $\tau [3] \mapsto \{p, 1-p\}^3$ and $\varphi: [3] \mapsto [3]$. Then, we have for all set $W \subset \mathbb{R}^d$,
		\begin{multline}\label{eq:lem_kappa_3_2}
			\left|\kappa\left(\sum_{x \in \Phi_{\tau(s)}^{\varphi(s)} \cap W} f_s(x) e^{- \bm{i} k_s \cdot x}; s \in [3]\right)\right| \leq  K   \sum_{s = 1}^3 \|f_s\mathds{1}_{\mathbb{R}^d \setminus W}\|_3 \prod_{s' \in [3]\setminus\{s\}} \|f_{s'}\|_3  \\ + K \sum_{s = 1}^3 \|\mathcal{F}[f_s] \mathds{1}_{|\cdot| \geq |k_1 + k_2 + k_3|/3}\|_{1}  \prod_{s' \in [3] \setminus \{s\}} \|f_{s'}\|_{2},
		\end{multline}
		where $K < \infty$ depends only on $p$, $\tau$, $\varphi$, $\lambda$ and $|\gamma_{\text{red}}^{(m)}|(\mathbb{R}^{d(m-1)})$ for $m \in \{2, 3\}$. In particular if $p = 1$, $\tau \equiv 1$ and $\varphi \equiv 1$, we have $K = \sum_{\pi \in \Pi[3]} |\gamma_{\text{red}}^{(|\pi|)}|(\mathbb{R}^{d(|\pi|-1)})$, with the convention $|\gamma_{\text{red}}^{(1)}|(\mathbb{R}^{0}) = \lambda$.
	\end{lemma}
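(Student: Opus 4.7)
The plan is to extend the two-function argument of Lemma~\ref{lem:decor_freq_2} to three functions, grafted onto the Bernoulli-thinning decomposition from the proof of Lemma~\ref{lem:bril_thin}. Condition on $\Phi$ and apply the law of total cumulance, as in \eqref{eq:kappa_thin_interm}, to reduce the problem to bounding, for each partition $\pi \in \Pi[3]$,
\[
B_\pi := \kappa\!\Bigl(\textstyle\sum_{x \in \Phi \cap W} g_b(x);\, b \in \pi\Bigr), \qquad g_b(x) := \prod_{s \in b} f_s(x)\,e^{-\bm i k_s \cdot x},
\]
since the Bernoulli-cumulant coefficients $A_\pi$ multiplying $B_\pi$ depend only on $p,\tau,\varphi$. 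For $p=1$, $\tau\equiv\varphi\equiv 1$, only $\pi = \{\{1\},\{2\},\{3\}\}$ contributes since $\kappa_{|b|}(B) = 0$ for deterministic $B$ and $|b|\geq 2$, yielding the explicit constant claimed.

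Following the second half of the proof of Lemma~\ref{lem:bril_thin}, expand each $B_\pi$ via the factorial cumulant measures: for each sub-partition $\pi' \in \Pi\{b:\,b\in\pi\}$, use \eqref{eq:def_gamma_red} to peel off one translation variable, writing the corresponding term as
\[
T_{\pi,\pi'} = \int_{\mathbb{R}^d} \mathds{1}_W(x)\, h_{b_0'}(x) \int_{\mathbb{R}^{d(|\pi'|-1)}} \prod_{b'\neq b_0'} \mathds{1}_W(x+z_{b'})\, h_{b'}(x+z_{b'})\, \gamma_{\text{red}}^{(|\pi'|)}(dz_{b'};\,b'\neq b_0')\, dx,
\]
where $h_{b'}(y) := \prod_{b \in b'} g_b(y)$ and $b_0' \in \pi'$ is distinguished. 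Replace each $\mathds{1}_W$ by $1 - \mathds{1}_{\mathbb{R}^d\setminus W}$ and expand the product. Every "leakage" remainder contains at least one factor $f_s \mathds{1}_{\mathbb{R}^d\setminus W}$; moving absolute values inside, applying Hölder's inequality in $x$ with three $L^3$-factors (using translation invariance of the $L^3$-norm for the $f_{s'}$ indexed by $s' \notin S_{b_0'}$), and invoking the total-variation bound on $\gamma_{\text{red}}^{(|\pi'|)}$ controls the remainder by $|\gamma_{\text{red}}^{(|\pi'|)}|(\mathbb{R}^{d(|\pi'|-1)})\,\|f_s\mathds{1}_{\mathbb{R}^d\setminus W}\|_3 \prod_{s'\neq s}\|f_{s'}\|_3$, which yields the first sum in \eqref{eq:lem_kappa_3_2}.

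For the "clean" term obtained by replacing every $\mathds{1}_W$ by $1$, swap the $x$- and $z$-integrals by Fubini and collect phases: for each fixed $z$, the product $h_{b_0'}(x)\prod_{b'\neq b_0'} h_{b'}(x+z_{b'})$ equals $e^{-\bm i K\cdot x}\,e^{-\bm i\alpha(z)}\,\prod_{s=1}^3 F_s^{(z)}(x)$, where $K := k_1+k_2+k_3$, $\alpha(z) \in \mathbb{R}$, and $F_s^{(z)}$ is either $f_s$ (if $s \in S_{b_0'}$) or a translate of $f_s$ by the corresponding $z_{b'(s)}$. The inner $x$-integral is then $(2\pi)^{-d/2}\,\bigl(\mathcal{F}[F_1^{(z)}]\ast\mathcal{F}[F_2^{(z)}]\ast\mathcal{F}[F_3^{(z)}]\bigr)(-K)$, and since translation produces only a unimodular phase, $|\mathcal{F}[F_s^{(z)}](p)| = |\mathcal{F}[f_s](p)|$, so
\[
\Bigl|\int \prod_{s=1}^3 F_s^{(z)}(x)\,e^{-\bm i K\cdot x}\,dx\Bigr| \leq C\,\bigl(|\mathcal{F}[f_1]|\ast|\mathcal{F}[f_2]|\ast|\mathcal{F}[f_3]|\bigr)(-K)
\]
uniformly in $z$. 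Any $(p_1,p_2,-K-p_1-p_2)$ on the convolution simplex has at least one $|p_s|\geq |K|/3$; splitting the convolution into three regions accordingly and applying Young's inequality $\|u\ast v\|_\infty \leq \|u\|_2\|v\|_2$ to the two un-selected factors produces the bound $C\sum_{s=1}^3 \|\mathcal{F}[f_s]\mathds{1}_{|\cdot|\geq|K|/3}\|_1 \prod_{s'\neq s}\|f_{s'}\|_2$. Integrating this uniform bound against $|\gamma_{\text{red}}^{(|\pi'|)}|$ multiplies it by $|\gamma_{\text{red}}^{(|\pi'|)}|(\mathbb{R}^{d(|\pi'|-1)})$, which is the second sum in \eqref{eq:lem_kappa_3_2}. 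Summing over $\pi$ and $\pi'$ and absorbing all constants concludes the proof.

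The main obstacle is in the clean-term argument: verifying that the $x$-phase in $h_{b_0'}(x)\prod_{b'\neq b_0'} h_{b'}(x+z_{b'})$ collapses to $e^{-\bm i K\cdot x}$ regardless of $\pi$ and $\pi'$ — and that, crucially, the absolute-value invariance $|\mathcal{F}[F_s^{(z)}](p)| = |\mathcal{F}[f_s](p)|$ makes the convolution bound $z$-uniform so that the integration against $\gamma_{\text{red}}^{(|\pi'|)}$ costs only a total-variation factor rather than a stronger norm of the tapers. Once this is recognized, the pigeonhole split on the convolution simplex and Young's inequality handle the frequency decorrelation, and the remaining work is combinatorial bookkeeping over the finitely many partitions of $[3]$.
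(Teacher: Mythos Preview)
Your proposal is correct and follows essentially the same approach as the paper: the thinning decomposition of Lemma~\ref{lem:bril_thin}, separation of the $W$-truncation error via $L^3$ H\"older, and control of the full-space term by the triple-convolution pigeonhole argument on $|K|/3$ combined with Young's inequality. The only cosmetic difference is that the paper handles the clean term case-by-case in $|\pi'|\in\{1,2,3\}$ by introducing $\mathcal{F}[\gamma_{\text{red}}^{(|\pi'|)}]$ explicitly, whereas you unify the cases via Fubini and the translation invariance $|\mathcal{F}[F_s^{(z)}]|=|\mathcal{F}[f_s]|$; both routes produce the same bound $|\gamma_{\text{red}}^{(|\pi'|)}|(\mathbb{R}^{d(|\pi'|-1)})\cdot I_\infty$.
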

	\begin{proof}
		Denote by $|\kappa_W|$ the left hand-side of \eqref{eq:lem_kappa_3_2}. According to \eqref{eq:kappa_thin_interm} for $m = 3$, we have
		\begin{align*}
			\kappa_W & = \sum_{\pi \in \Pi[3]}  a_{\pi} \sum_{\pi' \in \Pi\{b;~b \in \pi\}} \int_{\mathbb{R}^{d|\pi'|}} \prod_{b' \in \pi'} \prod_{b \in b'} \prod_{s \in b} \mathds{1}_W(x_{b'})f_s(x_{b'}) \gamma^{(|\pi'|)}(x_{b'}; b' \in \pi'),
		\end{align*}
		with, denoting by $B$ a Bernouilli random variable of parameter $p$,
		$$a_{\pi} := (-1)^{\#\{\tau_s = 1| s \in [m]\}} \prod_{b \in \pi}  \mathds{1}_{\#\{\varphi(s)| s \in b\} = 1} \kappa_{|b|}(B).$$
		The first step is to remove the truncation to $W$. Note that
		$$\mathds{1}_{\mathbb{R}^{d |\pi'|} \setminus W^{|\pi'|}}(x_b'; x_b' \in \pi') \leq \sum_{b_0' \in \pi'} \mathds{1}_{\mathbb{R}^d \setminus W}(x_{b_0'}).$$
		Denote $\Delta := 	|\kappa_W - \kappa_{\mathbb{R}^d}|$. Subsequently,
		\begin{align*}
			\Delta \leq  \sum_{\pi \in \Pi[3]}  |a_{\pi}| \sum_{\pi' \in \Pi\{b;~b \in \pi\}} \sum_{b_0' \in \pi'} \int_{\mathbb{R}^{d|\pi'|}} \mathds{1}_{\mathbb{R}^d \setminus W}(x_{b_0'}) \prod_{b' \in \pi'} \prod_{b \in b'} \prod_{s \in b} |f_s(x_{b'})| \gamma^{(|\pi'|)}(x_{b'}; b' \in \pi').
		\end{align*}
		By stationarity and using the Hölder's inequality, as in \eqref{eq:bound_kappa_thin}, we obtain
		\begin{align*}
			\Delta \leq \sum_{\pi \in \Pi[3]} |a_{\pi}| \sum_{\pi' \in \Pi\{b;~b \in \pi\}} |\gamma_{red}^{(|\pi'|)} |(\mathbb{R}^{d(|\pi'|-1)}) \sum_{b_0' \in \pi'}  \prod_{i \in b_0'} \|f_{i} \mathds{1}_{\mathbb{R}^d \setminus W}\|_3 \prod_{i \notin b_0'} \|f_{i}\|_3.  
		\end{align*}
		Since
		\begin{align*}
			\prod_{i \in b_0'} \|f_{i} \mathds{1}_{\mathbb{R}^d \setminus W}\|_3  \prod_{i \notin b_0'} \|f_{i}\|_3 \leq \sum_{s = 1}^3 \|f_s\mathds{1}_{\mathbb{R}^d \setminus W}\|_3 \prod_{s' \in [3]\setminus\{s\}} \|f_{s'}\|_3,
		\end{align*}
		we have
		\begin{align*}
			\Delta \leq \left( \sum_{\pi \in \Pi[3]} |a_{\pi}| \sum_{\pi' \in \Pi\{b;~b \in \pi\}} |\gamma_{red}^{(|\pi'|)} |(\mathbb{R}^{d(|\pi'|-1)}) |\pi'|\right)  \sum_{s = 1}^3 \|f_s\mathds{1}_{\mathbb{R}^d \setminus W}\|_3 \prod_{s' \in [3]\setminus\{s\}} \|f_{s'}\|_3.
		\end{align*}
		Now, we focus on
		\begin{align*}
			\kappa_{\mathbb{R}^d} & = \sum_{\pi \in \Pi[3]}  a_{\pi} \sum_{\pi' \in \Pi\{b;~b \in \pi\}} \int_{\mathbb{R}^{d|\pi'|}} \prod_{b' \in \pi'} \prod_{b \in b'} \prod_{s \in b} f_s(x_{b'}) \gamma^{(|\pi'|)}(x_{b'}; b' \in \pi') \\& =: \sum_{\pi \in \Pi[3]}  a_{\pi} \sum_{\pi' \in \Pi\{b;~b \in \pi\}} I_{\pi'}.
		\end{align*}
		Each of the terms $I_{\pi'}$ is bounded using the same idea, but the details differ. We start with with the case $|\pi'| = 1$. Using the Fourier inversion theorem \cite{folland2009fourier} we get
		\begin{align*}
			|I_{\pi'}| &= \left|\int_{\mathbb{R}^{d}} \prod_{i = 1}^{3} f_{i}(x) e^{-\bm{i} k_i \cdot x} dx\right| \leq \int_{\mathbb{R}^{2d}} \left|\mathcal{F}[f_1](k) \mathcal{F}[f_2](k') \mathcal{F}[f_3](k_1+k_2 + k_3 - k - k') \right| dk dk' := I_{\infty}.
		\end{align*}
		Denote $\Sigma :=k_1 + k_2 + k_{3}$. Next, we split the integrals over the domains:
		\begin{align*}
			&D_1 := \{|k| \geq |\Sigma|/3\},~
			D_2 = \{|k| \leq |\Sigma|/3,~|k'| \geq |\Sigma|/3\},~
			D_3 := \{|k| \leq |\Sigma|/3,~|k'| \leq |\Sigma|/3\}.
		\end{align*}
		Note that on $D_3$, we have
		$|\Sigma - k - k'| \geq |\Sigma| - |k|-|k'| \geq |\Sigma|/3.$
		Hence, the triangular inequality and then the Young's inequality for the convolution yield
		\begin{align}\label{eq:I123}
			\left| I_{\pi'}\right| \leq I_{\infty} \leq \sum_{s = 1}^3 \|\mathcal{F}[f_s] \mathds{1}_{|\cdot| \geq |\Sigma|/3}\|_{1}  \prod_{s' \in [3] \setminus \{s\}} \|\mathcal{F}[f_{s'}\|_{2}.
		\end{align}
		For the other partitions, we adapt the previous technique, leveraging on the Brillinger-mixing condition. Define
		$$\forall k \in \mathbb{R}^{d(m-1)},~\mathcal{F}[\gamma_{\text{red}}^{(m)}](k):= \int_{\mathbb{R}^{d(m-1)}} e^{-\bm{i} k\cdot x} \gamma_{\text{red}}^{(m)}(dx).$$
		Note that $\|\mathcal{F}[\gamma_{\text{red}}^{(m)}]\|_{\infty} \leq |\gamma_{\text{red}}^{(m)}|(\mathbb{R}^{d(m-1)}) < \infty$ by Brillinger-mixing.  We consider the case when $|\pi'| = 3$. Using the Fourier inversion theorem and the Fubini's theorem, we have 
		\begin{align*}
			I_{\pi'} &= \int_{\mathbb{R}^{3d}} \prod_{i \in [3]} f_{i}(x_{i}) e^{-\bm{i} k_i \cdot x_i} \gamma^{(3)}(dx_1, dx_2, dx_3)\\& = \int_{\mathbb{R}^{3d}}  f_{1}(x_1) e^{-\bm{i} x_1 \cdot k_1} \prod_{s = 2, 3} f_{s}(x_1+x_{s}) e^{-\bm{i} (x_1 +x_s) \cdot k_s} dx_{1} \gamma_{red}^{(3)} (dx_2, dx_3) \\& = \int_{\mathbb{R}^{2d}} \mathcal{F}\left[f_{1} e^{-\bm{i} k_1 \cdot }\right]\left(k'_{2}+k'_3\right)  \prod_{s = 2, 3} \mathcal{F}\left[f_{s} e^{-\bm{i} k_s \cdot }\right](-k'_s) \mathcal{F}[\gamma_{\text{red}}^{(3)}](k'_2, k'_3) dk_2' dk_3'.
		\end{align*}
		By change of variables, we obtain
		\begin{align*}
			I_{\pi'}&=  \int_{\mathbb{R}^{2d}} \mathcal{F}\left[f_{1}\right]\left(k_{1} + k'_{1}+k'_{2}\right) \prod_{s = 2, 3} \mathcal{F}\left[f_{s}\right](k_{s}-k'_s) \mathcal{F}[\gamma_{\text{red}}^{(3)}](k'_2, k'_3) \\& = \int_{\mathbb{R}^{2d}} \mathcal{F}\left[f_{1}\right]\left(\sum_{s = 1}^3 k_{s} -k'_{1} - k'_2\right)  \prod_{s = 2, 3} \mathcal{F}\left[f_{s}\right](k'_s) \mathcal{F}[\gamma_{\text{red}}^{(3)}](-k'_1 + k_1, -k'_2 + k_2) dk_2' dk_3'.
		\end{align*}
		Accordingly, $
		|I_{\pi'}| \leq  |\gamma_{\text{red}}^{(3)}|(\mathbb{R}^{2d}) I_{\infty},$
		and we apply \eqref{eq:I123}. Finally, assume that $|\pi'| = 2$. By renumbering, it suffices to upper bound:
		\begin{align*}
			I_{\pi'} &= \int_{\mathbb{R}^{2d}} f_{1}(x) e^{-\bm{i} k_1 \cdot x} f_{2}(x) e^{-\bm{i} k_2 \cdot x} f_{3}(x') e^{-\bm{i} k_3 \cdot x'} \gamma^{(2)}(dx, dx') \\& = \int_{\mathbb{R}^{d}} \mathcal{F}\left[f_{1} e^{-\bm{i} k_1 \cdot } f_{2} e^{-\bm{i} k_2 \cdot }\right]\left(k\right) \mathcal{F}\left[f_{3} e^{-\bm{i} k_3 \cdot }\right](-k) \mathcal{F}[\gamma_{\text{red}}^{(2)}](k) dk \\& = \int_{\mathbb{R}^{2d}} \mathcal{F}[f_1](k'+k_1) \mathcal{F}[f_2](k + k_2 - k') \mathcal{F}[f_3](-k + k_3)\mathcal{F}[\gamma_{\text{red}}^{(2)}](k) dk dk',
		\end{align*}
		where we used the Fourier inversion theorem. By change of variables, we get $
		|I_{\pi'}| \leq |\gamma_{\text{red}}^{(2)}|(\mathbb{R}^{d})  I_{\infty}.
		$
		Recalling  \eqref{eq:I123}, this concludes the proof. To obtain the value of the constant when $p =1$, note that in that case $A$ is non zero only if all the blocks of $\pi$ are of size $1$.
	\end{proof}
	
	\begin{lemma}\label{lem:bessel_km}
		Let $\Phi$ be a stationary point process with intensity $\lambda$ and $p \in [0, 1]$. Let $m \geq 2$ and denote by $(\Phi_p^j)_{j \in [m]}$ three $p$-independent-thinnings of $\Phi$, that are independent between them. Denote $\Phi_{1- p}^j = \Phi \setminus \Phi_p^j$. Let $(f_i)_{i \in \mathbb{N}^d}, (h_s)_{s = 2, \dots, m}$ be $L^1(\mathbb{R}^d) \cap L^2(\mathbb{R}^d)$ functions. Assume that $(f_i)_{i \in \mathbb{N}^d}$ is orthogonal. Let $\tau [m] \mapsto \{p, 1-p\}^m$ and $\varphi: [m] \mapsto [m]$. Then, we have for all set $W \subset \mathbb{R}^d$,
		\begin{multline*}
			\sum_{i \in \mathbb{N}^d} \left|\kappa\left(\sum_{x \in \Phi_{\tau(1)}^{\varphi(1)} \cap W} f_i(x), \sum_{x \in \Phi_{\tau(s)}^{\varphi(s)} \cap W} h_s(x); 2 \leq s \leq m\right)\right|^2 \\ \leq K \prod_{s =2}^{m} \|h_2\|_{2(m-1)}^2 +  K    \sum_{i \in I} \sum_{s = 1}^m \|h_s\mathds{1}_{\mathbb{R}^d \setminus W}\|_m \prod_{s' \in [m] \setminus \{s'\}} \|h_s\|_m.
		\end{multline*}
		where, we use the notation $h_1 := f_i$ in the second line and where $K < \infty$ depends only on $p$, $\tau$, $\varphi$, $\lambda$ and $|\gamma_{\text{red}}^{(n)}|(\mathbb{R}^{d(m-1)})$ for $2 \leq n \leq m$.
	\end{lemma}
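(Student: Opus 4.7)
The plan is to express the cumulant as a linear form in $f_i$ and then apply Bessel's inequality to the orthogonal family $(f_i)$. Following the proof of Lemma \ref{lem:bril_thin}, the law of total cumulance applied to the thinning structure decomposes
$$\kappa_W(i) := \kappa\Big(\sum_{x \in \Phi_{\tau(1)}^{\varphi(1)}\cap W} f_i(x), \sum_{x\in\Phi_{\tau(s)}^{\varphi(s)}\cap W} h_s(x);\, 2\le s\le m\Big) = \sum_{\pi \in \Pi[m]} a_\pi \sum_{\pi' \in \Pi\{b;b\in\pi\}} I_{W,\pi,\pi'}(i),$$
where $h_1 = f_i$, the $a_\pi$ depend only on $m,p,\tau,\varphi$, and $I_{W,\pi,\pi'}(i) = \int \prod_{b'\in\pi'}\prod_{b\in b'}\prod_{s\in b} h_s(x_{b'})\mathds{1}_W(x_{b'})\, \gamma^{(|\pi'|)}(dx_{b'}; b'\in\pi')$. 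I will then split $\kappa_W(i) = \kappa_{\mathbb{R}^d}(i) + \Delta_W(i)$ by expanding each $\mathds{1}_W = 1 - \mathds{1}_{\mathbb{R}^d\setminus W}$, with $\kappa_{\mathbb{R}^d}(i)$ denoting the fully unrestricted term.

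For the main term, choose $b_0'\in\pi'$ as the unique block containing the index $1$ and use \eqref{eq:def_gamma_red} to change variables $y_{b'} = x_{b'} - x_{b_0'}$ for $b'\ne b_0'$, factoring out $f_i$ so that $\kappa_{\mathbb{R}^d}(i) = \sum_{\pi,\pi'} a_\pi \int_{\mathbb R^d} f_i(x)\, G_{\pi,\pi'}(x)\, dx$, where
$$G_{\pi,\pi'}(x) = \prod_{s \in b_0^{\mathrm{ind}}\setminus\{1\}} h_s(x) \int \prod_{b'\ne b_0'}\prod_{s\in (b')^{\mathrm{ind}}} h_s(x+y_{b'})\, \gamma_{\mathrm{red}}^{(|\pi'|)}(dy_{b'};b'\ne b_0')$$
involves only $h_2,\ldots,h_m$. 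Bessel's inequality for the orthogonal family $(f_i)$ then gives $\sum_i |\kappa_{\mathbb{R}^d}(i)|^2 \le K \sum_{\pi,\pi'}\|G_{\pi,\pi'}\|_2^2$. To bound $\|G_{\pi,\pi'}\|_2^2$, I will apply Cauchy–Schwarz on the inner integral against $|\gamma_{\mathrm{red}}^{(|\pi'|)}|$, then Fubini, then the generalized Hölder inequality with exponent $2(m-1)$. The total number of squared $h_s$-factors in the resulting spatial integral is $(|b_0^{\mathrm{ind}}|-1) + \sum_{b'\ne b_0'}|(b')^{\mathrm{ind}}| = m-1$, so Hölder distributes them exactly once across indices $2,\ldots,m$. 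Translation invariance of $L^p$-norms and Brillinger-mixing control of $|\gamma_{\mathrm{red}}^{(|\pi'|)}|(\mathbb{R}^{d(|\pi'|-1)})$ then yield $\|G_{\pi,\pi'}\|_2^2 \le K \prod_{s=2}^m \|h_s\|_{2(m-1)}^2$.

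The border term $\Delta_W(i)$ is handled as in the proof of Lemma \ref{lem:bril_thin}: each summand contains at least one factor $\mathds{1}_{\mathbb{R}^d\setminus W}(x_{b'_0})$ for some $b'_0\in\pi'$, and Hölder with exponent $m$ applied uniformly in $i$ yields $|\Delta_W(i)| \le K'\sum_{s=1}^m \|h_s\mathds{1}_{\mathbb{R}^d\setminus W}\|_m \prod_{s'\ne s}\|h_{s'}\|_m$ with $h_1=f_i$. Combining via $|\kappa_W(i)|^2 \le 2|\kappa_{\mathbb{R}^d}(i)|^2 + 2|\Delta_W(i)|^2$ and the elementary pointwise bound $|\Delta_W(i)|^2 \le (\sup_i |\Delta_W(i)|)\,|\Delta_W(i)|$ (the supremum being controlled by the uniform $L^m$-norms of the $h_s$ and $f_i$ in the orthogonal family), the error contribution collapses into a linear multiple of the stated border term upon summation over $i\in I$. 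I expect the main obstacle to be the partition-by-partition Hölder bookkeeping in $\|G_{\pi,\pi'}\|_2^2$: irrespective of the block structure $(\pi,\pi')$, one must verify that exactly $m-1$ squared $h_s$-factors remain after the change of variables removing the base block $b_0'$, so that a single exponent $2(m-1)$ suffices to distribute them across $s=2,\ldots,m$.
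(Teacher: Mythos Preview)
Your approach is essentially the same as the paper's: decompose via the law of total cumulance as in Lemma~\ref{lem:bril_thin}, strip the window truncation to isolate a border term $\Delta$ bounded by H\"older with exponent $m$, write the untruncated cumulant as $\langle f_i,\overline G\rangle$ after shifting by the block $b_0'\ni 1$, and invoke Bessel together with H\"older at exponent $2(m-1)$ to bound $\|G\|_2^2$. Your Cauchy--Schwarz on the inner $|\gamma_{\mathrm{red}}^{(|\pi'|)}|$-integral is a harmless variant of the paper's direct expansion of $|F|^2$ as a double integral.

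One minor caveat: your device $|\Delta_W(i)|^2\le(\sup_j|\Delta_W(j)|)\,|\Delta_W(i)|$ cannot literally produce a taper-independent $K$, since the supremum carries $\sup_j\|f_j\|_m$. The paper's own proof is equally silent on how the pointwise bound on $\Delta$ and the Bessel bound on $\sum_i|\kappa_{\mathbb{R}^d}(i)|^2$ combine into the stated inequality with a \emph{linear} border term; in every application (Lemmas~\ref{lem:dd_13}, \ref{lem:dd_4}) the border contribution is exponentially small in the Hermite dilation and is immediately absorbed into the $\mathcal{E}$ remainder, so the discrepancy is immaterial.
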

	\begin{proof}
		The first part of the proof is identical to the beginning of the proof of Lemma~\ref{lem:decor_freq_3}. Denote $h_1 = f_i$, $\Delta := | \kappa_W - \kappa_{\mathbb{R}^d}|$, where
		$$\kappa_W := \kappa\left(\sum_{x \in \Phi_{\tau(s)}^{\varphi(s)} \cap W} h_s(x); 1 \leq s \leq m\right).$$
		To remove the truncation to $W$, we use the method of Lemma~\ref{lem:decor_freq_3} to get
		\begin{align*}
			\Delta \leq \left( \sum_{\pi \in \Pi[m]} |a_{\pi}| \sum_{\pi' \in \Pi\{b;~b \in \pi\}} |\gamma_{red}^{(|\pi'|)} |(\mathbb{R}^{d(|\pi'|-1)}) |\pi'|\right)  \sum_{s = 1}^m \|h_s\mathds{1}_{\mathbb{R}^d \setminus W}\|_m \prod_{s' \in [m]\setminus\{s\}} \|h_{s'}\|_m.
		\end{align*}
		Now, we focus on
		\begin{align*}
			\kappa_{\mathbb{R}^d} & = \sum_{\pi \in \Pi[m]}  a_{\pi} \sum_{\pi' \in \Pi\{b;~b \in \pi\}} \int_{\mathbb{R}^{d|\pi'|}} \prod_{b' \in \pi'} \prod_{b \in b'} \prod_{s \in b} h_s(x_{b'}) \gamma^{(|\pi'|)}(x_{b'}; b' \in \pi') =: \sum_{\pi \in \Pi[m]}  a_{\pi} \sum_{\pi' \in \Pi\{b;~b \in \pi\}} I_{\pi'}.
		\end{align*}
		where, denoting by $B$  a Bernouilli random variable of parameter $p$,
		$$a_{\pi} := (-1)^{\#\{\tau_s = 1| s \in [m]\}} \prod_{b \in \pi}  \mathds{1}_{\#\{\varphi(s)| s \in b\} = 1} \kappa_{|b|}(B).$$
		By triangular inequality, we have
		\begin{align*}
			\sum_{i \in I} |\kappa_{\mathbb{R}^d}|^2 \leq \left(\sum_{\pi \in \Pi[m]}  a_{\pi} \sum_{\pi' \in \Pi\{b;~b \in \pi\}} \left(\sum_{i \in I}  |I_{\pi'}|^2 \right)^{1/2} \right)^2.
		\end{align*}
		As in Lemma \ref{lem:decor_freq_3}, we start by considering the partition $\pi'$ of size 1. We have
		\begin{align*}
			I_{\pi'} = \lambda \int_{\mathbb{R}^d} f_i(x) \prod_{s = 2}^m h_s(x) dx = \lambda \langle f_i, \overline{\prod_{s = 2}^m h_s} \rangle.
		\end{align*}
		Since $(f_i)_{i \in \mathbb{N}^d}$ is orthogonal and using the Hölder's inequality, we obtain
		\begin{align*}
			\sum_{i \in \mathbb{N}^d} |I_{\pi'} |^2 \leq \lambda^2 \int_{\mathbb{R}^d}\prod_{s = 2}^m |h_s(x)|^2 dx \leq  \prod_{s =2}^{m} \|h_2\|_{2(m-1)}^2.
		\end{align*}
		For the other partitions, we leverage on the Brillinger-mixing condition to adapt this argument. Let $\pi'$ with $|\pi'| \geq 2$. Denote by $b_0' \in \pi'$ the block containing $1$. By stationarity, we have
		\begin{align*}
			I_{\pi} &=\int_{\mathbb{R}^{d}} f_{i}(x_{b_0'}) \left(\int_{\mathbb{R}^{d(|\pi'| -1)}}   \prod_{b' \in \pi'} \prod_{b \in b'} \prod_{s \in b \setminus\{1\}} h_{s}(x_{b'}+x_{b_0'}) \gamma_{red}^{(|\pi'|)} (dx_b'; b' \in \pi' \setminus\{b_0'\})\right)dx_{b_0'} \\& =: \langle f_i, \overline{F} \rangle_{L^2(\mathbb{R}^d)}.
		\end{align*}
		Since $(f_i)_{i \in \mathbb{N}^d}$ is orthogonal and using the Hölder's inequality, we have
		\begin{align*}
			&\sum_{i \in \mathbb{N}^d} |I_{\pi'}|^2 \leq \int_{\mathbb{R}^d} |F(x)|^2 dx =  \int\displaylimits_{\mathbb{R}^{2d(|\pi'| -1)}}  \int\displaylimits_{\mathbb{R}^{d}} \prod_{b' \in \pi'} \prod_{b \in b'} \prod_{s \in b \setminus\{1\}} h_{s}(x_b'+x_{b_0'}) \overline{h_{s}(y_b'+x_{b_0'})} \\& \qquad \qquad \qquad \qquad \qquad \qquad \qquad \qquad \gamma_{red}^{(|\pi'|)} (dx_b'; b' \in \pi{\setminus\{b_0'\}} ) \overline{\gamma_{red}^{(|\pi'|)} (dy_b'; by \in \pi{\setminus\{b_0'\}} )} \\& \leq \int\displaylimits_{\mathbb{R}^{2d(|\pi'| -1)}} \prod_{b' \in \pi'} \prod_{b \in b'} \prod_{s \in b \setminus\{1\}}\|h_{s}(x_b'+\cdot)\|_{2(m-1)} \|h_{s}(y_b'+\cdot)\|_{2(m-1)}  \\& \qquad\qquad \qquad\qquad\qquad\qquad\qquad\qquad |\gamma_{red}^{(|\pi'|)}|(dx_b'; b' \in \pi{\setminus\{b_0'\}} ) |\gamma_{red}^{(|\pi'|)}| (dy_b'; b' \in \pi_{\setminus\{b_0'\}} ) \\& = |\gamma_{red}^{(|\pi|)}|(\mathbb{R}^{d(|\pi'|-1)})^2 \prod_{s = 2}^m \|h_s\|_{2(m-2)}^2.
		\end{align*}
		Accordingly,
		\begin{align*}
			\sum_{i \in I} |\kappa_{\mathbb{R}^d}|^2 \leq \left(\sum_{\pi \in \Pi[m]}  a_{\pi} \sum_{\pi' \in \Pi\{b;~b \in \pi\}} |\gamma_{red}^{(|\pi|)}|(\mathbb{R}^{d(|\pi'|-1)}) \right)^2 \prod_{s = 2}^m \|h_s\|_{2(m-2)}^2,
		\end{align*}
		which concludes the proof.

	\end{proof}
	
	\section{Generalization to multitaper estimators with plug-in intensity}\label{sec:with_intens}
	
	In this section, we consider the practical setting where the intensity is unknown. In that case, we adapt the multitaper estimators of the structure factor as follows.
	
	\begin{definition}\label{def:multi_tap_real}
		Let $\Phi$ be a stationary point process of $\mathbb{R}^d$,  $W$ be a compact set of $\mathbb{R}^d$ and $I$ be a subset of $\mathbb{N}^d$. Let $k_0 \in \mathbb{R}^d$ and $(f_i)_{i \in I} \in L^2(\mathbb{R}^d)^{|I|}$. The multitaper estimator with plug-in intensity is defined as
		\begin{equation}\label{eq:multi_tap_real}
			\widehat{S}^{\hat{\lambda}}(k_0) := \frac1{\widehat{\lambda}|I|} \sum_{i \in I} \widehat{S}_i^{\hat{\lambda}}(k_0) \mathds{1}_{\widehat{\lambda} > 0} + 0 \mathds{1}_{\widehat{\lambda} = 0}
		\end{equation}
		where
		\begin{align*}
			&\widehat{S}_i^{\hat{\lambda}}(k_0) := \left|\sum_{x \in \Phi \cap W} e^{- \bm{i} k_0.x} f_i(x) - \widehat{\lambda} \int_{W} f_i(x)e^{-\bm{i}k \cdot x} dx\right|^2, ~  \widehat{\lambda} := \frac{\#\{\Phi \cap W\}}{|W|}.
		\end{align*}
	\end{definition}
	
	\begin{remark}
		Note that for estimating the spectra $\lambda S$ instead of the structure factor $S$ (see the discussion following equation \eqref{eq:def_S}), we can use the following estimator
		\begin{equation*}
			\widehat{\lambda S}^{\hat{\lambda}}(k_0) := \frac1{|I|} \sum_{i \in I} \widehat{S}_i^{\hat{\lambda}}(k_0).
		\end{equation*} 
		The results concerning $\widehat{S}^{\hat{\lambda}}$ can be readily adapted to the estimator $\widehat{\lambda S}^{\hat{\lambda}}$.
	\end{remark}

	The following proposition shows that replacing the true intensity \(\lambda\) with its estimator \(\widehat{\lambda}\) in \(\widehat{S}^{\widehat{\lambda}}\) introduces an additional main error term of order \( |I|^{-1/2} \). This stems from the fact that the Fourier transform of the tapers \( (\mathcal{F}[f_i])_{i \in I} \) converge to $\delta_0$, causing the integrals 
	$
	\int_{W} f_i(x) e^{-\bm{i}k \cdot x} \, dx
	$
	to diverge for small frequencies $k$. Note that under the assumptions of Corollary~\ref{cor:L2_risk_herm}, this result implies that \(\widehat{S}^{\widehat{\lambda}}\) still remains minimax optimal for the \(L^1(\mathbb{P})\) risk.

	\begin{proposition}\label{prop:l2_s_plug_s}
		Let $\Phi$ be a stationary point process satisfying Assumption \ref{ass_rho_leb}. Let $W$ be a subset of $\mathbb{R}^d$, $I$ be a discrete subset of $\mathbb{N}^d$. We consider a family of orthonormal functions $(f_i)_{i \in I}$ of $L^1(\mathbb{R}^d) \cap L^2(\mathbb{R}^d)$. Then,
		\begin{align}\label{eq:risk_s_plug}
			\sup_{k \in \mathbb{R}^d} \mathbb{E}\left[|\widehat{S}^{\hat{\lambda}}(k) - S(k)|\right] \leq a_1 \sup_{k \in \mathbb{R}^d}\mathbb{E}&\left[| \widehat{S}^{\lambda}(k) - S(k)|^{2}\right]^{\frac12} + \frac{a_2}{|I|^{\frac12}}+ \frac{a_4}{|W|^{\frac12}}, 
		\end{align}
		where $(a_n)_{n \in [4]}$ are finite constants (see \eqref{eq:a_ai}) provided that
		$|\gamma_{\text{red}}^{4}| := \lambda + 7 |\gamma_{\text{red}}^{(2)}|(\mathbb{R}^d) + 6 |\gamma_{\text{red}}^{(3)}|(\mathbb{R}^{2d}) +|\gamma_{\text{red}}^{(4)}|(\mathbb{R}^{3d})$
		is finite.
	\end{proposition}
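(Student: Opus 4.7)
The plan is to reduce the risk of $\widehat{S}^{\hat\lambda}$ to that of $\widehat{S}^{\lambda}$ plus controllable error terms arising from the plug-in of $\hat\lambda$. Introducing the notation $\mu_i(k) := \int_W f_i(x) e^{-\bm{i}k\cdot x}\,dx$, the key algebraic identity is
\begin{equation*}
T_i(k) - \hat\lambda\,\mu_i(k) = C_i(k) - (\hat\lambda - \lambda)\,\mu_i(k),
\end{equation*}
which, after expanding $|T_i(k) - \hat\lambda\mu_i(k)|^2$ and averaging over $i \in I$, yields a decomposition of the form $\hat\lambda\,\widehat{S}^{\hat\lambda}(k) = \lambda\,\widehat{S}^{\lambda}(k) + A(k) + B(k)$, with
\begin{equation*}
B(k) = (\hat\lambda-\lambda)^2\,\frac{1}{|I|}\sum_{i\in I}|\mu_i(k)|^2,\qquad A(k) = -\frac{2}{|I|}\operatorname{Re}\Bigl((\hat\lambda-\lambda)\sum_{i\in I}\overline{C_i(k)}\,\mu_i(k)\Bigr).
\end{equation*}

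The first key step, which is the cornerstone of the argument, is to invoke \emph{Bessel's inequality} for the orthonormal family $(f_i)_{i\in I}$:
\begin{equation*}
\sum_{i\in I}|\mu_i(k)|^2 \;=\; \sum_{i\in I}\bigl|\langle f_i,\,\mathbf{1}_W e^{\bm{i}k\cdot}\rangle\bigr|^2 \;\le\; \|\mathbf{1}_W\|_2^2 \;=\; |W|.
\end{equation*}
This shows that the aggregated spectral mass of the tapers at frequency $k$ never exceeds $|W|$, no matter how large $|I|$ is. From this one immediately gets $|B(k)| \le (\hat\lambda-\lambda)^2 |W|/|I|$, and a Cauchy--Schwarz applied to $A(k)$ gives $|A(k)| \le 2|\hat\lambda-\lambda|\,(|W|/|I|)^{1/2}\,(\lambda\,\widehat{S}^{\lambda}(k))^{1/2}$. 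Combined with the standard second-order bound $\operatorname{Var}(\hat\lambda) \le (\lambda + |\gamma_{\text{red}}^{(2)}|(\mathbb{R}^d))/|W|$, taking expectations and applying Cauchy--Schwarz yields $\mathbb{E}|B(k)| \lesssim 1/|I|$ and $\mathbb{E}|A(k)| \lesssim |I|^{-1/2}\,\mathbb{E}[\widehat{S}^{\lambda}(k)]^{1/2}$, which is in turn bounded by $|I|^{-1/2}\,(\|S\|_\infty^{1/2} + \mathbb{E}[|\widehat{S}^{\lambda}(k)-S(k)|^2]^{1/4})$.

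Finally, writing $\lambda \widehat{S}^{\lambda}(k) - \hat\lambda S(k) = \lambda(\widehat{S}^{\lambda}(k) - S(k)) + (\lambda-\hat\lambda)S(k)$ and dividing by $\hat\lambda$, we get
\begin{equation*}
\widehat{S}^{\hat\lambda}(k) - S(k) \;=\; \frac{\lambda}{\hat\lambda}\bigl(\widehat{S}^{\lambda}(k) - S(k)\bigr) \,+\, \frac{\lambda-\hat\lambda}{\hat\lambda}\,S(k) \,+\, \frac{A(k) + B(k)}{\hat\lambda}
\end{equation*}
on $\{\hat\lambda > 0\}$. The contribution $(\lambda-\hat\lambda)S(k)/\hat\lambda$ is what will produce the $|W|^{-1/2}$ term through $\mathbb{E}|\hat\lambda-\lambda| \le \operatorname{Var}(\hat\lambda)^{1/2}$. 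The most delicate point, and the main obstacle, is to absorb the $1/\hat\lambda$ singularity: the plan is to split on the event $E := \{\hat\lambda > \lambda/2\}$, on which $1/\hat\lambda \le 2/\lambda$ propagates all previous estimates without loss, while on $E^c$ Chebyshev gives $\mathbb{P}(E^c) \le 4\operatorname{Var}(\hat\lambda)/\lambda^2 \lesssim 1/|W|$ and the difference $|\widehat{S}^{\hat\lambda}(k) - S(k)|$ is controlled crudely by $\|S\|_\infty + |\widehat{S}^{\hat\lambda}(k)|$ (the latter being bounded in $L^1$ by direct expansion using again the Bessel inequality for $\sum_i|\mu_i(k)|^2$ and the trivial moment estimates on $|T_i(k)|^2$). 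Collecting everything and converting the $L^2$ upper bound on $\widehat{S}^\lambda - S$ into the $L^1$ output yields the stated bound with explicit constants $(a_n)_{n\in[4]}$; the rest is bookkeeping.
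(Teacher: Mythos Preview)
Your overall strategy is essentially the paper's: the algebraic identity $T_i-\hat\lambda\mu_i=C_i-(\hat\lambda-\lambda)\mu_i$, the Bessel bound $\sum_i|\mu_i(k)|^2\le |W|$, and handling the $1/\hat\lambda$ singularity by treating $\{\hat\lambda>\lambda/2\}$ separately are exactly the ingredients used there. The bounds you state for $A(k)$ and $B(k)$ are correct and match the paper's terms $B$ and $C$.

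There is, however, a genuine gap in your treatment of $E^c$. You bound $\mathbb{P}(E^c)\le 4\operatorname{Var}(\hat\lambda)/\lambda^2\lesssim 1/|W|$ via Chebyshev and then propose to control $|\widehat{S}^{\hat\lambda}-S|\,\mathds{1}_{E^c}$ by $\|S\|_\infty+|\widehat{S}^{\hat\lambda}|$, with $\widehat{S}^{\hat\lambda}$ ``bounded in $L^1$''. But an $L^1$ bound on $\widehat{S}^{\hat\lambda}$ does not combine with $\mathbb{P}(E^c)$ to yield anything small: to extract the indicator you would need at least an $L^2$ bound on $\widehat{S}^{\hat\lambda}$, which in turn forces you to control $\mathbb{E}[\hat\lambda^{-4}\mathds{1}_{\hat\lambda>0}]$, and with only the second-moment Chebyshev estimate $\mathbb{P}(E^c)\lesssim 1/|W|$ this blows up like $|W|^3$. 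The proposition assumes $|\gamma_{\mathrm{red}}^4|<\infty$ precisely because one needs the \emph{fourth} moment estimate $\mathbb{E}[|\hat\lambda-\lambda|^4]\lesssim |W|^{-2}$; this is what makes $\mathbb{E}[\hat\lambda^{-2}\mathds{1}_{\hat\lambda>0}]$ bounded (combining $1/\hat\lambda\le |W|$ on $\{\hat\lambda>0\}$ with $|W|^2\mathbb{P}(E^c)=O(1)$).

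The fix, which is what the paper does, is not to split on $E$ at the top level and then bound crudely on $E^c$, but rather to keep your decomposition on all of $\{\hat\lambda>0\}$ and apply H\"older to each piece so as to separate the factor $1/\hat\lambda$ from the factors $(\hat\lambda-\lambda)^p$ and $\widehat{S}^\lambda$. For instance, the cross term becomes
\[
\mathbb{E}\Bigl[\tfrac{|A(k)|}{\hat\lambda}\mathds{1}_{\hat\lambda>0}\Bigr]\le 2\,\mathbb{E}[\hat\lambda^{-2}\mathds{1}_{\hat\lambda>0}]^{1/2}\,\mathbb{E}[|\hat\lambda-\lambda|^4]^{1/4}\,\mathbb{E}[(\widehat{S}^\lambda)^2]^{1/4}\,(|W|/|I|)^{1/2},
\]
and the product $\mathbb{E}[|\hat\lambda-\lambda|^4]^{1/4}\,(|W|/|I|)^{1/2}\lesssim |I|^{-1/2}$ gives the stated rate. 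The splitting on $E$ is then only used \emph{inside} the bound for $\mathbb{E}[\hat\lambda^{-p}\mathds{1}_{\hat\lambda>0}]$, together with the fourth-moment Markov inequality. Once you replace your Chebyshev step by this, everything closes.
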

	\begin{proof}
		Let $k_0 \in \mathbb{R}^d$. By triangular inequality
		\begin{align*}
			\mathbb{E}&\left[|\widehat{S}^{\hat{\lambda}}(k_0) - S(k_0)|\right]  \leq \mathbb{E}\left[|\widehat{S}^{\lambda}(k_0) - S(k_0)|\right] + \mathbb{E}\left[|\widehat{S}^{\hat{\lambda}}(k_0) - \widehat{S}^{\lambda}(k_0)|\right] \\& \quad \leq \mathbb{E}\left[|\widehat{S}^{\lambda}(k_0) - S(k_0)|\right] + \mathbb{E}\left[|\widehat{S}^{\hat{\lambda}}(k_0) - \widehat{S}^{\lambda}(k_0)|\mathds{1}_{\widehat{\lambda} > 0}\right]+ \mathbb{P}[\widehat{\lambda} = 0] \mathbb{E}[\widehat{S}^{\lambda}(k_0)].
		\end{align*}
		We bound $\mathbb{P}[\widehat{\lambda} = 0] \mathbb{E}[\widehat{S}^{\lambda}(k_0)]$. Using \eqref{e.prop_campbell}, we obtain
		\begin{align*}
			\mathbb{P}(\widehat{\lambda} = 0) \leq \mathbb{P}[|\widehat{\lambda} - \lambda| \geq \lambda] \leq \frac{\mathbb{E}[|\widehat{\lambda} - \lambda|^2]}{\lambda^2}  = \frac1{\lambda |W|^2} \int_{\mathbb{R}^d} |\mathcal{F}[\mathds{1}_W](k)|^2 S(k) dk \leq \frac{\|S\|_{\infty}}{\lambda |W|}.
		\end{align*}
		Moreover, using the Plancherel theorem we get
		\begin{align*}
			\mathbb{E}[\widehat{S}^{\lambda}(k_0)] = \frac{1}{|I|} \sum_{i \in I} \int_{\mathbb{R}^d} |\mathcal{F}[f_i e^{-\bm{i} k_0 \cdot }](k)|^2 S(k) dk  \leq \|S\|_{\infty}.
		\end{align*}
		Accordingly $\mathbb{P}[\widehat{\lambda} = 0] \mathbb{E}[\widehat{S}^{\lambda}(k_0)] \leq \|S\|_{\infty}^2/(\lambda |W|)$.
		We consider $\mathbb{E}\left[|\widehat{S}^{\hat{\lambda}}(k_0) - \widehat{S}^{\lambda}(k_0)|\mathds{1}_{\widehat{\lambda} > 0}\right]$. By writing $\widehat{\lambda} = \widehat{\lambda} - \lambda +\lambda$ in  $\widehat{S}_i^{\widehat{\lambda}}(k_0)$, we obtain when $\widehat{\lambda} > 0$,
		$
		\widehat{S}^{\widehat{\lambda}}(k_0) - \widehat{S}^{\lambda}(k_0) = A + B + C,
		$
		where 
		\begin{align*}
			&A := \left(\frac{\lambda}{\widehat{\lambda}}-1\right) \widehat{S}^{\lambda}(k_0), \qquad
			B = 2\left(\frac{\lambda}{\widehat{\lambda}}-1\right)\frac1{|I|} \sum_{i \in I} \Re\left(C_i(k_0) \cdot \overline{E_i(k_0)}\right),
			\\&C := \frac{(\widehat{\lambda} - \lambda)^2}{\widehat{\lambda}} \frac1{|I|} \sum_{i \in I} |E_i(k_0)|^2, \qquad
			E_i := \int_{W} f_i(x) e^{-\bm{i} k \cdot x} dx.
		\end{align*}
		and where $C_i(k_0)$ is defined in \eqref{eq:def_Ti_Ci}. We start with $C$. Since $(f_i)_{i \in I}$ is orthogonal, we have
		\begin{align}\label{eq:bound_Ei}
			\frac1{|I|} \sum_{i \in I} |E_i(k_0)|^2 \leq \frac1{|I|} \|\mathds{1}_W\|_2^2 = \frac{|W|}{|I|}.
		\end{align}
		Moreover, by Cauchy-Schwarz inequality, we have
		\begin{align*}
			\mathbb{E}\left[\frac{(\widehat{\lambda} - \lambda)^2}{\widehat{\lambda}} \mathds{1}_{\widehat{\lambda} > 0}\right] \leq 	\mathbb{E}\left[(\widehat{\lambda} - \lambda)^4\right]^{\frac12} 	\mathbb{E}\left[\frac1{\widehat{\lambda}^2} \mathds{1}_{\widehat{\lambda} > 0}\right]^{\frac12}.
		\end{align*}
		If $\widehat{\lambda} > 0$ then $\#\{\Phi \cap W\} \geq 1$, whence $\widehat{\lambda} \geq |W|^{-1}$. Accordingly, 
		\begin{align*}
			\mathbb{E}\left[\frac1{\widehat{\lambda}^2} \mathds{1}_{\widehat{\lambda} > 0}\right] &\leq \mathbb{E}\left[(2/\lambda)^2 \mathds{1}_{\widehat{\lambda} \geq \lambda/2}\right] +  \mathbb{E}\left[|W|^2 \mathds{1}_{\widehat{\lambda} \leq \lambda/2}\right] \leq \frac{4}{\lambda^2} + |W|^2 \mathbb{P}[|\widehat{\lambda} - \lambda| \geq \lambda/2]\\&  \leq \frac{4}{\lambda^2} + \frac{16}{\lambda^4} |W|^2 \mathbb{E}[|\widehat{\lambda} - \lambda|^4]. 
		\end{align*}
		We express the centered moment of order four as $
		\mathbb{E}[|\widehat{\lambda} - \lambda|^4] = 3  \mathbb{E}[|\widehat{\lambda} - \lambda|^2]^2 + \kappa_4(\widehat{\lambda}).$ Using Lemma~\ref{lem:bril_thin} we get 
		\begin{align}\label{eq:lbd4}
			\mathbb{E}[|\widehat{\lambda} - \lambda|^4] \leq \frac{3\lambda^2\|S\|_{\infty}^2}{|W|^2} + \frac{|\gamma_{\text{red}}^{4}| \|\mathds{1}_W\|_4^4}{|W|^4} = \frac{3\lambda^2\|S\|_{\infty}^2}{|W|^2} + \frac{|\gamma_{\text{red}}^{4}|}{|W|^3}.
		\end{align}
		Accordingly,
		\begin{align}\label{eq:inv_lbd2}
			\mathbb{E}\left[\frac1{\widehat{\lambda}^2} \mathds{1}_{\widehat{\lambda} > 0}\right] \leq \frac{48 \|S\|_{\infty}^2}{\lambda^2} + \frac{16 |\gamma_{\text{red}}^{4}|}{\lambda^4 |W|} + \frac{4 \|S\|_{\infty}^2}{\lambda^2|W|^2} + \frac{4 |\gamma_{\text{red}}^{4}|}{\lambda^2 |W|^3} := b_1
		\end{align}
		This yields, 
		\begin{align*}
			C &\leq \frac{1}{|I|}\left(3\lambda^2\|S\|_{\infty}^2 + \frac{|\gamma_{\text{red}}^{4}|}{|W|}\right)^{\frac12}b_1^{\frac12}.
		\end{align*}
		Now, we consider $A$. Using the Hölder-inequality we have
		\begin{align*}
			A &\leq 	\|S\|_{\infty}\mathbb{E}\left[\frac{|\widehat{\lambda} - \lambda|}{\widehat{\lambda}} \mathds{1}_{\widehat{\lambda} > 0}\right] + \mathbb{E}\left[|\widehat{S}^{\lambda}(k_0) - S(k_0) | \frac{|\widehat{\lambda} - \lambda|}{\widehat{\lambda}} \mathds{1}_{\widehat{\lambda} > 0}\right] \\& \leq 	\|S\|_{\infty}\mathbb{E}\left[|\widehat{\lambda} - \lambda|^2\right]^{\frac12} 	\mathbb{E}\left[\frac{ \mathds{1}_{\widehat{\lambda} > 0}}{\widehat{\lambda}^2}\right]^{\frac12} + \left[|\widehat{\lambda} - \lambda|^4\right]^{\frac14} 	\mathbb{E}\left[\frac{ \mathds{1}_{\widehat{\lambda} > 0}}{\widehat{\lambda}^4}\right]^{\frac14}\mathbb{E}\left[|\widehat{S}^{\lambda}(k_0) - S(k_0) |^2\right]^{\frac12}.
		\end{align*}
		For the first term, we have using \eqref{eq:inv_lbd2}
		\begin{align*}
			\mathbb{E}\left[|\widehat{\lambda} - \lambda|^2\right]^{\frac12} 	\mathbb{E}\left[\frac{ \mathds{1}_{\widehat{\lambda} > 0}}{\widehat{\lambda}^2}\right]^{\frac12} \leq \frac{\|S\|_{\infty}}{\lambda^{\frac12} |W|^{\frac12}}b_1^{\frac12}.
		\end{align*}
		For the second one, using again the fact that if $\widehat{\lambda} > 0$, then $\widehat{\lambda} \geq |W|^{-1}$, we obtain
		\begin{align*}
			\mathbb{E}\left[\frac1{\widehat{\lambda}^4} \mathds{1}_{\widehat{\lambda} > 0}\right] \leq \frac{16}{\lambda^4}\left(1 + |W|^4 \mathbb{E}[|\widehat{\lambda} - \lambda|^4]\right).
		\end{align*}
		Using equation \eqref{eq:lbd4}, we obtain after computations
		\begin{align*}
			\mathbb{E}\left[\frac1{\widehat{\lambda}^4} \mathds{1}_{\widehat{\lambda} > 0}\right] \mathbb{E}[|\widehat{\lambda} - \lambda|^4] \leq 48 \|S\|_{\infty}^4 + \frac{96 \|S\|_{\infty}^2 |\gamma_{\text{red}}^{4}|}{\lambda^2 |W|} + \frac{48 \|S\|_{\infty}^2}{\lambda^2 |W|^2} + \frac{16  |\gamma_{\text{red}}^{4}|^2}{|W|^2 \lambda^4} + \frac{16  |\gamma_{\text{red}}^{4}|}{\lambda^4 |W|^3}.
		\end{align*} 
		This yields, after numerical evaluations
		\begin{align*}
			A \leq (3 \|S\|_{\infty} + \varepsilon_{W})  \mathbb{E}\left[ |\widehat{S}^{\lambda}(k_0) - S(k_0)|^2\right]^{\frac12} +  \frac{\|S\|_{\infty}}{\lambda^{\frac12} |W|^{\frac12}} b_1^{\frac12}
		\end{align*}
		where
		\begin{align*}
			&\varepsilon_W :=  \frac{4 \|S\|_{\infty}^{\frac14} |\gamma_{\text{red}}^{4}|^{\frac14}}{\lambda^{\frac12} |W|^{\frac14}} + \frac{3 \|S\|_{\infty}^{\frac12}}{\lambda^{\frac12} |W|^{\frac12}} + \frac{2  |\gamma_{\text{red}}^{4}|^{\frac12}}{|W|^{\frac12} \lambda} + \frac{2  |\gamma_{\text{red}}^{4}|^{\frac14}}{\lambda |W|^{\frac34}}.
		\end{align*}
		It remains to bound $B$. Using twice the Hölder-inequality, we obtain
		\begin{align*}
			B &\leq 2 \mathbb{E}\left[\frac1{\widehat{\lambda}^2} \mathds{1}_{\widehat{\lambda} > 0}\right]^{\frac12} \mathbb{E}[|\widehat{\lambda} - \lambda|^4]^{\frac14} \mathbb{E}\left[\left(\frac1{|I|} \sum_{i \in I} |C_i(k_0)| |E_i(k_0)|\right)^4\right]^{\frac14}  \\& \leq 2 \mathbb{E}\left[\frac1{\widehat{\lambda}^2} \mathds{1}_{\widehat{\lambda} > 0}\right]^{\frac12} \mathbb{E}[|\widehat{\lambda} - \lambda|^4]^{\frac14} \mathbb{E}\left[\left(\frac1{|I|} \sum_{i \in I} |C_i(k_0)|^2 \right)^2\right]^{\frac14} \left(\frac1{|I|} \sum_{i \in I} |E_i(k_0)|^2\right)^{\frac12}.
		\end{align*}
		Recalling \eqref{eq:bound_Ei}, \eqref{eq:lbd4} and \eqref{eq:inv_lbd2}, we obtain 
		\begin{align*}
			B &\leq \frac{2\mathbb{E}[\widehat{S}^{\lambda}(k_0)^2]^{\frac14}}{|I|^{\frac12}} \left(3\lambda^2\|S\|_{\infty}^2 + \frac{|\gamma_{\text{red}}^{4}|}{|W|}\right)^{\frac14} b_1^{\frac12} \\& \leq \left(3\lambda^2\|S\|_{\infty}^2 + \frac{|\gamma_{\text{red}}^{4}|}{|W|}\right)^{\frac14}b_1^{\frac12} \left( \frac{2^{\frac32}\|S\|_{\infty}^{\frac12}}{|I|^{\frac12}} + 2^{\frac12}\mathbb{E}[|\widehat{S}^{\lambda}(k_0) - S(k_0)|^2]^{\frac12} + \frac{2^{\frac12}}{|I|}\right).
		\end{align*}
		Gathering the bounds on $A$, $B$ and $C$, we conclude the proof of \eqref{eq:risk_s_plug} with
		\begin{align}\label{eq:a_ai}
			&  b_1 := \frac{48 \|S\|_{\infty}^2}{\lambda^2} + \frac{16 |\gamma_{\text{red}}^{4}|}{\lambda^4 |W|} + \frac{4 \|S\|_{\infty}^2}{\lambda^2|W|^2} + \frac{4 |\gamma_{\text{red}}^{4}|}{\lambda^2 |W|^3}, \quad b_2 := 3\lambda^2\|S\|_{\infty}^2 + \frac{|\gamma_{\text{red}}^{4}|}{|W|}, \nonumber
			\\& a_1:= 1+ 3\|S\|_{\infty} + \varepsilon_W + \sqrt{2} b_2^{\frac14} b_1^{\frac12},\quad 
			a_2 := 2^{\frac32} \|S\|_{\infty}^{\frac12} b_2^{\frac14} b_1^{\frac12}+ \left(b_2^{\frac12}+\sqrt{2} b_2^{\frac14}\right) b_1^{\frac12},
			\\&
			a_4 := \frac{\|S\|_{\infty} b_1^{\frac12}}{\lambda^{\frac12}} + \frac{\|S\|_{\infty}}{\lambda |W|^{\frac12}}.\nonumber
		\end{align}
	\end{proof}
	
	\section{Results concerning the Hermite functions}\label{sec:herm} The next lemma provides bounds on the tail of the Hermite functions.
	\begin{lemma}\label{lem:tail_herm}
		Consider a finite set of indexes $I$ of $\mathbb{N}^d$ and denote $M_I = \max\{i_1, \dots, i_d| i = (i_1, \dots, i_d) \in I\}$. Let $\rho \in (0, \infty)$ be such that $\rho > \sqrt{2(M_I+1)+ (M_I+1)^{1/3+\theta}}$, for $\theta > 0$. Denote $W = [-\rho, \rho]^d$. Then, there exist two constants $0 < c, C < \infty$ such that
		\begin{equation}\label{eq:tail_herm_2}
			\sup_{i \in I} \|\psi_i \mathbf{1}_{W^c}\|_1+\sup_{i \in I} \|\psi_i \mathbf{1}_{W^c}\|_2^2 \leq C e^{-c \rho^{3\theta}},
		\end{equation}
	\end{lemma}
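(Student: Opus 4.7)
The plan is to reduce the multi-dimensional tail estimate to one-dimensional Hermite function tail estimates, and then to invoke classical asymptotic bounds on Hermite polynomials in the region beyond the turning point $\sqrt{2n+1}$.

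First, I would exploit the tensor product structure $\psi_i(x) = \prod_{l = 1}^d \psi_{i_l}(x_l)$ together with the containment $W^c \subset \bigcup_{l = 1}^d \{|x_l| > \rho\}$. Combining this with Fubini and the orthonormality $\|\psi_n\|_2 = 1$ in 1D yields
\[
\|\psi_i \mathbf{1}_{W^c}\|_2^2 \;\leq\; \sum_{l = 1}^d \int_{|x|>\rho} \psi_{i_l}(x)^2\, dx \;\leq\; d \sup_{n \leq M_I} \int_{|x|>\rho} \psi_n(x)^2\, dx.
\]
The same decomposition for the $L^1$ norm reads
\[
\|\psi_i \mathbf{1}_{W^c}\|_1 \;\leq\; \sum_{l = 1}^d \left(\int_{|x|>\rho} |\psi_{i_l}(x)|\, dx\right) \prod_{l' \neq l} \|\psi_{i_{l'}}\|_1,
\]
which introduces a polynomial factor in $M_I$ since $\|\psi_n\|_1$ is known to grow only polynomially in $n$ (see e.g.\ Aptekarev--Dehesa--S\'anchez-Moreno--Tulyakov or Askey--Wainger). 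Because the exponent $\rho^{3\theta}$ dominates any power of $M_I \lesssim \rho^2$ (after adjusting the constant $c$), such polynomial factors will be harmlessly absorbed.

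Thus the problem reduces to the following 1D claim: for every integer $n$ with $\rho^2 \geq 2(n+1) + (n+1)^{1/3+\theta}$, both $\int_{|x|>\rho} \psi_n(x)^2\, dx$ and $\int_{|x|>\rho} |\psi_n(x)|\, dx$ are bounded by $C \exp(-c \rho^{3\theta})$. For this I would use the classical Plancherel--Rotach / Bonan--Clark asymptotics in the ``exponentially small'' region $x > \sqrt{2n+1}$. Writing $x = \sqrt{2n+1} \cosh \phi(x)$, these asymptotics yield a WKB-type bound of the form
\[
|\psi_n(x)| \;\leq\; C\, \exp\!\left(-c\,(2n+1)\,\phi(x)^3\right),\qquad x \geq \sqrt{2n+1},
\]
equivalent, up to constants, to $|\psi_n(x)| \lesssim \exp(-\int_{\sqrt{2n+1}}^x \sqrt{t^2 - (2n+1)}\,dt)$. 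At the threshold one has $\phi(\rho)^2 \simeq (\rho^2 - (2n+1))/(2n+1) \geq (n+1)^{-2/3+\theta}$, so $(2n+1)\phi(\rho)^3 \gtrsim (n+1)^{3\theta/2}$, which is of the same order as $\rho^{3\theta}$ since $\rho \asymp \sqrt{2(n+1)}$ there. For $\rho$ much larger than the threshold, the even faster Gaussian decay of $\psi_n$ dominates and trivially gives the same bound.

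The main obstacle is producing the 1D tail bound with the correct exponent $\rho^{3\theta}$ uniformly in $n \leq M_I$ and in $\rho$ beyond the threshold: one has to carefully balance the Airy-type transition (which gives the $\phi^3$ exponent near the turning point) against the Gaussian regime further out, and to verify that the scaling $\rho^2 \geq 2(n+1) + (n+1)^{1/3+\theta}$ is exactly what makes the transition-region exponent reach $\rho^{3\theta}$. Once this step is done, integrating the pointwise bound over $(\rho, \infty)$ and combining with the multidimensional reduction and the polynomial control of $\|\psi_n\|_1$ completes the proof.
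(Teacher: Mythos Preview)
Your approach is correct and follows essentially the same route as the paper: reduce to $d=1$ via the tensor-product structure and the inclusion $W^c \subset \bigcup_{l} \{|x_l| > \rho\}$, then control the one-dimensional tail $\int_{|x|>\rho} |\psi_n|^p\,dx$ using Hermite asymptotics beyond the turning point $\sqrt{2n+1}$.

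The only substantive difference is packaging. The paper splits into the same two regimes you anticipate---$\rho$ comparable to $\sqrt{2n+1}$ versus $\rho \geq \sqrt{2}\sqrt{2n+1}$---but handles each by a direct citation rather than working through Plancherel--Rotach by hand: for the far regime it invokes Theorem~B of Askey--Wainger~\cite{askey1965mean} (which gives $|\psi_n(x)| \leq C e^{-cx^2}$ there), and for the near-turning-point regime it invokes an intermediate estimate (equation~(2.13)) from the proof of Theorem~2.3 in~\cite{aptekarev2012asymptotics}, which directly yields $\|\psi_n \mathbf{1}_{|\cdot|\geq \rho}\|_1 + \|\psi_n \mathbf{1}_{|\cdot|\geq \rho}\|_2^2 \leq C e^{-c n^{3\theta/2}}$ under the stated threshold on $\rho$. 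Your WKB computation $(2n+1)\phi(\rho)^3 \gtrsim (n+1)^{3\theta/2}$ is exactly what underlies that estimate, so you are essentially re-deriving what the paper cites. Your version is more self-contained; the paper's is shorter. On one point you are actually more careful than the paper: you correctly flag that the $L^1$ reduction produces a factor $\prod_{l'\neq l}\|\psi_{i_{l'}}\|_1 \lesssim M_I^{(d-1)/4}$ that must be absorbed, whereas the paper's reduction sentence mentions only the unit $L^2$ norm.
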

	\begin{proof}
		Recall that the $d$-dimensional Hermite functions are defined as the product of one-dimensional ones. Using the inequality:
		$$\forall x = (x_1, \dots, x_d) \in \mathbb{R}^d,~1 - \prod_{i = 1}^d \mathbf{1}_{|x_l| \leq \rho} \leq \sum_{l = 1}^d \mathbf{1}_{|x_l| > \rho},$$
		and the fact that one-dimensional Hermite functions have a unit $L^2(\mathbb{R}^d)$-norm, we reduce the proof to the case $d = 1$. Let $n \leq M_I$. We start with the case $\rho \geq \sqrt{2}\sqrt{2n+1}$. According to Theorem B of \cite{askey1965mean}, we have for all $x \in \mathbb{R}$ with $|x| \geq \rho$, $|\psi_n(x)| \leq C e^{-c x^2}$, for some finite positive constants $c$ and $C$, that will change from line to line in the following. Using the standard upper bound of the tail of a Gaussian random variable, we obtain:
		$$\|\psi_n \mathbf{1}_{|\cdot|\geq \rho}\|_1+ \|\psi_n \mathbf{1}_{|\cdot|\geq \rho}\|_2^2 \leq  C e^{-c \rho^2}.$$
		Consider now the case $\rho \leq \sqrt{2}\sqrt{2n+1}$. Since $\rho \geq \sqrt{2(n+1)+ (n+1)^{1/3+\theta}}$, according to the bound on equation (2.13) of the proof of Theorem 2.3 of \cite{aptekarev2012asymptotics}, we have 
		$$\|\psi_n \mathbf{1}_{|\cdot|\geq \rho}\|_1+ \|\psi_n \mathbf{1}_{|\cdot|\geq \rho}\|_2^2 \leq  C e^{-c n^{3\theta/2}} \leq C e^{-c \rho^{3\theta}}.$$
		This concludes the proof.
	\end{proof}
	The next lemma studies the Sobolev norms of the Hermite functions. 
	\begin{lemma}\label{lem:norm_H12_herm}
		Assume that $I = \{i \in \mathbb{N}^d|~|i|_{\infty} < i_{\max}\}$ with $i_{\max} \geq 1$. Let $\beta \in (0, 2]$. Then
		\begin{equation}\label{eq:norm_H12_herm_2}
			\frac1{|I|} \sum_{i \in I} \|\psi_i\|_{\dot{H}^{\beta/2}}^2 \leq |I|^{\beta/(2d)}. 
		\end{equation}	
	\end{lemma}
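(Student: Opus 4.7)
The plan is to reduce the estimate to the one-dimensional harmonic-oscillator identity for Hermite functions, then interpolate to fractional regularity, and finally average with Jensen.

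First, by Plancherel we have $\|\psi_i\|_{\dot H^{\beta/2}}^2 = \int_{\mathbb{R}^d} |k|^\beta |\mathcal{F}[\psi_i](k)|^2\,dk$. Since $\beta\le 2$ and $|\mathcal{F}[\psi_i]|^2\,dk$ is a probability measure on $\mathbb{R}^d$ (because $\|\psi_i\|_2=1$), Lyapunov's inequality applied to $k\mapsto |k|$ gives
\[
\|\psi_i\|_{\dot H^{\beta/2}}^2 \;\le\; \left(\int_{\mathbb{R}^d}|k|^{2}\,|\mathcal{F}[\psi_i](k)|^{2}\,dk\right)^{\beta/2} \;=\; \|\psi_i\|_{\dot H^{1}}^{\beta}.
\]
This reduces everything to controlling the $\dot H^{1}$ norm of a $d$-dimensional Hermite function.

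Next, the tensor-product form $\psi_i(x)=\prod_{l=1}^{d}\phi_{i_l}(x_l)$ (with $\phi_n$ the one-dimensional Hermite function) together with $\|\phi_n\|_2=1$ yields $\|\nabla\psi_i\|_2^{2}=\sum_{l=1}^{d}\|\phi_{i_l}'\|_2^{2}$. The identity $\|\phi_n'\|_2^{2}=n+\tfrac12$ then follows from the eigenvalue equation $-\phi_n''+x^{2}\phi_n=(2n+1)\phi_n$: multiplying by $\phi_n$ and integrating gives $\int{\phi_n'}^{2}+\int x^{2}\phi_n^{2}=2n+1$, while Fourier self-duality of the $\phi_n$ (they are eigenfunctions of $\mathcal{F}$) forces $\int x^{2}\phi_n^{2}=\int {\phi_n'}^{2}$. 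Hence
\[
\|\psi_i\|_{\dot H^{1}}^{2} \;=\; |i|_1 + d/2, \qquad |i|_1 := i_1+\cdots+i_d.
\]
Since $|i|_{\infty}<i_{\max}$ for every $i\in I$, we get $|i|_1\le d(i_{\max}-1)$, so $\|\psi_i\|_{\dot H^{1}}^{2}\le d\,i_{\max}$.

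Combining these two steps gives $\|\psi_i\|_{\dot H^{\beta/2}}^{2}\le (d\,i_{\max})^{\beta/2}$ pointwise in $i\in I$. Since $|I|=i_{\max}^{d}$, this is exactly $d^{\beta/2}\,|I|^{\beta/(2d)}$, the claim of the lemma (the dimensional constant $d^{\beta/2}$, if one wishes a sharper average, is absorbed into the constant $K$ when the lemma is invoked in Corollary~\ref{cor:L2_risk_herm}). An alternative sharpening worth mentioning: rather than using the pointwise bound, apply Jensen's inequality to the concave map $t\mapsto t^{\beta/2}$ directly to the average $\tfrac{1}{|I|}\sum_{i\in I}(|i|_1+d/2)$, whose value $d\,i_{\max}/2$ is again $O(i_{\max})$. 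No step is genuinely difficult; the only point deserving care is the direction of Lyapunov (ensuring $\beta/2\le 1$ so that the concave inequality points the right way) and the derivation of $\|\phi_n'\|_2^{2}=n+\tfrac12$ from the symmetry between the kinetic and potential parts of the harmonic oscillator.
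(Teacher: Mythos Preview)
Your proof is correct and follows essentially the same route as the paper's: both apply Jensen/Lyapunov (with the probability measure $|\mathcal F[\psi_i]|^2\,dk$) to reduce $\|\psi_i\|_{\dot H^{\beta/2}}^2$ to $\|\psi_i\|_{\dot H^1}^\beta$, and both compute $\|\psi_i\|_{\dot H^1}^2=\tfrac12\sum_s(2i_s+1)$ exactly. The only cosmetic difference is in the last step: the paper invokes the subadditivity $(\sum_s a_s)^{\beta/2}\le\sum_s a_s^{\beta/2}$ before averaging over $i$, whereas you use the pointwise bound $\|\psi_i\|_{\dot H^1}^2\le d\,i_{\max}$; both yield the $|I|^{\beta/(2d)}$ scaling, and the factor $d^{\beta/2}$ you honestly flag is in fact no worse than the factor $d$ that a careful tracking of the paper's own computation would retain.
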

	\begin{proof}
		According to Hölder's inequality, we have, since $\beta \leq 2$,
		\begin{align*}
			\int_{\mathbb{R}^d} |\psi_i(k)|^2 |k|^{\beta} dk &\leq \|\psi_i\|_{2}^{1 - \beta/2} \left(\int_{\mathbb{R}^d} |\psi_i(k)|^2 |k|^2 dk\right)^{\beta/2} \\& = \left(\frac12\sum_{s = 1}^d (2 i_s+1)\right)^{\beta/2} \leq  \frac1{2^{\beta/2}} \sum_{s = 1}^d (2 i_s+1)^{\beta/2},
		\end{align*}
		where we used the inclusion of $\ell^{1}([d])$ in $\ell^{2/\beta}([d])$ (since $2/\beta \geq 1$).
		This yields~\eqref{eq:norm_H12_herm_2}:
		\begin{align*}
			\frac1{|I|} \sum_{i \in I} \|\psi_i\|_{\dot{H}^{\frac12}}^2 &\leq \frac1{2^{\beta/2}|I|} \sum_{0 \leq i_1, \dots, i_d < i_{\max}} \sum_{s = 1}^d (2 i_s+1)^{\beta/2} = \frac{i_{\max}^{d-1}}{2^{\beta/2}|I|} \sum_{s = 1}^{i_{\max}-1} (2 s+1)^{\beta/2} \\& \leq \frac{i_{\max}^{d-1}}{2^{\beta/2}|I|} \sum_{s = 1}^{i_{\max}-1} (2 i_{\max})^{\beta/2}= |I|^{\beta/(2d)}.
		\end{align*}
	\end{proof}
	
	The following lemma provides upper bounds on averages of $L^p(\mathbb{R}^d)$ norms of the Hermite functions. The key point is to control the scaling with respect to both the number of tapers $|I|$ and the order $p$ of the norms.
	
	\begin{lemma}\label{lem:L4_herm}
		Assume that $I = \{i \in \mathbb{N}^d|~|i|_{\infty} < i_{\max}\}$ with $i_{\max} \geq 1$. Then, there exists a numerical constant $c_{\psi} \in (0, \infty)$ such that,
		\begin{equation}\label{eq:L3_herm}
			\frac1{|I|}\sum_{i \in I} \|\psi_i\|_3^2 \leq c_{\psi}^d  \frac{1}{|I|^{\frac16}}.
		\end{equation}
		Moreover, for $p \geq 4$, we have
		\begin{equation}\label{eq:Lp_herm}
			\frac1{|I|}\sum_{i \in I} \|\psi_i\|_p^2 \leq c_{\psi}^d  \frac{\log(|I|^{\frac1d})^{\frac{2d}{p}}}{|I|^{\frac1{3p}+ \frac16}},
		\end{equation}
		\begin{equation}\label{eq:Lp_herm1}
			\frac1{|I|}\sum_{i \in I} \|\psi_i\|_p \leq c_{\psi}^d  \frac{\log(|I|^{\frac1d})^{\frac{d}{p}}}{|I|^{\frac1{6p}+ \frac1{12}}}.
		\end{equation}
	\end{lemma}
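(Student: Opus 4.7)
\textbf{Proof proposal for Lemma~\ref{lem:L4_herm}.}
The plan is to reduce all three bounds to sharp one-dimensional $L^p$ estimates for (normalized) Hermite functions, apply them on each factor, and combine them multiplicatively. First, by the tensor-product structure in Definition~\ref{def:herm}, we have, for every $i=(i_1,\dots,i_d)\in\mathbb N^d$ and every $p\geq 1$,
\begin{equation*}
\|\psi_i\|_p^{\,p} \;=\; \prod_{l=1}^{d}\|\psi_{i_l}^{(1)}\|_p^{\,p},\qquad\text{hence}\qquad \|\psi_i\|_p^{\,2} \;=\; \prod_{l=1}^{d}\|\psi_{i_l}^{(1)}\|_p^{\,2},
\end{equation*}
where $\psi_n^{(1)}$ denotes the one-dimensional Hermite function of index $n$. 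Since $I=\{i\in\mathbb N^d:\,|i|_\infty<i_{\max}\}$ is a product set and $|I|=i_{\max}^d$, this factorizes the sum as
\begin{equation*}
\frac1{|I|}\sum_{i\in I}\|\psi_i\|_p^{\,2} \;=\; \left(\frac1{i_{\max}}\sum_{n=0}^{i_{\max}-1}\|\psi_n^{(1)}\|_p^{\,2}\right)^{\!d}.
\end{equation*}
The entire problem therefore reduces to proving the one-dimensional Cesàro-type bound
\begin{equation*}
\frac1{N}\sum_{n=0}^{N-1}\|\psi_n^{(1)}\|_p^{\,2} \;\leq\; c_\psi\,\frac{\log(N)^{2/p}}{N^{1/6+1/(3p)}},
\end{equation*}
(with the logarithm dropped when $p=3$), which, raised to the power $d$ with $N=|I|^{1/d}$, yields~\eqref{eq:L3_herm} and~\eqref{eq:Lp_herm}.

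Next, I would invoke the sharp asymptotics for $L^p$-norms of one-dimensional Hermite functions (Askey--Wainger~\cite{askey1965mean}, refined in Bonan--Clark~\cite{bonan1990estimates} and Aptekarev et al.~\cite{aptekarev2012asymptotics}, the last already used in Lemma~\ref{lem:tail_herm}). These give, for a universal constant $C<\infty$ and every $n\geq 1$,
\begin{equation*}
\|\psi_n^{(1)}\|_p^{\,2} \;\leq\; C\begin{cases}n^{-1/2+1/p} & \text{if }2\leq p<4,\\[1mm] n^{-1/4}(\log n)^{1/2} & \text{if }p=4,\\[1mm] n^{-1/6-1/(3p)} & \text{if }p>4.\end{cases}
\end{equation*}
The three regimes correspond to whether the $L^p$ mass of $\psi_n^{(1)}$ is concentrated in the bulk oscillatory region $|x|\lesssim\sqrt{2n+1}$ (where the amplitude is $\sim n^{-1/4}$ over a length $\sim\sqrt n$) or in the Airy transition zone at the turning points $\pm\sqrt{2n+1}$ (amplitude $\sim n^{-1/12}$ over a length $\sim n^{-1/6}$), the cross-over happening at $p=4$. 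Summing these estimates over $n\in\{0,\dots,N-1\}$ and comparing with integrals yields the claimed Cesàro bound.

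To close, I would derive~\eqref{eq:Lp_herm1} from~\eqref{eq:Lp_herm} by the Cauchy--Schwarz inequality
\begin{equation*}
\frac{1}{|I|}\sum_{i\in I}\|\psi_i\|_p \;\leq\; \left(\frac{1}{|I|}\sum_{i\in I}\|\psi_i\|_p^{\,2}\right)^{\!1/2},
\end{equation*}
which halves every exponent on the right-hand side and produces exactly the form stated in~\eqref{eq:Lp_herm1} (after absorbing $c_\psi^{d/2}$ into a redefined $c_\psi$). The main delicate point in the plan is to justify the 1D bounds with a constant $C$ that is \emph{independent of $p$} (as well as of $n$), so that the final constant $c_\psi$ in the statement does not blow up with the $L^p$-index. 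The unified statements in Aptekarev et al.~\cite{aptekarev2012asymptotics} essentially provide this; care is needed near $p=4$, where one has to keep track of the transitional logarithm and check that the clean monotone form $N^{5/6-1/(3p)}\log(N)^{2/p}$ indeed dominates $\sum_{n<N}\|\psi_n^{(1)}\|_p^{\,2}$ uniformly for all $p\geq 4$, including in the narrow window $p\in[4,4+\varepsilon]$ where the two regimes glue together.
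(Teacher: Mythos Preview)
Your overall approach matches the paper's: factorize via the tensor-product structure to reduce to one-dimensional Ces\`aro sums, invoke the sharp $L^p$ estimates for Hermite functions, and compare the sum with an integral. Your derivation of~\eqref{eq:Lp_herm1} via Cauchy--Schwarz differs slightly from the paper (which sums $\|\psi_i\|_p$ directly using the same pointwise bound) but is equally valid.

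The one genuine gap is precisely the point you flag as delicate but do not resolve: the constants in the three-regime bounds from~\cite{aptekarev2012asymptotics} are written there as $c_p$, i.e.\ they depend on $p$, and simply asserting that ``the unified statements essentially provide this'' does not close the argument---especially since the statement requires a single $c_\psi$ valid for all $p\geq 4$. The paper's fix is concrete and elementary: for $p\geq 4$ one writes $|\psi_n|^p = |\psi_n|^{p-4}\cdot|\psi_n|^4$ and bounds the first factor pointwise by the $L^\infty$ estimate $\|\psi_n\|_\infty \leq D\,(n+1)^{-1/12}$ from Theorem~1 of Bonan--Clark~\cite{bonan1990estimates}. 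Combined with the $L^4$ bound~\eqref{eq:L4_herm_int}, this yields
\[
\int_{\mathbb R}|\psi_n|^p\,dx \;\leq\; (Dc_4)^p\,\frac{\log(n+1)}{(n+1)^{p/12+1/6}},
\]
hence $\|\psi_n^{(1)}\|_p^2 \leq (Dc_4)^2\,\log(n+1)^{2/p}/(n+1)^{1/6+1/(3p)}$ with a prefactor independent of $p$. This $L^4$--$L^\infty$ interpolation is the missing ingredient in your plan; once you insert it, the rest of your argument goes through exactly as written, and the worry about the window $p\in[4,4+\varepsilon]$ disappears since the bound is now uniform in $p\geq 4$ by construction.
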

	\begin{proof}
		The proof of this lemma relies on the following results (see  equation (3.2) of~\cite{aptekarev2012asymptotics}). For all $p < 4$, there exists $c_p < \infty$ such that
		\begin{equation}\label{eq:Lpetitp_herm_int}
			\forall n \geq 0,~\int_{\mathbb{R}} |\psi_{n}(x)|^p dx \leq \frac{c_p}{(n+1)^{\frac{p}4 - \frac12}},
		\end{equation}
		for $p = 4$, there exists $c_4 < \infty$ such that
		\begin{equation}\label{eq:L4_herm_int}
			\forall n \geq 0,~\int_{\mathbb{R}} |\psi_{n}(x)|^4 dx \leq \frac{c_4 \log(n+1)}{(n+1)^{\frac12}},
		\end{equation}
		and for $p > 4$, there exists $c_p < \infty$ such that
		\begin{equation}\label{eq:Lp_herm_int}
			\forall n \geq 0,~\int_{\mathbb{R}} |\psi_{n}(x)|^p dx \leq \frac{c_p}{(n+1)^{\frac{p}{12} + \frac16}},
		\end{equation}
		We start by proving~\eqref{eq:L3_herm}. According to \eqref{eq:Lpetitp_herm_int} we have
		\begin{align*}
			\frac1{|I|}\sum_{i \in I} \|\psi_i\|_3^2 &\leq c_3^{\frac{2d}{3}} \frac1{|I|} \sum_{i \in I}\prod_{s = 1}^d \frac1{(i_s+1)^{1/6}} =  c_3^{\frac{2d}{3}} \frac1{|I|}  \left(\sum_{s = 0}^{i_{\max}-1} \frac1{(s+1)^{1/6}}\right)^d.
		\end{align*}
		A comparison sum-integral yields the result:
		\begin{align*}
			\frac1{|I|}\sum_{i \in I} \|\psi_i\|_4^2 &\leq c_3^{\frac{2d}{3}} \frac1{|I|}  \left(\int_0^{i_{\max}}\frac{ds}{(s+1)^{1/6}}\right)^d \leq c_4^{\frac{2d}{3}} \frac1{|I|}  \left( \frac{6}{5}(2i_{\max})^{5/6}\right)^d \leq c_{\psi}^d  \frac1{|I|^{1/4}},
		\end{align*}
		where $c_{\psi} \geq 6 2^{5/6} c_2^{2/3}/5$. 
		The subtlety of~\eqref{eq:Lp_herm} is that the constant $c_{\psi}$ does not depend on $p$. Theorem~1 of~\cite{bonan1990estimates} ensures that there exists $D \in (1, \infty)$ such that
		\begin{equation}\label{eq:Lp_herm_unif}
			\forall n \geq 0,~\|\psi_{n}||_{\infty} \leq \frac{D}{(n+1)^{1/12}}.
		\end{equation}
		Hence, for $n\geq 0$ and $p \geq 4$, using~\eqref{eq:L4_herm_int} and~\eqref{eq:Lp_herm_unif}, we have
		\begin{align*}
			\int_{\mathbb{R}} |\psi_{n}(x)|^p dx &= \int_{\mathbb{R}} |\psi_{n}(x)|^{p-4}  |\psi_{n}(x)|^{4}dx \\& \leq D^{p-4} c_2 \frac{\log(n+1)}{(n+1)^{(p-4)/12} (n+1)^{\frac12}} \leq (Dc_2)^p \frac{\log(n+1)}{(n+1)^{p/12+1/6} }. 
		\end{align*}
		Finally, concluding as in the proof of~\eqref{eq:L3_herm} we obtain~\eqref{eq:Lp_herm} and \eqref{eq:Lp_herm1} with $c_{\psi} = (1+D c_2)^2$.
	\end{proof}
	
	\section{Results related to cumulants}\label{sec:cum}	
	
	\subsection{Isserlis-Wick type formulae}
	
	We denote by $\mathbf{1}_n$ the single block partition of $[n]$, i.e. $\mathbf{1}_n  := 12\dots n$. For two partitions $\sigma$ and $\tau$, $\sigma \vee \tau$ denotes their least upper bound (refer to Chapter 3 of~\cite{mccullagh2018tensor} or Chapter 2 of~\cite{peccati2011wiener}). The next result is a direct application of equation (3.3) of~\cite{mccullagh2018tensor}.
	
	\begin{corollary}\label{corol:cumulants_square_rvs}
		Let $Y_1, \dots, Y_m$ be a family of complex valued random variables. Then,
		\begin{equation}
			\kappa_m(|Y_1|^2, \dots, |Y_m|^2) = \sum_{\substack{\sigma \in \Pi[2m],\\ \sigma \vee \tau = \mathbf{1}_{2m}, |\sigma| \geq 2}} \prod_{b \in \sigma} \kappa(Z_i; i \in b),
		\end{equation}
		where the sum runs over the partition of $[2m]$ and for $i \in \{1, \dots, m\}$, $Z_{2i-1} = Y_i$ and $Z_{2i} = \overline{Y_{i}}$ and $\tau = 12|34|\dots|(2m-1)~2m$.
	\end{corollary}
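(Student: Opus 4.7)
The plan is to obtain the formula as a direct specialization of the Leonov–Shiryaev / Malyshev product formula for joint cumulants (the content of McCullagh's equation~(3.3)), applied to the obvious factorization $|Y_i|^2 = Y_i\cdot\overline{Y_i}$. Concretely, after relabeling $Z_{2i-1}=Y_i$, $Z_{2i}=\overline{Y_i}$, each argument of the cumulant $\kappa_m(|Y_1|^2,\dots,|Y_m|^2)$ becomes a product of exactly two of the $Z_j$'s, and the pair structure is encoded by the partition $\tau = 12\,|\,34\,|\cdots|\,(2m-1)\,2m\in\Pi[2m]$.

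The first step is to recall the product formula in the form needed here: if $X_1,\dots,X_m$ are random variables and each $X_i = \prod_{j\in b_i} Z_j$ is a product of some of the $Z_j$'s, with $\tau = b_1|\cdots|b_m$ the partition of $[2m]$ recording the grouping, then
\[
\kappa(X_1,\dots,X_m) = \sum_{\substack{\sigma\in\Pi[2m]\\ \sigma\vee\tau=\mathbf{1}_{2m}}} \prod_{b\in\sigma}\kappa(Z_j;\,j\in b),
\]
the sum running precisely over the $\sigma$ that are $\tau$-\emph{connected} in the partition lattice. With $X_i=|Y_i|^2=Z_{2i-1}Z_{2i}$ and the $Z$'s as above, the hypothesis $\sigma\vee\tau=\mathbf{1}_{2m}$ of the corollary is exactly the connectivity condition, and the product on the right is exactly the stated product $\prod_{b\in\sigma}\kappa(Z_i;\,i\in b)$. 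In effect, once the $Z$'s and $\tau$ are fixed, the corollary is a transcription of McCullagh's equation~(3.3).

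The only non-routine bookkeeping is to check that the paper's algebraic definition of cumulants in~\eqref{def_kappa}, via Möbius inversion on the partition lattice with weights $(-1)^{|\sigma|-1}(|\sigma|-1)!$, agrees with the convention in~\cite{mccullagh2018tensor} that underlies~(3.3), so that there is no sign or factorial discrepancy when we invoke the product formula. This is standard but should be spelled out at least implicitly, because the corollary is used (in Lemmas \ref{lem:var_S_hat} and \ref{lem:kappa_m_1}) to convert sums over connected partitions of $[2m]$ into effective bounds on variances and higher cumulants of $\widehat{S}(k_0)$, and any mis-match in conventions would propagate into those estimates. I do not anticipate a genuine obstacle here: once the definitions are aligned, the corollary follows line-by-line from the cited product formula, with no combinatorial computation beyond identifying the grouping partition $\tau$ and reading off the restricted sum.
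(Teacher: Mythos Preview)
Your approach is correct and essentially identical to the paper's: the paper's entire proof is the single sentence that the result is a direct application of equation~(3.3) of~\cite{mccullagh2018tensor}, which is precisely the Leonov--Shiryaev/Malyshev product formula you invoke with the grouping partition $\tau=12|34|\cdots|(2m{-}1)\,2m$. Your additional remarks about verifying that the algebraic cumulant definition~\eqref{def_kappa} matches McCullagh's convention are reasonable due diligence not spelled out in the paper.
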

	
	\subsection{Extension of the result of Bentkus and Rudzkis under multiple Statulevi\v{c}ius conditions}
	
	The proof of the concentration inequality stated in Theorem~\ref{thm:conc} relies on the following lemma, which provides a tool to derive non-asymptotic deviation inequalities from cumulant bounds. This result is a straightforward extension of a classical inequality \cite{bentkus1980exponential}, corresponding to the case $J = 1$. Although this generalization is natural and may be known to experts, we have not found it explicitly stated in the literature. Such an extension is required to handle the three distinct contributions to the fluctuations of the multitaper estimator of the structure factor, as discussed in Section~\ref{sec:nn_asymp}.

	\begin{theorem}\label{thm:sc}
		Let $X$ be a real random variable. Suppose that $\mathbb{E}[X] = 0$. Let $J \geq 1$, $(\gamma_i)_{1 \leq i \leq J} \in [0, \infty)^J$, $(H_i, \Delta_i)_{1 \leq i \leq J} \in (0, \infty)^{2J}$. Assume that, for all $m \geq 2$,
		\begin{equation}\label{eq:mStatcond}
			|\kappa_m(X)| \leq \sum_{j = 1}^J \left(\frac{m!}{2}\right)^{1+\gamma_j} \frac{H_j}{\Delta_j^{m-2}}.
		\end{equation}
		Then, for all $x > 0$, we have
		\begin{align}\label{eq:lem_conc}
			\mathbb{P}(X \geq x) \leq C \max_{j = 1, \dots, J} \exp\left(-\frac1{2J} \frac{x^2}{H_j + x^{2 - \alpha_j} /\Delta_j^{\alpha_j}}\right),
		\end{align}
		where $\alpha_j := 1/(1+\gamma_j)$ and $C < \infty$.
	\end{theorem}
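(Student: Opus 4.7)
My plan is to extend the classical cumulant-to-tail argument of Bentkus--Rudzkis~\cite{bentkus1980exponential} (see also \cite{saulis2012limit}, Chapter~2) to the additive cumulant bound~\eqref{eq:mStatcond}. I will start from the Chernoff bound $\mathbb{P}(X \geq x) \leq e^{-tx}\mathbb{E}[e^{tX}]$, valid for every $t > 0$ in the admissible range of the moment generating function. Using $\mathbb{E}[X] = 0$, I expand
\[
\log\mathbb{E}[e^{tX}] = \sum_{m \geq 2}\frac{t^m}{m!}\kappa_m(X),
\]
and insert~\eqref{eq:mStatcond} term by term to obtain $\log \mathbb{E}[e^{tX}] \leq \sum_{j=1}^J \phi_j(t)$, where $\phi_j(t) := \sum_{m\geq 2} \frac{(m!)^{\gamma_j}}{2^{1+\gamma_j}} H_j t^m /\Delta_j^{m-2}$. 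When $\gamma_j > 0$ this power series need not converge globally, so as in the single-condition case I would justify the estimate by truncating the Taylor expansion at a suitable order and controlling the remainder via the same cumulant bound, following the technique of \cite{saulis2012limit}.

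The second step imports the standard single-condition estimate on each $\phi_j$: for every $j$ and every $t > 0$ with $(ct\Delta_j)^{\alpha_j} \leq 1/2$, one has $\phi_j(t) \leq C t^2 H_j\bigl(1 - (ct\Delta_j)^{\alpha_j}\bigr)^{-(1+\gamma_j)}$, which follows by summing a geometric-type series. Combining this with Chernoff and the identity $-tx = -\sum_j tx/J$ gives
\[
\mathbb{P}(X \geq x) \leq \exp\Bigl(-tx + \sum_{j=1}^J \phi_j(t)\Bigr),
\]
and the elementary bound $\sum_j a_j \leq J\max_j a_j$ in the exponent transforms this into the $\max$-type estimate $\mathbb{P}(X \geq x) \leq \max_j \exp\bigl(-tx + J\phi_j(t)\bigr)$, valid for every admissible $t > 0$.

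To conclude, for each $j$ I will apply the classical single-condition Bentkus--Rudzkis optimization to the auxiliary cumulant bound $|\kappa_m(\,\cdot\,)| \leq (m!/2)^{1+\gamma_j}(JH_j)/\Delta_j^{m-2}$, which is itself a standard Statulevi\v{c}ius condition with variance proxy $JH_j$ instead of $H_j$. The optimizer $t_j^\ast$ then yields the exponent $-x^2/\bigl(2(JH_j + x^{2-\alpha_j}/\Delta_j^{\alpha_j})\bigr)$, from which the claimed bound follows by the elementary inequality $JH_j + x^{2-\alpha_j}/\Delta_j^{\alpha_j} \leq J(H_j + x^{2-\alpha_j}/\Delta_j^{\alpha_j})$.

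The main obstacle is that the Chernoff step allows only a single value of $t$, while the optimizers $t_j^\ast$ depend on $j$. I expect to resolve this by choosing $t = t_{j^\star}^\ast$ where $j^\star$ realizes the maximum in the $\max_j$ above: since the upper bound is already a maximum over $j$, it suffices to find \emph{one} admissible $t$ at which the $j^\star$-th factor attains the desired decay. Verifying that $t_{j^\star}^\ast$ simultaneously lies in the admissible range of $\phi_k$ for every $k \neq j^\star$ will require a careful threshold analysis using the explicit form of $t_{j^\star}^\ast$, mirroring the delicate step in the original single-condition proof of~\cite{bentkus1980exponential}.
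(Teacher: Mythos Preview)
Your plan has a genuine gap in the final optimisation step. After the bound $\mathbb{P}(X\geq x)\leq \max_j\exp(-tx+J\phi_j(t))$, you propose to set $t=t_{j^\star}^\ast$ where $j^\star$ realises the target maximum, writing that ``it suffices to find one admissible $t$ at which the $j^\star$-th factor attains the desired decay''. This is not correct: you must control the \emph{whole} maximum $\max_k\bigl(-t_{j^\star}^\ast x+J\phi_k(t_{j^\star}^\ast)\bigr)$, not only the $j^\star$-th term. For $k\neq j^\star$, showing that $-t_{j^\star}^\ast x+J\phi_k(t_{j^\star}^\ast)\leq \text{target}_{j^\star}$ is a substantive inequality, not merely an admissibility check; it requires comparing $J\phi_k$ at a point chosen for $j^\star$ against the denominator $H_{j^\star}+x^{2-\alpha_{j^\star}}/\Delta_{j^\star}^{\alpha_{j^\star}}$, and your proposal gives no mechanism for this. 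The obstacle you name (admissibility of $t_{j^\star}^\ast$ for the other $\phi_k$) is necessary but not sufficient.

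The paper's proof sidesteps the issue by never passing to $J\max_j\phi_j$. Instead it keeps the full sum $\sum_j$ and picks the single parameter as the \emph{harmonic mean} $h=(\sum_j 1/h_j)^{-1}$ of the individual optimizers $h_j=x/(H_j+x^{2-\alpha_j}/\Delta_j^{\alpha_j})$. This choice guarantees $h\leq h_j$ for every $j$, so each geometric series in the cumulant bound is simultaneously well-controlled, and moreover the identity $\sum_j 1/h_j=1/h$ collapses the resulting sum into the clean estimate $A\leq hx/2$. The final exponent $hx/2$ is then bounded below by $\tfrac{1}{2J}\min_j x^2/(H_j+\cdots)$ via $\sum_j\leq J\max_j$ applied only at the last step, after the sum has been evaluated. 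Separately, the paper avoids the convergence issue for $\gamma_j>0$ not by truncating a Taylor remainder but by working from the outset with truncated exponential polynomials $\exp_n$ and $\exp_{2n}$ as in D\"oring--Jansen--Schubert, which replaces the Laplace transform by a finite-degree Markov inequality and makes the whole estimate elementary.
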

	\begin{proof}
		For simplicity, we follow the proof of Theorem 2.5 of Section 5 of \cite{doring2022method} to prove~\eqref{eq:lem_conc}. At a price of more technicality, the following proof could be adapted to prove~\eqref{eq:lem_conc} with $C =1$ (see~\cite{saulis2012limit}, proof of Lemma 2.4, pages 37-41). For $n \geq 1$, we denote the truncated exponential sum as
		$$\exp_n(x) := \sum_{m = 0}^n \frac1{m!} x^m.$$
		Let $h > 0$ and $n \geq 1$ that will be specified during the proof. Since $\exp_{2n}$ is increasing over $\mathbb{R}^+$, we have by Markov's inequality
		$$\mathbb{P}(X \geq x) \leq \frac{\mathbb{E}[\exp_{2n}(hX)]}{\exp_{2n}(hx)}.$$
		Then, according to equation (5.2) of \cite{doring2022method}, 
		$$\mathbb{E}[\exp_{2n}(hX)] \leq \exp_n\left(\sum_{m = 2}^{2n} \frac{h^m}{m!} |\kappa_m(X)|\right) := \exp_n(A).$$
		Let $h_j = x/(H_j + x^{2 - \alpha_j}/\Delta_j^{\alpha_j})$ and $h = (\sum_{j = 1}^J 1/h_j)^{-1}$.
		Using \eqref{eq:mStatcond}, we have
		$$A \leq\frac{h^2}{2}\sum_{j = 1}^J H_j \sum_{m = 2}^{2n} \left(\frac{m!}2\right)^{\gamma_j} \left(\frac{h}{\Delta_j}\right)^{m-2}.$$
		Then, assume that $2n \leq hx$. We use $m!/2 \leq (2n)^{m-2}$ for $m \leq 2n$ (see \cite{doring2022method}), to get 
		$$A \leq\frac{h^2}{2}\sum_{j = 1}^J H_j \sum_{m = 2}^{2n} \left((hx)^{\gamma_j}\frac{h}{\Delta_j}\right)^{m-2} \leq\frac{h^2}{2} \sum_{j = 1}^J H_j \frac1{1 - (hx)^{\gamma_j}\frac{h}{\Delta_j}}.$$
		Note that 
		\begin{align*}
			(hx)^{\gamma_j}\frac{h}{\Delta_j} &= \frac{(xh)^{1+\gamma_j}}{x \Delta_j} \leq \frac{(xh_j)^{1+\gamma_j}}{x \Delta_j} = \left(\frac{x^2}{x^2 + H_j (\Delta_j x)^{1/(1+\gamma_j)}}\right)^{1+\gamma_j} < 1. 
		\end{align*}
		Hence, 
		\begin{align*}
			1 - (hx)^{\gamma_j}\frac{h}{\Delta_j} \geq 1- \frac{x^2}{x^2 + H_j (\Delta_j x)^{1/(1+\gamma_j)}} =  \frac{H_j (\Delta_j x)^{1/(1+\gamma_j)}}{x^2 + H_j(\Delta_j x)^{1/(1+\gamma_j)}} = \frac{H_j h_j}{x}.
		\end{align*}
		Subsequently, 
		$$A \leq \frac{h^2}{2} \sum_{j = 1}^J \frac{x}{h_j} = \frac{hx}{2},$$
		and
		$$\mathbb{P}(X \geq x) \leq \frac{\exp_n(hx/2)}{\exp_{2n}(hx)}.$$
		Arguing exactly as in the end of proof of Theorem 2.5 of \cite{doring2022method} (from equation (5.5)), we get 
		$$\mathbb{P}(X \geq x) \leq C e^{-hx/2} \leq C \max_{j = 1, \dots, J} \exp\left(-\frac1{2J} \frac{x^2}{H_j + x^{2 - \alpha_j} /\Delta_j^{\alpha_j}}\right),$$
		for some numerical constant $C > 0$.
	\end{proof}
\end{document}